\documentclass[preprint,11pt,number,nopreprintline]{elsarticle}
\usepackage[letterpaper,margin=1in]{geometry}
\usepackage{amsmath,amssymb,amsthm,mathtools}
\usepackage{booktabs,tabularx,microtype}
\usepackage[hidelinks]{hyperref}
\newtheorem{theorem}{Theorem}[section]
\newtheorem{proposition}[theorem]{Proposition}
\newtheorem{lemma}[theorem]{Lemma}
\newtheorem{corollary}[theorem]{Corollary}
\theoremstyle{definition}
\newtheorem{remark}[theorem]{Remark}
\newcommand{\Z}{\mathbb Z}
\newcommand{\Q}{\mathbb Q}
\newcommand{\F}{\mathbb F}
\newcommand{\K}{K}
\newcommand{\Ht}{\widetilde H}
\newcommand{\Ind}{\operatorname{Ind}}
\newcommand{\Cay}{\operatorname{Cay}}
\newcommand{\tr}{\operatorname{tr}}
\hypersetup{pdftitle={Integral homology lattices of matching and circulant complexes},pdfauthor={Qihang Wang and Weiye Li},pdfsubject={Integral homology actions and equivariant lifting}}
\begin{document}
\begin{frontmatter}
\title{Integral homology lattices of matching and circulant complexes}
\author[pku]{Qihang Wang\corref{corresponding}\fnref{equal}}
\ead{2501110049@stu.pku.edu.cn}
\author[thu]{Weiye Li\fnref{equal}}
\ead{liwy25@mails.tsinghua.edu.cn}
\address[pku]{School of Mathematical Sciences, Peking University, Beijing 100871, China}
\address[thu]{Department of Automation, Tsinghua University, Beijing 100084, China}
\cortext[corresponding]{Corresponding author.}
\fntext[equal]{Qihang Wang and Weiye Li contributed equally to this work and are joint first authors.}
\begin{abstract}
Bouc's hook cycles span the rational top homology of the matching complex of $K_{2k+1}$.
We prove that their integral saturation quotient has exponent exactly $2^{\lfloor k/2\rfloor}$ for every $k\geq1$, and construct divided-power cycles attaining this bound.
For $k\geq2$, the homology lattice is not isomorphic to the standard integral hook lattice as an $\mathfrak S_{2k+1}$-module.
For circulant complexes, we determine the integral dihedral homology modules of $\Ind(C_{3s}^{s-1})$, $\Ind(C_{3s+1}^{s-1})$ and $\Ind(C_{3s+2}^{s-1})$, for $s\geq3$, $s\geq2$ and $s\geq3$, respectively, and classify the least scalar lifts of all primitive equivariant quotients to the rank-one module with trivial rotation and negative reflection.
Tate localization relates prime-order gluing to fixed-point homology under a free, single-degree homology hypothesis.
Explicit cycle subgroups separate saturation defects from obstructions to equivariant sections.
For circulant independence complexes with jumps two and three, we prove an eight-periodic ordinary homotopy classification and a fixed-space obstruction to uniform representation suspensions.
For every fixed nonempty bounded positive jump set, a finite-state construction gives a rational critical-face series.

AI models used: OpenAI GPT-5.6 Sol, Anthropic Claude Fable 5, Grok 4.6, OpenAI GPT-6.1 Sol, OpenAI GPT-6 Astra and Anthropic Claude Fable 5.1.
\end{abstract}
\begin{keyword}
Matching complex \sep Independence complex \sep Integral homology \sep Discrete Morse theory \sep Circulant graph \sep Equivariant lifting
\MSC[2020] 05E45 \sep 55U15 \sep 20C10
\end{keyword}
\end{frontmatter}

\section{Introduction}
A family of cycles may span rational homology without generating integral homology.
When a group acts on the complex, two further questions arise: whether a rational decomposition splits over $\Z$, and whether a specified quotient of integral homology admits an equivariant section.
The character of the rational representation does not answer either question.
We study these integral problems for matching complexes and for independence and clique complexes of circulant graphs.

Our starting point is the inclusion of explicit cycles in the simplicial chain complex.
For odd matching complexes, it leads to a sharp denominator theorem for Bouc's hook-cycle lattice.
For circulant complexes, cycle presentations and signed comparison maps determine the integral actions of the original vertex permutations, including the position of particular cycle subgroups.
Fixed-point spaces explain several of the resulting arithmetic obstructions and also show why an ordinary homotopy reduction need not retain the symmetry.

\subsection{Saturation in odd matching complexes}
For the matching complex $\mathcal M_{2k+1}$ of $K_{2k+1}$, let
$A^{(k)}=\ker(\sum:\Z^{2k+1}\to\Z)$ and $S^{(k)}=\bigwedge^k A^{(k)}$.
Since the complex has dimension $k-1$, its top homology $\mathcal H_k=\Ht_{k-1}(\mathcal M_{2k+1};\Z)$ is a free abelian group.
Bouc's hook cycles define an inclusion $\Psi_k:S^{(k)}\to\mathcal H_k$ which becomes an isomorphism over $\Q$, although its integral image can be proper~\cite{BoucMatching}.
We ask for the least positive integer that brings every top cycle into this image after multiplication, or equivalently the exponent of $\mathcal H_k/\Psi_k(S^{(k)})$.

\begin{theorem}\label{thm:matching-main}
For every $k\geq1$, the quotient $\mathcal H_k/\Psi_k(S^{(k)})$ is a finite $2$-group of exponent exactly $2^{\lfloor k/2\rfloor}$.
Divided-power cycles explicitly realize every order $2^j$, $0\leq j\leq\lfloor k/2\rfloor$.
For every $k\geq2$, $\mathcal H_k$ is not isomorphic to $S^{(k)}$ as an integral $\mathfrak S_{2k+1}$-module.
\end{theorem}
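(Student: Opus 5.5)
We will prove the three claims by making both lattices completely explicit and then comparing them, first upstairs in the chain group and then on a fixed-point sublattice.

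\emph{Setup.} Since $\mathcal M_{2k+1}$ has dimension $k-1$, there are no $k$-chains. Hence $\mathcal H_k=\ker\partial_{k-1}\subset C_{k-1}(\mathcal M_{2k+1};\Z)$. This subgroup is saturated in the free chain group, so for any rational cycle $z$ we have $z\in\mathcal H_k$ exactly when its coefficients on perfect-minus-one matchings are integral. Recall that a hook cycle for vertices $a_1,b_1,\dots,a_k,b_k$ and a distinguished vertex $c$ is the product of the "triangle" $1$-chains $(a_ic)-\dots$. More concretely, a decomposable element $(e_{a_1}-e_{b_1})\wedge\cdots\wedge(e_{a_k}-e_{b_k})$ maps under $\Psi_k$ to a signed sum of matchings obtained by expanding. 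We first write $\Psi_k$ on the standard $\Z$-basis of $S^{(k)}$ given by wedges $(e_{i_1}-e_0)\wedge\cdots\wedge(e_{i_k}-e_0)$, and record the coefficient of each matching in the image. Every $k$-matching $\{a_1b_1,\dots,a_kb_k\}$ misses exactly one vertex. The coefficient of $\Psi_k(\omega)$ on it is a signed sum of $\omega$-coordinates over the $2^k$ choices of one endpoint per edge. This makes $\Psi_k(S^{(k)})$ a concrete sublattice, and $\mathcal H_k/\Psi_k(S^{(k)})$ becomes a question about which rational $\omega$ give integral chains.

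\emph{Upper bound and $2$-primarity.} We will show that $2^{\lfloor k/2\rfloor}\mathcal H_k\subset\Psi_k(S^{(k)})$. Take an integral cycle $z=\Psi_k(\omega)$ with $\omega\in S^{(k)}\otimes\Q$, and solve for $\omega$ by inverting the expansion. Restricting to matchings that contain a fixed edge $ab$ and avoid a fixed vertex gives a link that is a copy of $\mathcal M_{2k-1}$. This suggests an induction: the restriction (contraction along $ab$) carries $\mathcal H_k\to\mathcal H_{k-1}$ and is compatible with $\Psi$ via contraction $e_a-e_b\mapsto$ boundary. From that compatibility we aim for a two-step recursion, in which each pair of additional edges costs at most one factor of $2$. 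The induction itself needs only the one-step divisibility lemma: if $z$ is integral and all contractions of $z$ lie in $2^{m}\Psi_{k-1}$-denominators, then $2\omega$ has denominator at most $2^m$ every second step. Oddness of the index, and hence the $2$-group claim, should fall out of the same computation: any odd prime $p$ is invertible in the expansion, because one can average over the two endpoints. \textbf{This step is the main obstacle}, since it demands pinning down exactly why denominators grow only every other step. I would first try computing the Smith form for $k=1,2,3$ by hand to guess the precise recursion.

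\emph{Lower bound via divided powers.} For $j\le\lfloor k/2\rfloor$, take $j$ disjoint $4$-vertex blocks. On each block, form the rational element $\tfrac12\big((e_a-e_b)\wedge(e_c-e_d)+\dots\big)$, a divided square $\gamma_2$ of a degree-one class. Wedge these with ordinary hook factors on the remaining vertices. We check that the image is an integral chain: each matching coefficient picks up an even sum from each block. We also check that $\omega$ has exact denominator $2^j$ in $S^{(k)}\otimes\Q$. This gives elements of order exactly $2^j$, and with the upper bound it yields exponent exactly $2^{\lfloor k/2\rfloor}$.

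\emph{Non-isomorphism for $k\ge2$.} Take a transposition or a Sylow $2$-subgroup element $g$, and compare $H^1(\langle g\rangle;-)$, or fixed sublattice indices $M^g/(1+g)M$, for $M=\mathcal H_k$ and $M=S^{(k)}$. For $S^{(k)}$ the Tate cohomology is computable from $A^{(k)}=$ permutation module modulo trivial. For $\mathcal H_k$ we use the divided-power cycle with $j=1$, which is fixed (or anti-fixed) by a double transposition on its block. This cycle produces an extra Tate class. Equivalently, $\mathcal H_k/2$ and $S^{(k)}/2$ are non-isomorphic $\F_2\mathfrak S_{2k+1}$-modules, which we detect by the dimension of invariants under the block's Klein four-group.
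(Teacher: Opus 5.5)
Your outline has the right shape, but two steps fail as written and the third is not yet an argument.

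\emph{The divided-power step is wrong as stated.} A bivector supported on a four-label block cannot produce a nonintegral class. Already for $k=2$, let $\eta\in\bigwedge^2A^{(2)}$ have alternating matrix $E$ supported on $B=\{a,b,c,d\}$, and let $r$ be the fifth label. The matching $\{r,a\},\{c,d\}$ gives $\langle\eta,b_m\rangle=E_{ac}+E_{ad}$, because $\eta$ does not see $q_r$. Evenness of all such values, together with the zero row sum $E_{ab}+E_{ac}+E_{ad}=0$, forces every entry of $E$ to be even. So $\tfrac12\eta$ already lies in $S^{(2)}$. The obstruction persists for larger $k$: an edge joining a block to its complement contributes a single vector $q_a$ from the block rather than an edge sum. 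Hence it is false that each block contributes an even factor.

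What works is a bivector whose values $\omega(q_a,q_b)$ are odd for every pair of distinct labels, i.e.\ an integral lift of the canonical inverse of the dot-product form on $A^{(k)}/2A^{(k)}$. The paper takes $\omega=\sum_{1\le i<j\le2k}f_i\wedge f_j$, which is necessarily global. With this choice:
\begin{itemize}
\item $\omega(q_a+q_b,q_c+q_d)$ is even for disjoint edges;
\item $\omega^{[j]}=\omega^j/j!$ is integral by the Pfaffian formula;
\item the Pfaffian expansion of $\langle\omega^{[j]}\wedge v,b_m\rangle$ is a sum of products of $j$ even entries;
\item the principal Pfaffians of the all-ones alternating matrix equal $1$, so the class of $2^{-j}\omega^{[j]}\wedge f_1\wedge\cdots\wedge f_{k-2j}$ has order exactly $2^j$.
\end{itemize}

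\emph{The upper bound lacks its key idea.} A one-edge contraction cannot give the two-step recursion. Integrality of a linear form on all $u_a+u_b$ only gives integrality of twice the form (take $\tfrac12\sum_au_a^*$). So each edge costs a factor of two, and you reach only $2^{k-1}$.

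Instead, contract $x$ simultaneously with two disjoint edge sums $q_a+q_b$ and $q_c+q_d$. The result descends to the $Q$-lattice of the remaining $2k-3$ labels, because their sum is $-(q_a+q_b)-(q_c+q_d)$. It is integral on their maximum matchings, so the case $k-2$ applies. Fixing $k-2$ further labels then leaves a rational bivector on the free lattice of the other $k+3\ge5$ labels that is integral on every $(u_a+u_b)\wedge(u_c+u_d)$. The missing lemma is that these elements generate a subgroup containing $2\bigwedge^2\Z^m$ for $m\ge5$. The resulting inclusion $\widehat S^{(k)}\subseteq2^{-\lfloor k/2\rfloor}S^{(k)}$ also gives $2$-primarity, which your averaging remark does not establish.

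\emph{Non-isomorphism needs neither Tate cohomology nor mod-$2$ reductions.} The claim $\mathcal H_k/2\not\cong S^{(k)}/2$ is stronger than required and unverified. The paper's route is short:
\begin{itemize}
\item absolute irreducibility makes any integral isomorphism equal to $c\Psi_k$, so $\widehat S^{(k)}=cS^{(k)}$;
\item the inclusion $S^{(k)}\subseteq\widehat S^{(k)}$ forces $c=\pm1/d$;
\item primitivity of each basis hook cycle in the chain group (Lemma~\ref{lem:om-normalization}) forces $d=1$, contradicting $\Gamma_k\ne0$ for $k\ge2$.
\end{itemize}
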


The upper bound follows by contracting to a matching complex on four fewer ground vertices and applying a two-edge switching identity.
For sharpness, Pfaffian divisibility produces divided-power cycles with the required denominators. Section~\ref{sec:om-saturation} gives both arguments with the integral normalization of Bouc's map fixed at the outset.
Theorem~\ref{thm:om-exponent} describes their canonical sublattice, and Corollary~\ref{cor:om-first-layer} identifies the submodule generated by the first divided power.
In particular, the quotient contains an element of order four from nine vertices onward.
The exponent quantifies the failure of Bouc's integral cycle span to be saturated; a description of all its invariant factors remains open.

\subsection{Integral actions on circulant complexes}
For the circulant complexes, put
\[
 G(n,s)=\Cay(\Z/n\Z,\{\pm1,\pm s\}),\qquad
 \K(n,s)=\Ind(G(n,s)).
\]
Vertices are represented by $0,\ldots,n-1$, and a simplex is a set containing no pair with difference $\pm1$ or $\pm s$ modulo $n$.
The chosen generator $\tau$ of $C_n$ acts by $x\mapsto x+1$.
We make $\Ht_q(\K(n,s);\Z)$ a $\Z[t]$-module by letting $t$ act as $\tau_*$; thus $t^n=1$.
Let $\rho(x)=-x$ modulo $n$. We write $D_m=\langle\tau,\rho\mid\tau^m=\rho^2=1,\ \rho\tau\rho=\tau^{-1}\rangle$ for the dihedral group of order $2m$.
Unless explicitly stated to be equivariant, homotopy equivalences are ordinary.
We also compare the clique complexes $\mathcal X_s=\operatorname{Cl}(G(3s,s))=\Ind(\overline{G(3s,s)})$ for $s\geq3$. The graph complement is essential: $\mathcal X_s$ and $\K(3s,s)$ are different complexes on the same labeled vertex set.

There are two complementary ways to retain symmetry through a homotopy calculation.
An ordered Morse matching gives an ordinary CW model, even when the matching itself is not invariant.
Signed comparison maps then recover the action on integral homology.
Alternatively, orbit coordinates identify the geometric fixed spaces directly, providing information that homology characters alone cannot supply.

The first uniform presentations arise from the cycle-power complexes $\mathcal Y_s=\Ind(C_{3s}^{s-1})$.
Their triangles collapse to give the ordinary homotopy type.
An integral presentation by graph cycles and triangle boundaries retains both dihedral generators and gives the following parity-dependent decompositions.
Each block in the theorem is the cyclic lattice $\Z[t]/(f)$, where $f$ is its listed annihilator; multiplication by $t$ gives the translation, and the last column specifies the reflection.

\begin{theorem}\label{thm:cyclepoweraction}
Let $s\geq3$ and $\mathcal Y_s=\Ind(C_{3s}^{s-1})$, where $C_n^r$ joins vertices of circular distance at most $r$.
With the labeled actions $T(i)=i+1$ and $R(i)=-i$, its only nonzero reduced integral homology is $H_1$, of rank $(s-1)(3s-2)/2$.
As an integral $D_{3s}$-module,
\[
 H_1(\mathcal Y_s;\Z)\cong
 \begin{cases}
 M_s\oplus(\Pi_{3s}^0)^{(s-3)/2},&s\text{ odd},\\
 B_v\oplus W_v\oplus(\Pi_{6v}^0)^{a_v}\oplus(\Pi_{6v}^1)^{b_v},&s=2v,
 \end{cases}
\]
where all blocks have $T=t$, and their annihilators and reflections are
\[
\begin{array}{c|c|c}
\text{Block}&\text{Annihilator}&Rp(t)\\\hline
M_s&(t-1)(1+t^s+t^{2s})&-p(t^{-1})\\
\Pi_n^\epsilon&t^n-1&-t^\epsilon p(t^{-1})\\
B_v&1-t^v+t^{2v}&-p(t^{-1})\\
W_v&(t-1)(1+t^v+\cdots+t^{5v})&-t^{-v}p(t^{-1})
\end{array}
\]
Here $(a_v,b_v)=(v/2-1,v/2-1)$ for even $v$, and $((v-1)/2,(v-3)/2)$ for odd $v$.
\end{theorem}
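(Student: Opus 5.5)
The plan has three stages: establish the ordinary homotopy type from the simplices, write an explicit integral presentation of $H_1$ by cycles on which $T$ and $R$ act by signed permutations, and then split that presentation into the listed blocks.

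\textbf{Homotopy type and rank.} In $C_{3s}^{s-1}$ two vertices are non-adjacent exactly when their circular distance lies in $[s,\lfloor 3s/2\rfloor]$. A triple is independent only if its three circular gaps are each at least $s$ and sum to $3s$, so every independent triple is a translate of $\{i,i+s,i+2s\}$, and there are no $3$-simplices. Hence $\mathcal Y_s$ is the independence graph plus the $s$ triangles $\Delta_i=\{i,i+s,i+2s\}$, $0\le i<s$. The steps are:
\begin{itemize}
\item Each triangle has free faces. Collapsing one edge of each $\Delta_i$ (chosen by an ordered Morse matching, not necessarily invariantly) leaves a connected graph.
\item The connectivity check uses only the steps $s$ and $s+1$.
\item The Euler characteristic is $3s-|E|+s$, where $|E|=3s(\lfloor 3s/2\rfloor-s+1)$ for odd $s$ (with the half-count correction at distance $3s/2$ for even $s$).
\item This gives the rank $(s-1)(3s-2)/2$, which I will check separately in both parities.
\item Thus $\tilde H_q=0$ for $q\ne 1$, and $H_1$ is free.
\end{itemize}

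\textbf{Integral presentation.} Over $\Z$ I write $H_1=Z_1(\text{graph})/\langle\partial\Delta_i\rangle$. The graph cycle lattice is free with a $D_{3s}$-action. I choose generators organized by edge orbits:
\begin{itemize}
\item for each distance $d\in[s,\lfloor 3s/2\rfloor]$, there is one orbit of oriented edges $e_d(i)=[i,i+d]$;
\item the cycle space is $\ker(\partial:\bigoplus_d \Z[t]e_d\to\Z[t]/(t^{3s}-1))$, where $\partial e_d=t^d-1$ (the edge orbit at $d=3s/2$ has only $3s/2$ edges, so $\Z[t]/(t^{3s/2}-1)$, up to orientation sign);
\item the triangle boundary orbit is $\partial\Delta=e_s+te_s\cdot t^{s-1}\!\ldots$, more precisely $(1+t^s+t^{2s})e_s$.
\end{itemize}
$R$ sends $e_d(i)$ to $-e_d(-i-d)$, so on $\Z[t]e_d$ it acts by $p(t)\mapsto -t^{-d}p(t^{-1})$. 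This is the source of the shifts $t^\epsilon$ in the table.

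\textbf{Splitting into blocks.} I pair the distances $d$ and $d+1$ inside the kernel using elementary cycles $c_d=(t-1)e_{d+1}-(t^{d+1}-1)/(t^d-1)$-type combinations, or more concretely the $4$-cycles $e_d+t^de_1'$ built from a difference of consecutive distances. Each such pair contributes a free rank-$3s$ summand $\Z[t]/(t^{3s}-1)$. Rescaling the generator by a power of $t$ normalizes its reflection to $-t^\epsilon p(t^{-1})$, where $\epsilon$ is the parity of the shift. That yields the $\Pi^\epsilon$ blocks, and for even $s$ the lattice $\Pi_{6v}^\epsilon$ has $n=3s=6v$.

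The leftover piece involves $e_s$, the triangle relations, and (for even $s$) the half-orbit at $d=3s/2$. For odd $s$ the residue is $\Z[t]e_s$ modulo the kernel of $t^s-1$ in the boundary map, taken modulo $(1+t^s+t^{2s})$, which gives $M_s$. For even $s$ the half-orbit, with annihilator $t^{3v}+1$ up to sign, interacts with $e_s$ and splits as $B_v\oplus W_v$, using
\[
(t^{6v}-1)=(t^{3v}-1)(t^{3v}+1),\qquad t^{3v}+1=(t^v+1)(1-t^v+t^{2v}).
\]

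\textbf{Main obstacle.} The hardest step is this last integral splitting. The annihilators $1-t^v+t^{2v}$ and $(t-1)(1+\cdots+t^{5v})$ are not coprime over $\Z$ in general, since their resultant is nontrivial, so the decomposition is not forced by the characters. It has to be exhibited through explicit $R$-compatible generators and a unimodular change of basis, checked by a determinant $\pm1$ computation. I would first write the leftover module as a $\Z[t]$-lattice with an explicit $2\times2$ relation matrix and look for an idempotent-free basis change. Failing that, I would verify the splitting by comparing indices against $\Z[t]$-quotients.

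Finally, the counts $a_v,b_v$ follow from the number of distance pairs, and I will check them against the rank:
\[
3s\cdot\#\Pi+\operatorname{rank}(\text{leftover})=(s-1)(3s-2)/2.
\]
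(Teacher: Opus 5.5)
\textbf{Verdict: genuine gap in the splitting stage.} Your first two stages are sound. After you collapse one step-$s$ edge per triangle, the surviving step-$s$ edges connect each residue class modulo $s$, and the step-$(s+1)$ edges join consecutive classes, so connectivity, the rank and freeness follow. The presentation $H_1=Z_1(\text{graph})/\langle\partial\Delta_i\rangle$ with $R(p\,e_d)=-t^{-d}p(t^{-1})e_d$ is also correct.

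The gap is the splitting stage, which is the real content of the theorem. The cycles you propose do not give the regular blocks:
\begin{itemize}
\item For $\gamma_d=(1-t^{d+1})e_d+(t^d-1)e_{d+1}$ on two full orbits, $\gcd(d,d+1)=1$ forces the annihilator to be $(1+t+\cdots+t^{3s-1})$. So $\Z[t]\gamma_d$ has rank $3s-1$, not $3s$.
\item Every $\gamma_d$ is killed by the norm, so these cycles miss the invariant cycles $\sum_ie_{d,i}$ even rationally.
\item One computes $R\gamma_d=+t^{-2d-1}\gamma_d$, which has the opposite sign to $\Pi^\epsilon$.
\end{itemize}
Your odd-case leftover also cannot produce $M_s$. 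A quotient of $\Z[t]e_s$ by $1+t^s+t^{2s}$ has rank $2s$ and no factor $t-1$. In $M_s$ the extra rank and the factor $t-1$ come from an invariant cycle glued to the triangle-step orbit by the triangle boundaries, and your plan contains no such cycle.

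\textbf{The missing idea} is a $D_{3s}$-stable integral basis of the graph cycle lattice in which the triangle relations involve only one edge orbit and one invariant cycle. The paper builds it as follows.
\begin{itemize}
\item Take a unit step $d_0\equiv1\pmod 3$ outside the triangle orbit. Its edges form a Hamiltonian cycle $u$.
\item Close each $e_{d,i}$ with the $d_0$-path from $i$ to $i+d$. This gives cycles $c_{d,i}$ with $Tc_{d,i}=c_{d,i+1}$, $Rc_{d,i}=-c_{d,-i-d}$ and $Ru=-u$. These cycles together with $u$ form an integral basis, using half the indices on the half-turn orbit.
\item Each remaining full orbit is then a free block $\Z[t]/(t^{3s}-1)$ with reflection $-t^{-d}p(t^{-1})$. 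A shift by $t^a$ changes the exponent to $-d-2a$, which gives the phases in the $\Pi$ blocks.
\item Because $d_0\equiv1\pmod 3$, the closing path for $d=s$ has length $s$. Hence $\partial\Delta=N_s(T)c+u$ with $c=c_{s,0}$, and eliminating $u$ gives $M_s$ for odd $s$.
\item For $s=2v$, the half-turn cycle $h=c_{3v,0}$ satisfies $(1+t^{3v})h=-u$. This gives the presentation $(t-1)N_{2v}c=0$, $(1+t^{3v})h=N_{2v}c$.
\end{itemize}

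\textbf{Your even-case premise is also wrong.} Put $x=t^v$, $A=1+x+x^2$, $B=1-x+x^2$ and $C=1+x$. The annihilator $B$ of $B_v$ divides the annihilator $(t-1)ABC$ of $W_v$. So the two blocks share a rational constituent and their resultant is zero. No coprimality or idempotent argument can split them.

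What is needed is the explicit unimodular change $z=Ch-Ac$, $w=h-xc$, whose inverse $c=Cw-z$, $h=Aw-xz$ comes from $A-xC=1$. It converts the relations to $Bz=0$ and $(t-1)ABCw=0$. The substitutions $z'=t^{-2v}z$ and $w'=w-xz$ then normalize the reflections to those of $B_v$ and $W_v$.
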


For a positive integer $d$ and a specified equivariant quotient $q:H\to Q$, a scalar lift of $d$ is an equivariant homomorphism $a:Q\to H$ such that $qa=d\operatorname{id}_Q$.
The least such positive $d$ measures the obstruction to an equivariant section.
Corollary~\ref{cor:cyclepowerscalar} classifies this minimum for every primitive quotient of $H_1(\mathcal Y_s;\Z)$ to the rank-one module $\Z_-$ with $T=1$ and $R=-1$.
For odd $s\geq5$ the possible minima are $3e$ for $e\mid s$; for $s=2v\geq6$ they are $6e$ for $e\mid v$.
At $s=3,4$ the only minima are three and six.
The proof in Section~\ref{sec:cyclepowerfamily} gives integral cycle coordinates, a complete relation presentation, and a reflection-compatible splitting.

At the adjacent vertex counts $3s+1$ and $3s+2$, the triangles have trivial stabilizers.
Their attaching relations eliminate one or two full chord orbits, giving simpler integral modules.

\begin{theorem}\label{thm:cyclepowerneighbors}
Let $a=1$, $s\geq2$, or $a=2$, $s\geq3$, and put
$n=3s+a$ and $\mathcal Z_{s,a}=\Ind(C_n^{s-1})$.
With the labeled actions $T(i)=i+1$ and $R(i)=-i$, its only nonzero reduced integral homology is $H_1$, of rank $n(s-a-1)/2+1$.
For odd $n$, set $k=(s-a-1)/2$; then
\[
 H_1(\mathcal Z_{s,a};\Z)\cong\Z_-\oplus(\Pi_n^0)^k.
\]
For even $n=2m$, set $k=(s-a-2)/2$; then
\[
 H_1(\mathcal Z_{s,a};\Z)\cong
 H_m\oplus(\Pi_n^0)^{\lfloor k/2\rfloor}\oplus(\Pi_n^1)^{\lceil k/2\rceil},
\]
where the integral block $H_m=\Z[t]/((t-1)(1+t^m))$ has $T=t$ and $Rp(t)=-t^{-m}p(t^{-1})$.
The signed regular blocks $\Pi_n^\epsilon$ and the rank-one module $\Z_-$ have the actions specified above.
\end{theorem}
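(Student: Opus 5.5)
We follow the same route as for Theorem~\ref{thm:cyclepoweraction}, which is simpler here because the triangles have trivial stabilizers.

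\emph{Step 1: the ordinary homotopy type.} In $G=C_n^{s-1}$ with $n=3s+a$, a set is independent when its elements are pairwise at circular distance at least $s$. Three vertices at mutual gaps $\geq s$ have gaps summing to $n=3s+a$, so a triangle has gap vector a composition of $a$ added to $(s,s,s)$. No four vertices are independent, since $4s>n$ once $s>a$. Hence $\mathcal Z_{s,a}$ is a $2$-complex with the following cells.
\begin{itemize}
\item[] \emph{Vertices:} $n$ of them.
\item[] \emph{Edges:} the pairs at distance $d\in\{s,\dots,\lfloor n/2\rfloor\}$, giving ``chord orbits''.
\item[] \emph{Triangles:} for $a=1$, a single $C_n$-orbit of gap type $(s+1,s,s)$, hence $n$ triangles. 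For $a=2$, the gap types $(s+2,s,s)$ and $(s+1,s+1,s)$, hence two free orbits, $2n$ triangles.
\end{itemize}
The stabilizers are trivial because the gap vectors are not constant. Each triangle of type $(s+1,s,s)$ has a unique edge of length $s+1$, and it is the only triangle containing that edge: the edge $\{x,x+s+1\}$ extends only via $x+2s+1$. Similarly, each $(s+2,s,s)$ triangle has a free edge of length $s+2$, and each $(s+1,s+1,s)$ triangle has a free edge of length $s$. We must check that no other triangle contains that edge. This is where the hypotheses $s\geq3$ for $a=2$ and $s\geq2$ for $a=1$ enter, ensuring that the lengths $s$, $s+1$, $s+2$ are distinct and $\leq n/2$.

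Collapsing along these free edges eliminates one or two full chord orbits. What remains is a connected graph on $n$ vertices whose edges are the surviving chord orbits. So $\mathcal Z_{s,a}$ is a wedge of circles and $H_1$ is its only nonzero reduced homology. The rank is $E-n+1$, where $E$ counts the surviving edges. Each length $d<n/2$ contributes $n$ edges, and $d=n/2$ (for even $n$) contributes $m$ edges. Counting gives $n(s-a-1)/2+1$ in both parities.

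\emph{Step 2: an equivariant integral presentation.} Rather than use the non-invariant collapse, we take the $D_n$-equivariant presentation
\[
H_1=\ker\partial_1/\operatorname{im}\partial_2
\]
of the full chain complex. A convenient set of $\Z[t]$-generators consists of:
\begin{itemize}
\item[] for each chord length $d$, the ``ladder cycle'' $c_d$ comparing the $d$-chord with the $(d+1)$-chord through a triangle-free loop, or better, a closed cycle built from consecutive chords;
\item[] one ``global'' cycle $z$, formed by the $s$-chords (or $(s+1)$-chords) winding around $\Z/n$.
\end{itemize}
Because the collapsed complex is a graph, $H_1$ of that graph is described by a spanning tree. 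We choose the tree to consist of the edges of one chord length $d_0$, which form a single cycle when $\gcd(d_0,n)=1$, minus one edge.

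We then compute $T$ and $R$ on the free-orbit edge generators using signed comparison. Under $R$, an edge $\{x,x+d\}$ maps to $\{-x-d,-x\}$, which is $t^{-d}$ times the reversed orientation. This produces the twist $-t^{\epsilon}p(t^{-1})$, with $\epsilon$ depending on the parity of $d$ after renormalizing the basepoint.

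\emph{Step 3: the decomposition.} Each surviving chord orbit of length $d<n/2$ gives a free $\Z[t]/(t^n-1)$ summand modulo its boundary into vertices. The relations glue the orbits pairwise into regular blocks, plus one rank-one or $H_m$ piece coming from the global cycle and the diameter orbit.
\begin{itemize}
\item[] \emph{Odd $n$:} the diameter orbit is absent. The $2k+1$ surviving lengths produce $k$ blocks $\Pi_n^0$ and a leftover winding class. Since $t$ fixes the winding class and $R$ reverses it, this class is $\Z_-$.
\item[] \emph{Even $n=2m$:} the diameter orbit $\Z[t]/(t^m-1)$ combines with the winding class to give $\Z[t]/((t-1)(1+t^m))$ with reflection $-t^{-m}p(t^{-1})$.
\end{itemize}
The reflection twists of the regular blocks alternate with the parity of the chord length, which yields $\lfloor k/2\rfloor$ blocks with $\epsilon=0$ and $\lceil k/2\rceil$ with $\epsilon=1$. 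Integrally, the decomposition is verified by writing an explicit change of basis. Its triangularity in the length filtration gives determinant $\pm1$, which avoids index issues.

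\emph{Main obstacle.} The hardest step is the reflection-compatible integral splitting, especially in the even case. There the diameter chords and the winding cycle must be shown to span a genuine direct summand $H_m$, not merely a finite-index sublattice. We first try choosing the splitting using a basepoint-shift normalization of each chord orbit, so that $R$ acts by $-t^{\epsilon}p(t^{-1})$ exactly. We then check unimodularity by reducing modulo $2$ and modulo the primes dividing $m$.
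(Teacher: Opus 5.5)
\textbf{There is a genuine gap: the integral, reflection-compatible splitting, which is the content of the theorem, is never proved.} There is also one incorrect claim in Step~1.

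\emph{The Step~1 error.} For $a=2$ the length-$s$ edge $\{x,x+s\}$ is \emph{not} free. Besides the type $(s+1,s+1,s)$ triangle $\{x,x+s,x+2s+1\}$, it lies in the two type $(s+2,s,s)$ triangles $\{x,x+s,x+2s\}$ and $\{x,x+s,x+2s+2\}$, so the check you propose fails. The fix is to order the collapses. First collapse every type $(s+2,s,s)$ triangle along its $(s+2)$-edge, which is free for $s\geq3$. After that, each $s$-edge lies in exactly one remaining triangle. The end result of the collapse is the $1$-skeleton minus one or two full edge-length orbits. This is a $D_n$-invariant subgraph whose inclusion is a homotopy equivalence, so you may compute the action on its cycle lattice directly. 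Your worry about a ``non-invariant collapse'' is unnecessary here.

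\emph{The gap in Steps~2--3.} You defer the splitting to a hoped-for unimodularity check, and the structure you sketch is wrong in its details.
\begin{enumerate}
\item For odd $n$ there are $k+1$ surviving lengths, not $2k+1$. A count of $2k+1$ contradicts the rank $nk+1$ you found in Step~1.
\item Of these lengths, one is a Hamiltonian step $d_0$ coprime to $n$. Each of the other $k$ gives one regular block by itself; nothing is glued pairwise.
\item The oriented diameter edges form $\Z[t]/(t^m+1)$, not $\Z[t]/(t^m-1)$, since $T^m[i,i+m]=-[i,i+m]$.
\end{enumerate}

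\emph{The missing idea} is an explicit equivariant $\Z$-basis of the graph cycle group. With it, no check modulo $2$ or modulo primes dividing $m$ is needed.
\begin{enumerate}
\item Take $d_0=s$ for $a=1$ and $d_0=s+1$ for $a=2$; their inverses modulo $n$ are $-3$ and $3$.
\item Put $u=\sum_i[i,i+d_0]$. For each other step $d$, set $c_{d,i}=[i,i+d]-\sum_{j=0}^{a_d-1}[i+jd_0,i+(j+1)d_0]$, with $a_d\equiv dd_0^{-1}\pmod n$.
\item Eliminating chord coefficients shows that $u$ and the $c_{d,i}$ form a basis, using only $0\leq i<m$ on the half-turn orbit. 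One has $Tc_{d,i}=c_{d,i+1}$, $Rc_{d,i}=-c_{d,-i-d}$, $Ru=-u$ and $c_{m,i+m}=-c_{m,i}-u$.
\item This basis is a disjoint union of orbit pieces. So each remaining non-Hamiltonian full orbit is literally $\Z[t]/(t^n-1)$ with $Rp(t)=-t^{-d}p(t^{-1})$.
\item A shift $t^a$ changes the reflection exponent to $-2a-d$. This can be made $0$ for odd $n$, and $\epsilon\equiv d\pmod 2$ for even $n$.
\item For even $n$, put $h=c_{m,0}$. The relation $u=-(1+t^m)h$ shows that $h$ alone generates exactly $H_m$, with $Rh=h+u=-t^{-m}h$. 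This is the direct summand you were worried about.
\end{enumerate}

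If you work on the full complex rather than the collapsed graph, you must also express the triangle boundaries in this basis. They are $c_{s+1,i+2s}+u$ for $a=1$, and $c_{s,i}+u$ and $c_{s,i}+c_{s,i+s}+c_{s+2,i+2s}+2u$ for $a=2$. These set whole orbits equal to $-u$ or $0$ with unit coefficient, which is how the paper eliminates them integrally.
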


Corollary~\ref{cor:neighborscalar} determines all primitive scalar quotient lifts for these modules.
The half-turn block has augmentation minimum two, whereas the odd-vertex rank-one summand has minimum one.
The other maps can have larger minima, determined by their coefficients on the regular summands.
These adjacent formulas separate the role of the triangle stabilizers from that of the antipodal edges in the integral action.

The integral decompositions also admit a geometric interpretation.
For a prime-order subgroup $P=\langle\sigma\rangle$, put
\[
 N_P=1+\sigma+\cdots+\sigma^{p-1},\qquad
 \mathcal G_P(L)=L/(L^P+\ker N_P).
\]
This quotient measures the failure of the invariant and noninvariant rational summands to split integrally.
Under the free, single-degree homology hypothesis, classical Tate localization calculates it from the fixed-point space (Theorem~\ref{thm:fixedgluing}).
The clique complexes $\mathcal X_s$ give a parameter-family application, including the residual dihedral action on the gluing quotient (Theorem~\ref{thm:geometricfamily} and Proposition~\ref{prop:residualgluing}).
The localization method is classical; the cycle presentations identify the particular lattices and quotient maps to which it applies.

Write $\zeta_m=e^{2\pi i/m}$ and define
\begin{align*}
 \Phi_6(t)&=t^2-t+1,\qquad \Phi_9(t)=t^6+t^3+1,\\
 L_3&=\Z[t]/((t-1)\Phi_9(t)),\\
 L_2&=\Z[t]/((t^9-1)\Phi_6(t)),\qquad
 L_{14}=\Z[t]/(1+t+\cdots+t^{13}).
\end{align*}
On $\Z[\zeta_m]$ and $\Q(\zeta_m)$, $t$ is multiplication by $\zeta_m$.
On a trivial summand it is the identity.
The notation $\Z[C_9]$ means $\Z[t]/(t^9-1)$ with its usual regular action.
Inflation from $C_d$ to $C_n$, for $d\mid n$, uses the quotient taking the chosen generator to the chosen generator.

The finite circulant examples exhibit the same obstructions in higher homological degrees.
The translation lattices of $\K(18,4)$, $\K(18,5)$ and $\K(27,5)$ are $L_3,L_2,L_3$.
The factor-kernel sums have indices three and four in the two lattice types, while the two occurrences of $L_3$ have opposite reflection signs (Theorem~\ref{thm:dihedral}).
Fixed-point homology explains the defects for the eighteen-vertex examples and the three-factor quotient of $\K(14,2)$.
For $\K(27,5)$ the geometric subgroup $\langle T^9\rangle$ acts trivially on homology; Lemma~\ref{lem:centralcomparison} instead passes to the residual action on its fixed space to recover the effective index-three gluing.
The eighteen-case ordinary homotopy and translation-module table appears in Appendix~\ref{app:finiteclassification}. Three additional benchmarks in Theorem~\ref{thm:three-benchmarks} are treated by direct reductions; their thirteen-vertex homology is the integral cyclic augmentation lattice, and their two fourteen-vertex spheres have opposite translation signs (Theorem~\ref{thm:cyclic-action}).
The $L_{14}$ representation agrees with the earlier circular-arc nerve representation, as detailed in Proposition~\ref{prop:composite}.

A specified cycle subgroup gives a finer question than the abstract type of the ambient lattice.
For $G(18,2)$, twenty-seven classical adjacent-pair cross-cycles generate a primitive rank-twenty-three subgroup in rank-twenty-five homology.
The quotient is a free rank-two permutation module, but its least positive equivariant scalar lift is nine.
For $G(22,2)$, the analogous family spans rational homology and has integral index two.
Proposition~\ref{prop:tilingdihedral} identifies its position as a parity kernel on a norm-quotient summand together with an entire regular summand.
These calculations distinguish a saturation defect from the failure of an equivariant section of a free quotient.

The clique family makes the dependence on the quotient map particularly explicit.
Triangle boundaries give $H_1(\mathcal X_s;\Z)\cong M_s$, with negative inversion as reflection.
Its augmentation to $\Z_-$ has least scalar lift three for every $s\geq3$.
Before filling the triangles the graph homology splits integrally, but its primitive scalar quotient maps realize every divisor of the vertex count as a least lift.
Proposition~\ref{prop:flaggraphs} determines the fixed-lattice image for each map and identifies which maps descend through triangle filling.
An ambient direct-sum decomposition therefore does not settle the lifting question for a chosen quotient.

The jump-three families distinguish integral dihedral lattices with equal rational characters by their coinvariant reflections. Appendix~\ref{app:guide} locates these and the further action comparisons.

\subsection{Periodic reductions and fixed spaces}
An integral action calculation requires an underlying homology calculation.
For circulant graphs with bounded jumps, an ordered matching can sometimes be organized into local blocks, yielding ordinary suspension periods.
The distinction between such a period and an equivariant one is already visible in the family with jumps two and three.

For a nonempty finite positive jump set $J$, put $G_n(J)=\Cay(\Z/n\Z,\{\pm j:j\in J\})$ for $n>\max J$.
For the family $X_n=\Ind(G_n(\{2,3\}))$, the local criterion of Theorem~\ref{lb-criterion} reduces an infinite induction to two finite boundary identities.
It gives $X_{n+8}\simeq\Sigma^2X_n$ and the complete residue-dependent sphere multiplicities and dimensions in Corollary~\ref{lb-eight-periodic}.
For an arbitrary fixed bounded jump set, Theorem~\ref{thm:rational-critical-series} instead constructs a finite automaton and a rational critical-face series, without a duplication hypothesis.
This counts Morse cells; attaching maps remain a separate question.
Theorem~\ref{thm:family-cyclic-character} determines the rational translation module of $X_n$.
The full cyclic fixed-space profile in Theorem~\ref{thm:fixed-profile} is stronger: it shows that no positive uniform ordinary period can be refined to a representation suspension for all common cyclic subgroups (Theorem~\ref{thm:no-equivariant-period}).
These results explain why ordinary periodicity, a homology action and an equivariant homotopy type require different arguments.

The proofs are organized as follows.
Section~\ref{sec:om-saturation} proves the matching saturation theorem.
Sections~\ref{sec:normgluing} and~\ref{sec:cyclepowerfamily} treat fixed-point gluing and the three cycle-power presentations; Section~\ref{sec:framework} gives the low-rank trace argument.
The local and finite-state Morse constructions occupy Sections~\ref{lb-section}--\ref{sec:finite-state}, followed by the fixed-space obstruction in Section~\ref{sec:fixed-spaces} and the direct reductions and cyclic actions in Sections~\ref{sec:benchmarks}--\ref{sec:symmetry}.
The remaining sections develop the finite integral action calculations, reflections and embedded cross-cycle subgroups.
Section~\ref{sec:reconstruction} specifies their reconstruction, and Section~\ref{sec:further-questions} discusses the unresolved integral classifications.

\subsection{Related work}
The chain constructions used here belong to discrete Morse theory.
Forman's theorem gives the ordinary CW reduction \cite{Forman}, while the signed algebraic comparison maps follow the framework of Sk\"oldberg and of J\"ollenbeck and Welker \cite{Skoldberg,JW}.
Freij develops equivariant discrete Morse theory for group homotopy types \cite{Freij}.
Here the matching may be noninvariant: we retain the group action on the original simplicial chains and transfer it to homology through integral comparison maps.
The local block criterion in Section~\ref{lb-section} separates a finite Morse matching calculation from the induction needed for a parameter family. Its periodic application provides ordinary reductions; the signed comparison maps supply the integral actions.

Bouc constructs the hook cycle map and identifies the rational top representation of odd matching complexes~\cite[Lemma~8, Proposition~5 and Section~3.3.2]{BoucMatching}.
He also notes explicitly, for $k=2$, that the cycle map does not induce an isomorphism of integral $\mathfrak S_5$-modules, so the integral cycle span need not be the entire homology lattice~\cite[Section~3.3.2]{BoucMatching}.
Theorem~\ref{thm:matching-main} determines the sharp power-of-two denominator for that specified geometric cycle span, with explicit witnesses; the cycle construction and rational decomposition are prior results.
Below the top degree, the integral homology of matching complexes has torsion: Bouc found $3$-torsion, and Shareshian and Wachs showed that the bottom nonvanishing homology is an elementary $3$-group for all sufficiently large $n$~\cite{ShareshianWachs}; Wachs surveys these complexes~\cite{WachsSurvey}.
The saturation question studied here concerns the free top lattice and the prime $2$, and is independent of that torsion.
Danz and Hofmann describe integral hook forms, including their exterior-power realization and local classifications~\cite{DanzHofmannHooks}.
Their later work resolves the remaining exterior-degree-two case in an earlier conjectural classification~\cite[Theorem~3.7]{DanzHofmannZeta}.
Kapadia's characteristic-two hook filtrations and subsequent decomposition results give additional modular context~\cite[Theorem~5.1]{KapadiaHooks} and~\cite{KapadiaHooksII}.
Our first-layer construction realizes a modular exterior-algebra module inside the geometric saturation quotient. Determining this geometric lattice is distinct from classifying all integral or modular hook forms.

Circulant graph complexes have been studied from several complementary viewpoints.
Adamaszek classifies clique complexes of cycle powers, equivalently independence complexes of circular complete graphs \cite{Adamaszek}, and Nilakantan and Shukla treat a regular bipartite family \cite{NS}.
Earl, Vander Meulen and Van Tuyl study independence complexes of well-covered circulants; Table~1 of their public preprint includes $G(14,6)$ and $G(15,5)$ in a classification by vertex decomposability, shellability and related properties \cite{Earl}.
Rinaldo and Romeo use cyclic symmetry to study their reduced Euler characteristics \cite{RR}.
These results concern the graph class directly, whereas our comparisons retain the integral modules of the specified vertex actions.
The recurrence for independence complexes of cycle powers in Adamaszek's Theorem~1.1 requires $n\geq5r+4$ \cite{CyclePowers}.
For the families $\mathcal Y_s$ and $\mathcal Z_{s,a}$, $r=s-1$ and the stated vertex counts lie outside that range. Remark~6.6 of the public preprint leaves the smaller initial regime aside and points to other methods for ordinary types.
The clique classification in \cite{Adamaszek} concerns $\operatorname{Cl}(C_n^r)$, equivalently independence complexes of complementary circular complete graphs.
Here $\mathcal Y_s$ and $\mathcal Z_{s,a}$ take the opposite complement direction.
Our triangle-collapse proofs give their ordinary types directly; Theorems~\ref{thm:cyclepoweraction} and~\ref{thm:cyclepowerneighbors} determine their full integral labeled dihedral actions.
These three families do not exhaust the cycle powers.
The preprint of Rather, Pirzada and Aijaz studies edge rings of complements of cycle powers through the induced homology of $\Ind(\overline{C_n^m})=\operatorname{Cl}(C_n^m)$ \cite[Section~2 and Theorem~3.10]{RatherCyclePowers}. Their complex again has the opposite complement direction; the graded Betti numbers studied there are distinct from the integral actions considered here.

Square-grid independence complexes also have extensive Morse and Euler-characteristic treatments \cite{Jonsson,Thapper}.
Matching trees provide another systematic reduction method \cite{BousquetMelouLinussonNevo2008}.
For width-three hard-square cylinders, the eight-column recurrence has suspension exponent six \cite[Theorem~1.1(c)]{AdamaszekHardSquares}; Okura gives a simple-homotopy refinement \cite[Corollary~1.5]{Okura2019}.
The ordinary three-row cylinder classification is due to Iriye \cite{Iriye2012}; its formula, recalled and strengthened in \cite[Corollary~1.5 and the preceding paragraph]{Okura2019}, has the same modulo-eight multiplicities $5,3,1$ as Corollary~\ref{lb-eight-periodic}, with a different dimension shift.
These are comparisons of patterns; no graph transformation or Alexander duality with $X_n$ is established here.
The two fourteen-vertex and thirteen-vertex benchmarks also have known contexts.
The recursion for the cycle square in \cite{CyclePowers} begins at $n=14$, so Section~\ref{sec:p683} gives a direct reduction at $n=13$.
For $G(14,5)$, write even and odd vertices as $2i$ and $2j+1$ in $\Z/7\Z$.
Adjacency is $j-i\in\{0,2,4,6\}$; multiplying both indices by $4$ gives $\{0,1,2,3\}$, identifying this graph with $G_7^4$ of \cite{NS}.
Their family theorem already gives $S^2$ at $7=1(4+1)+2$.
The benchmarks supply explicit reductions and labeled cyclic actions, rather than priority claims for these homotopy types.

The closest representation-theoretic precedent is the work of Adamaszek, Adams, Frick, Peterson and Previte-Johnson on nerve complexes of circular arcs \cite{Nerve}.
Their $N(n,k)$ is the nerve of the evenly spaced closed arcs $[i/n,(i+k)/n]$ in $\mathbb R/\mathbb Z$.
They give explicit integral homology bases and compute both dihedral generators in Section~7 of the public preprint.
In particular, their Propositions~6.1 and~7.2 already realize $L_{14}$ on $N(28,14)$ with the same dihedral action after shifting the cyclic module generator.
Proposition~\ref{prop:composite} identifies this agreement.
The odd-dimensional wedges of seven or eleven spheres in Theorem~\ref{thm:main} have different ordinary types from their uniform-arc nerves.
Adams, Lagoda, Moy, Sadovek and De Saha obtain an $S^1$-equivariant homotopy model for metric thickenings of the full circle \cite[Theorem~7.1]{PersistentEquivariant}; this concerns a different object and a stronger equivalence than the finite integral homology identifications considered here.

The arithmetic of polynomial factors is also classical.
Mahatab and Sampath study integer Chinese remainder maps, resultant determinants and Smith invariants for complete cyclotomic factorizations of $t^n-1$ \cite{MS}.
Our factor-ideal calculations use this framework.
Lemma~\ref{lem:crtdual} relates the factor-kernel embedding to polynomial reduction by a perfect integral pairing, so that Proposition~\ref{prop:flagfactorquotients} can apply their coprime tensor isomorphisms and prime-power Smith formula to $M_s$.
For equivariant lifting, we use the classical induction adjunction and the description of equivariant maps by fixed elements \cite[Section~3.3, equation~(3.6)]{BrownLectures}.
The calculation for the specified eighteen-vertex quotient determines the lifting ideal $(9,3(1+t))$ by finding the image of the entire fixed lattice.

Rotation traces require both independent-set counts and orientation signs.
The circular counting formula in \cite[equation~(1.1)]{MansourSun}, including its attribution of the earlier $m=1$ case, supplies the face counts.
White's cyclic-sieving theorem concerns rotations of fixed-size independent sets in powers of cycles \cite[Theorem~1]{White}.
Those unsigned set counts enter a different stage from the signed Lefschetz calculation and the integral basis construction.

Cross-cycles from induced matchings are established constructions \cite[Section~2]{SpecialCycles}.
Jonsson constructs integral cross-cycles from induced matchings and maximal faces on grid quotients, distinguishing existence from completeness as a homology basis \cite[Section~1.2 and Proposition~3.1]{JonssonCross}.
His remark following Proposition~5.1 also poses a subgroup-rank question for balanced-tiling families.
The adjacent-pair gap families in Sections~\ref{sec:tiling} and~\ref{sec:crossfamily} are specified on the cycle square itself.
Their index-two parity subgroup and primitive subgroup with a free permutation quotient provide two finite comparisons of this embedding problem.
The ambient cyclic module in the twenty-two-vertex case agrees with a restriction of the earlier nerve representation; the additional coordinates locate the chosen cycle family inside it.

Supersymmetric-fermion models provide related cohomology counts and spectral comparisons for periodic zig-zag ladders \cite[Section~IV.D]{HuijseLadders}.
Huijse and Schoutens determine graded cohomology dimensions on square tori with periods $(m,-m)$ and $(v_1,v_2)$, where $v_1+v_2=3p>0$ \cite[Theorem~1]{HuijseSchoutens}.
For periods $(5,-5),(1,2)$, the map $(x,y)\mapsto2x-y\pmod {15}$ identifies the quotient graph with $G(15,2)$.
For $G(16,2)$ the periods are $(8,0),(1,2)$.
Huijse's thesis gives quarter-filling cross-cycles in equation~(7.8), but the accompanying footnote excludes completeness at total sizes $8,12,16$ \cite[Section~7.4.1, pp.~122--123]{HuijseThesis}.
In both graph identifications, horizontal translation becomes $\tau^2$, and fermion number $f$ corresponds to simplicial degree $f-1$.
The cited complex cohomology counts therefore do not specify an integral orbit basis for one-step translation; on ordinary dual cohomology the left action corresponding to a homology matrix $T$ is $(T^{-1})^T$.

The seventeen- and eighteen-site systems at fermion number four already occur in the thesis \cite[Figure~7.9, p.~125]{HuijseThesis}.
Huijse, Halverson, Fendley and Schoutens study periodic nearest- and next-nearest-neighbor exclusion chains, including their ground states, momenta and spectral flow \cite[Figure~4 and Table~I]{HuijseCritical}.
Chepiga, Min\'a\v r and Schoutens treat the same exclusion constraint and nearby Hamiltonians, with a periodic four-site product-state pattern \cite[Section~2 and equations~(18)--(19) of the public preprint]{ChepigaLadder}.
Their equation~(10) counts open-boundary ground states and is separate from the periodic calculations here.
Likewise, the periodic zig-zag counting formula (7.5) in the thesis is numerically checked through thirty-four sites, while the accompanying proof has a restricted divisibility hypothesis \cite[Section~7.2.2, pp.~116--117]{HuijseThesis}.
Our cycle-square comparisons identify primitive integer orbit bases and actual dihedral summands, in addition to these dimension and momentum precedents.

The jump-three comparisons use several classical constructions.
Adamaszek and Adams recall the cyclic-graph classification in Theorem~4.3 and give the cross-polytopal maps and integral bases used here in Lemma~8.2 and Proposition~8.4 \cite{RipsCircle}.
After relabeling vertices and transporting the rotation generator, these apply to $\K(5(2\ell+1),3)$.
Section~\ref{sec:rankfour} derives the geometric reflection and distinguishes the two integral types by their order-five coinvariants.
For even vertex counts, Jonsson's bipartite suspension construction is credited in Barmak's public Theorem~3.7; the star-cluster cover follows from his Lemmas~3.2--3.3 \cite{Barmak}.
Nilakantan and Shukla classify the ordinary types of this entire family \cite[Theorem~1.2, $d=4$]{NS}.
Section~\ref{sec:evenfamily} transports the cover-preserving actions through the integral connecting map, then recovers one-step translation from its square, its centralizer and its signed trace.
The resulting integral types agree with the relevant nerve formulas \cite[Theorem~3.6 and Propositions~6.1, 7.2--7.3 of the public preprint]{Nerve}.
The odd sphere branches use Adamaszek's clique classification \cite[Corollary~6.7]{Adamaszek} and a direct signed quotient trace.
The modulo map $i\mapsto i\bmod n$ in Theorem~8.5 of the public nerve preprint \cite{Nerve} intertwines neither the chosen rotations nor the chosen reflections unless $n$ divides $k$, so its ordinary homotopy equivalence alone does not transport these actions.

\subsection{Other complexes of circulant graphs}
Brimkov's clique-number calculation gives the three triangular conditions for degree-four circulants used in Proposition~\ref{prop:flaggraphs} \cite[Section~3.1 of the public conference text]{Brimkov}.
The ordinary branches of our clique comparison follow this face structure; oriented cycles and attaching boundaries then determine their integral actions and quotient maps.
Goyal, Shukla and Singh study clique complexes of line graphs, including four-regular circulant input graphs \cite[Section~4 of the public preprint]{GoyalShuklaSingh}.
Their complex is $\operatorname{Cl}(L(G))$, rather than $\operatorname{Cl}(G)$ or $\Ind(G)$.
Tang and Yau use Fourier symbol matrices and translation eigenspaces for GLMY path homology of circulants \cite{TangYau}.
Their comparison of the path homology of the digraph on $\Z/7\Z$ with steps $1,3$ against the clique homology of the underlying graph distinguishes the two theories, and the calculations over a root-of-unity field do not give the integral triangle-boundary presentation studied here.

\section{Saturation of top cycles in odd matching complexes}
\label{sec:om-saturation}

Let $\mathcal M_n$ be the matching complex of the complete graph on the
labeled vertex set $\{0,\ldots,n-1\}$. Its simplices are collections of
pairwise disjoint edges. Throughout this section we use augmented simplicial chains: $C_{-1}=\Z$ and $\partial_0$ sends every vertex to $1$. Thus $\ker\partial_0$ represents reduced homology when $k=1$. Put $n=2k+1$ and
\begin{align*}
 U^{(k)}&=\mathbb Z^{2k+1},&
 A^{(k)}&=\ker\bigl(\textstyle\sum:U^{(k)}\longrightarrow\mathbb Z\bigr),\\
 Q^{(k)}&=U^{(k)}/\mathbb Z(e_0+\cdots+e_{2k}),&
 S^{(k)}&=\bigwedge^k A^{(k)}.
\end{align*}
The integral identification of $S^{(k)}$ with the hook Specht lattice $(k+1,1^k)$ is classical~\cite[Section~5.5(b), equation~(3)]{DanzHofmannHooks}. The dot product gives a perfect pairing $A^{(k)}\times Q^{(k)}\to\mathbb Z$.
Write $q_i$ for the image of $e_i$ in $Q^{(k)}$ and $f_i=e_i-e_0$ for
$1\leq i\leq2k$; these are dual bases when $i\ne0$.
For an oriented maximum matching
$m=[\{a_1,b_1\},\ldots,\{a_k,b_k\}]$, set
\[
 b_m=(q_{a_1}+q_{b_1})\wedge\cdots\wedge(q_{a_k}+q_{b_k}),\qquad
 \Psi_k(x)=\sum_m\langle x,b_m\rangle m.
\]
The sum uses one orientation for each maximum matching. Reversing that
orientation reverses both factors, so the definition is independent of
the chosen chain basis. All these maps respect the actual permutation
action of $\mathfrak S_{2k+1}$ on the labels.

\begin{lemma}[The integral normalization of the cycle map]
\label{lem:om-normalization}
The map $\Psi_k:S^{(k)}\to C_{k-1}(\mathcal M_{2k+1};\mathbb Z)$ takes
values in cycles and is Bouc's integral hook cycle map~\cite{BoucMatching}, with no
additional integer factor. More precisely, if
$I=\{i_1<\cdots<i_k\}\subset\{1,\ldots,2k\}$ and
$\{r_1<\cdots<r_k\}$ is its complement, then
\[
 \Psi_k(f_{i_1}\wedge\cdots\wedge f_{i_k})
 =\sum_{\pi\in\mathfrak S_{\{0,i_1,\ldots,i_k\}}}
 \operatorname{sgn}(\pi)
 [\{r_1,\pi(i_1)\},\ldots,\{r_k,\pi(i_k)\}].
\]
In particular every displayed basis cycle is primitive in the integer
chain group.
\end{lemma}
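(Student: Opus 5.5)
The plan is to compute $\Psi_k$ on the basis vectors $f_I=f_{i_1}\wedge\cdots\wedge f_{i_k}$ of $S^{(k)}$ directly. Once the coefficient formula is in hand, the cycle property and the comparison with Bouc follow from it.

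\emph{Step 1: the coefficient formula.} For an oriented matching $m=[\{a_1,b_1\},\ldots,\{a_k,b_k\}]$, the pairing $\langle f_I,b_m\rangle$ is the $k\times k$ determinant $\det\bigl(\langle f_{i_p},q_{a_j}+q_{b_j}\rangle\bigr)_{p,j}$. We have $\langle f_i,q_a\rangle=\delta_{ia}-\delta_{0a}$, so $\langle f_i,q_a+q_b\rangle=[i\in\{a,b\}]-[0\in\{a,b\}]$. Exactly one vertex $u$ is unmatched.

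\emph{Step 2: show the coefficient vanishes unless every edge meets $I_0=\{0\}\cup I$ exactly once.} This is a counting argument. Call an edge a crossing edge if it meets $I_0$ exactly once.
\begin{itemize}
\item An edge inside the complement $R=\{r_1,\ldots,r_k\}$ gives a zero column.
\item An edge inside $I_0$ containing $0$, say $\{0,i\}$, gives a column with entry $0$ at $i$ and $-1$ elsewhere.
\item An edge $\{i,i'\}\subset I$ gives a column with two $1$'s.
\end{itemize}
Since $|R|=k$ and $|I_0|=k+1$, if no edge lies inside $R$ then every $r\in R$ is matched into $I_0$. This uses $k$ edges, so all edges are crossing and $u\in I_0$. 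If some edge lies inside $R$, the determinant is already $0$.

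\emph{Step 3: evaluate the crossing case.} When all edges cross, orient each as $\{r_j,x_j\}$ with $x_j\in I_0$, listing the edges by the order of the $r_j$. The map $r_j\mapsto x_j$, extended by $u\mapsto$ the leftover vertex, is a bijection $\pi$ of $I_0$ after the identification $i_j\leftrightarrow r_j$. The column for $r_j$ is $e_{x_j}$ if $x_j\neq0$, and $-\mathbf 1$ if $x_j=0$.
\begin{itemize}
\item If $u=0$, the matrix is the permutation matrix of $\pi$ restricted to $I$, with determinant $\operatorname{sgn}(\pi)$.
\item If $u\ne0$, one column is $-\mathbf 1$. Adding the other columns to it turns it into $-e_u$ (or rather, leaves $-1$ in row $u$ with the other rows cancelling against the permutation columns). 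The determinant is then the sign of the permutation of $I_0$ fixing the bookkeeping, which is $\operatorname{sgn}(\pi)$ with $\pi$ sending the slot of $0$ to $u$.
\end{itemize}
The transposition bookkeeping $\pi(0)=u$ versus $\pi$ on $I$ is the step I expect to be the main obstacle. Signs must be checked carefully against the convention that $\mathfrak S_{\{0,i_1,\ldots,i_k\}}$ acts with $\pi(0)$ being the unmatched vertex. I would fix this by first checking $k=1$, where $\Psi_1(f_i)=[\{r,i\}]-[\{r,0\}]$, and then arguing inductively by expanding along a column.

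\emph{Step 4: cycle property, identification with Bouc, primitivity.}
\begin{itemize}
\item \emph{Cycle property.} The displayed chain is the antisymmetrization over $\mathfrak S_{k+1}$ on $I_0$ with $R$ fixed. Its boundary deletes an edge $\{r_j,\pi(i_j)\}$. Two permutations differing by the transposition of $\pi(i_j)$ with $\pi(0)$ give the same face with opposite sign, so the boundary vanishes. In the augmented case $k=1$, this is $1-1=0$.
\item \emph{Identification with Bouc.} The formula is exactly Bouc's hook cycle attached to the tabloid with first row $I_0$ and column $R$~\cite{BoucMatching}, without an extra scalar, since no factor arises in Step 3.
\item \emph{Primitivity.} Distinct $\pi$ give distinct matchings, because $\pi$ is recovered from the partners of the $r_j$. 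Hence all coefficients are $\pm1$ on distinct basis chains, and the displayed cycle is primitive.
\end{itemize}
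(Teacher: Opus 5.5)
Your overall plan (evaluate $\langle f_I,b_m\rangle$ as a $k\times k$ determinant and sort matchings by how their edges meet $I_0=\{0\}\cup I$) is the paper's. However, Step~2 rests on a false counting claim. You assert that if no edge lies inside $R$, then every $r\in R$ is matched into $I_0$. This overlooks the case where the unmatched vertex $u$ lies in $R$. Then only $k-1$ vertices of $R$ are matched, necessarily to distinct vertices of $I_0$, and the last edge lies inside $I_0$, which is therefore completely covered. For example, with $k=2$, $I=\{1,2\}$ and $R=\{3,4\}$, the matching $[\{0,3\},\{1,2\}]$ has no edge in $R$ and leaves $4$ unmatched; its coefficient matrix is $\begin{pmatrix}-1&1\\-1&1\end{pmatrix}$.

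Such coefficients do vanish, but your argument never shows it: you record the column types of edges inside $I_0$ and then do not use them. The missing step is that in this case the columns sum to zero. The edge-sum vectors add up to $-q_u$ in $Q^{(k)}$, and $\langle f_p,q_u\rangle=\delta_{pu}-\delta_{0u}=0$ for $u\in R$. Equivalently, each $f_p$ has coordinate sum zero on the covered set $I_0$ and vanishes on $R$. This is exactly the case the paper treats separately. Until it is added, the displayed formula is not proved. Your Step~4 derivations of the cycle property and of primitivity rely on that formula, so they inherit the gap.

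With that case supplied, the rest works. The sign in Step~3 is not a real obstacle. Set $\pi(0)=u$ (your ``$u\mapsto$ the leftover vertex'' should read this way). After your column operation, the column of the edge at $0$ becomes $-e_u$, so the determinant is $-\operatorname{sgn}\theta$. Here $\theta\in\mathfrak S_I$ agrees with $\pi$ except that it sends $\pi^{-1}(0)$ to $u$. Since $\pi=(0\;u)\circ\theta$, this determinant equals $\operatorname{sgn}\pi$. The paper packages this as the determinant of $\pi$ on the augmentation lattice of $I_0$.

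Your cycle argument pairs $\pi$ with $(\pi(i_j)\;\pi(0))\circ\pi$ at the same deletion position. It is a valid alternative to the paper's, provided you note that a face determines both the deleted position $j$ and $\pi$ on $I\setminus\{i_j\}$, so exactly these two terms contribute. The paper instead proves $\partial\Psi_k(x)=0$ for every $x$ directly: the coefficient of a $(k-1)$-face $\sigma$ missing $a,b,c$ is the pairing of $x$ with $2b_\sigma\wedge(q_a+q_b+q_c)=0$. That route is independent of the basis computation, whereas yours needs the corrected formula first.
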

\begin{proof}
Fix an oriented $(k-1)$-edge matching $\sigma$, leaving three ground
vertices $a,b,c$. Orient each extension by placing its new edge last.
Up to the common boundary sign, the coefficient of $\sigma$ in
$\partial\Psi_k(x)$ is the pairing with
\[
 b_\sigma\wedge\bigl((q_a+q_b)+(q_b+q_c)+(q_c+q_a)\bigr)
 =2b_\sigma\wedge(q_a+q_b+q_c)=0.
\]
The last equality follows from $\sum_iq_i=0$: the sum of the three
remaining vectors is minus the sum of the edge-sum vectors already
occurring in $b_\sigma$. This also proves the assertion for the
augmented boundary when $k=1$.

For the stated formula, a coefficient is the determinant whose entries
are $\langle f_{i_t},q_a+q_b\rangle$. An edge with both endpoints in the
complement of $\{0\}\cup I$ gives a zero column. If a matching has no
such edge but leaves a vertex of that complement unmatched, it has one
edge entirely in $\{0\}\cup I$ and uses all vertices of $\{0\}\cup I$.
In this case the sum of its determinant columns is zero, since each
$f_{i_t}$ has coordinate sum zero on that set. The only remaining
matchings pair every $r_s$ with a distinct element of $\{0\}\cup I$.
They are indexed uniquely by the displayed permutations, with
$\pi(0)$ unmatched. In the written edge order their determinant is
$\operatorname{sgn}(\pi)$: it is the determinant of $\pi$ on the
augmentation lattice of $\{0\}\cup I$. Thus there are exactly
$(k+1)!$ distinct summands with coefficients $1$ or $-1$, proving both
the normalization and primitivity.
\end{proof}

Bouc's rational top-homology theorem~\cite[Proposition~4, Lemma~8 and Proposition~5]{BoucMatching} identifies the rational extension
of this classical map with the irreducible hook representation
$(k+1,1^k)$. The saturation problem concerns the integral lattice inside this rational representation. Since there are no chains above
degree $k-1$, put
\[
 \mathcal H_k=\widetilde H_{k-1}(\mathcal M_{2k+1};\mathbb Z)
     =\ker\partial_{k-1},\qquad
 \widehat S^{(k)}=\Psi_{k,\mathbb Q}^{-1}(\mathcal H_k),\qquad \Gamma_k=\widehat S^{(k)}/S^{(k)}.
\]
Thus $\Gamma_k$ measures the saturation of the specified integral Bouc
cycles inside the actual top homology. The next theorem determines
its exponent for every odd number of ground vertices.

\begin{theorem}[Uniform exponent of the saturation quotient]
\label{thm:om-exponent}
For every $k\geq1$, $\Gamma_k$ is a finite $2$-group of exponent exactly
\[
                         2^{\lfloor k/2\rfloor}.
\]
Equivalently, this is the least positive integer $d$ for which every
integral top cycle becomes an integral linear combination of the
specified Bouc cycles after multiplication by $d$. More explicitly,
if
\[
 \omega=\sum_{1\leq i<j\leq2k}f_i\wedge f_j,\qquad
 \omega^{[j]}=\frac{\omega^j}{j!},\qquad\omega^{[0]}=1,
\]
then the lattice
\[
 N^{(k)}=\sum_{j=0}^{\lfloor k/2\rfloor}
  2^{-j}\omega^{[j]}\wedge\bigwedge^{k-2j}A^{(k)}
\]
is $\mathfrak S_{2k+1}$-stable and depends only on the canonical
inverse symplectic bivector $\overline\omega$ on $A^{(k)}/2A^{(k)}$, not
on its chosen integral lift. It satisfies
\begin{equation}
\label{eq:om-two-inclusions}
 \sum_{j=0}^{\lfloor k/2\rfloor}
  2^{-j}\omega^{[j]}\wedge\bigwedge^{k-2j}A^{(k)}
 \ \subseteq\ \widehat S^{(k)}\ \subseteq\ 2^{-\lfloor k/2\rfloor}S^{(k)}.
\end{equation}
For each $0\leq j\leq\lfloor k/2\rfloor$, the class of
\[
 2^{-j}\omega^{[j]}\wedge f_1\wedge\cdots\wedge f_{k-2j}
\]
has order exactly $2^j$ in $\Gamma_k$. For $k\geq2$, $\mathcal H_k$ and $S^{(k)}$ are
not isomorphic as integral $\mathfrak S_{2k+1}$-modules.
\end{theorem}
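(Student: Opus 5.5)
The plan has three parts, handled in order: the left inclusion of \eqref{eq:om-two-inclusions} together with the order statements, then the right inclusion, and finally the non-isomorphism claim.

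\emph{Left inclusion.} By $\mathfrak S_{2k+1}$-equivariance of $\Psi_k$ and linearity, it suffices to show that $\Psi_{k,\Q}(2^{-j}\omega^{[j]}\wedge f_{I})$ has integer coefficients for every $k-2j$ subset $I$. The coefficient on an oriented maximum matching $m$ is $2^{-j}$ times the pairing of $\omega^{[j]}\wedge f_I$ with $b_m$. Expanding $b_m$ in the dual basis $q_1,\dots,q_{2k}$ (with $q_0=-\sum q_i$), this pairing is a sum of $k\times k$ minors in which the $\omega^{[j]}$ part contributes Pfaffians of skew-symmetric $2j\times 2j$ matrices $\bigl(\langle f_a\wedge f_b, (q_{x}+q_y)\wedge(q_{x'}+q_{y'})\rangle\bigr)$. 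I would show that the relevant skew matrix is congruent mod $2$ to an alternating matrix with even off-diagonal structure. Concretely, the Gram matrix of $\omega$ against the edge vectors has entries in $\{0,\pm1,\pm2\}$ arranged so that its Pfaffian is divisible by $2^j$; this is the ``Pfaffian divisibility'' mentioned in the introduction, and I expect it to reduce to the statement that $\omega$ restricted to a sum of $j$ disjoint edge vectors evaluates to an even number on each pair.

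Stability of $N^{(k)}$ under $\mathfrak S_{2k+1}$, and its independence of the lift of $\overline\omega$, follow because changing $\omega$ by $2\eta$ changes $2^{-j}\omega^{[j]}$ by terms lying in $2^{-(j-1)}\omega^{[j-1]}\wedge\bigwedge^2A$ plus lower terms, using the divided-power identities. A transposition sends $\omega$ to another lift of the same $\overline\omega$.

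For exact order $2^j$, I would pair $2^{-j}\omega^{[j]}\wedge f_1\wedge\cdots\wedge f_{k-2j}$ with a suitable element of $\bigwedge^kQ^{(k)}$, for instance $q_{k-2j+1}\wedge\cdots\wedge q_{2k}$ restricted appropriately. This should produce $2^{-j}$ times an odd integer, which shows that no smaller multiple lies in $S^{(k)}$.

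\emph{Right inclusion.} This is the main obstacle, and I would argue by induction on $k$ in steps of two. Let $x\in\widehat S^{(k)}$. Contracting along two fixed disjoint edges $\{a,b\},\{c,d\}$ gives a coefficient extraction map from top chains of $\mathcal M_{2k+1}$ to top chains of $\mathcal M_{2k-3}$ on the remaining vertices. This map sends cycles to cycles and is compatible with $\Psi$ via a contraction $\bigwedge^kA^{(k)}\to\bigwedge^{k-2}A^{(k-2)}$ (interior product with $(q_a+q_b)\wedge(q_c+q_d)$). By induction, all such contractions of $2^{\lfloor k/2\rfloor-1}x$ are integral.

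Next comes the two-edge switching identity. For four vertices $a,b,c,d$, the three perfect matchings of $\{a,b,c,d\}$ give bivectors $\beta_1,\beta_2,\beta_3$ with $\beta_1-\beta_2$ (or a similar combination) equal to $2$ times an integral generator modulo the span of the others. Combining this with integrality of the coefficients of $\Psi(x)$ themselves, the interior products of $2^{\lfloor k/2\rfloor}x$ with a spanning family of decomposable $k$-vectors in $\bigwedge^kQ$ are all integral. By the perfect pairing $\bigwedge^kA\times\bigwedge^kQ\to\Z$, this gives $2^{\lfloor k/2\rfloor}x\in S^{(k)}$. The base cases $k=1$ ($\Gamma_1=0$ since $\Psi_1$ is the identity onto $\ker\partial_0$) and $k=2,3$ I would check directly. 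Finiteness, and the fact that $\Gamma_k$ is a $2$-group, follow from the inclusions, and the two inclusions together give exponent exactly $2^{\lfloor k/2\rfloor}$.

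\emph{Non-isomorphism for $k\ge2$.} Since $\Gamma_k\ne0$, one option is an invariant comparison. Compute $\mathcal H_k/2\mathcal H_k$ versus $S^{(k)}/2S^{(k)}$ as $\F_2\mathfrak S_{2k+1}$-modules and show that they differ. For instance, $S/2S=\bigwedge^k(A/2A)$ contains the invariant $\overline\omega^{[k/2]}$-type vector or has a specific fixed-point dimension under a Young subgroup, while the image of $N^{(k)}$ changes it. More robustly, I would use that any $\Q\mathfrak S$-isomorphism between lattices in an absolutely irreducible representation is a scalar. So $\mathcal H_k\cong S^{(k)}$ would force $\widehat S^{(k)}=\lambda S^{(k)}$ for some $\lambda\in\Q$. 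But $S^{(k)}\subsetneq\widehat S^{(k)}\subseteq 2^{-\lfloor k/2\rfloor}S^{(k)}$, with $\widehat S^{(k)}$ containing primitive elements of $S^{(k)}$ such as $f_1\wedge\cdots\wedge f_k$. This rules out every scalar, since such a $\lambda$ would have to be $1$, contradicting $\Gamma_k\ne0$.
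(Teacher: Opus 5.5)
Your outline has the same architecture as the paper's proof: even Pfaffian entries for the left inclusion, the expansion $(\omega+2\eta)^{[j]}/2^j=\sum_{a+b=j}2^{-a}\omega^{[a]}\wedge\eta^{[b]}$ for lift-independence, contraction along two disjoint edges for the right inclusion, and a scalar argument for non-isomorphism. Two steps do not hold as written.

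\emph{Non-isomorphism.} This is the clearer gap. From $\widehat S^{(k)}=\lambda S^{(k)}$ and the fact that $f_1\wedge\cdots\wedge f_k$ is primitive in $S^{(k)}$, you only get $\lambda=\pm1/d$ with $d$ a positive integer. The sandwich $S^{(k)}\subsetneq\widehat S^{(k)}\subseteq2^{-\lfloor k/2\rfloor}S^{(k)}$ then only makes $d>1$ a power of two dividing $2^{\lfloor k/2\rfloor}$. For $k=2,3$, for example, $\widehat S^{(k)}=\tfrac12S^{(k)}$ is compatible with everything you list. What rules out $d>1$ is that $f_I$ is primitive in $\widehat S^{(k)}$, i.e.\ that $\Psi_k(f_I)$ is primitive in $\mathcal H_k$. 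This is the content of Lemma~\ref{lem:om-normalization}: every coefficient of $\Psi_k(f_{i_1}\wedge\cdots\wedge f_{i_k})$ in the chain group is $\pm1$, so $\Psi_k(f_I)/d$ is not an integral chain for $d>1$. Only then does $d=1$ follow, contradicting $\Gamma_k\neq0$.

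\emph{The switching step.} This cannot be a four-vertex identity. On four labels, the three bivectors $(u_a+u_b)\wedge(u_c+u_d)$, $(u_a+u_c)\wedge(u_b+u_d)$, $(u_a+u_d)\wedge(u_b+u_c)$ span a rank-three subgroup of the rank-six lattice $\bigwedge^2\Z^4$. That subgroup contains no nonzero multiple of $u_a\wedge u_b$; the only relation of the kind you describe is $\beta_1+\beta_2+\beta_3=2(u_a\wedge u_b+u_a\wedge u_c+u_a\wedge u_d)$. What you need is Lemma~\ref{lem:om-two-edge}: for $m\geq5$ labels, the subgroup generated by all $(u_a+u_b)\wedge(u_c+u_d)$ contains $2\bigwedge^2\Z^m$. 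Its proof runs as follows, writing $w_{ab}$ for the class of $u_a\wedge u_b$ in the quotient. Comparing the three-term relations across a fifth label shows that $t_a=2w_{ab}$ is independent of $b$. Then $3t_a=0$, skew symmetry gives $t_a=-t_b$, three labels give $2t_a=0$, and hence $t_a=0$.

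In the inductive step you apply this lemma after fixing distinct $i_1,\ldots,i_{k-2}$. It is applied to the bilinear form $2^{\lfloor(k-2)/2\rfloor}x(\,\cdot\,,\,\cdot\,,q_{i_1},\ldots,q_{i_{k-2}})$ on the free lattice of the remaining $k+3\geq5$ labels. This is where $k\geq2$ enters. It also makes $k=2$ exactly the lemma on five labels, so base cases $k=0,1$ suffice and no separate check of $k=2,3$ is needed.

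Two smaller corrections:
\begin{itemize}
\item The entries $\omega(q_a+q_b,q_c+q_d)$ are not confined to $\{0,\pm1,\pm2\}$; they equal $4$ when $1\leq a<b<c<d$. The fact actually used is that they are even. This holds because $\omega(q_a,q_b)\equiv1\pmod 2$ for all distinct labels, including $0$, since $q_0=-\sum_{i>0}q_i$ and $2k$ is even.
\item Exact order $2^j$ needs the principal $2j\times2j$ Pfaffian of the all-ones skew matrix to be odd (it equals $1$). It is evaluated on $q_1\wedge\cdots\wedge q_{k-2j}\wedge q_{i_1}\wedge\cdots\wedge q_{i_{2j}}$, with the $i_r$ outside $\{1,\ldots,k-2j\}$.
\end{itemize}
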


\begin{lemma}[Two-edge lattice identity]
\label{lem:om-two-edge}
Let $V=\mathbb Z^m$, where $m\geq5$, with basis $u_1,\ldots,u_m$.
The subgroup of $\bigwedge^2V$ generated by
\[
 (u_a+u_b)\wedge(u_c+u_d),\qquad a,b,c,d\text{ distinct},
\]
contains $2\bigwedge^2V$.
\end{lemma}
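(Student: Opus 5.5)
The identities below live on five indices. So I would first reduce to $m=5$. Any pair $i\neq j$ together with three further indices spans a copy of $\mathbb Z^5$ inside $V$. The generators supported on those five basis vectors are exactly the generators of the $m=5$ problem, and $2u_i\wedge u_j$ lies in $\bigwedge^2$ of that sublattice. Hence it suffices to show that, for $m=5$, the subgroup $L$ generated by the elements $g(a,b;c,d)=(u_a+u_b)\wedge(u_c+u_d)$ contains every $2u_i\wedge u_j$. Write $X_{ij}=2u_i\wedge u_j$; these are the targets.

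Next I would build a stock of derived elements of $L$ by short signed combinations of generators.

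\emph{Difference identity.} For distinct $a,b,d,e$ and a fifth index $c$, $g(a,b;c,d)-g(a,b;c,e)=(u_a+u_b)\wedge(u_d-u_e)$. Applying the same trick to the first factor gives $(u_a-u_c)\wedge(u_d-u_e)\in L$ and $(u_a-u_c)\wedge(u_d+u_e)\in L$.

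\emph{Three-generator identity.} A direct expansion gives
\[
 g(a,b;c,d)+g(a,c;d,b)-g(a,d;b,c)=2(u_a+u_b)\wedge u_d .
\]
This is the only source I see of an element of $2\bigwedge^2V$ whose expansion has a single factor $2$ rather than $4$. It yields $X_{ad}+X_{bd}\in L$ for distinct $a,b,d$.

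\emph{Sums of the sign variants.} Adding the sign variants from the difference identity gives $X_{ad}-X_{ae}\in L$ and $X_{ad}+X_{ce}\in L$ for distinct indices.

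These relations give only $2X_{ad}\in L$, that is, $4u_a\wedge u_d\in L$. Indeed, modulo the relations, each $X_{ij}$ depends only on $i$, antisymmetry forces all classes to agree up to sign, and the classes satisfy $c=-c$. So the linear relations among the $X_{ij}$ alone cannot finish; some generator must be used with its odd (coefficient $1$) content.

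The main obstacle is therefore this last factor of two. I would attack it by working in the rank-ten lattice $\bigwedge^2\mathbb Z^5$ directly. First, compute $L\cap 2\bigwedge^2\mathbb Z^5$: form the $10\times 30$ matrix of the thirty generators $g(a,b;c,d)$ in the basis $u_i\wedge u_j$, and find the combinations whose coefficients are all even.

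The first combination to try is the full symmetric sum
\[
 \sum_{b\neq d\in\{3,4,5\}} g(1,b;2,d)=6u_1\wedge u_2+2u_1\wedge s+2s\wedge u_2,\qquad s=u_3+u_4+u_5 .
\]
Its two tails are sums of the elements $2(u_b+u_{b'})\wedge u_d$ supplied by the three-generator identity, up to a single leftover term $\pm X_{1d}$ or $\pm X_{d2}$. I expect that subtracting suitable such elements, together with the even relation $4u_1\wedge u_2\in L$, reduces the sum to $X_{12}$ up to elements already known to lie in $L$. If this bookkeeping does not close, the fallback is a Smith normal form check of the $10\times30$ matrix, confirming that $L\supseteq 2\bigwedge^2\mathbb Z^5$. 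That check is finite and, after the reduction in the first step, suffices for all $m\geq5$.
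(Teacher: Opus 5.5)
Your reduction to $m=5$ and the difference identities are fine, but the three-generator identity is miscomputed. Expanding gives
\[
 g(a,b;c,d)+g(a,c;d,b)-g(a,d;b,c)=2(u_a+u_b+u_c)\wedge u_d=X_{ad}+X_{bd}+X_{cd},
\]
not $2(u_a+u_b)\wedge u_d$. The $u_c\wedge u_d$ terms of the second and third generators add rather than cancel.

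Your conclusion $X_{ad}+X_{bd}\in L$ is still true, but it needs a different combination on five labels, $g(a,b;c,d)-g(a,b;c,e)+g(a,b;d,e)=2(u_a+u_b)\wedge u_d$. The error matters because it creates your ``main obstacle.'' With the correct identity, the difference of these two elements is already a target:
\[
 g(a,c;d,b)-g(a,d;b,c)+g(a,b;c,e)-g(a,b;d,e)=2u_c\wedge u_d .
\]
So your claim that relations inside $2\bigwedge^2V$ cannot finish is false. The relations you list are consistent with all $X_{ij}$ sharing one class of order two. The correct three-generator element has weight three on that class and so kills it.

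This is exactly the paper's proof. Summing the three pairings on four labels gives $2(w_{ab}+w_{ac}+w_{ad})=0$ in the quotient. Comparing triples makes $t_a=2w_{ab}$ independent of $b$ with $3t_a=0$, and antisymmetry on three labels gives $2t_a=0$.

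Your symmetric-sum step would in fact close. Modulo $X_{ad}+X_{bd}$, $2X_{ad}$ and antisymmetry, it reduces to $3X_{12}$, hence to $X_{12}$. But it works for the same reason, the odd factor three. As written, you only say you ``expect'' the bookkeeping to close, with an unperformed Smith normal form check as fallback. That final step still has to be carried out; the corrected identity does it in one line.
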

\begin{proof}
In the quotient by this subgroup write $w_{ab}$ for the class of
$u_a\wedge u_b$. Adding the three pairings on four distinct labels,
always putting the pair containing $a$ first, gives
\[
 2(w_{ab}+w_{ac}+w_{ad})=0.
\]
For fixed $a$, comparison of triples among the other $m-1\geq4$
labels shows that $2w_{ab}$ is independent of $b\ne a$; denote it by
$t_a$. The displayed relation gives $3t_a=0$. Skew symmetry gives
$t_a=-t_b$ for distinct $a,b$. Using three distinct labels then gives
$2t_a=0$. Hence $t_a=0$, proving the assertion.
\end{proof}

\begin{proof}[Proof of Theorem~\ref{thm:om-exponent}]
\emph{The upper bound.} We prove the upper inclusion by induction in steps of two. It
is a statement about alternating forms: if a rational alternating
$k$-form $x$ on $Q^{(k)}$ has integer values on every $b_m$, then
$2^{\lfloor k/2\rfloor}x$ is integer-valued on
$\bigwedge^kQ^{(k)}$. For $k=0$ use the one-vertex set and the empty
matching, so the assertion is simply $x\in\mathbb Z$. For $k=1$,
$q_a+q_b=-q_c$ on three labels, so the assertion is immediate.

Suppose $k\geq2$. Choose two disjoint edges $ab,cd$ and let $W$ be
the other $2k-3$ ground labels. Contract $x$ with their edge-sum
vectors and restrict the remaining arguments to $\mathbb Z^W$:
\[
 y(v_1,\ldots,v_{k-2})
   =x(q_a+q_b,q_c+q_d,v_1,\ldots,v_{k-2}).
\]
This descends to $\mathbb Z^W/\mathbb Z\sum_{i\in W}e_i$,
because in $Q^{(k)}$ the sum over $W$ is
$-(q_a+q_b)-(q_c+q_d)$, which contraction annihilates. Its values on
maximum matchings of $W$ are integers: append the two fixed edges to
obtain a maximum matching of the original ground set. The induction
hypothesis therefore gives
\[
 2^{\lfloor(k-2)/2\rfloor}
 x(q_a+q_b,q_c+q_d,q_{i_1},\ldots,q_{i_{k-2}})\in\mathbb Z
\]
whenever all the displayed ground labels are distinct.

Now fix distinct $i_1,\ldots,i_{k-2}$. There are $k+3\geq5$ other
labels. On their free lattice consider the rational alternating
bilinear form obtained by fixing the last $k-2$ arguments of
$2^{\lfloor(k-2)/2\rfloor}x$. Its values on all disjoint two-edge
generators are integers. Lemma~\ref{lem:om-two-edge} shows that
twice this form is integer-valued on every wedge of two basis
vectors. Thus $2^{\lfloor k/2\rfloor}x$ is integer-valued on every
basis wedge of $Q^{(k)}$. The perfect pairing proves the upper inclusion
in~\eqref{eq:om-two-inclusions}.

\emph{Divided-power cycles.} For the lower inclusion, the divided powers $\omega^{[j]}$ are
integral: their coefficients are the principal Pfaffians of the
alternating matrix with all upper-triangular entries equal to one.
Modulo two, $\omega(q_a,q_b)=1$ for every pair of distinct ground
labels, including label zero since $q_0=-\sum_{i>0}q_i$ and $2k$
is even. Consequently
\[
 \omega(q_a+q_b,q_c+q_d)\equiv0\pmod2
\]
for disjoint edges. Evaluate
$\omega^{[j]}\wedge v$, where
$v\in\bigwedge^{k-2j}A^{(k)}$, on $b_m$. The usual exterior product
expansion is a sum of Pfaffians on $2j$ chosen edge-sum arguments,
multiplied by the complementary evaluations of $v$. Each Pfaffian
is a sum of products of $j$ even entries. Every term is therefore
divisible by $2^j$. This proves that
$2^{-j}\omega^{[j]}\wedge v$ lies in $\widehat S^{(k)}$.

\emph{Canonicity.} First note that the Gram matrix of the dot
product on $A^{(k)}/2A^{(k)}$ in the $f_i$ basis has zero diagonal and ones
off the diagonal. Its square is the identity, so
$\overline\omega$ is its invariant inverse bivector. Any other
integral lift, including $g\omega$ for
$g\in\mathfrak S_{2k+1}$, has the form $\omega+2\eta$ with
$\eta\in\bigwedge^2A^{(k)}$. Its divided powers satisfy
\[
 \frac{(\omega+2\eta)^{[j]}}{2^j}
 =\sum_{a+b=j}\frac{\omega^{[a]}}{2^a}\wedge\eta^{[b]}.
\]
Every $\eta^{[b]}$ is integral, again by the Pfaffian formula. Thus
the new lift generates a sublattice of $N^{(k)}$. Interchanging the two
lifts gives equality, proving both lift independence and invariance.

\emph{Sharpness.} The principal Pfaffian on any ordered set of $2j$ indices is one.
Indeed expansion along its first row reduces this to the alternating
sum of $2j-1$ ones, starting and ending with a plus sign. Choose any
$2j$ indices disjoint from $\{1,\ldots,k-2j\}$. The corresponding
wedge coefficient of
$\omega^{[j]}\wedge f_1\wedge\cdots\wedge f_{k-2j}$ is $1$ or
$-1$. Its class after division by $2^j$ therefore has exactly that
order modulo $S^{(k)}$. Taking $j=\lfloor k/2\rfloor$, and using the
upper inclusion, proves the exact exponent.

\emph{Integral nonisomorphism.} The rational hook representation is absolutely irreducible,
so an integral equivariant isomorphism $S^{(k)}\to \mathcal H_k$ would, under
$\Psi_{k,\mathbb Q}$, identify $\widehat S^{(k)}$ with $cS^{(k)}$ for some nonzero
rational $c$. Since $S^{(k)}\subseteq \widehat S^{(k)}$, this forces $c=\pm1/d$ for
a positive integer $d$. Lemma~\ref{lem:om-normalization} shows that
each $\Psi_k(f_{i_1}\wedge\cdots\wedge f_{i_k})$ is primitive even
in the full chain lattice, hence cannot be divided by any $d>1$ in
$\mathcal H_k$. Thus $d=1$. For $k\geq2$ the order-two class already
constructed shows $\widehat S^{(k)}\ne S^{(k)}$, a contradiction.
\end{proof}

\begin{proof}[Proof of Theorem~\ref{thm:matching-main}]
The rational cycle isomorphism identifies $\mathcal H_k/\Psi_k(S^{(k)})$ with $\widehat S^{(k)}/S^{(k)}$.
The exponent and the sharp classes are given by Theorem~\ref{thm:om-exponent}, whose final argument also proves the asserted integral nonisomorphism.
\end{proof}

\begin{corollary}[The first divided-power submodule]
\label{cor:om-first-layer}
For $k\geq2$, the image in $\Gamma_k$ of
$\frac12\omega\wedge\bigwedge^{k-2}A^{(k)}$ is a canonical
$\mathfrak S_{2k+1}$-submodule naturally isomorphic to
\[
 \operatorname{im}\left(
  \overline\omega\wedge-:
  \bigwedge^{k-2}(A^{(k)}/2A^{(k)})\longrightarrow\bigwedge^k(A^{(k)}/2A^{(k)})
 \right).
\]
Its dimension over $\mathbb F_2$ is
\[
 r_k=\sum_{j=1}^{\lfloor k/2\rfloor}
             (-1)^{j-1}\binom{2k}{k-2j}.
\]
\end{corollary}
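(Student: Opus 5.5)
The plan is to identify the image with a reduction mod $2$. Let $L=\frac12\omega\wedge\bigwedge^{k-2}A^{(k)}$. By Theorem~\ref{thm:om-exponent}, $L\subseteq N^{(k)}\subseteq\widehat S^{(k)}$, and canonicity of $N^{(k)}$ under change of lift, namely $\frac12(\omega+2\eta)=\frac12\omega+\eta$ with $\eta\wedge\bigwedge^{k-2}A^{(k)}\subseteq S^{(k)}$, shows that $L+S^{(k)}$ does not depend on the lift. This sum is therefore $\mathfrak S_{2k+1}$-stable, and its image $(L+S^{(k)})/S^{(k)}$ in $\Gamma_k$ is a canonical submodule.

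To compute this image, I would use the map $\phi:\bigwedge^{k-2}A^{(k)}\to\Gamma_k$ given by $v\mapsto[\frac12\omega\wedge v]$. Since $2\cdot\frac12\omega\wedge v=\omega\wedge v\in S^{(k)}$, the image is killed by $2$. Also $\phi(2v)=[\omega\wedge v]=0$, so $\phi$ factors through $\bigwedge^{k-2}(A^{(k)}/2A^{(k)})$. Now $\frac12\omega\wedge v\in S^{(k)}$ if and only if $\omega\wedge v\in 2S^{(k)}$, that is, if and only if $\overline\omega\wedge\bar v=0$ in $\bigwedge^k(A^{(k)}/2A^{(k)})$; here I use $S^{(k)}/2S^{(k)}=\bigwedge^k(A^{(k)}/2A^{(k)})$, since $A^{(k)}$ is free. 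Hence $\ker\bar\phi=\ker(\overline\omega\wedge-)$, and multiplication by $2$ gives an isomorphism from the image onto $\operatorname{im}(\overline\omega\wedge-)$:
\[
[\tfrac12\omega\wedge v]\longmapsto \overline{\omega\wedge v}\in\bigwedge^k(A^{(k)}/2A^{(k)}).
\]
This isomorphism is natural and equivariant because the action commutes with multiplication by $2$ and with reduction mod $2$.

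For the dimension, write $E=A^{(k)}/2A^{(k)}$, an $\F_2$-space of dimension $2k$. I would compute the rank of $\overline\omega\wedge-:\bigwedge^{k-2}E\to\bigwedge^kE$. In the proof of Theorem~\ref{thm:om-exponent} the Gram matrix of the dot product on $E$ is shown to be nondegenerate, and $\overline\omega$ is its inverse bivector, which is a nondegenerate alternating bivector since its matrix has zero diagonal. So we may choose a symplectic basis $x_1,y_1,\dots,x_k,y_k$ with $\overline\omega=\sum x_i\wedge y_i$.

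\emph{Main obstacle.} The rank of Lefschetz-type maps in characteristic two is not the characteristic-zero value $\binom{2k}{k-2}$, so a genuine computation is required. I would use the complex
\[
\cdots\to\bigwedge^{k-4}E\xrightarrow{\overline\omega}\bigwedge^{k-2}E\xrightarrow{\overline\omega}\bigwedge^kE .
\]
Because $\overline\omega\wedge\overline\omega=2\,\overline\omega^{[2]}=0$ in characteristic two, this is a complex. I expect it to be exact at every $\bigwedge^{k-2j}E$ with $j\geq1$, so that the rank of the last map is the alternating sum $r_k=\sum_{j\geq1}(-1)^{j-1}\binom{2k}{k-2j}$. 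To prove exactness, I would use the tensor decomposition $\bigwedge E=\bigotimes_i\bigwedge\langle x_i,y_i\rangle$, on which multiplication by $\overline\omega$ acts as a sum of commuting square-zero operators. Each factor $\F_2\oplus\F_2^2\oplus\F_2$ has homology only in its middle degree, so the Künneth formula (over a field, treating this as a complex with $\Z/2$-grading) shows that the total homology of $\overline\omega\wedge-$ on $\bigwedge E$ is concentrated in degree $k$. That gives exactness in the degrees below $k$ and hence the stated dimension.
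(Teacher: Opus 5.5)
Your proposal is correct and follows the paper's proof essentially step for step. You identify the image through the reduction of $\omega\wedge v$ modulo $2S^{(k)}$, and you get the dimension from exactness below degree $k$ of multiplication by $\overline\omega$, proved by factoring $\bigwedge(A^{(k)}/2A^{(k)})$ over a symplectic basis into $k$ pieces $\bigwedge\langle x_i,y_i\rangle$, each with homology only in degree one. The $\Z/2$-regrading you mention is not needed: Künneth over a field applies directly to this homogeneous degree-two differential and already tracks the exterior degree.
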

\begin{proof}
The class of $\frac12\omega\wedge v$ depends precisely on the
reduction of $\omega\wedge v$ modulo $2S^{(k)}$, giving the asserted
image. On $A^{(k)}/2A^{(k)}$ the dot product is a nondegenerate alternating
form: its Gram matrix in the $f_i$ basis has zero diagonal and ones
off the diagonal, and its square is the identity. Thus
$\overline\omega$ is its invariant inverse bivector. This proves
that the image is canonical and equivariant, although the displayed
integral lift $\omega$ depends on the basis.

Choose a symplectic basis $u_i,v_i$, $1\leq i\leq k$. Multiplication
by $\overline\omega=\sum_i u_i\wedge v_i$ defines a differential
of degree two on the exterior algebra, since its square is zero in
characteristic two. This complex is the tensor product of the $k$
complexes $\bigwedge\langle u_i,v_i\rangle$ with differential
$1\mapsto u_i\wedge v_i$. Each factor has homology only in degree
one, of dimension two. The tensor product therefore has homology
only in degree $k$. It is exact in degrees below $k$. Recursively
subtracting the rank of the preceding differential from
$\dim\bigwedge^{k-2}(A^{(k)}/2A^{(k)})$ gives the stated alternating sum.
\end{proof}

The corollary identifies the submodule generated by the first divided
power; it does not identify the whole subgroup $\Gamma_k[2]$ of classes
annihilated by two.

The theorem gives the exact exponent and explicit classes realizing
every indicated power of two. It does not assert equality in the
left inclusion of~\eqref{eq:om-two-inclusions}, or determine all
invariant factors of $\Gamma_k$. Those stronger statements remain open here. In particular, the
elementary $2$-group quotients in the first small examples cannot
persist in this form: for $k\geq4$, $\Gamma_k$ contains an element of
order four. Bouc's cycle construction, rational hook theorem, and
observation that the integral cycle span can be proper are established
in~\cite{BoucMatching}. The exponent formula concerns the actual
homology lattice selected by the matching complex; it does not
classify every integral form of the rational hook representation.

\section{Fixed-point geometry and norm gluing}\label{sec:normgluing}
The integral decomposition into invariant and noninvariant summands has a geometric obstruction.
We first express it through the classical norm map, then apply Tate localization to the fixed-point space.
This is an application of the fixed-point method of Swan and the standard complete-resolution formalism \cite{Swan,BrownGroups,BrownLectures}; the argument is included to specify the hypotheses and the particular quotient being calculated.

\begin{lemma}\label{lem:normgluing}
Let $P=\langle\sigma\rangle$ be cyclic of prime order $p$ and let $L$ be a finitely generated free abelian $P$-module.
Set $N_P=1+\sigma+\cdots+\sigma^{p-1}$, $A_P=L^P$ and $D_P=\ker N_P$.
Then $A_P\cap D_P=0$, and norm induces an isomorphism
\[
 \mathcal G_P(L):=L/(A_P\oplus D_P)\cong N_PL/pA_P.
\]
In particular $\mathcal G_P(L)$ is an elementary abelian $p$-group, with exact sequences
\begin{align}
 0&\longrightarrow\mathcal G_P(L)\longrightarrow A_P/pA_P
 \longrightarrow\widehat H^0(P,L)\longrightarrow0,\label{eq:normexact}\\
 0&\longrightarrow\mathcal G_P(L)\longrightarrow D_P/(\sigma-1)D_P
 \longrightarrow\widehat H^{-1}(P,L)\longrightarrow0.\label{eq:normminusone}
\end{align}
Here $\widehat H^0(P,L)=A_P/N_PL$ and $\widehat H^{-1}(P,L)=D_P/(\sigma-1)L$.
\end{lemma}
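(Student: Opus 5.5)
The plan is to work directly with the norm map $N_P:L\to L$, whose image lies in $A_P$ and whose kernel is $D_P$. For the intersection, take $x\in A_P\cap D_P$. Then $N_Px=px$ because $x$ is fixed, and $N_Px=0$ because $x\in D_P$. Since $L$ is torsion-free, $x=0$. Next, $N_P$ induces an isomorphism $L/D_P\cong N_PL$. Under this isomorphism the image of $A_P$ in $L/D_P$ (which injects, by the trivial intersection) corresponds to $N_PA_P=pA_P$. Quotienting gives
\[
 L/(A_P\oplus D_P)\cong N_PL/pA_P,
\]
which makes sense because $pA_P=N_PA_P\subseteq N_PL$. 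Finally, $pL\subseteq A_P\oplus D_P$: for $x\in L$, write $px=N_Px+(p-N_P)x$, where $N_Px\in A_P$ and $N_P(p-N_P)x=pN_Px-pN_Px=0$, using $N_P^2=pN_P$. Hence $\mathcal G_P(L)$ is killed by $p$, and it is finitely generated, so it is an elementary abelian $p$-group.

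For \eqref{eq:normexact}, I would use the chain $pA_P\subseteq N_PL\subseteq A_P$. The quotient $A_P/pA_P$ has the subgroup $N_PL/pA_P\cong\mathcal G_P(L)$, with cokernel $A_P/N_PL=\widehat H^0(P,L)$. Exactness is then just the third isomorphism theorem.

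For \eqref{eq:normminusone}, I would use the chain $(\sigma-1)D_P\subseteq(\sigma-1)L\subseteq D_P$; the second inclusion holds because $N_P(\sigma-1)=0$. The cokernel is $D_P/(\sigma-1)L=\widehat H^{-1}$. It remains to identify $(\sigma-1)L/(\sigma-1)D_P$ with $\mathcal G_P(L)$. The map $\sigma-1$ sends $L$ onto $(\sigma-1)L$ and has kernel $A_P$, so
\[
 (\sigma-1)L/(\sigma-1)D_P\cong L/(A_P+D_P).
\]
Indeed, the preimage of $(\sigma-1)D_P$ under $\sigma-1$ is exactly $D_P+A_P$. This is the step to check carefully, but it is immediate: if $(\sigma-1)x=(\sigma-1)d$ with $d\in D_P$, then $x-d\in A_P$.

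I expect no real obstacle here. The only care needed is to make sure torsion-freeness is used for $A_P\cap D_P=0$ and that $N_P^2=pN_P$ is used for the exponent.
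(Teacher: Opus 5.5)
Your proof is correct and follows the paper's argument essentially step by step. You use torsion-freeness for $A_P\cap D_P=0$, the identity $px=N_Px+(p-N_P)x$ with $N_P^2=pN_P$ for the exponent, the norm map (with kernel $D_P$ and $N_PA_P=pA_P$) for the isomorphism, and the map $\sigma-1$ (with kernel $A_P$) for \eqref{eq:normminusone}; your explicit chains $pA_P\subseteq N_PL\subseteq A_P$ and $(\sigma-1)D_P\subseteq(\sigma-1)L\subseteq D_P$ simply spell out the exactness that the paper leaves implicit.
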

\begin{proof}
Norm is multiplication by $p$ on $A_P$, so torsion-freeness gives $A_P\cap D_P=0$.
Since $N_P^2=pN_P$, the identity $px=N_Px+(p-N_P)x$ puts $pL$ in $A_P+D_P$.
The map $[x]\mapsto N_Px\bmod pA_P$ is surjective onto $N_PL/pA_P$.
If $N_Px=pa$ with $a\in A_P$, then $x-a\in D_P$, proving injectivity and \eqref{eq:normexact}.
For \eqref{eq:normminusone}, use $[x]\mapsto(\sigma-1)x\bmod(\sigma-1)D_P$.
If $(\sigma-1)x=(\sigma-1)d$ for $d\in D_P$, then $x-d\in A_P$; its cokernel is $D_P/(\sigma-1)L$.
The Tate-group descriptions are the standard cyclic norm formulas \cite[Sections~6.3--6.4]{BrownLectures}.
\end{proof}

Vanishing of $\widehat H^0(P,L)$ gives the largest possible norm gluing, rather than an integral splitting: then $\mathcal G_P(L)=A_P/pA_P$.
For example this occurs for the regular lattice $\Z[P]$, whose norm gluing is $\Z/p$.

\begin{theorem}[Fixed-point calculation of norm gluing]\label{thm:fixedgluing}
Let a cyclic group $P$ of prime order $p$ act simplicially on a finite complex $X$.
Suppose $\Ht_i(X;\Z)=0$ for $i\ne q$ and $L=\Ht_q(X;\Z)$ is free of rank $h$.
Put $F=|X|^P$ and
\[
 b_0=\sum_{i\equiv q\ (\mathrm{mod}\ 2)}\dim_{\F_p}\Ht_i(F;\F_p),\qquad
 b_1=\sum_{i\not\equiv q\ (\mathrm{mod}\ 2)}\dim_{\F_p}\Ht_i(F;\F_p).
\]
Use augmented reduced homology, so $\Ht_{-1}(\varnothing;\F_p)=\F_p$.
Then
\begin{equation}\label{eq:geometricgluing}
 \mathcal G_P(L)\cong(\Z/p)^g,
 \qquad g=\frac{h-b_0-(p-1)b_1}{p}.
\end{equation}
Moreover $\dim_{\F_p}\widehat H^0(P,L)=b_0$.
If $b_0=0$, every primitive $P$-equivariant quotient $u:L\to\Z$ with trivial target action satisfies $u(L^P)=p\Z$ and has least positive scalar lift $p$.
\end{theorem}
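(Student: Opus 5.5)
The plan is to compute the Tate cohomology of $L$ in two ways and then feed the result into Lemma~\ref{lem:normgluing}. The first computation comes from the fixed-point space, via Swan's localization theorem. The second comes from the integral representation theory of $\Z[P]$.

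\emph{Step one: Tate cohomology from the fixed points.} Since $\Ht_*(X;\Z)$ is concentrated in degree $q$, the augmented chain complex $\widetilde C_*(X)$ is quasi-isomorphic to $L[q]$. Hence the hypercohomology $\widehat H^*(P;\widetilde C_*(X))$ equals $\widehat H^{*-q}(P,L)$, up to the sign convention for the shift. Only the parity of the degree matters, because Tate cohomology of a cyclic group is $2$-periodic.

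After passing to a subdivision on which $P$ acts regularly, the chains decompose as the fixed subcomplex $\widetilde C_*(F)$ plus free $\Z[P]$-modules. Free modules are Tate-acyclic, so the hypercohomology equals $\widehat H^*(P;\widetilde C_*(F))$. The action on $F$ is trivial, so this becomes $\bigoplus_i \widehat H^{*-i}(P;\Z)\otimes$ (the chains of $F$). Tate cohomology of $\Z$ with trivial action is $\F_p$ in even degrees and $0$ in odd degrees, so the $E_1$-page consists of $\F_p$-chains of $F$. The spectral sequence degenerates because $p\cdot\widehat H=0$ and the coefficient reduction is exact. This gives $\dim\widehat H^{\,\mathrm{even}}(P;\widetilde C(F))=\sum_{i\ \mathrm{even}}\dim\Ht_i(F;\F_p)$, and similarly in odd degree. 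The empty case $\Ht_{-1}(\varnothing;\F_p)=\F_p$ is handled by the augmentation.

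Shifting by $q$ gives $\dim\widehat H^0(P,L)=b_0$ and $\dim\widehat H^{-1}(P,L)=\dim\widehat H^{1}(P,L)=b_1$. The main obstacle is this parity bookkeeping together with the justification that the free part contributes nothing. I would take the latter from the standard Swan/Brown localization argument rather than reprove it.

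\emph{Step two: count with the Diederichsen--Reiner decomposition.} Over $\Z$, a free abelian $P$-module is a direct sum of three kinds of indecomposables: trivial modules $\Z$, ideals $I$ of $\Z[\zeta_p]$, and modules $(I,\Z)$ glued as in $\Z[P]$. Suppose there are $x$, $y$ and $z$ of each kind. Then $\widehat H^0$ has dimension $x$, $\widehat H^{-1}$ has dimension $y$, and the $\Z[P]$-type summands contribute nothing. By Lemma~\ref{lem:normgluing}, $\mathcal G_P(L)$ has dimension $z$, since each regular-type summand contributes exactly one $\Z/p$. The rank is $h=x+(p-1)y+pz$. Substituting $x=b_0$ and $y=b_1$ gives $g=z=(h-b_0-(p-1)b_1)/p$.

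To avoid citing the classification, one could instead combine the exact sequences \eqref{eq:normexact} and \eqref{eq:normminusone}: $\dim A_P/pA_P=g+b_0$ and $\dim D_P/(\sigma-1)D_P=g+b_1$. The ranks satisfy $\operatorname{rank}A_P=g+b_0$ and $\operatorname{rank}D_P=h-\operatorname{rank}A_P$. It remains to show $\dim D_P/(\sigma-1)D_P=\operatorname{rank}D_P/(p-1)$, which holds because $D_P$ is a torsion-free $\Z[\zeta_p]$-module, hence locally free. This yields the same formula.

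\emph{Step three: the lifting statement.} If $b_0=0$ then $\widehat H^0=0$, so $L^P=N_PL$. An equivariant $u:L\to\Z$ with trivial target satisfies $u(N_Px)=p\,u(x)$. Since $u$ is primitive, $u(L^P)=p\,u(L)=p\Z$.

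An equivariant map $a:\Z\to L$ is determined by $a(1)\in L^P$, and $ua=d$ means $d=u(a(1))\in p\Z$. Hence the least positive scalar lift is $d=p$. It is attained by choosing $a(1)\in L^P$ with $u(a(1))=p$.
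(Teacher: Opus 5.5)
Your argument is correct, and Steps one and three agree with the paper's proof: you localize to $\widetilde C_*(F)$ through the degreewise free relative complex, read off $\widehat H^0(P,L)$ and $\widehat H^{-1}(P,L)$ by parity, and use $L^P=N_PL$ when $b_0=0$. One point in Step one needs a better justification. For trivial action the Tate differentials alternate between $0$ and $p$, so the total complex splits at chain level into shifted cones of $p$ on $\widetilde C_*(F)$, each quasi-isomorphic to $\widetilde C_*(F)\otimes\F_p$. This is how the paper argues, whereas ``$p\cdot\widehat H=0$'' alone does not rule out higher differentials.

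The genuine difference is in Step two. The paper takes only $\widehat H^0$ from localization. It obtains $\operatorname{rank}L^P=(h+(p-1)(b_0-b_1))/p$ from the Lefschetz formula by averaging the character: every nonidentity element has fixed space $F$, hence trace $(-1)^q\widetilde\chi(F)=b_0-b_1$ on $L\otimes\Q$. Subtracting $b_0$ in \eqref{eq:normexact} then gives $g$.

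You replace this trace computation by lattice theory and use both Tate parities. In your second version, \eqref{eq:normexact} and \eqref{eq:normminusone} give $\operatorname{rank}A_P=g+b_0$ and $\dim D_P/(\sigma-1)D_P=g+b_1$. Since $D_P$ is a torsion-free module over the Dedekind ring $\Z[P]/(N_P)\cong\Z[\zeta_p]$, whose prime $(\zeta_p-1)$ has residue field $\F_p$, the latter dimension also equals $(h-g-b_0)/(p-1)$. Solving $(p-1)(g+b_1)=h-g-b_0$ recovers both $g$ and the paper's rank formula.

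The paper's route is more elementary and stays within the signed-trace methods it uses throughout. Yours avoids the Lefschetz fixed-point theorem entirely, showing that the integral Tate data already force the rank of $L^P$. Your Diederichsen--Reiner version costs a citation of that classification but yields more: the structural statement $L\cong\Z^{b_0}\oplus(\text{ideals of }\Z[\zeta_p])^{b_1}\oplus(\text{non-split rank-}p\text{ summands})^{g}$.
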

\begin{proof}[Proof of Theorem~\ref{thm:fixedgluing}]
The proof uses the classical Tate localization construction \cite{Swan,BrownGroups}, with complete resolutions as in \cite[Section~6.4]{BrownLectures}.
We give its chain argument, including the empty-fixed-set convention.
After barycentric subdivision, a stabilized simplex is a chain of faces of distinct dimensions, so its stabilizer fixes it pointwise.
Thus $F$ is a subcomplex and, because $|P|=p$, every simplex outside $F$ has trivial stabilizer.
Orient an orbit representative and transport its orientation.
The bounded relative augmented chain complex $\widetilde C_*(X)/\widetilde C_*(F)$ is consequently degreewise free over $\Z[P]$.

Let $\widehat P_*$ be the standard two-periodic complete free resolution of the trivial module.
Place chain degree $i$ in cohomological degree $-i$ and form the total complex of $\operatorname{Hom}_{\Z[P]}(\widehat P_*,\widetilde C_*(X))$.
Each total degree has finitely many chain-degree summands.
Filtering by chain degree shows that the relative total complex is acyclic, since free $\Z[P]$-modules have zero Tate cohomology.
The inclusion of fixed chains therefore induces an isomorphism on total cohomology.
Filtering first by chain homology gives a spectral sequence with terms
$\widehat H^a(P,\Ht_i(X;\Z))$ in total degree $a-i$.
There is only the row $i=q$, so total degree $j-q$ has cohomology $\widehat H^j(P,L)$.

The action on fixed chains is trivial.
Its Tate differential alternates between zero and multiplication by $p$.
Pair resolution columns $2a-1$ and $2a$, whose differential is multiplication by $p$. Each pair is a translate of the cone of $p$ on the fixed chain complex. Since the chain groups are free abelian, this cone is quasi-isomorphic to the quotient complex modulo $p$ in column $2a$. The translates therefore occur in even degrees.
Hence, as $\F_p$-vector spaces,
\begin{equation}\label{eq:tatetotalparity}
 \widehat H^j(P,L)\cong
 \bigoplus_{i\equiv q-j\ (\mathrm{mod}\ 2)}\Ht_i(F;\F_p).
\end{equation}
In particular $\dim\widehat H^0(P,L)=b_0$.

Every nonidentity element of $P$ has fixed space $F$.
The simplicial Lefschetz formula gives trace $(-1)^q\widetilde\chi(F)=b_0-b_1$ on $L\otimes\Q$.
Averaging this character yields
\[
 \operatorname{rank}L^P=\frac{h+(p-1)(b_0-b_1)}p.
\]
Subtracting $b_0$ in \eqref{eq:normexact} proves \eqref{eq:geometricgluing}.
If $b_0=0$, then $L^P=N_PL$.
For a primitive equivariant quotient $u$, one has $u(N_Px)=pu(x)$ and $u(L)=\Z$.
Thus $u(L^P)=p\Z$, which gives both the lower bound and an invariant element lifting $p$.
\end{proof}

For an involution, \eqref{eq:geometricgluing} reads
$2g=h-\sum_i\dim_{\F_2}\Ht_i(F;\F_2)$:
the excess over the fixed-point Betti sum measures this integral gluing.
The single-degree hypothesis is essential to the spectral-sequence collapse.
Composite-order actions can have proper nontrivial stabilizers outside the fixed subcomplex, so the same localization argument does not apply to them.
The quotient $\mathcal G_P(L)$ also differs from the sum-of-all-factor-kernels quotient when more than two rational factors are present.

The same localization argument preserves a commuting group action.
Centralizer-equivariant Smith theory also appears in Treumann's sheaf-theoretic formulation \cite[Section~4.5 of the public preprint]{Treumann}.
The following elementary consequence is useful when a geometric subgroup acts trivially on homology.
\begin{lemma}[Centralizer comparison]\label{lem:centralcomparison}
Let a finite group $G$ act simplicially on a finite complex $X$, and let $K\leq Z(G)$ have prime order $p$.
Suppose $\Ht_*(X;\Z)$ is free and concentrated in degree $q$, with lattice $L$, and $K$ acts trivially on $L$.
Suppose the fixed space $F=|X|^K$ has free reduced integral homology concentrated in degree $d$, with lattice $M$, and $q\equiv d\pmod2$.
Then $L/pL\cong M/pM$ as $\F_p[G/K]$-modules.
For every subgroup $P\leq G/K$ of order $p$, the norm-gluing quotients $\mathcal G_P(L)$ and $\mathcal G_P(M)$ are isomorphic as $\F_p[N_{G/K}(P)]$-modules.
Here $G/K$ acts on $L$ by the homologically trivial $K$ hypothesis and on $M$ by its geometric action on $F$.
\end{lemma}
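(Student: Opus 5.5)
The plan is to rerun the Tate localization argument from the proof of Theorem~\ref{thm:fixedgluing} for the central subgroup $K$, keeping track of the residual $G/K$-action. Then I would reduce the second assertion to a statement that depends only on reductions modulo $p$.

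\emph{Step 1: equivariant localization.} As in Theorem~\ref{thm:fixedgluing}, after barycentric subdivision $F=|X|^K$ is a subcomplex, and $\widetilde C_*(X)/\widetilde C_*(F)$ is degreewise $\Z[K]$-free. Because $K$ is central, each $g\in G$ commutes with $K$. Postcomposition with $g$ is therefore an automorphism of the total complex $\operatorname{Hom}_{\Z[K]}(\widehat P_*,\widetilde C_*(X))$ whose restriction to $K$ is chain homotopic to the identity on Tate cohomology. This gives an action of $G/K$ on total cohomology, and the inclusion of fixed chains is a $G$-map. The relative complex is still acyclic, so inclusion of the fixed chains is a $G/K$-equivariant isomorphism on total cohomology.

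On the $X$ side, homology is concentrated in degree $q$, so the spectral sequence has a single row. Its edge isomorphisms are natural, hence $G/K$-equivariant. Since $K$ acts trivially on the free lattice $L$, we get $\widehat H^{2a}(K,L)\cong L/pL$ and $\widehat H^{\mathrm{odd}}(K,L)=0$, equivariantly.

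On the $F$ side, $K$ acts trivially on the chains. Each pair of resolution columns is the cone of multiplication by $p$ on $\widetilde C_*(F)$, and this cone is quasi-isomorphic to $\widetilde C_*(F)\otimes\F_p$. The quasi-isomorphism is natural in chain maps, so it is $G/K$-equivariant. Because $M$ is free in the single degree $d$, the universal coefficient theorem gives $\widetilde H_*(F;\F_p)=M/pM$ in degree $d$ only. The hypothesis $q\equiv d\pmod 2$ places $L/pL$ and $M/pM$ in the same total-degree parity. Comparing one even total degree then gives $L/pL\cong M/pM$ as $\F_p[G/K]$-modules.

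\emph{Step 2: the gluing quotient is read off modulo $p$.} For a free $P$-lattice $L$, I claim $\mathcal G_P(L)\cong N_P\cdot(L/pL)$, the image of the norm acting on $L/pL$. By Lemma~\ref{lem:normgluing}, $\mathcal G_P(L)\cong N_PL/pA_P$. Map $N_Px\bmod pA_P$ to the class of $N_Px$ in $L/pL$; this is well defined and surjective.

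For injectivity, suppose $N_Px=py$ with $y\in L$. Applying $\sigma-1$ gives $p(\sigma-1)y=0$. Torsion-freeness then gives $y\in A_P$, so $N_Px\in pA_P$.

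The map is natural in $L$, so it commutes with the normalizer $N_{G/K}(P)$, which acts on $L$ and preserves $N_P$ up to the automorphism of $P$ (and $N_P$ is invariant under automorphisms of $P$). Applying the claim to $L$ and to $M$ and using Step~1 restricted to $N_{G/K}(P)$ yields $\mathcal G_P(L)\cong\mathcal G_P(M)$ as $\F_p[N_{G/K}(P)]$-modules.

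\emph{Main obstacle.} I expect the difficulty to lie in Step~1, namely making equivariance rigorous through the spectral sequence and the cone identification. Two points need checking. First, the $G$-action on the total complex must descend to $G/K$ on cohomology; this holds because an element of $K$ acts on Tate cohomology as the identity, via conjugation invariance. Second, orientation signs of transported simplices outside $F$ play no role, since only the fixed chains and $L$ enter the final identification. I would handle both by working with the morphism of double complexes induced by inclusion, and by noting that each identification used is a natural transformation of functors on $\Z[K]$-chain complexes with commuting $G$-action.
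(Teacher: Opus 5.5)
Your proposal is correct and takes essentially the same route as the paper: $G$-equivariant Tate localization for the central subgroup $K$, followed by the identification $\mathcal G_P(V)\cong\operatorname{im}(\overline N_P:V/pV\to V/pV)$, which rests on the primitivity of $V^P$ and is exactly your injectivity argument. The only difference is on the fixed-space side, where the paper uses the single-row spectral sequence and even periodicity for trivial coefficients in place of your cone decomposition and universal coefficients; this difference is inessential.
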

\begin{proof}
Barycentric subdivision makes $F$ a subcomplex; the relative augmented chain groups outside $F$ are free over $\Z[K]$.
Use the two-periodic complete resolution $\widehat P_*$ of $\Z$ over $\Z[K]$ and the total complex $\operatorname{Hom}_{\Z[K]}(\widehat P_*,\widetilde C_*(X))$, placing chain degree $i$ in cohomological degree $-i$.
Since $G$ centralizes $K$, it acts on coefficients alone, commuting with both resolution differentials and all filtrations.
The inclusion of fixed chains induces a $G$-equivariant isomorphism on total cohomology because the relative complex is degreewise free over $\Z[K]$ and bounded in chain degree.
Filtering by chain homology has just the $q$ row for $X$ and the $d$ row for $F$; it yields, in total degree $-q$, an isomorphism
$\widehat H^0(K,L)\cong\widehat H^{d-q}(K,M)$ compatible with $G$.
Both coefficient lattices have trivial $K$ action, so the left side is $L/pL$, while even periodicity identifies the right side with $M/pM$ without any centralizer twist.
These coefficient actions factor through $G/K$, proving the first assertion.

For any lattice $V$ free over $\Z$ with a $P$-action, its invariant lattice $A=V^P$ is primitive, since it is the kernel of $\sigma-1$ on a torsion-free lattice.
Therefore $A/pA\hookrightarrow V/pV$, and the norm model identifies
$\mathcal G_P(V)=N_PV/pA\cong\operatorname{im}(\overline N_P:V/pV\to V/pV)$.
The comparison $L/pL\cong M/pM$ intertwines the residual norm operators.
Taking their images proves the second assertion; the image is stable under the normalizer since conjugation permutes the summands of the norm.
\end{proof}

\subsection{The clique family}
\begin{theorem}\label{thm:geometricfamily}
Let $s\geq3$ and $L=H_1(\mathcal X_s;\Z)$, with vertex translation $T$.
For $P_3=\langle T^s\rangle$,
\[
 L^{P_3}=N_{P_3}L,\qquad \mathcal G_{P_3}(L)\cong\Z/3.
\]
Every primitive $P_3$-equivariant quotient $u:L\to\Z$, with trivial target action, satisfies $u(L^{P_3})=3\Z$.
Consequently its least positive scalar lift is three.
For every prime $p\ne3$ dividing $s$, set $P_p=\langle T^{3s/p}\rangle$. Then
\[
 \widehat H^0(P_p,L)\cong\F_p,\qquad
 \mathcal G_{P_p}(L)\cong(\Z/p)^{2s/p}.
\]
\end{theorem}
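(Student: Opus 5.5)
The plan is to deduce every assertion from Theorem~\ref{thm:fixedgluing}. For each prime-order subgroup the work reduces to two inputs: that the homology of $\mathcal X_s$ is free and concentrated in degree $q=1$ of rank $h=2s+1$, and that we know the fixed space $F=|\mathcal X_s|^{P}$.

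\emph{Homology input.} First I will show that the only cliques of $G(3s,s)$ beyond edges are the $s$ triangles $\{x,x+s,x+2s\}$, and that there are no $4$-cliques; this is the case $n=3s$ of the triangular conditions cited before Proposition~\ref{prop:flaggraphs}. So $\mathcal X_s$ is a $2$-complex with $3s$ vertices, $6s$ edges and $s$ triangles.

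These triangles have pairwise disjoint edge sets, because the jump-$s$ edges are partitioned among them and they contain no jump-$1$ edges. Hence their boundaries are linearly independent, and $H_2=0$. For freeness of $H_1$, take an integral $1$-cycle that is a rational combination of triangle boundaries. Since the edge sets are disjoint, its coefficients on each triangle's edges are the coefficients of that combination, so they are integers. Thus the span of the boundaries is saturated in $Z_1$, and $H_1$ is free.

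The graph is connected, so $H_0$ vanishes in reduced homology, and
\[
 \operatorname{rank}H_1=6s-3s+1-s=2s+1.
\]
This agrees with $H_1(\mathcal X_s;\Z)\cong M_s$ from the triangle-boundary presentation. The group $D_{3s}$ acts simplicially, so Theorem~\ref{thm:fixedgluing} applies to any prime-order subgroup of translations.

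\emph{The subgroup $P_3=\langle T^s\rangle$.} An invariant simplex is a union of orbits $\{x,x+s,x+2s\}$ that is a clique. Each single orbit is one of the triangles. The union of two distinct orbits would need the three differences $y-x+ks$, $k=0,1,2$, all to lie in $\{\pm1,\pm s\}$. None of them can be $\pm s$, since that would make the two orbits equal. So all three would lie in $\{\pm1\}$, and two of these three distinct residues cannot all fit there.

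Hence the invariant simplices are exactly the $s$ triangles, and after barycentric subdivision $F$ is $s$ isolated points, namely their barycenters. Then $\Ht_0(F;\F_3)=\F_3^{s-1}$ has degree of parity opposite to $q=1$, so $b_0=0$ and $b_1=s-1$. Formula~\eqref{eq:geometricgluing} gives
\[
 g=\frac{(2s+1)-0-2(s-1)}{3}=1.
\]
Since $b_0=0$, Theorem~\ref{thm:fixedgluing} gives $L^{P_3}=N_{P_3}L$, and every primitive $P_3$-equivariant quotient $u$ satisfies $u(L^{P_3})=3\Z$. Its least positive scalar lift is therefore three.

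\emph{The subgroups $P_p$ for primes $p\neq3$ dividing $s$.} Here the generator is translation by $3s/p$, and I claim no simplex is invariant. An invariant simplex would contain an orbit $\{x+j\cdot3s/p\}$ that is a clique, so $3s/p$ would be congruent to $\pm1$ or $\pm s$ modulo $3s$.
\begin{itemize}
\item The case $\pm1$ is impossible, since $3s/p\geq 3s/s=3$ and $3s/p<3s-1$.
\item The case $\pm s$ would force $p\in\{3,3/2\}$, which is excluded.
\end{itemize}
(For $p=2$ this is the check that $3s/2\not\equiv\pm s$.) Hence $F=\varnothing$, and with the augmented convention $\Ht_{-1}(\varnothing;\F_p)=\F_p$. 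Degree $-1$ has the same parity as $q$, so $b_0=1$ and $b_1=0$. Theorem~\ref{thm:fixedgluing} then yields $\dim\widehat H^0(P_p,L)=1$ and
\[
 g=\frac{(2s+1)-1}{p}=\frac{2s}{p},
\]
as claimed.

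\emph{Expected obstacle.} The delicate point is the invariant-simplex analysis, which requires an exact clique classification of $G(3s,s)$ for all $s\geq3$. I will verify directly that for $s\geq3$ there are no jump-$1$ triangles, and that a mixed triangle $\{x,x+1,x+1\pm s\}$ would need $1\pm s\equiv\pm1$ or $\pm s$, which fails. I will also double-check the small cases $s=3,4$ by hand.
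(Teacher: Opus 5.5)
Your proposal is correct and follows essentially the same route as the paper: both apply Theorem~\ref{thm:fixedgluing} with $(h,q)=(2s+1,1)$, identify the $P_3$-fixed space as the $s$ triangle barycenters (so $b_0=0$, $b_1=s-1$, $g=1$), and show that the $P_p$-fixed space is empty (so $b_0=1$, $b_1=0$, $g=2s/p$). The differences are minor. You re-derive freeness and rank of $H_1$ from the disjointness of the triangle edge sets instead of citing Proposition~\ref{prop:flagfamily}. You also treat all primes $p\neq3$ at once by testing whether $x$ and $x+3s/p$ are adjacent, whereas the paper splits into $p>3$ (cliques have at most three vertices) and $p=2$.
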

\begin{proof}[Proof of Theorem~\ref{thm:geometricfamily}]
Proposition~\ref{prop:flagfamily} gives $h=2s+1$, $q=1$, and the full triangle list
$\{i,i+s,i+2s\}$, $0\leq i<s$, with no higher faces.
For $P_3=\langle T^s\rangle$, these triangles are the vertex orbits.
A fixed point has invariant support, so the fixed space consists of their $s$ barycenters.
Thus $b_0=0$ and $b_1=s-1$.
Theorem~\ref{thm:fixedgluing} gives $g=1$, $\widehat H^0(P_3,L)=0$, and the asserted image of every primitive scalar quotient.
This also gives a geometric proof of the full dihedral augmentation minimum in Proposition~\ref{prop:flagfamily}.
Indeed $L^{P_3}$ has rank one and maps injectively to the rank-one target; equivariance forces translation and reflection on it to have the target actions, and its generator maps to $\pm3$.

If $p\ne3$ divides $s$, an invariant simplex must contain a full orbit of $p$ vertices.
For $p>3$ there is none, since all cliques have size at most three.
For $p=2$, the orbit is a half-turn pair with difference $3s/2$, which is neither $\pm1$ nor $\pm s$ modulo $3s$.
Again there is no invariant simplex, so the fixed space is empty.
The augmented convention gives $b_0=1$, $b_1=0$, and \eqref{eq:geometricgluing} yields $g=2s/p$ and $\widehat H^0(P_p,L)\cong\F_p$.
\end{proof}

The norm calculation also retains the residual dihedral action, including its nonsemisimple part in characteristic $p$.
\begin{proposition}\label{prop:residualgluing}
Let $s\geq3$, $L=H_1(\mathcal X_s;\Z)$, and $p\ne3$ be a prime dividing $s$.
Put $r=s/p$ and $P=\langle T^{3r}\rangle$.
As a module for the labeled residual group $D_{3s}/P\cong D_{3r}$,
\begin{equation}\label{eq:residualgluing}
 \mathcal G_P(L)\cong\F_p[t]/(1+t^r+t^{2r}),\qquad
 T v(t)=tv(t),\quad Rv(t)=t^{-1}v(t^{-1}).
\end{equation}
Translation has exact order $3r$ on this quotient.
For the order-three subgroup, $\mathcal G_{P_3}(L)\cong\F_3$ has $T=1$ and $R=-1$.
\end{proposition}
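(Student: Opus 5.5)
We will compute $\mathcal G_P(L)$ through the norm model of Lemma~\ref{lem:normgluing}, realized inside an explicit presentation of $L$. By Proposition~\ref{prop:flagfamily} (invoked already in the proof of Theorem~\ref{thm:geometricfamily}), the clique complex $\mathcal X_s$ is the graph $G(3s,s)$ with the $s$ triangles $\{i,i+s,i+2s\}$ filled. We will use the integral description $L\cong M_s=\Z[t]/((t-1)(1+t^s+t^{2s}))$, with $T=t$ and reflection given by negative inversion, $Rp(t)=-p(t^{-1})$ up to the labeled normalization fixed there. Everything then becomes a calculation with polynomials modulo $f(t)=(t-1)(1+t^s+t^{2s})$.

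For $P=\langle T^{3r}\rangle$, with $\sigma=t^{3r}$ of order $p$ (as $t^{3s}=1$ in $M_s$), the norm is
\[
N_P=\sum_{j=0}^{p-1}t^{3rj}=\frac{t^{3s}-1}{t^{3r}-1}.
\]
Since $\mathcal G_P(L)\cong\operatorname{im}(\overline N_P)$ on $L/pL$ (the norm model in the proof of Lemma~\ref{lem:centralcomparison}), we work in $\F_p[t]/(\bar f)$. Over $\F_p$ we have $t^{3s}-1=(t^{3r}-1)^p$, so $\overline N_P=(t^{3r}-1)^{p-1}$. We also have $\bar f=(t-1)(1+t^s+t^{2s})=(t-1)(1+t^r+t^{2r})^p$.

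The image of multiplication by $g=(t^{3r}-1)^{p-1}$ on $\F_p[t]/(\bar f)$ is $\F_p[t]/(\bar f/\gcd(\bar f,g))$. Write $t^{3r}-1=(t^r-1)(1+t^r+t^{2r})$, and note that $p\ne3$ makes $t^r-1$ coprime to $1+t^r+t^{2r}$ over $\F_p$ (their resultant is a power of $3$). Then $\gcd(\bar f,g)=(t-1)(1+t^r+t^{2r})^{p-1}$ when $p\ge2$, and $\bar f/\gcd=1+t^r+t^{2r}$. This gives the underlying $\F_p[t]$-module $\F_p[t]/(1+t^r+t^{2r})$, of dimension $2r=2s/p$, which matches Theorem~\ref{thm:geometricfamily}. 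The module generator is the image of $g$, that is, $g\cdot p(t)\mapsto p(t)$.

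For the reflection, $R(g\,p)=-g(t^{-1})p(t^{-1})$. Since $g(t^{-1})=t^{-3r(p-1)}(1-t^{3r})^{p-1}=(-1)^{p-1}t^{-3r(p-1)}g(t)$, and $t^{3r}\equiv1$ modulo $1+t^r+t^{2r}$, the induced action is $p(t)\mapsto\pm p(t^{-1})$. The sign depends on the labeled normalization of $R$ on $M_s$. This sign bookkeeping is the main obstacle. To match the stated form $t^{-1}v(t^{-1})$, we will rescale the generator, replacing $g$ by $(1-t)^{?}g$ or by a unit multiple $u(t)g$ with $u(t^{-1})=\pm t^{-1}u(t)$ in $\F_p[t]/(1+t^r+t^{2r})$. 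We will then check that the rescaled action is exactly $Rv=t^{-1}v(t^{-1})$. The first candidate to try is $u=1-t$: it satisfies $u(t^{-1})=-t^{-1}u(t)$, and the resulting sign flip absorbs the minus sign from negative inversion. Unitness of $1-t$ modulo $1+t^r+t^{2r}$ holds because the value $3$ of $1+t^r+t^{2r}$ at $t=1$ is invertible for $p\ne3$.

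For the order of translation, $t^{3r}=1$ holds on the quotient. If $t^d=1$ with $d\mid3r$ and $d<3r$, then $1+t^r+t^{2r}$ would divide $t^d-1$ over $\F_p$. Separability fails only when $p\mid r$, so we will argue instead via degree and the factor $t^r-1$. Taking $d=3r/\ell$ for a prime $\ell$, we compare $\gcd(t^d-1,1+t^r+t^{2r})$ with the full polynomial, using $(1+t^r+t^{2r})\mid t^d-1$ iff $t^{3r}-1\mid(t^r-1)(t^d-1)$ up to multiplicities. Finally, for $P_3=\langle T^s\rangle$, we have $\overline N=1+t^s+t^{2s}$ over $\F_3$, and $\bar f/\gcd(\bar f,\overline N)=t-1$. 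This gives a one-dimensional quotient with $T=1$ and $R=-1$, the latter read off from negative inversion on constants.
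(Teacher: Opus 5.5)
Your argument is correct and reaches the result by a somewhat more direct route than the paper's.

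\textbf{How the two routes differ.} The paper first computes the integral invariant lattice $L^P=K(T)L$, where $K=N_s/N_r$ and $N_j=1+t^j+t^{2j}$. It identifies $L^P$ with $\Z[t]/((t-1)N_r)$ carrying negative inversion. It then reads $\mathcal G_P(L)\cong N_PL/pL^P$ as the ideal generated by $E=1+t^r+\cdots+t^{r(p-1)}\equiv(t^r-1)^{p-1}$ in $L^P/pL^P$. That ideal is the augmentation ideal, which multiplication by $t-1$ identifies with $\F_p[t]/(N_r)$.

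You instead use the equivalent model $\operatorname{im}(\overline N_P)$ on $L/pL=\F_p[t]/(\bar f)$. This model is legitimate because $L^P$ is saturated, and it is equivariant because $T$ and $R$ commute with $N_P$. You then do a single gcd computation in $\F_p[t]$, using the Frobenius identities $N_s\equiv N_r^p$ and $\overline N_P=(t^{3r}-1)^{p-1}$.

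Your route never needs the integral lattice $L^P$, and it handles $P_3$ with the same formula. The paper's route gives the integral invariant lattice as a by-product. Your rescaling by the unit $1-t$ is the same device as the paper's generator $(t-1)a'$.

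\textbf{The reflection sign.} There is no sign ambiguity to resolve. Proposition~\ref{prop:flagfamily} fixes $Rw=-w$, so relative to your generator $g$ the induced action is exactly $v\mapsto -v(t^{-1})$. This holds because $t^{-3r(p-1)}\equiv1$ modulo $N_r$ and $(-1)^{p-1}=1$ in $\F_p$. The unit $1-t$ then converts this action to $v\mapsto t^{-1}v(t^{-1})$, as stated.

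\textbf{The exact order.} This is the one step you have not actually carried out. The criterion you write, $N_r\mid t^d-1$ if and only if $t^{3r}-1\mid(t^r-1)(t^d-1)$, is just cancellation of $t^r-1$ and carries no information. The degree comparison you allude to suffices by itself. Since $t$ is a unit modulo $N_r$ and $t^{3r}\equiv1$, the order $k$ of $T$ divides $3r$. If $k<3r$, then $k\le 3r/2<2r=\deg N_r$, so the nonzero polynomial $t^k-1$ cannot be divisible by $N_r$. This argument works uniformly, including when $p\mid r$. It is also simpler than the paper's multiplicity argument, which writes $r=p^au$ and uses $N_r=N_u^{p^a}$.
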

\begin{proof}
Use the geometric polynomial presentation in Proposition~\ref{prop:flagfamily} and write $N_j=1+t^j+t^{2j}$.
For $p\ne3$, $K=N_s/N_r$ is monic integral of degree $2r(p-1)$ and
\[
 \gcd((t-1)N_s,t^{3r}-1)=(t-1)N_r.
\]
For example, a root of $t^{3r}-1$ is also a root of $N_s$ exactly when it is a root of $N_r$, since $p$ is coprime to three.
Monic division therefore gives the entire integral invariant lattice $L^P=KL$.
Put $a'=T^{-r(p-1)}K(T)w$.
Its translates identify $L^P$ with the algebraic module $\Z[t]/((t-1)N_r)$, also for $r=1,2$.
Reciprocity $K(t^{-1})=t^{-2r(p-1)}K(t)$ gives $Ra'=-a'$.
Thus this identification has translation $t$ and reflection negative inversion.

The polynomial norm factors as
\[
 N_P(t)=1+t^{3r}+\cdots+t^{3r(p-1)}=K(t)E(t),\qquad
 E(t)=1+t^r+\cdots+t^{r(p-1)}.
\]
Norm commutes with $T$ and $R$, since $T$ centralizes $P$ and $R$ conjugates its generator to its inverse.
Under the resulting equivariant norm model $\mathcal G_P(L)\cong N_PL/pL^P$ from Lemma~\ref{lem:normgluing}, the quotient is consequently the ideal generated by $E$ in $L^P/pL^P$; the shift defining $a'$ changes this generator by a unit only.
In characteristic $p$, $E=(t^r-1)^{p-1}$ is coprime to $N_r$, since $N_r$ is three when $t^r=1$.
Its image in $\F_p[t]/((t-1)N_r)$ is therefore exactly the augmentation ideal $(t-1)$.
Multiplication by $t-1$ identifies that ideal with $\F_p[t]/N_r$.
On the generator $g=(t-1)a'$ of this norm-image model one has $Rg=t^{-1}g$, proving \eqref{eq:residualgluing}.
Here $g$ is a class in $N_PL/pL^P$, transported by the norm isomorphism; its literal class in $L/(L^P+\ker N_P)$ would be zero.

Write $r=p^a u$ with $p\nmid u$.
Then $N_r=N_u^{p^a}$ in $\F_p[t]$, and a primitive $3u$th root is a root of $N_u$ of multiplicity one.
Thus $N_r\mid t^k-1$ requires both $3u\mid k$ and $p^a\mid k$.
Conversely $N_r\mid t^{3r}-1$, proving the exact rotation order.
For $P_3$, its invariant lattice has rank one, with $T=1$ and $R=-1$, and its reduction modulo three is the norm gluing.
\end{proof}

When $p\mid r$, the polynomial in \eqref{eq:residualgluing} has repeated factors.
The description retains the resulting unipotent part of the translation action; the additive group $(\Z/p)^{2r}$ alone does not record it.

\subsection{The independence-complex examples}
For a subgroup acting on graph vertices, form the orbit graph by joining two orbits if an edge joins their vertices, and put a loop on an orbit containing an edge.
The fixed-point space of the independence complex is homeomorphic to the independence complex of this looped orbit graph: invariant barycentric coordinates are constant on each orbit, and the normalized orbit barycenters give the parametrization. Proposition~\ref{prop:orbit-fixed} gives the full coordinate proof for circulants and arbitrary cyclic subgroups.

\begin{corollary}\label{cor:fixedexamples}
For the indicated geometric subgroups, norm gluing is
\[
 \begin{array}{c|c|c|c}
 X& P&\text{Fixed-space type}&\mathcal G_P(\Ht_q(X;\Z))\\\hline
 \K(14,2)&\langle T^2\rangle\cong C_7&\varnothing&\Z/7\\
 \K(14,2)&\langle T^7\rangle\cong C_2&S^1&(\Z/2)^6\\
 \K(18,4)&\langle T^6\rangle\cong C_3&\text{three contractible components}&\Z/3\\
 \K(18,5)&\langle T^9\rangle\cong C_2&\bigvee^7 S^1&(\Z/2)^2\\
 \K(27,5)&\langle T^9\rangle\cong C_3&\bigvee^7 S^1&0
 \end{array}
\]
The product of the first two natural norm quotients identifies the full factor-kernel quotient of $L_{14}$ with $\Z/7\oplus(\Z/2)^6$.
For the two eighteen-vertex rows these are exactly the factor-kernel quotients in \eqref{eq:index3} and \eqref{eq:index2}.
\end{corollary}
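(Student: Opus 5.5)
The plan is to apply Theorem~\ref{thm:fixedgluing} row by row. For each row we need the ordinary homotopy type of $X$, its single nonzero degree $q$ and rank $h$, and the mod-$p$ homology of the fixed space $F=|X|^P$. Proposition~\ref{prop:orbit-fixed} identifies $F$ with the independence complex of the looped orbit graph. The inputs $(q,h)$ come from the finite classifications cited earlier: $L_{14}$ for $\K(14,2)$, which has rank $13$; $L_3$, of rank $7$, for $\K(18,4)$ and $\K(27,5)$; and $L_2$, of rank $11$, for $\K(18,5)$. These come from Theorem~\ref{thm:dihedral} and the appendix table, and each is concentrated in one degree.

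\emph{Computing the fixed spaces.} Compute each looped orbit graph explicitly. For $\langle T^2\rangle$ on $G(14,2)$, the two orbits are the parity classes. Each contains an edge of difference $2$, so both are looped, and $F=\varnothing$.

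For $\langle T^7\rangle$, the orbits are the antipodal pairs $\{i,i+7\}$. No edge is internal, since $7\notin\{\pm1,\pm2\}$. The orbit graph is therefore $G(7,2)$ on $\Z/7$, whose independence complex is $C_7$ viewed as a cycle, i.e.\ $S^1$.

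For $\langle T^6\rangle$ on $G(18,4)$, the orbits are the residue classes mod $6$, and no internal edges occur. The quotient graph is $\Cay(\Z/6,\{\pm1,\pm4\})=\Cay(\Z/6,\{\pm1,\pm2\})$, which is $K_6$ minus a perfect matching. Its independence complex is three disjoint edges, i.e.\ three contractible components.

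For $\langle T^9\rangle$ on $G(18,5)$, the quotient is $\Cay(\Z/9,\{\pm1,\pm4\})$. For $\langle T^9\rangle$ on $G(27,5)$, it is $\Cay(\Z/9,\{\pm1,\pm4\})$ again. In both cases we need the independence complex of this $4$-regular graph on $9$ vertices, claimed to be $\bigvee^7S^1$. The maximum independent sets are triangles only if the graph has independent triples, so we list them: the triples avoiding differences $\pm1,\pm4$ use differences in $\{\pm2,\pm3\}$. These form a $1$-dimensional complex with $9$ vertices and $18$ edges; if there are no independent triples, then $\chi=9-18=-9$. That contradicts $\bigvee^7S^1$ unless there are $2$ triangles. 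We check this directly: $\{0,3,6\}$ and $\{1,4,7\}$-type triples have difference $3$ repeated, giving the three cosets of $3\Z/9$. Connectivity and Euler characteristic then give $\chi=9-18+3=-6$, hence $\bigvee^7S^1$ after verifying it is homotopy equivalent to a graph (each triangle has a free edge, so collapse it).

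\emph{Applying the formula.} Plug into $g=(h-b_0-(p-1)b_1)/p$, assuming the degrees $q$ from the classification.

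For $\K(14,2)$ with $p=7$: $F=\varnothing$ contributes $\Ht_{-1}=\F_7$, and $q$ is odd there (degree $3$?). This gives $b_0=0$, $b_1=1$, and $g=(13-6)/7=1$.

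With $p=2$ and $F=S^1$: $b_0+b_1=1$, so $g=(13-1)/2=6$.

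For $\K(18,4)$ with $p=3$, $h=7$: $F$ has $\Ht_0=\F_3^2$. The parity bookkeeping must yield $g=1$, so we need $b_0=0$ and $b_1=2$, giving $g=(7-4)/3=1$. This forces $q$ to be odd.

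For $\K(18,5)$ with $p=2$, $h=11$: $b_0+b_1=7$, so $g=(11-7)/2=2$.

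For $\K(27,5)$ with $p=3$: here $\langle T^9\rangle$ acts trivially on homology, so $L^P=L$, $\ker N_P=0$, and $g=0$ directly. As a consistency check, $h=7=b_0$ when $q\equiv1$, giving $g=(7-7-0)/3=0$.

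\emph{Remaining claims.} The main obstacle is keeping the parity of $q$ relative to the fixed-space degrees straight, and confirming the ranks and degrees from the earlier classification. We resolve this by reading off the degrees $q$ from Theorem~\ref{thm:dihedral} and Appendix~\ref{app:finiteclassification} and cross-checking each against the Lefschetz trace.

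For the factor-kernel identification of $L_{14}=\Z[t]/(\Phi_2\Phi_7\Phi_{14})$, the factor-kernel quotient is computed via Lemma~\ref{lem:crtdual} and resultants. We have $\operatorname{Res}(\Phi_2,\Phi_{14})=1$ up to sign? No: the two relevant gluings are $\Phi_7$ against $\Phi_{14}$ at $2$, and $\Phi_2$ against $\Phi_{14}$ at $7$. Their product is $\Z/7\oplus(\Z/2)^6$, matching the orders of the two norm quotients. Since $7$ and $2$ are coprime, the combined map to the product of the two norm quotients is surjective with the same kernel.

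For the eighteen-vertex rows, compare the orders $3$ and $4$ with \eqref{eq:index3} and \eqref{eq:index2}. Each is a two-factor lattice, and there the factor-kernel quotient equals $\mathcal G_P$.
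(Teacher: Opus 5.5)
Your computation of the table follows the paper's route: looped orbit graphs via Proposition~\ref{prop:orbit-fixed}, then formula~\eqref{eq:geometricgluing}. The fixed spaces and the values of $g$ are right. Your direct argument in the $\K(27,5)$ row is a valid shortcut: $T^9=1$ on $L_3$, so $L^P=L$ and $\mathcal G_P=0$.

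Some of the bookkeeping is wrong as written.
\begin{itemize}
\item For $\K(14,2)$ the degree is $q=2$, not odd. It is because $-1\not\equiv 2\pmod 2$ that $\Ht_{-1}(\varnothing;\F_7)$ counts in $b_1$. With odd $q$ it would count in $b_0$ and give $g=12/7$.
\item In the $\K(18,4)$ row, take $q=3$ from Theorem~\ref{thm:main} rather than inferring the parity from the answer you want.
\item For $G(9,5)$ you need the complete face list, not a check against the expected Euler characteristic. The paper multiplies by $2$ to reach $C_9^2$. There, three gaps $\geq 3$ summing to $9$ force exactly the triangles $\{i,i+3,i+6\}$, and no $4$-faces exist. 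Also, $\chi=-6$ requires three triangles, not two.
\end{itemize}

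The substantive gap is in the last two assertions, which are about specific subgroups. Equal orders do not make two sublattices equal. So comparing orders cannot give ``exactly the factor-kernel quotients,'' and ``surjective with the same kernel'' is unsupported for $L_{14}$. Resultants give only orders, not group structure.

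The missing step is to identify $L^P$ and $\ker N_P$ with cyclotomic factor kernels. Kernels of endomorphisms of a lattice are saturated, so agreement over $\Q$ gives agreement over $\Z$.
\begin{itemize}
\item On $L_{14}$: $\ker(T^2-1)=U_2$, $N_{\langle T^2\rangle}=(\Phi_7\Phi_{14})(T)$, $\ker(T^7-1)=U_7$ and $N_{\langle T^7\rangle}=(\Phi_2\Phi_{14})(T)$. Hence $U_2\oplus U_7\oplus U_{14}$ lies in both denominators. This gives natural surjections from the factor-kernel quotient onto both norm quotients. Their product is surjective by coprimality, and it is an isomorphism by the order $7\cdot 2^6$ in \eqref{eq:index14}.
\item On $L_3$: $T^9=1$ gives $N_{\langle T^6\rangle}=1+T^3+T^6=\Phi_9(T)$, and $\ker(T^6-1)=\ker(T-1)=A_3$.
\item On $L_2$: $L^{\langle T^9\rangle}=A_2$ by definition. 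Also $\ker(1+T^9)=\ker\Phi_6(T)=D_2$, because $\gcd(1+t^9,(t^9-1)\Phi_6)=\Phi_6$.
\end{itemize}
Only with these identifications is $\mathcal G_P(L)$ literally $L/(A\oplus D)$.

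Being a two-factor lattice is not enough on its own. For $\langle T^6\rangle$ on $L_2$, the invariant lattice is $\ker(\Phi_1\Phi_3\Phi_6)(T)$, which is neither $A_2$ nor $D_2$.
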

\begin{proof}
For $\K(14,2)$, each orbit of $\langle T^2\rangle$ contains an edge of circular difference two, so both orbit vertices are looped and the fixed space is empty.
The half-turn orbit graph is $C_7^2$.
Its independence complex has seven vertices and seven edges of circular distance three, forming one seven-cycle, with no triangles.
Since $(h,q)=(13,2)$, formula~\eqref{eq:geometricgluing} gives $g=(13-6)/7=1$ and $g=(13-1)/2=6$.

For the three remaining rows the orbit graphs are $G(6,4)$, $G(9,5)$ and $G(9,5)$, respectively, with no loops.
The first independence complex consists of the three opposite pairs, hence three disjoint edges.
Multiplication by $2$ identifies the second orbit graph with the square of the nine-cycle.
Its independence complex has nine vertices, eighteen edges and the three triangles $\{i,i+3,i+6\}$, $0\leq i<3$.
These triangles have disjoint edges and can each be collapsed along one edge.
The resulting graph is connected, with nine vertices and fifteen edges, so it is a wedge of seven circles.
The free homology ranks and degrees in Theorem~\ref{thm:main} are $(h,q)=(7,3),(11,3),(7,5)$.
Formula~\eqref{eq:geometricgluing} gives $g=1,2,0$.
In the $\K(18,4)$ row, $L^P=A_3$ and $\ker N_P=D_3$; in the $\K(18,5)$ row, $L^P=A_2$ and $\ker N_P=D_2$, by their displayed rational factors and torsion-freeness.

For the three-factor identification write $L=L_{14}=\Z[t]/(\Phi_2\Phi_7\Phi_{14})$ and $U_i=\ker\Phi_i(T)$.
The actual subgroup invariant lattices are $L^{\langle T^2\rangle}=U_2$ and $L^{\langle T^7\rangle}=U_7$.
The respective norm kernels are $\ker(\Phi_7\Phi_{14})(T)$ and $\ker(\Phi_2\Phi_{14})(T)$.
Thus $U_2+U_7+U_{14}$ is contained in both denominator lattices and gives natural dihedral-equivariant surjections from the full factor quotient $Q$ onto the two norm quotients.
Their product is surjective because the two target orders are coprime: multiplying a preimage by an integer that is one modulo one target exponent and zero modulo the other isolates that component.
The established order $|Q|=7\cdot2^6$ in \eqref{eq:index14} equals the product of these orders, so the product map is an isomorphism.
It identifies the full primary components through the natural quotient maps, not merely by comparing group orders.
\end{proof}

In the last row $T^9$ acts trivially on homology, although the geometric subgroup is nontrivial.
The order-three subgroup of the effective homology group $C_9$ is a different action; it cannot be substituted for this geometric subgroup.
Instead, the residual action on its fixed space supplies a geometric calculation of that factor gluing.

\begin{corollary}[The effective gluing of $\K(27,5)$]\label{cor:effective27}
Put $X=\Ind(G(27,5))$, $L=\Ht_5(X;\Z)$, and $G=C_{27}=\langle T\rangle$.
Let $K=\langle T^9\rangle$, $F=|X|^K$, and $P=\langle T^3K\rangle\leq G/K$.
Then $\mathcal G_P(\Ht_5(X;\Z))\cong\Z/3$.
It is the full two-factor quotient $L/(\ker(T-1)+\ker\Phi_9(T))$.
\end{corollary}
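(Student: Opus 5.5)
The plan is to apply Lemma~\ref{lem:centralcomparison} with $G=C_{27}$, which is abelian, so $K=\langle T^9\rangle$ is central of order three. From Theorem~\ref{thm:main} and Corollary~\ref{cor:fixedexamples}, $L=\Ht_5(X;\Z)$ is free of rank seven, concentrated in degree $q=5$, and $K$ acts trivially on it. Indeed, the translation lattice is $L_3=\Z[t]/((t-1)\Phi_9(t))$, and $t^9=1$ there. The fixed space $F$ is the independence complex of the orbit graph $G(9,5)$, which has no loops, and Corollary~\ref{cor:fixedexamples} identifies it with a wedge of seven circles. So $M=\Ht_1(F;\Z)$ is free of rank seven in degree $d=1$, and $q\equiv d\pmod 2$ holds. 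The lemma then gives $\mathcal G_P(L)\cong\mathcal G_P(M)$ for $P=\langle T^3K\rangle$ of order three in $G/K\cong C_9$. Here $G/K$ acts on $M$ through the geometric rotation of the nine orbits.

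Next I compute $\mathcal G_P(M)$ with Theorem~\ref{thm:fixedgluing}, applied to the simplicial action of $P$ on $F=\Ind(G(9,5))$. The generator acts on orbit labels by $x\mapsto x+3$ modulo $9$. Its fixed space is the independence complex of the orbit graph of $G(9,5)$ under $\langle 3\rangle$, which has three vertices. Each orbit $\{i,i+3,i+6\}$ contains no pair differing by $\pm1$ or $\pm5\equiv\mp4$, so no loops occur. Every pair of distinct orbits is joined, since the differences cover all nonzero residues modulo $3$ and $1$ is one of them. The orbit graph is therefore $K_3$, whose independence complex is three points. This gives $b_0=0$ and $b_1=2$ relative to $d=1$. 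With $h=7$, formula~\eqref{eq:geometricgluing} yields $g=(7-0-2\cdot2)/3=1$, so $\mathcal G_P(L)\cong\Z/3$.

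For the identification with the two-factor quotient, I use $L\cong L_3$ with $T=t$, so that $P$ acts by $\sigma=t^3$. The invariant lattice $L^P=\ker(t^3-1)$ equals $\ker(T-1)$, because $\gcd(t^3-1,(t-1)\Phi_9)=t-1$. The norm is $N_P=1+t^3+t^6=\Phi_9(t)$, so $\ker N_P=\ker\Phi_9(T)$. Hence $\mathcal G_P(L)=L/(\ker(T-1)+\ker\Phi_9(T))$ literally, not merely up to order. The order three is also visible directly: $\ker\Phi_9(T)=(t-1)L$, $\ker(T-1)$ is generated by $\Phi_9$, and their sum is $(t-1,\Phi_9)$ in $\Z[t]/((t-1)\Phi_9)$, with quotient $\Z[t]/(t-1,\Phi_9)=\Z/3$. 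This gives an independent check of the geometric count.

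The main obstacle is making the hypotheses of Lemma~\ref{lem:centralcomparison} airtight. Two points need care: that $K$ acts trivially on $L$ (not just up to sign) and that $F$ really is the stated wedge of circles, so that $M$ is free in a single degree of matching parity. I would cite the translation module $L_3$ from Theorem~\ref{thm:dihedral} and the orbit-graph description of Proposition~\ref{prop:orbit-fixed}. In the comparison step, the only subtlety is that $P$ must act on $F$ simplicially after subdivision, which Theorem~\ref{thm:fixedgluing} already handles.
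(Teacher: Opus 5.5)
Your proposal is correct and follows essentially the same route as the paper. You transfer the gluing by Lemma~\ref{lem:centralcomparison} (using $5\equiv1\pmod 2$), compute $\mathcal G_P(H_1(F;\Z))\cong\Z/3$ from Theorem~\ref{thm:fixedgluing} with three fixed points, giving $b_0=0$, $b_1=2$ and $g=1$, and then identify $L^P=\ker(T-1)$ and $\ker N_P=\ker\Phi_9(T)$. The only differences are minor: you work directly in the $G(9,5)$ labeling instead of relabeling to $C_9^2$, and you add the direct algebraic check $\Z[t]/(t-1,\Phi_9)\cong\Z/3$ via Lemma~\ref{lem:gluing}.
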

\begin{proof}
Theorem~\ref{thm:main} gives free reduced homology concentrated in degree five, with $\operatorname{rank}L=7$ and $T^9=1$.
Corollary~\ref{cor:fixedexamples} identifies the orbit graph of $K$ as $G(9,5)$ and gives free reduced homology of rank seven in degree one for $F$.
Multiplication by two on its orbit labels identifies this graph with $C_9^2$, and takes $T$ to the square of the usual translation.
Under this labeling, $T^3$ becomes the sixth power of the usual translation; it generates the same subgroup as its third power.
The $P$ fixed space in $F$ consists of the three barycenters of the independent triples $\{i,i+3,i+6\}$.
Theorem~\ref{thm:fixedgluing} applied to $F$ gives $\mathcal G_P(H_1(F;\Z))\cong\Z/3$.
Lemma~\ref{lem:centralcomparison} transfers this norm gluing to $L$, because $5-1$ is even.
Finally the already established rational factors of $L$ are $t-1$ and $\Phi_9$.
For $\sigma=T^3$, its invariant rational subspace is the $t=1$ factor, and its norm is $1+T^3+T^6=\Phi_9(T)$.
Torsion-freeness gives $L^P=\ker(T-1)$ and $\ker N_P=\ker\Phi_9(T)$, identifying the stated quotient.

\end{proof}

The centralizer comparison transfers a module modulo three.
The integral lattice and the geometric reflection in Theorem~\ref{thm:dihedral} still come from the signed-cycle calculation.

\section{An integral action theorem for a cycle-power family}\label{sec:cyclepowerfamily}
Let $C_n^r$ denote the graph whose edges join vertices of circular distance at most $r$, and put
\[
 \mathcal Y_s=\Ind(C_{3s}^{s-1}),\qquad s\geq3.
\]
Its one-skeleton is the complement of $C_{3s}^{s-1}$: its edge differences are $s,\ldots,\lfloor3s/2\rfloor$.
In particular $\mathcal Y_s$ differs from the degree-four clique complex $\mathcal X_s$.
Throughout this section $T(i)=i+1$ and $R(i)=-i$ on the labeled vertices.
For $n\geq3$, define the signed regular lattices
\[
 \Pi_n^\epsilon=\Z[t]/(t^n-1),\qquad
 T p(t)=tp(t),\quad R p(t)=-t^\epsilon p(t^{-1}),\quad \epsilon=0,1.
\]
Also put $N_s(t)=1+t^s+t^{2s}$ and
\[
 M_s=\Z[t]/((t-1)N_s(t)),\qquad R p(t)=-p(t^{-1}).
\]
For $v\geq2$, set
\begin{align*}
 B_v&=\Z[t]/(1-t^v+t^{2v}),& R p(t)&=-p(t^{-1}),\\
 W_v&=\Z[t]/\bigl((t-1)(1+t^v+\cdots+t^{5v})\bigr),& R p(t)&=-t^{-v}p(t^{-1}).
\end{align*}
All these actions satisfy the indicated dihedral relations.
Theorem~\ref{thm:cyclepoweraction} determines the entire integral action, including reflection, for this family.
The ordinary homotopy type used in its proof follows from an elementary triangle collapse.

\begin{proof}[Proof of Theorem~\ref{thm:cyclepoweraction}]
An independent set in $C_{3s}^{s-1}$ has consecutive circular gaps at least $s$.
Consequently the only triangles of $\mathcal Y_s$ are
$\{i,i+s,i+2s\}$, $0\leq i<s$, and there are no higher faces.
Each triangle edge belongs to exactly one triangle.
Choose a directed Hamiltonian edge step $d_0$ as follows:
\[
 d_0=(3s-1)/2\quad(s\text{ odd});\qquad
 d_0=\begin{cases}-(3v-1),&v\text{ even},\\3v-2,&v\text{ odd},\end{cases}
 \quad(s=2v).
\]
Each step is a unit modulo $3s$, is $1$ modulo three, and belongs to an edge orbit different from the triangle-edge orbit.
For the second even case $v\geq3$; when $v$ is even, $v\geq2$ suffices.
Collapsing each triangle along one of its edges preserves this Hamiltonian cycle, so the resulting graph is connected.
There are $3s(s+1)/2$ edges before collapsing and $s$ triangles.
Thus the collapse gives a wedge of $(s-1)(3s-2)/2$ circles.

We now work on the original oriented chains, without requiring the collapse to preserve the action.
Write $e_{d,i}=[i,i+d]$, with subscripts modulo $n=3s$, and let
$u=\sum_{i=0}^{n-1}e_{d_0,i}$.
For each other directed edge step $d$, let $a_d$ be the integer in $\{1,\ldots,n-1\}$ congruent to $d d_0^{-1}$ modulo $n$, and define
\[
 c_{d,i}=e_{d,i}-\sum_{j=0}^{a_d-1}e_{d_0,i+jd_0}.
\]
These are cycles, with
\begin{equation}\label{eq:cyclepowerchords}
 T c_{d,i}=c_{d,i+1},\qquad R c_{d,i}=-c_{d,-i-d},\qquad Ru=-u.
\end{equation}
The full edge orbits give $n$ such cycles apiece.
If $n$ is even, use only $0\leq i<n/2$ for the half-turn orbit.
Together with $u$, these cycles form an integral basis of graph homology: subtracting their coefficients eliminates all non-Hamiltonian edges from any cycle, and the remainder is an integer multiple of $u$.
Independence follows by inspecting those edges, then one Hamiltonian edge.

For the triangle step $d=s$, one has $a_s=s$ because $d_0\equiv1\pmod3$.
Put $c=c_{s,0}$.
The triangle boundaries are therefore the translates of
\begin{equation}\label{eq:cyclepowertriangle}
 N_s(T)c+u.
\end{equation}
This accounts for all attaching relations.
When $s$ is odd, all other edge orbits are full.
Eliminating $u$ in \eqref{eq:cyclepowertriangle}, and using $Tu=u$, gives precisely the relation $(t-1)N_s c=0$.
For $w=t^s c$, \eqref{eq:cyclepowerchords} gives $Rw=-w$.
There are $(s-3)/2$ other non-Hamiltonian edge orbits.
Their generators have reflection $Rc_d=-t^{-d}c_d$; a shift $t^a c_d$ replaces the exponent by $-2a-d$.
Since $n$ is odd, each exponent can be made zero modulo $n$.
This proves the odd decomposition over $\Z$, with the two specified generators of the group.

Suppose now that $s=2v$.
The half-turn chord $h=c_{3v,0}$ satisfies
$c_{3v,i+3v}=-c_{3v,i}-u$; the two base paths together traverse the Hamiltonian cycle once.
Hence, after eliminating $u$, the complete presentation of the special two-generator part is
\begin{equation}\label{eq:cyclepowerevenpresentation}
 (t-1)N_{2v}c=0,\qquad (1+t^{3v})h=N_{2v}c,
 \qquad Rc=-t^{-2v}c,\quad Rh=-t^{-3v}h.
\end{equation}
The relations $t^{6v}c=c$ and $t^{6v}h=h$ follow from these two relations, so no periodicity relation is omitted.
For example, multiplying the second relation by $t^{3v}-1$ gives the relation for $h$, since $t^{3v}-1$ is divisible by $t-1$.

Set $x=t^v$, $A=1+x+x^2$, $B=1-x+x^2$, and $C=1+x$.
Then $N_{2v}=AB$, $1+x^3=BC$, and $A-xC=1$.
The integral change of generators
\[
 z=Ch-Ac,\qquad w=h-xc,
 \qquad c=Cw-z,\quad h=Aw-xz
\]
is invertible over $\Z[t,t^{-1}]$.
The two relations in \eqref{eq:cyclepowerevenpresentation} become
\[
 Bz=0,\qquad (t-1)ABCw=0.
\]
Indeed the second transformed relation follows after subtracting a multiple of $Bz=0$, and conversely it recovers $(t-1)ABc=0$.
This establishes a direct sum, rather than merely annihilating polynomials for two classes.
The reflection formulas in the new generators are
\[
 Rz=-x^{-4}z,\qquad
 Rw=-x^{-1}w+x^{-3}(x-1)z.
\]
Replacing $z$ by $z'=t^{-2v}z$ and $w$ by $w'=w-xz$ gives
$Rz'=-z'$ and $Rw'=-t^{-v}w'$.
For the latter equality, the coefficient of $z$ in $R(w-xz)$ is
\[
 x^{-3}(x-1)+x^{-5}=-(x-1)+x=1\pmod B,
\]
where $x^3=-1$ and $x^6=1$ modulo $B$.
Thus $R(w-xz)=-x^{-1}w+z=-x^{-1}(w-xz)$.
These replacements preserve the direct sum because $B$ divides $(t-1)ABC$.
Since $ABC=1+x+\cdots+x^5$, they give the blocks $B_v$ and $W_v$ with their stated reflections.

The remaining full edge orbits have steps $2v,\ldots,3v-1$, excluding the triangle step and the Hamiltonian step.
The latter has odd absolute step.
The phase in \eqref{eq:cyclepowerchords} can be shifted to zero for an even step and to one for an odd step, producing $\Pi_{6v}^0$ and $\Pi_{6v}^1$.
Counting these steps gives $(a_v,b_v)=(v/2-1,v/2-1)$ for even $v$, and $((v-1)/2,(v-3)/2)$ for odd $v$.
This proves the even decomposition and completes the full integral action calculation.
\end{proof}

\begin{corollary}\label{cor:cyclepowerscalar}
Let $\Z_-$ have $T=1$ and $R=-1$.
For every primitive $D_{3s}$-equivariant quotient $q:H_1(\mathcal Y_s;\Z)\to\Z_-$, the least positive integer $d$ for which $d\operatorname{id}_{\Z_-}$ has an equivariant lift is as follows.
For odd $s$, write $q=\alpha\operatorname{aug}_{M_s}+\sum_j\beta_j\operatorname{aug}_{\Pi_{3s}^0}$; then
\[
 d=3\gcd(\alpha,s).
\]
For $s=2v$, the map vanishes on $B_v$ and has coefficients $\alpha$ on $W_v$ and $\beta_j$ on the regular blocks; then
\[
 d=6\gcd(\alpha,v).
\]
For odd $s\geq5$, the possible minima are exactly $3e$ with $e\mid s$.
For $v\geq3$, they are exactly $6e$ with $e\mid v$.
At $s=3$ and $s=4$ the only minima are three and six, respectively.
\end{corollary}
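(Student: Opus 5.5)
The plan is to reduce the minimum to computing the image under $q$ of one fixed lattice, and then to compute that image block by block using Theorem~\ref{thm:cyclepoweraction}. An equivariant map $a:\Z_-\to H$, where $H=H_1(\mathcal Y_s;\Z)$, is determined by $x=a(1)$, and $x$ must satisfy $Tx=x$ and $Rx=-x$. Write $H^-$ for the lattice of all such $x$. Then $d\operatorname{id}_{\Z_-}$ lifts exactly when $d\in q(H^-)$, so the least $d$ is the positive generator of the ideal $q(H^-)\subseteq\Z$.

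The conditions $Tx=x$, $Rx=-x$ act summand by summand, so $H^-$ is the direct sum of the corresponding lattices of the blocks. I would therefore compute, for each block, its $T$-fixed lattice, check the reflection sign on it, and record its augmentation value.
\begin{itemize}
\item On $M_s$, the relation $(t-1)p\equiv0$ modulo $(t-1)N_s$ forces $N_s\mid p$. So the $T$-fixed lattice is $\Z N_s$.
\item For this generator, $RN_s=-t^{-2s}N_s\equiv -N_s$, so $N_s\in H^-$. Its augmentation is $3$.
\item On $\Pi_n^\epsilon$, the $T$-fixed lattice is spanned by the norm $\nu=\sum_i t^i$. One has $R\nu=-t^\epsilon\nu=-\nu$, so $\nu\in H^-$, with augmentation $n$.
\item On $W_v$, the fixed lattice is spanned by $1+t^v+\cdots+t^{5v}$, which has augmentation $6$. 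Its reflection sign is again $-1$, because $t^{-v}\cdot t^{-5v}$ acts trivially on it.
\item $B_v$ has no $t=1$ factor, so it contributes nothing to $H^-$. The same fact gives the claimed vanishing of $q$ on $B_v$.
\end{itemize}
I would also note that each augmentation really is equivariant to $\Z_-$, since $\operatorname{aug}(Rp)=-\operatorname{aug}(p)$ in every case. Moreover, equivariant maps from each cyclic block to $\Z_-$ factor through $t=1$, which justifies the stated coefficient form of $q$.

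With these values the image ideal follows directly. For odd $s$,
\[
 q(H^-)=\gcd(3\alpha,3s\beta_j)\Z .
\]
Primitivity of $q$ means $\gcd(\alpha,\beta_j)=1$. Hence $\gcd(\alpha,s\gcd_j\beta_j)=\gcd(\alpha,s)$, which gives $d=3\gcd(\alpha,s)$. For $s=2v$ the same argument gives $\gcd(6\alpha,6v\beta_j)=6\gcd(\alpha,v)$.

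For realizability I take $\alpha=e$ and $\beta=1$ on a single regular block, which is possible as soon as one regular block exists. This occurs for odd $s\geq5$, since $(s-3)/2\geq1$, and for $v\geq3$, since $a_v+b_v\geq1$ there. This realizes every $3e$ with $e\mid s$, respectively every $6e$ with $e\mid v$.

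At $s=3$ the module is $M_3$ alone, and at $s=4$ it is $B_2\oplus W_2$ with no regular blocks. In both cases primitivity forces $\alpha=\pm1$, so the minima are $3$ and $6$.

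The main point needing care is the claim that the $T$-fixed lattice of each cyclic block is exactly the cyclic span of the displayed norm-type element, and not merely rationally so. This is a divisibility statement in $\Z[t]/(f)$ and follows from monic division by $f/(t-1)$.
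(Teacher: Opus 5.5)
Your proposal is correct and follows essentially the same route as the paper. Both arguments identify the lifting image with $q$ applied to the $T$-fixed lattice, which is automatically $R$-anti-invariant. That lattice is computed blockwise by monic division, generated by $N_s$, by $1+t^v+\cdots+t^{5v}$ and by the regular norms, with nothing from $B_v$; the ideals $(3\alpha,3s\beta_j)$ and $(6\alpha,6v\beta_j)$ are then reduced via primitivity, and all divisors are realized using one regular block.
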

\begin{proof}
An equivariant scalar map on each displayed cyclic block is a multiple of augmentation; $B_v$ has no such nonzero map since its annihilator is one at $t=1$.
The other annihilators vanish there, and each reflection has negative augmentation, so all the stated maps exist.
Primitivity is exactly $\gcd(\alpha,\beta_1,\ldots)=1$.
The entire $T$-fixed lattice of $M_s$ is generated by $N_s$, whose augmentation is three.
That of $W_v$ is generated by $1+t^v+\cdots+t^{5v}$, whose augmentation is six.
The block $B_v$ has no $T$-fixed element; a regular block has fixed generator $1+t+\cdots+t^{n-1}$, of augmentation $n$.
These assertions follow by monic division of $(t-1)p$ by the relevant annihilator.
Every such fixed generator has reflection $-1$.
Consequently the full dihedral lifting images are
\[
 (3\alpha,3s\beta_1,\ldots)\Z
 \quad\text{and}\quad
 (6\alpha,6v\beta_1,\ldots)\Z,
\]
respectively, where the notation denotes the ideal generated by the entries.
Primitivity reduces their positive generators to the asserted gcd formulas; Bezout gives an element attaining each generator.
With a regular block present, choose $\alpha=e$ and one $\beta_j=1$ to realize every stated divisor value.
At the two exceptional parameters no regular block is present and $\alpha=\pm1$.
\end{proof}

The baseline divisibilities have a fixed-point explanation.
The order-three subgroup $\langle T^s\rangle$ fixes exactly the $s$ triangle barycenters of $\mathcal Y_s$.
For even $s=2v$, the order-two subgroup $\langle T^{3v}\rangle$ fixes exactly the $3v$ half-turn edge barycenters.
There can be no further fixed supports, since an invariant simplex must be a union of vertex orbits and has at most three vertices.
Theorem~\ref{thm:fixedgluing} gives zero $\widehat H^0$ in both cases, forcing every primitive scalar quotient to take the subgroup invariant lattice onto $3\Z$ or $2\Z$.
Thus the minima are multiples of three in the odd case and six in the even case.
The full block decomposition, rather than these divisibilities alone, supplies the gcd dependence and the optimal lifts in Corollary~\ref{cor:cyclepowerscalar}.

\subsection{The two adjacent vertex counts}
The triangle relations simplify at $n=3s+1$ and $3s+2$.
The resulting presentations give Theorem~\ref{thm:cyclepowerneighbors}; the blocks are ordinary graph-cycle lattices with the specified geometric actions.

\begin{proof}[Proof of Theorem~\ref{thm:cyclepowerneighbors}]
The one-skeleton has circular edge distances $s,\ldots,\lfloor n/2\rfloor$.
An independent set has all circular gaps at least $s$, so it has at most three vertices in the stated range.
For $a=1$, the triangles have gaps $(s,s,s+1)$ and form one full orbit of length $n$.
For $a=2$, there are two full orbits: type A has gaps $(s,s,s+2)$ and type B has gaps $(s,s+1,s+1)$.
The three gaps are not all equal, since $3\nmid n$, so none of these orbits has a nontrivial stabilizer.
Every triangle belongs to one of the listed orbits.

Choose the Hamiltonian edge step $d_0=s$ for $a=1$, and $d_0=s+1$ for $a=2$.
Its inverse modulo $n$ is respectively $-3$ or $3$.
Use the cycles $c_{d,i}$ and $u$ constructed in \eqref{eq:cyclepowerchords}, with this new $n,d_0$.
Their integral graph-cycle basis and their translation and reflection formulas have the same proof: elimination of the non-Hamiltonian edge coefficients leaves an integer multiple of $u$.
The half-turn chords, when present, again satisfy
\begin{equation}\label{eq:neighborhalfturn}
 (1+t^m)h=-u,\qquad Rh=-t^{-m}h,\qquad m=n/2.
\end{equation}
These are graph-cycle identities before any triangle is filled.

For $a=1$, the base-path length for step $s+1$ is $n-2$.
The triangle with consecutive steps $s,s,s+1$ therefore has boundary
\[
 c_{s+1,i+2s}+u.
\]
Its translates impose precisely $c_{s+1,j}=-u$ for every $j$.
For $a=2$, the base-path lengths for steps $s$ and $s+2$ are $n-2$ and four.
The boundaries of type B and type A, respectively, are
\begin{equation}\label{eq:neighbortriangles}
 c_{s,i}+u,\qquad
 c_{s,i}+c_{s,i+s}+c_{s+2,i+2s}+2u.
\end{equation}
For the first boundary, the two Hamiltonian edges complete a base path of total length $n$.
For the second, the total length of the three paths is $2(n-2)+4=2n$, giving the coefficient two on $u$.
Thus all attaching relations are equivalent to $c_{s,j}=-u$ and $c_{s+2,j}=0$ for every $j$.
These substitutions are compatible with $T$ and $R$ and have coefficient one, so they eliminate entire chord orbits integrally.
They impose no further relation on the remaining graph-cycle generators.

The ordinary collapse also makes this last assertion transparent at the level of homology groups.
For $a=1$, every edge of distance $s+1$ is in exactly one triangle; collapse each triangle along that edge.
For $a=2$, each distance-$s+2$ edge occurs in exactly one type-A triangle and no type-B triangle.
Collapse all type-A triangles first; then each distance-$s$ edge is in exactly one remaining type-B triangle, and collapse those.
The Hamiltonian edge orbit remains in both cases, so the resulting graph is connected.
There are $n(s+a+1)/2$ edges initially and $an$ triangles.
The graph after collapse has first Betti number
\[
 n(s+a+1)/2-an-n+1=n(s-a-1)/2+1.
\]
It supplies free homology in degree one and no other reduced homology; the preceding chain relations retain the original group action without requiring the collapse to be equivariant.

The remaining full edge-orbit steps are
\[
 s+a+1,\ldots,\lfloor(n-1)/2\rfloor.
\]
If $n$ is odd, $u$ generates $\Z_-$ and each remaining full orbit is a signed regular lattice.
Its reflection phase can be centered to zero, as in the odd case of Theorem~\ref{thm:cyclepoweraction}.
There are $(s-a-1)/2$ such orbits, proving the odd formula.

If $n=2m$ is even, retain the half-turn orbit and eliminate $u$ using \eqref{eq:neighborhalfturn}.
The complete relations on its generator are
$(t-1)(1+t^m)h=0$ and $t^{2m}h=h$.
The second follows from the first, so this is exactly $H_m$, with the reflection in \eqref{eq:neighborhalfturn}.
The other full orbits are independent regular summands; shifting their generators centers the reflection phase to their step parity.
The first remaining step $s+a+1$ is odd, since $n$ is even.
The $k=(s-a-2)/2$ remaining steps therefore contribute $\lceil k/2\rceil$ odd phases and $\lfloor k/2\rfloor$ even phases, as asserted.
The four endpoint cases have no regular summands and are included in these formulas.
\end{proof}

\begin{corollary}\label{cor:neighborscalar}
For a primitive $D_n$-equivariant quotient $q:H_1(\mathcal Z_{s,a};\Z)\to\Z_-$, write $\alpha$ for its coefficient on the first summand of Theorem~\ref{thm:cyclepowerneighbors}, and $\beta_j$ for its augmentation coefficients on the regular summands.
Then the least positive scalar lift is
\[
 d=\begin{cases}
 \gcd(\alpha,n),&n\text{ odd},\\
 2\gcd(\alpha,m),&n=2m.
 \end{cases}
\]
When a regular summand is present, all divisors of $n$ occur in the odd case and all $2e$, $e\mid m$, occur in the even case.
At $(s,a)=(2,1),(3,2)$ only one occurs; at $(3,1),(4,2)$ only two occurs.
\end{corollary}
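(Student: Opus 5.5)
The plan follows the proof of Corollary~\ref{cor:cyclepowerscalar}, with the block decomposition of Theorem~\ref{thm:cyclepowerneighbors} in place of Theorem~\ref{thm:cyclepoweraction}.

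\textbf{Step 1: maps to $\Z_-$ block by block.} An equivariant map from a cyclic block $\Z[t]/(f)$ to $\Z_-$ is determined by the image of the generator $1$. Since $T$ acts trivially on $\Z_-$, the map factors through $t=1$, so it is a multiple of augmentation. The reflection condition then forces the augmented reflection to be $-1$, and this holds in every block: on $\Pi_n^\epsilon$ one has $Rp=-t^\epsilon p(t^{-1})$; on $H_m$ one has $Rp=-t^{-m}p(t^{-1})$; and on $\Z_-$ it holds trivially. Hence each $q$ has the form $\alpha\operatorname{aug}_{\mathrm{first}}+\sum_j\beta_j\operatorname{aug}_{\Pi_n^{\epsilon_j}}$. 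Primitivity means $\gcd(\alpha,\beta_1,\ldots)=1$.

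\textbf{Step 2: the fixed lattice.} An equivariant lift of $d\operatorname{id}_{\Z_-}$ is a $D_n$-fixed element $x$ of the anti-invariant type ($Tx=x$, $Rx=-x$) with $q(x)=d$. So the minimal $d$ is the positive generator of $q$ applied to that fixed sublattice. I compute the $T$-fixed lattice of each block by monic division of $(t-1)p$ by the annihilator.
\begin{itemize}
\item For $\Z_-$, the fixed lattice is everything, with augmentation $1$.
\item For $H_m=\Z[t]/((t-1)(1+t^m))$, it is generated by $1+t^m$, with augmentation $2$. Its reflection is $-t^{-m}(1+t^{-m})=-(t^{-m}+t^{-2m})=-(t^{-m}+1)$, so it is anti-invariant, as required.
\item For $\Pi_n^\epsilon$, it is generated by $1+t+\cdots+t^{n-1}$, with augmentation $n$ and reflection $-1$.
\end{itemize}
Since each fixed generator already has $R=-1$, the $D_n$-fixed anti-invariant lattice equals the $T$-fixed lattice. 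Therefore the image of $q$ is the ideal $(\alpha,n\beta_1,\ldots)$ in the odd case and $(2\alpha,n\beta_1,\ldots)=2(\alpha,m\beta_1,\ldots)$ in the even case. By primitivity, $\gcd(\alpha,n\beta_j)=\gcd(\alpha,n)$, and similarly $\gcd(\alpha,m\beta_j)=\gcd(\alpha,m)$. Bezout then realizes each generator.

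\textbf{Step 3: realizing the values.} With at least one regular summand present, take $\alpha=e$ for a divisor $e$ and one $\beta_j=1$. This is primitive and gives $e$ in the odd case and $2e$ in the even case.

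In the endpoint cases there is no regular summand, so $\alpha=\pm1$. Counting from the theorem, $k=(s-a-1)/2$ for odd $n$ vanishes at $(2,1)$ and $(3,2)$, giving the value $1$. For even $n$, $k=(s-a-2)/2$ vanishes at $(3,1)$ and $(4,2)$, giving the value $2$.

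\textbf{Main obstacle.} The only delicate point is the fixed-lattice computation for $H_m$: checking that $1+t^m$ generates the whole $T$-fixed lattice integrally, and that its reflection sign is correct. This reduces to the identity $\gcd$-quotient $((t-1)(1+t^m))/(t-1)=1+t^m$, which is monic.
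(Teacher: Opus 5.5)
Your proposal is correct and follows essentially the same argument as the paper. In both, equivariant maps to $\Z_-$ are blockwise multiples of augmentation, and the full $T$-fixed lattice is generated by $u$ or $1+t^m$ together with the regular norms, all with $R=-1$. Its image is therefore $(\alpha,n\beta_1,\ldots)$ or $(2\alpha,n\beta_1,\ldots)$, and primitivity with B\'ezout gives the formulas and the realizations.

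One small precision: the identity $\gcd(\alpha,n\beta_j)=\gcd(\alpha,n)$ holds only for the joint gcd $\gcd(\alpha,n\beta_1,\ldots,n\beta_k)$. Primitivity says $\gcd(\alpha,\beta_1,\ldots,\beta_k)=1$, not that $\alpha$ is coprime to each $\beta_j$ individually.
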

\begin{proof}
Every equivariant map on a cyclic summand is a multiple of augmentation; on $\Z_-$ it is a scalar.
All these maps have the required reflection action, and primitivity is exactly $\gcd(\alpha,\beta_1,\ldots)=1$.
For odd $n$, the entire $T$-fixed lattice has generators $u$ and the regular norms, so its image is the ideal $(\alpha,n\beta_1,\ldots)$.
For even $n$, monic division gives $H_m^T=\Z(1+t^m)$, with augmentation two; together with the regular norms it gives image $(2\alpha,n\beta_1,\ldots)$.
Every displayed fixed generator has reflection $-1$.
Consequently this image is the full equivariant scalar lifting ideal.
Primitivity gives the two gcd formulas, and Bezout supplies lifts attaining them.
Choosing one $\beta_j=1$ and $\alpha$ equal to a chosen divisor realizes all listed minima when a regular summand is present; otherwise $\alpha=\pm1$.
\end{proof}

For even $n$, the geometric half-turn fixes exactly the $m$ antipodal edge barycenters.
No triangle is fixed, because an invariant support is a union of two-element vertex orbits.
Theorem~\ref{thm:fixedgluing} therefore gives $\widehat H^0(C_2,H_1)=0$ and explains the common factor two in Corollary~\ref{cor:neighborscalar}.
The order-three triangle stabilizers of $\mathcal Y_s$ disappear at the two adjacent vertex counts; the actual presentations determine the resulting change of lattices and lifting ideals.

\section{Orbit traces and low-rank integral actions}\label{sec:framework}
The trace calculation and the integral lattice calculation answer different questions.
The former is a standard application of the alternating chain-trace identity; the latter requires additional arithmetic information.
We first state the trace calculation in a form that also explains the low-rank entries of Theorem~\ref{thm:main}.

\begin{proposition}\label{prop:orbittrace}
Let $G$ be a finite simple graph and $g$ a graph automorphism.
Form a possibly looped quotient graph $Q_g$ on the vertex orbits of $g$: two orbits are adjacent whenever an edge of $G$ joins them, and an orbit has a loop if it contains an edge.
An independent set of a looped graph cannot contain a vertex with a loop.
Then the reduced Lefschetz number of $g$ is
\[
 \sum_i(-1)^i\tr(g_*\mid\Ht_i(\Ind(G);\Q))
 =\widetilde\chi(\Ind(Q_g)).
\]
Here the complex consisting only of the empty face has reduced Euler characteristic $-1$.
If reduced homology is concentrated in degree $q$, its trace is therefore $(-1)^q\widetilde\chi(\Ind(Q_g))$.
\end{proposition}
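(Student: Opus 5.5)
The plan is to compute the Lefschetz number at the chain level and then identify the fixed faces with independent sets of $Q_g$. Rational homology and rational chains of the augmented complex $\widetilde C_*(\Ind(G);\Q)$ have the same alternating trace, by the Hopf trace formula. The formula applies to the chain map $g_\#$ because $g$ is simplicial: a graph automorphism preserves independence. Including the augmentation degree $-1$, with the empty face, is what produces reduced homology and reduced Euler characteristics. So the left side equals $\sum_i(-1)^i\tr(g_\#\mid\widetilde C_i)$, summed from $i=-1$.

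Next I will compute each chain trace in the oriented face basis. The map $g_\#$ permutes faces up to sign, so only faces $\sigma$ with $g\sigma=\sigma$ contribute. Such a face contributes $\pm1$, namely the sign of the permutation that $g$ induces on the vertices of $\sigma$. A $g$-invariant face is a union of vertex orbits $O_1,\ldots,O_r$. It is independent exactly when no edge lies inside any $O_j$, so no $O_j$ carries a loop, and no edge joins two of them. This means $\{O_1,\ldots,O_r\}$ is an independent set of $Q_g$, and it gives a bijection between invariant faces and faces of $\Ind(Q_g)$, the empty face included.

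The main step is the sign bookkeeping. On an orbit of length $\ell$, $g$ acts as an $\ell$-cycle of sign $(-1)^{\ell-1}$. The face $\sigma$ has dimension $\sum_j\ell_j-1$, so its contribution is
\[
(-1)^{\sum\ell_j-1}\prod_j(-1)^{\ell_j-1}=(-1)^{r-1}.
\]
This is exactly the term of $\{O_1,\ldots,O_r\}$, a face of dimension $r-1$, in $\widetilde\chi(\Ind(Q_g))$. The empty face gives $-1$ on both sides, which matches the stated convention. Summing over all invariant faces gives the identity.

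The final sentence of the proposition then follows at once. If reduced rational homology is concentrated in degree $q$, the Lefschetz sum reduces to $(-1)^q\tr(g_*)$, so $\tr(g_*)=(-1)^q\widetilde\chi(\Ind(Q_g))$. I expect no real obstacle. The one point to check carefully is the sign of the induced permutation on an invariant face relative to a chosen orientation, which is handled by the cycle-type computation above.
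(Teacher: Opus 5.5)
Your proposal is correct and follows essentially the same route as the paper: the chain-level alternating trace on the augmented complex, the identification of setwise-fixed faces with independent sets of $Q_g$, and the orientation sign $\prod_j(-1)^{\ell_j-1}=(-1)^{m-a}$, which yields the contribution $(-1)^{r-1}$. The only difference is that you cite the Hopf trace formula, whereas the paper sketches its proof by trace additivity over the cycle and boundary sequences.
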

\begin{proof}
A face fixed setwise by $g$ is a union of vertex orbits, and the fixed independent faces correspond exactly to independent sets of $Q_g$.
Suppose such a face has $m$ vertices in $a$ orbits.
The orientation sign of $g$ on that simplex is $(-1)^{m-a}$.
Its contribution to the alternating simplicial trace is
$(-1)^{m-1}(-1)^{m-a}=(-1)^{a-1}$.
The empty face contributes $-1$ in the augmented complex.
Summing gives the reduced Euler characteristic of the quotient independence complex.
The alternating chain-trace identity gives the same sum on homology.
This identity follows by trace additivity in the cycle and boundary exact sequences: the boundary terms cancel between adjacent degrees.
\end{proof}

For $G(n,s)$ and $g=\tau^k$, put $d=\gcd(n,k)$ and write $Q_k=Q_g$.
Its vertices are the residue classes modulo $d$.
For one-step translation, $d=1$ and $Q_1$ has a loop, so the reduced Lefschetz number is $-1$.
More generally, $Q_k$ has a loop at every vertex if $d$ divides either $1$ or $s$.
Otherwise it is the simple Cayley graph with the nonzero jumps $\pm1,\pm s$ reduced modulo $d$, with repeated edges removed.
Thus quotient independence complexes determine the rational rotation character, but need not determine an integral lattice.

\begin{proposition}\label{prop:lowrank}
Let $G$ be a simple graph on $\Z/n\Z$ with at least one edge, invariant under one-step translation.
Suppose $\Ht_*(\Ind(G);\Z)$ is free, concentrated in one degree $q>0$, and has rank $r\in\{1,2\}$.
The integral translation module is determined as follows:
\[
\begin{array}{c|c|c}
 r & q & \text{module}\\ \hline
 1 & q\text{ odd} & \Z[t]/(t-1)\\
 1 & q\text{ even} & \Z[t]/(t+1)\\
 2 & q\text{ even} & \Z[t]/(t^2+t+1)\\
 2 & q\text{ odd} & \Z[t]/(t^2-t+1).
\end{array}
\]
The action orders in the last three rows are respectively $2,3,6$, and hence divide $n$.
\end{proposition}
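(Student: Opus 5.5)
The plan is to combine the trace formula of Proposition~\ref{prop:orbittrace} with the elementary classification of integral matrices of finite order in ranks one and two. Let $T$ be the translation acting on the lattice $L=\Ht_q(\Ind(G);\Z)\cong\Z^r$; then $T^n=1$ on $L$. Since $G$ has an edge and is translation invariant, the orbit quotient $Q_1$ for the one-step translation $g=\tau$ is a single vertex carrying a loop. So $\Ind(Q_1)$ is the complex with only the empty face, and its reduced Euler characteristic is $-1$. By Proposition~\ref{prop:orbittrace}, $\tr(T\mid L\otimes\Q)=(-1)^q\cdot(-1)=(-1)^{q+1}$. Translation by $j$ steps has nontrivial orbits for $j\not\equiv0$, but we only need $\tau$ itself together with the finiteness of its order.

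\emph{Rank one.} Here $T=\pm1$, so $T$ equals its trace, namely $1$ for $q$ odd and $-1$ for $q$ even. This gives $\Z[t]/(t-1)$ and $\Z[t]/(t+1)$ respectively. When $q$ is even, $T=-1$ has order $2$ and $T^n=1$, so $2\mid n$.

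\emph{Rank two.} $T$ is a finite-order element of $\mathrm{GL}_2(\Z)$ with trace $\pm1$, so its characteristic polynomial is $t^2\mp t+\det T$ with $\det T=\pm1$. Because the order is finite, the eigenvalues are roots of unity. If $\det T=-1$, the eigenvalues are real of product $-1$, hence $\{1,-1\}$, and the trace would be $0$. That contradicts trace $\pm1$, so $\det T=1$.
\begin{itemize}
\item For $q$ even the trace is $-1$, the characteristic polynomial is $t^2+t+1$, and $T$ has order $3$.
\item For $q$ odd the trace is $1$, the characteristic polynomial is $t^2-t+1$, and $T$ has order $6$.
\end{itemize}
In both cases the order divides $n$ because $T^n=1$.

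\emph{Identifying the module.} The remaining step, and the only real obstacle, is to pass from the characteristic polynomial to the isomorphism class of the $\Z[t]$-module. The polynomial $f=t^2\pm t+1$ is irreducible, so $L$ is a torsion-free module over $\Z[t]/(f)\cong\Z[\zeta_3]=\Z[\zeta_6]$, which has rank two over $\Z$. It is therefore isomorphic to a fractional ideal of $\Z[\zeta_3]$. Since $\Z[\zeta_3]$ is a principal ideal domain (class number one), this ideal is free of rank one, so $L\cong\Z[t]/(f)$. For the table to read as stated, I would check that the module $\Z[t]/(t^2-t+1)$ is the $\Z[\zeta_6]$-structure with $t$ acting as $\zeta_6$; this is the same ring as $\Z[\zeta_3]$, so the PID argument applies unchanged.
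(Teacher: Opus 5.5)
Your proposal is correct and follows essentially the same route as the paper: the Lefschetz trace $(-1)^{q+1}$ for one-step translation, the exclusion of $\det T=-1$ because the eigenvalues would then be $\pm1$ with trace zero, and the passage from $\Phi_3$ or $\Phi_6$ to a rank-one torsion-free module over the Eisenstein ring. The only difference is that you cite class number one for $\Z[\zeta_3]$, where the paper instead shows directly that this ring is Euclidean.
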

\begin{proof}
No nonempty independent face is fixed setwise by one-step translation: the only nonempty invariant vertex subset is the full vertex set, which contains an edge.
The reduced alternating chain trace is $-1$.
Hence the trace $\epsilon$ in degree $q$ is $(-1)^{q+1}$.
In rank one this directly gives multiplication by $\epsilon$.

In rank two let $T$ be the finite-order integral action matrix.
Its determinant cannot be $-1$: a real finite-order matrix of dimension two with negative determinant has eigenvalues $1,-1$ and trace zero.
Thus $\det T=1$, and the characteristic polynomial is $t^2-\epsilon t+1$.
It is $\Phi_3$ or $\Phi_6$, and is irreducible over $\Q$.
Cayley--Hamilton makes the lattice a torsion-free rank-one module over the corresponding quadratic ring.
Torsion-freeness follows by embedding in the rationalization, where every nonzero ring element acts invertibly.
Both rings are the Eisenstein ring $\Z[\zeta_3]=\Z[\zeta_6]$, up to the chosen generator.
This ring is Euclidean: rounding the two coefficients in the basis $1,\zeta_3$ leaves a remainder $a+b\zeta_3$ with $|a|,|b|\le1/2$ and norm $a^2-ab+b^2\le3/4<1$.
Consequently a finitely generated torsion-free rank-one module over it is free.
The displayed polynomials give the exact orders, which must divide $n$.
\end{proof}

These propositions assemble standard trace and quadratic-ring arguments.
They require neither an invariant matching nor an equivariant wedge decomposition.
For larger rank we will compute the geometric action on actual integral cycles; knowing all rotation traces alone would still leave the gluing of rational factors unresolved.

\section{Local blocks and periodic Morse reductions}\label{lb-section}

The ordinary reductions used in the action calculations admit a local formulation.
For a graph with bounded edge differences, we isolate the matching near the ends of a block and give a finite condition under which the block can be duplicated.
The resulting criterion propagates critical faces and their dimensions through an infinite family.
We then prove the eight-periodic reduction for the jumps $\{2,3\}$, including the complete finite part of the argument.

\subsection{Ordered toggles}
Let $K$ be a finite simplicial complex with ordered vertices
$v_0,\ldots,v_{N-1}$.  Starting with the nonempty faces, at stage $i$ match
each available pair $\sigma,\sigma\cup\{v_i\}$ with $v_i\notin\sigma$.
Delete both faces from the available set before the next stage.  Pairs
formed at the same stage are disjoint.  If $a_i$ is the indicator of the
available set after stage $i$, extended by zero to all other vertex subsets,
then
\begin{equation}\label{lb-toggle-recurrence}
 a_i(x)=a_{i-1}(x)\bigl(1-a_{i-1}(x\mathbin\triangle\{v_i\})\bigr).
\end{equation}
Initially $a_{-1}$ is the indicator of $K\setminus\{\varnothing\}$.

\begin{lemma}\label{lb-acyclic}
The ordered-toggle matching is acyclic.  If its critical faces are one
vertex and $m$ faces of a single dimension $d>0$, then
$K\simeq\bigvee^m S^d$.
\end{lemma}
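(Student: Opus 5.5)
The plan has two parts. First, acyclicity comes from the standard stage-wise argument, in the spirit of the element-matching lemma of Jonsson. Then the homotopy conclusion follows from Forman's theorem \cite{Forman} together with a dimension count.

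\emph{Acyclicity.} Consider a gradient path
\[
\sigma_0\nearrow\tau_0\searrow\sigma_1\nearrow\tau_1\searrow\cdots,
\]
where each $\tau_j=\sigma_j\cup\{v_{i_j}\}$ is a matched pair formed at stage $i_j$, and $\sigma_{j+1}\neq\sigma_j$ is a codimension-one face of $\tau_j$. I will show that the sequence of stages $i_j$ is nonincreasing, and that it cannot stay constant along a cycle.

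Within one stage $i$ this is easy. Since $\sigma_{j+1}$ is obtained from $\tau_j$ by deleting a vertex other than $v_i$, it still contains $v_i$. So it cannot be the lower face of a stage-$i$ pair, and the path stops being a stage-$i$ step.

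Now suppose $\sigma_{j+1}$ is matched upward at a later stage $i'>i$. Then $\sigma_{j+1}$ was available after stage $i$. I would use recurrence~\eqref{lb-toggle-recurrence} to derive a contradiction. Set $\rho=\sigma_{j+1}\setminus\{v_i\}$. This is a face of $\sigma_j$, and both were available before stage $i$, provided we track availability at the earlier stages. But at stage $i$ the face $\rho$ had partner $\sigma_{j+1}$, so one of the two was removed at stage $i$; this uses that pairs at one stage are disjoint and determined by toggling $v_i$.

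This "availability is inherited by toggled faces" step is the delicate point, and I expect it to be the main obstacle. My first attempt would be an induction on stages, proving the following. If $x$ and $x\triangle\{v_i\}$ are both in $K$ and both survive stages $<i$, then one of them is matched at stage $i$. Moreover, a face lying in the available set at a stage only has boundary faces that were removed at earlier-or-equal stages in a monotone way.

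If that direct route becomes messy, the fallback is to realize the matching as an iterated element matching. Each stage is the element matching by $v_i$ on the subposet of still-available faces. The Patchwork or Cluster lemma, applied to the poset map sending each face to the stage at which it is removed, then gives acyclicity, since each fiber matching is an element matching and is therefore acyclic.

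\emph{Homotopy type.} By Forman's theorem \cite{Forman}, $K$ is homotopy equivalent to a CW complex with one cell for each critical face. That gives one $0$-cell and $m$ cells of dimension $d>0$. The $d$-cells attach to the $d-1$ skeleton, which is a single point because $d-1\ge 0$ and no other cells lie below dimension $d$. Hence every attaching map is constant, and the complex is $\bigvee^m S^d$.

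The nonempty-face convention matters here. Since we start with the nonempty faces, the unique critical vertex supplies the $0$-cell, and no augmented cell needs to be counted.
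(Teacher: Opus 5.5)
Your homotopy-type step is correct and matches the paper's. The acyclicity part has a genuine gap. The claim that the stages $i_j$ are nonincreasing along a gradient path is false, so the step you flagged as delicate cannot be proved.

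\emph{Counterexample.} Take the complex on $0,\ldots,4$, with $v_i=i$, whose facets are $\{0,2,4\},\{1,2,4\},\{1,3,4\}$. The stages are:
\begin{itemize}
\item stage $0$ pairs $\{2\},\{4\},\{2,4\}$ with their unions with $\{0\}$;
\item stage $1$ pairs $(\{3\},\{1,3\})$ and $(\{3,4\},\{1,3,4\})$;
\item stage $2$ pairs $(\{1\},\{1,2\})$ and $(\{1,4\},\{1,2,4\})$.
\end{itemize}
The gradient path $\{3,4\}\nearrow\{1,3,4\}\searrow\{1,4\}\nearrow\{1,2,4\}$ has stages $1$ and then $2$. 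In your notation $\rho=\{4\}$ was already removed at stage $0$, so it was not available before stage $1$. The opposite inequality also fails. For facets $\{0,1,2\},\{1,2,3\}$, the path $\{2,3\}\nearrow\{1,2,3\}\searrow\{1,2\}\nearrow\{0,1,2\}$ has stages $1$ and then $0$. So the removal stage is not a potential along gradient paths in either direction.

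\emph{The fallback.} It fails for the same reason: the removal-stage map is not order-preserving, so the Cluster Lemma does not apply. In the first example $\{4\}\subset\{1,4\}\subset\{1,3,4\}$ are removed at stages $0,2,1$. Adding the facet $\{2,3,5\}$ leaves these stages unchanged, while $\{3\}\subset\{3,5\}$ are then removed at stages $1$ and $2$. So no partial order on the stages repairs the map.

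\emph{What is missing.} You need a global argument on a putative cycle rather than a local monotone quantity; this is how the paper proceeds.
\begin{itemize}
\item A directed cycle alternates matched upward edges with unmatched downward edges. Let $v_i$ be the smallest pivot among its upward edges, and start just after such an edge.
\item Follow the cycle to the first downward edge $\tau\to\tau\setminus\{v_i\}$ that deletes $v_i$. It exists because upward edges only add vertices and the cycle must return to a face without $v_i$.
\item The upward edge into $\tau$ cannot have pivot $v_i$, since then this downward incidence would be the matched one. The upward edge out of $\tau\setminus\{v_i\}$ cannot have pivot $v_i$ either, since it would be the same pair.
\item By minimality, both faces are matched at stages later than $i$. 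Hence both were still available at stage $i$, and the toggle recurrence would have paired them there, a contradiction.
\end{itemize}
The decisive down-step deletes the minimal pivot itself. Your argument only examines down-steps deleting a vertex other than the preceding pivot, and there no contradiction is available, as the first example shows.
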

\begin{proof}
Orient matched incidences upward and all other incidences downward.
A directed cycle would have equally many upward and downward edges.
Two upward edges cannot be consecutive, since a face belongs to at most
one matching pair; hence the edges of such a cycle alternate.  Choose the
smallest pivot $v_i$ used by an upward edge of the cycle and start just
after that edge.  Eventually a downward edge removes $v_i$, say
$\tau\longrightarrow\tau\setminus\{v_i\}$.
The upward edge immediately before it has a different pivot, because a
matched edge is not oriented downward.  That pivot is therefore later
than $v_i$.  The upward edge immediately after it likewise has a different
pivot, since otherwise it would reverse the same downward edge.
Thus both $\tau$ and $\tau\setminus\{v_i\}$ were available at stage $i$.
They would have been paired then, a contradiction.

Discrete Morse theory~\cite{Forman} gives a CW model with precisely the
critical cells.  Under the stated profile it has one vertex and $m$
$d$-cells.  For $d=1$ this is a wedge of circles.  For $d>1$ every attaching
map lands in the unique vertex, so the model is a wedge of $d$-spheres.
\end{proof}

\subsection{A finite criterion}
Let $J$ be a nonempty finite set of positive integers, let $w=\max J$,
and put
\[
 G_n(J)=\Cay(\Z/n\Z,\{\pm j:j\in J\}).
\]
Fix $p\geq w$.  A local word $P\mid R\mid L$ has respective lengths
$p,w,w$, with integer coordinates
\[
 P:\ 0,\ldots,p-1,\qquad R:\ p,\ldots,p+w-1,
 \qquad L:\ -w,\ldots,-1.
\]
Within each block, bits are written in increasing coordinate order.
Two coordinates are adjacent when their absolute difference lies in $J$.
For $z\in\{0,1\}$, start with all independent local words, omitting the
zero word exactly when $z=0$, and perform the toggles at $0,\ldots,p-1$.
Write $\mathcal L_p(J,z)$ for the survivors.  Define
\[
 \Delta_p(P\mid R\mid L)=PP\mid R\mid L,
\]
with one exception: when $z=0$, the singleton at coordinate zero maps to
the singleton at coordinate zero, rather than to a two-element word.
On cyclic words the same notation duplicates the first $p$ positions and
shifts all subsequent positions by $p$, with the same singleton exception.

\begin{theorem}[Local block criterion]\label{lb-criterion}
Assume the two finite identities
\begin{equation}\label{lb-local-identity}
 \mathcal L_{2p}(J,z)=\Delta_p\mathcal L_p(J,z)
 \qquad(z=0,1).
\end{equation}
If $\mathcal C(n,J)$ is the critical set of the natural-order matching on
$\Ind(G_n(J))\setminus\{\varnothing\}$, then for every $n\geq p+3w$,
\[
 \mathcal C(n+p,J)=\Delta_p\mathcal C(n,J).
\]
In general, duplication raises the dimension of a positive critical face by the number of its selected vertices in the first block.
Suppose moreover that the critical profile at $n$ consists of the vertex
$\{0\}$ and $m$ faces of a single positive dimension $d$, each containing
exactly $t$ vertices among $0,\ldots,p-1$.  Then
\[
 \Ind(G_{n+p}(J))\simeq\bigvee^m S^{d+t}
 \simeq\Sigma^t\Ind(G_n(J)).
\]
\end{theorem}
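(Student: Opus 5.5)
The plan is to exploit the locality of the toggle recurrence~\eqref{lb-toggle-recurrence}. The value $a_i(x)$ depends only on $a_{i-1}$ at $x$ and at $x\mathbin\triangle\{v_i\}$. Iterating, the survival of a face $x$ after the toggles at $0,\ldots,i$ depends only on the bits of $x$ in a bounded window of coordinates near $0,\ldots,i$ together with the independence constraints, which have range $w$.

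\textbf{Splitting the toggles.} I would split the natural-order matching on $\Ind(G_n(J))$ into two phases. The first phase consists of the toggles at $0,\ldots,p-1$; the second consists of the toggles at $p,\ldots,n-1$.

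\textbf{Phase one.} Consider a face $x$ of $G_{n}(J)$ and read off its local word $P\mid R\mid L$ on the coordinates $0,\ldots,p+w-1$ and $n-w,\ldots,n-1$. Because $n\geq p+3w$, these windows are disjoint. No edge joins $P$ to the cyclic part outside $R\cup L$. Toggling at a coordinate of $P$ changes only that bit, and it can only create or destroy conflicts with neighbours in $P\cup R\cup L$. Hence after phase one, $x$ survives exactly when its local word lies in $\mathcal L_p(J,z)$, where $z$ records whether the rest of $x$ (outside $P$) is nonempty. The value $z$ matters only because the empty face is excluded from the start; this is why the finite statement carries both values $z=0,1$ and the singleton exception.

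The same reasoning at length $n+p$ with block length $2p$ gives survival iff the local word lies in $\mathcal L_{2p}(J,z)$. The hypothesis~\eqref{lb-local-identity} then says that phase-one survivors at $n+p$ are exactly the $\Delta_p$-images of phase-one survivors at $n$. Here $\Delta_p$ acts on the first block and shifts the rest of the word.

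\textbf{Phase two.} The toggles at coordinates of the original $p,\ldots,n-1$ correspond under the shift by $p$ to toggles at $2p,\ldots,n+p-1$. I would show by induction on the stage that $\Delta_p$ intertwines the available sets at each stage. Each such toggle changes a bit outside the duplicated block. The independence of the toggled partner depends only on bits within distance $w$. Those bits are either outside the first block, where they are identical after the shift, or inside $R$ and $L$, which are preserved verbatim by $\Delta_p$. The one delicate point is that the partner $x\mathbin\triangle\{v_i\}$ must remain in the image of $\Delta_p$. This holds because the first-block word is unchanged by later toggles and has already been forced into $\Delta_p\mathcal L_p$. The step I expect to be the main obstacle is verifying this intertwining cleanly, including the bookkeeping that $z$, the emptiness of the complement of $P$, cannot flip during phase two in a way that breaks the correspondence. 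I would handle the boundary case where the complement becomes empty through the $z=0$ exception, which is precisely the singleton case. The result is $\mathcal C(n+p,J)=\Delta_p\mathcal C(n,J)$.

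\textbf{Dimension and homotopy type.} Duplication $PP$ adds exactly the selected vertices of $P$ once more, so the dimension rises by that count. The singleton $\{0\}$ stays a singleton. Under the stated profile, the critical cells at $n+p$ are therefore one vertex and $m$ faces of dimension $d+t$. Lemma~\ref{lb-acyclic} then gives $\Ind(G_{n+p}(J))\simeq\bigvee^mS^{d+t}$. Applying the same lemma at $n$ gives $\Ind(G_n(J))\simeq\bigvee^mS^d$, so $\Ind(G_{n+p}(J))\simeq\Sigma^t\Ind(G_n(J))$.
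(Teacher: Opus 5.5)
Your proposal is correct and follows essentially the same route as the paper. You factor the first $p$ toggles through the local word $P\mid R\mid L$, with $z$ recording emptiness of the outside part. The hypothesis then gives a bijection of phase-one survivors. The commutation $\Delta_p(\sigma\mathbin\triangle\{j\})=\Delta_p(\sigma)\mathbin\triangle\{j+p\}$ for $j\ge p$ is carried by induction on the available sets, and Lemma~\ref{lb-acyclic} gives the wedge.

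What you flag as the main obstacle dissolves as in the paper. After pivot $0$ the only survivor containing $0$ is $\{0\}$, and it never has an available partner later, since each $\{0,j\}$ was already matched with $\{j\}$ at pivot $0$. So phase two uses only ordinary duplication on faces omitting $0$, and no tracking of $z$ is needed.
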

\begin{proof}
Write a cyclic word as $P\mid R\mid Y\mid L$, where $R,L$ have length
$w$.  The inequalities $p\geq w$ and $n\geq p+3w$ imply $|Y|\geq w$.
There is no edge from $R$ to $L$: both arcs between these boundary blocks
have more than $w$ edges.  Every edge meeting $P$ is represented in the
displayed local graph, and $P$ has no edge to $Y$.

Fix $Y$.  Independence inside $Y$ and compatibility of $Y$ with $R,L$
give a Boolean condition $A_Y(R,L)$ independent of $P$.
On a boundary choice with $A_Y=0$ there are no global faces.  On a choice
with $A_Y=1$, the initial global indicator as a function of $P$ is precisely
the local indicator with $z=1$ when $Y$ is nonempty and $z=0$ otherwise.
Since the first $p$ toggles alter only $P$, the same description holds at
every stage of \eqref{lb-toggle-recurrence}.  Duplicating $P$ leaves
$Y,R,L$ and their compatibility condition unchanged.  Equation
\eqref{lb-local-identity} therefore gives a bijection between the global
survivors after $p$ old pivots and after $2p$ new pivots.

After the first pivot, every surviving face containing zero is the singleton
$\{0\}$: a larger such face has already been paired with its nonempty
deletion of zero.  The singleton has no surviving partner at any subsequent
stage.  All other survivors omit zero, and the ordinary duplication map
commutes with every later toggle:
\[
 \Delta_p(\sigma\mathbin\triangle\{j\})
 =\Delta_p(\sigma)\mathbin\triangle\{j+p\},\qquad j\geq p.
\]
The inverse on the image deletes the second copy of $P$ and shifts the
remaining positions back.  Thus two survivors are toggle partners at $j$
if and only if their images are partners at $j+p$.  Induction through the
remaining pivots proves the asserted critical-set equality.  Each positive
critical face acquires exactly $t$ vertices.  Lemma~\ref{lb-acyclic}
then gives both wedge descriptions and the suspension equivalence.
\end{proof}

The criterion is sufficient: it does not assert the existence of a suitable
$p$ for every jump set.  Its finite hypotheses use $p+2w$ and $2p+2w$
Boolean coordinates, irrespective of $n$.
For comparison, $J=\{1\}$ and $p=3$ give the classical three-periodic suspension pattern for cycle independence complexes~\cite[Proposition~11.17, pp.~194--195]{Kozlov2008}.

\subsection{The jumps two and three}
The following specialization includes the finite core of the proof, so that
the infinite family does not depend on an unspecified matching table.

\begin{corollary}[Eight-periodic family]\label{lb-eight-periodic}
Let $X_n=\Ind(\Cay(\Z/n\Z,\{\pm2,\pm3\}))$ and write
$n=8q+r\geq6$, with $0\leq r\leq7$.  Then
\[
 X_n\simeq
 \begin{cases}
 \displaystyle\bigvee^5 S^{2q-1},&r=0,\\
 S^{2q-1},&r=1,2,3,\\
 \displaystyle\bigvee^3 S^{2q},&r=4,\\
 S^{2q+1},&r=5,6,7.
 \end{cases}
\]
In particular $X_{n+8}\simeq\Sigma^2X_n$ for every $n\geq6$.
\end{corollary}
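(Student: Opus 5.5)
The plan is to apply the local block criterion, Theorem~\ref{lb-criterion}, with $J=\{2,3\}$, $w=3$ and period $p=8$. The proof then has three stages: verify the two finite identities \eqref{lb-local-identity}, compute the critical profile at base values of $n$, and propagate.

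\emph{Finite identities.} For $z=0,1$ I would enumerate all independent local words $P\mid R\mid L$ of lengths $8,3,3$ (and $16,3,3$). Adjacency is difference $2$ or $3$ between coordinates in $\{-3,\ldots,p+2\}$. I would then run the toggles at coordinates $0,\ldots,p-1$ via the recurrence \eqref{lb-toggle-recurrence}, and check that the survivors of the $2p$-run are exactly $\Delta_8$ of the survivors of the $p$-run. The computation has a finite-automaton structure that makes it checkable by hand. A toggle at $j$ can only interact with bits within distance $3$, so the survivor set after processing a prefix of $P$ depends only on a bounded window of trailing bits together with $R$ and $L$. I would tabulate these transfer states once and show that processing eight further coordinates returns each state to itself. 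Concatenating two copies of the eight-step transfer is then the same as duplicating $P$. The singleton exception at $z=0$ must be handled separately: when $Y$ is empty, only $\{0\}$ survives among faces containing $0$, and it maps to itself.

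\emph{Base cases and propagation.} The theorem requires $n\geq p+3w=17$. So I would directly compute the natural-order critical sets $\mathcal C(n,J)$ for the eight base values $n=17,\ldots,24$, one per residue class. For each, I would confirm that the profile consists of $\{0\}$ together with $m$ faces of a single dimension $d$, each meeting $\{0,\ldots,7\}$ in exactly $t=2$ vertices. The expected data are $m=5,1,1,1,3,1,1,1$ for $r=0,\ldots,7$, with dimensions matching the displayed formula at $q=2$ or $3$. Theorem~\ref{lb-criterion} then gives $\mathcal C(n+8)=\Delta_8\mathcal C(n)$ for all $n\geq17$. Since each positive critical face gains exactly two vertices, the single-dimension profile and the constant $t=2$ are inherited, and Lemma~\ref{lb-acyclic} with the criterion yields $X_{n+8}\simeq\Sigma^2X_n$ and the formula for all $n\geq17$. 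The small cases $6\leq n\leq16$ I would treat by direct computation, using the same ordered matching or an ad hoc collapse. For example, $X_6$: the complement of $\{\pm2,\pm3\}$ in $\Z/6$ leaves the jumps $\pm1$ as nonadjacency, so $X_6$ is a $6$-cycle, which is $S^1$. This gives the suspension relation for every $n\geq6$.

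\emph{Main obstacle.} I expect the main obstacle to be the finite verification of \eqref{lb-local-identity}, and specifically organizing it so that it is genuinely a proof rather than an unspecified table. My first attempt would be the transfer-state reduction described above: a state is the pattern of recently processed bits and their pairing status relative to $R$ and $L$. If the state space turns out to be too large for hand presentation, I would restrict to the words that actually survive. This should be a small set, since most words are paired at early pivots. I would then list these survivors explicitly for $p=8$ and $p=16$.
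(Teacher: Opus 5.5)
Your proposal is correct and is essentially the paper's proof. The paper applies Theorem~\ref{lb-criterion} with $J=\{2,3\}$, $w=3$, $p=8$ and verifies both local identities by displaying the complete survivor sets: seventeen words for $z=1$, the same words plus the singleton $10000000\mid000\mid000$ for $z=0$, and the identical table with $P$ replaced by $PP$ after sixteen pivots. It then computes the natural-order critical words for the whole window $6\le n\le 24$; the words for $17\le n\le 24$ each begin with an eight-bit block containing two selected positions, so they propagate exactly as you describe.

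Drop the transfer-state shortcut, for two reasons. First, survival is not a function of the last three processed bits together with $R$ and $L$: $00011000\mid000\mid000$ survives while $00000000\mid000\mid000$ does not, although both prefixes end in $000$. Second, ``each state returns to itself'' would also need the condition that reading a block $P'\neq P$ kills every word. Your fallback of listing the survivors explicitly for $p=8$ and $p=16$, which is what the paper does, is the right way to make the finite step a proof.
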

\begin{proof}
Use $J=\{2,3\}$, $w=3$ and $p=8$ in Theorem~\ref{lb-criterion}.
For clarity we specify the whole finite calculation.  For any of the local
graphs $H$ in that theorem, initialize the indicator on every binary word by
\begin{equation}\label{lb-initial-indicator}
 a_{-1}(x)=
 \left(\prod_{\{u,v\}\in E(H)}(1-x_ux_v)\right)
 \left(1-(1-z)\prod_u(1-x_u)\right).
\end{equation}
Apply \eqref{lb-toggle-recurrence} at the processed positions in order.
The resulting fourteen-coordinate survivor table is
\begin{center}
\begin{tabular}{@{}lll@{}}
\toprule
$P$&allowed $R$&allowed $L$\\
\midrule
$00011000$&$000,100,010,110,001,011$&$000$\\
$00001100$&$000,010,001,011$&$100$\\
$00000110$&$000,001$&$010,110$\\
$00010001$&$000,100$&$001$\\
$00000011$&$000$&$011$\\
\bottomrule
\end{tabular}
\end{center}
Each row represents all combinations in its last two columns, for a total
of seventeen words.  For $z=0$ add
$10000000\mid000\mid000$; for $z=1$ add no word.
Applying the same two formulas to the twenty-two-coordinate graph, with
pivots $0,\ldots,15$, gives precisely this table with $P$ replaced by $PP$,
and the unchanged singleton when $z=0$.  These are equalities of complete
survivor sets, not just of their cardinalities.  They are the finite
identities \eqref{lb-local-identity}.

Here is also the complete base specification.  For $6\leq n\leq24$, use
\eqref{lb-initial-indicator} on the cyclic graph with $z=0$ and apply all
$n$ pivots.  Besides $\{0\}$, its survivors are the words in
\eqref{lb-critical-words} below, with $n=8q+r$.  Put
\[
 B_1=00011000,\quad B_2=00001100,\quad B_3=00000110,
 \quad B_4=00010001,\quad B_5=00000011.
\]
Using concatenation and word powers, the positive critical words are
\begin{equation}\label{lb-critical-words}
\begin{array}{c|l}
r&\text{positive critical words}\\ \hline
0&B_j^q\quad(1\leq j\leq5)\\
1,2,3&B_1^q0^r\\
4&B_2^q0100,\ B_3^q0010,\ B_4^q0001\\
5&B_5^q00011\\
6&B_5^q000011\\
7&B_5^q0000011.
\end{array}
\end{equation}
The empty power is allowed for $n=6,7$.  Equivalently, their
multiplicity--dimension pairs in the full base window are
\[
\begin{array}{c|rrrrrrrrrr}
n&6&7&8&9&10&11&12&13&14&15\\ \hline
m&1&1&5&1&1&1&3&1&1&1\\
d&1&1&1&1&1&1&2&3&3&3
\end{array}
\]
\[
\begin{array}{c|rrrrrrrrr}
n&16&17&18&19&20&21&22&23&24\\ \hline
m&5&1&1&1&3&1&1&1&5\\
d&3&3&3&3&4&5&5&5&5.
\end{array}
\]
Equations \eqref{lb-initial-indicator} and \eqref{lb-toggle-recurrence}
specify each entry using only Boolean multiplication and subtraction on a
fixed finite domain. The complete local survivor sets and the base-word
comparisons are the finite part of the proof; they are specified by the
displayed graph, initial indicator and toggle recurrence.

The generic threshold is $p+3w=17$.  The eight base cases
$17,\ldots,24$ meet every residue class modulo eight, and each positive
critical word begins with a block having two selected positions.  Theorem
\ref{lb-criterion} therefore duplicates its initial block, preserves the
multiplicity and raises the dimension by two.  Induction proves
\eqref{lb-critical-words} for every $n\geq25$; the finite base window
covers all smaller $n\geq6$.  Lemma~\ref{lb-acyclic} gives the displayed
homotopy types.  Comparing the formulas at $n$ and $n+8$ gives the claimed
suspension equivalence, including $6\leq n\leq16$ below the generic
threshold.
\end{proof}

\begin{corollary}\label{cor:p687-family}
Multiplication by $3$ modulo $14$ identifies the jumps
$\{\pm1,\pm4\}$ with $\{\pm2,\pm3\}$. Hence
$\K(14,4)\simeq X_{14}\simeq S^3$.
\end{corollary}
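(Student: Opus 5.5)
The argument has two steps: relabel $G(14,4)$ so that it becomes the Cayley graph of Corollary~\ref{lb-eight-periodic}, and then read the homotopy type off that corollary at $n=14$.

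\emph{Relabeling.} Since $\gcd(3,14)=1$, multiplication by $3$ is a bijection $\mu$ of $\Z/14\Z$. It carries the connection set to
\[
\{\pm1,\pm4\}\mapsto\{\pm3,\pm12\}=\{\pm3,\mp2\}=\{\pm2,\pm3\}
\]
modulo $14$, using $12\equiv-2$. Cayley graphs satisfy $\mu(x)-\mu(y)=3(x-y)$. So $x,y$ are adjacent in $G(14,4)$ exactly when $\mu(x),\mu(y)$ are adjacent in $\Cay(\Z/14\Z,\{\pm2,\pm3\})$. The map $\mu$ is therefore a graph isomorphism. It induces a simplicial isomorphism of independence complexes, $\K(14,4)\cong X_{14}$.

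\emph{Homotopy type.} Write $14=8\cdot1+6$, so $q=1$ and $r=6$. The case $r\in\{5,6,7\}$ of Corollary~\ref{lb-eight-periodic} gives $X_{14}\simeq S^{2q+1}=S^3$. As a consistency check, the base-window table lists $(m,d)=(1,3)$ at $n=14$, and the critical word there is $B_5\,000011$.

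The only point needing care is the congruence $3\cdot4=12\equiv-2$, and there is no real obstacle. The isomorphism $\mu$ does not intertwine the translation $\tau$ with itself: it sends $\tau$ to $\tau^3$. The statement asserts only an ordinary homotopy equivalence, so this does not matter here, but any later transport of the cyclic action through this identification must account for the exponent $3$.
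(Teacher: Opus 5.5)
Your proposal is correct and follows the paper's own proof: multiplication by $3$ carries $\{\pm1,\pm4\}$ to $\{\pm3,\pm12\}=\{\pm2,\pm3\}$, and Corollary~\ref{lb-eight-periodic} at $14=8+6$ then gives $S^3$. Your remark that the relabeling sends $\tau$ to $\tau^3$ also matches the paper's caution about transporting labeled translations.
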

\begin{proof}
The image of $\{\pm1,\pm4\}$ is $\{\pm3,\pm12\}=\{\pm2,\pm3\}$.
Apply Corollary~\ref{lb-eight-periodic} at $14=8+6$.
\end{proof}
This relabeling identifies the ordinary complexes; its effect on a labeled
translation must be retained when computing actions.

\section{A finite-state description for every bounded jump set}
\label{sec:finite-state}

For a nonempty finite positive jump set $J$, write
$G_n(J)=\Cay(\Z/n\Z,\{\pm j:j\in J\})$ with vertices
$0,\ldots,n-1$ in their natural order.  The local duplication identity is sufficient for a particularly simple
recurrence.  The ordered matching has a finite-state description even when
that identity fails.  This description counts critical faces; it does not
compute their attaching maps or, in general, their homology.
Finite automata for Morse critical cells and rational Morse-number series
already occur in the work of Hersh and Welker on affine-semigroup
posets~\cite[Theorem~1.2 and Sections~6,8]{HershWelker2003}.
Here we construct a Boolean frontier directly for ordered-toggle matchings
on every fixed bounded-jump circulant family.

\begin{theorem}[Rational critical-face series]
\label{thm:rational-critical-series}
Fix a nonempty finite set $J$ of positive jumps and put $w=\max J$.
For $n>w$, let $\mathcal C(n,J)$ be the nonempty critical faces of the
natural ordered-toggle matching on $\Ind(G_n(J))$.  The binary words of
these faces, over all $n>w$, form a regular language.  Their bivariate
series
\[
 F_J(x,y)=\sum_{n>w}x^n\sum_{\sigma\in\mathcal C(n,J)}y^{|\sigma|}
\]
is rational.  More precisely, there are finite matrices and vectors over
$\Z[y]$, depending only on $J$, such that
\[
 F_J(x,y)=E_J(x,y)+\frac{yx^{3w}}{1-x}
       +x^{3w}U_J(y)(I-xM_J(y))^{-1}V_J(y),
\]
where $E_J$ accounts exactly for $w<n<3w$.  One may use at most
$2^{2^{2w}}$ frontier states.  In particular, the critical-face polynomials
satisfy a constant-coefficient linear recurrence over $\Z[y]$ for all
sufficiently large $n$.
\end{theorem}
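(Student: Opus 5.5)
We plan to reuse the local analysis from the proof of Theorem~\ref{lb-criterion}, but with the duplication identity replaced by a transfer-matrix construction. Write a cyclic word of length $n\geq3w$ as $L'\mid Y\mid L$. Here $L'$ is the initial block of length $w$ at coordinates $0,\ldots,w-1$, $L$ is the final block at $n-w,\ldots,n-1$, and $Y$ is the middle. Since every edge has circular difference at most $w$, the wraparound edges join only $L$ and $L'$. The matching processes pivots in natural order, and the toggle recurrence \eqref{lb-toggle-recurrence} at pivot $v_i$ changes the availability of $x$ only through $x\triangle\{v_i\}$. Processing position $i$ therefore depends only on the bits of $x$ on a window of positions within distance $w$ of $i$, together with the two wraparound blocks.

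The key step is to define a \emph{frontier state} after pivot $i$. It will be the Boolean function which assigns, to each assignment of the $2w$ ``open'' bits (the bits $x_{i+1},\ldots,x_{i+w}$ still to be read, together with the last $w$ bits already processed), whether the face is still available. This state must be taken relative to the fixed boundary block $L'$ and the unknown block $L$. To make the count uniform, we will instead let the state be the availability function of the still-unprocessed ``future'' coordinates $x_{i+1},\ldots,x_{i+w}$ and the final $L$, restricted to a fixed $2w$-bit interface. This gives a state space of functions $\{0,1\}^{2w}\to\{0,1\}$, hence at most $2^{2^{2w}}$ states.

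We then show, by induction on $i$ from \eqref{lb-toggle-recurrence}, that the state after pivot $i+1$ together with the bit $x_{i+1}$ is a function of the state after pivot $i$ alone. The availability of both $x$ and $x\triangle\{v_{i+1}\}$ is determined by the same interface data, because all edges involving $v_{i+1}$ lie within the window. This yields a deterministic automaton reading the word letter by letter, with states recording the frontier and letters recording the bit. The initial and terminal vectors will encode three things: the first $3w$ positions (the block $L'$ and the zero-word exclusion, as in \eqref{lb-initial-indicator}), the closing of the cycle by checking the wraparound edges between $L$ and $L'$, and the processing of the last $w$ pivots. A face is critical precisely when it survives all pivots, so the critical words are exactly those accepted. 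This gives a regular language, and weighting letter $1$ by $y$ gives a matrix $M_J(y)$ over $\Z[y]$.

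Summing accepted paths over lengths gives $x^{3w}U_J(I-xM_J)^{-1}V_J$. The singleton $\{0\}$ is critical for every $n$, since after the first pivot every larger face containing $0$ is matched, as in the proof of Theorem~\ref{lb-criterion}. Separating it out produces the term $yx^{3w}/(1-x)$. The finitely many cases $w<n<3w$, where the blocks overlap, go into the polynomial $E_J$. The linear recurrence then follows from Cayley--Hamilton applied to $M_J(y)$.

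The main obstacle is making the frontier genuinely finite while the unprocessed tail still influences earlier toggles through availability of partner faces. Availability of $x$ at stage $i$ involves faces differing from $x$ in earlier pivots, whose compatibility with later bits is only local. We will argue, exactly as in the $A_Y$ factorization in the proof of Theorem~\ref{lb-criterion}, that the far coordinates enter only through a product of a local indicator with a condition unchanged by all pivots so far. The state can therefore be taken to be the local indicator as a function of the near coordinates. Checking that this factorization persists at every stage is the delicate part.
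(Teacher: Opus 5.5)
Your proposal follows the paper's proof. The frontier state is a Boolean function of the next $w$ unread bits and the terminal $w$-bit block, the survivor indicator factors as the suffix independence indicator times this function, transitions are weighted by $y^b$, the singleton $\{0\}$ is split off, and Cayley--Hamilton gives the recurrence. The step you call delicate is an immediate induction: both toggle variants at pivot $i$ share the independence factor of the remaining suffix, which gives $f_{i+1}=h_b(1-h_{1-b})$. Two details need fixing. Run the automaton on the augmented matching with $f_0=1$ rather than encoding a zero-word exclusion; that exclusion is not expressible in the $2w$-bit frontier, and it only affects the pair $(\varnothing,\{0\})$. Also let the terminal rule process the final $2w$ pivots, not $w$, to match the $x^{3w}$ normalization.
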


\begin{proof}
We first include the empty face in the matching.  At pivot zero this pairs
$\varnothing$ with $\{0\}$.  Removing that one pair instead leaves
$\{0\}$ critical, and does not change any other subsequent pair: every
two-element partner of $\{0\}$ was already paired at pivot zero, or is
not independent.  Thus the nonempty-face matching has precisely the same
other critical faces as the augmented matching, with the extra singleton
$\{0\}$.  This explains the term $yx^{3w}/(1-x)$ in the large-$n$ series.

Assume $n\ge3w$.  At the stage immediately before pivot $i$, fix the
already read prefix $b_0\cdots b_{i-1}$.  For $0\le i\le n-2w$, the
unprocessed suffix has at least $2w$ vertices.  Let $q$ be its first $w$
bits, and let $v$ be its last $w$ bits.  These blocks are disjoint.
The survivor indicator on the suffix is
\begin{equation}\label{eq:frontier-factor}
 I_i(s) f_i(q,v),
\end{equation}
where $I_i$ is the independence indicator of the subgraph induced by the
unprocessed suffix and $f_i:\{0,1\}^{2w}\to\{0,1\}$ is a Boolean
function.  Initially $f_0=1$.

For $i<n-2w$, write the actual bit at pivot $i$ as $b$, and the next
$w$ bits as $q=(q_0,\ldots,q_{w-1})$.  For a possible bit $c$, its
compatibility with future vertices is
\[
 c_i(c,q,v)=\prod_{j\in J}(1-cq_{j-1})
 \prod_{\substack{j\in J\\j>i}}(1-cv_{w+i-j}).
\]
The second product represents the cyclic edges to the terminal boundary;
it is empty for $i\ge w$.  The two evaluations before the toggle use
\[
 h_c(q,v)=f_i(c,q_0,\ldots,q_{w-2},v)c_i(c,q,v).
\]
For $w=1$, the middle list is empty.  The survivor recurrence gives
\begin{equation}\label{eq:frontier-transition}
 f_{i+1}(q,v)=h_b(q,v)\bigl(1-h_{1-b}(q,v)\bigr).
\end{equation}
Indeed, the induced-suffix independence indicator after removing vertex
$i$ is common to both toggle variants.  If it is zero, both vanish; if it
is one, the two variants are exactly $h_b$ and $h_{1-b}$.  This proves
\eqref{eq:frontier-factor} and \eqref{eq:frontier-transition} by induction.
Edges to earlier vertices have already entered $f_i$, so none is omitted.

The possible Boolean functions constitute a finite state set of cardinality
$D=2^{2^{2w}}$.  The first $w$ pivots use the explicit, time-dependent
transitions above.  All later pivots before $n-2w$ use the same two
transitions, indexed by $b=0,1$, since the wraparound product is then
empty.  Let $M_J(y)$ give weight $y^b$ to the transition reading bit $b$,
adding the weights if the two bits yield the same state.  Let $U_J(y)$
be the row vector obtained from $f_0=1$ after the first $w$ transitions,
with the same weights.

At $i=n-2w$, exactly $2w$ bits remain.  Their induced graph has only
the ordinary line-distance edges in $J$: the shortest possible wraparound
distance is $n-(2w-1)>w$.  For each state $f$, initialize the finite table
by $I_{n-2w}(s)f(s)$ and perform the final $2w$ toggles.  Let $V_J(y)_f$
be the sum of $y^{|s|}$ over its surviving terminal words.  This terminal
rule depends on $J$ and $f$ but not on $n$.  Thus the augmented critical-face
polynomial is exactly
\[
 U_J(y) M_J(y)^{n-3w} V_J(y).
\]
Summing this expression over $n\ge3w$ proves the displayed rational
formula.  The omitted lengths are a finite polynomial $E_J$.

The same construction is a finite automaton: read the first $w$ bits with
the initial transitions, read any number of bits with the common
transitions, and accept a terminal block of length $2w$ according to the
finite rule.  A finite automaton can guess where this terminal block begins;
its prescribed length makes the split unique for every accepted word.
The extra singleton language and the finitely many short-length words are
regular as well.  Finally, the characteristic polynomial of
$\operatorname{diag}(M_J(y),1)$
gives the stated eventual linear recurrence by Cayley--Hamilton.
\end{proof}

The bound on states is deliberately uniform rather than practical.  It gives
a finite construction for each fixed jump set without requiring any block
duplication identity.  A rational critical-face series alone does not imply
periodic homotopy types: Morse differentials and dimensions still matter.

\section{Fixed spaces and the obstruction to equivariant periodicity}
\label{sec:fixed-spaces}

Put $Y_n=|\Ind(G_n(\{2,3\}))|$ for $n\geq4$; for $n\geq6$ this is $|X_n|$ from Corollary~\ref{lb-eight-periodic}.  For a divisor $m$ of $n$, let $P_m^{(n)}$ be the
subgroup of $C_n$ generated by translation through $g=n/m$ vertices.
Thus $P_m^{(n)}$ has order $m$, and its vertex orbits are the residue classes
modulo $g$.  Superscripts denote fixed spaces in the geometric realization,
not subcomplexes on fixed vertices.

\begin{proposition}[Orbit-coordinate fixed spaces]
\label{prop:orbit-fixed}
Let $J$ be a nonempty finite positive jump set and assume $n>\max J$.  For
$m\mid n$ and $g=n/m$, form the graph on $\Z/g\Z$ by reducing the jumps
in $J$ modulo $g$, retaining a loop when a reduced jump is zero.
Then the $P_m^{(n)}$-fixed space of $|\Ind(G_n(J))|$ is homeomorphic to the
independence complex of this graph.  In particular, the fixed space is empty
if $g$ divides any member of $J$.
\end{proposition}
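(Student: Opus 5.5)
We prove the homeomorphism by working in barycentric coordinates on $|\Ind(G_n(J))|$, identifying fixed points with $P$-invariant coordinate functions. Put $P=P_m^{(n)}$ and $\sigma=\tau^g$. A point $x$ of the realization is a function $x:\Z/n\Z\to[0,1]$ with $\sum_i x_i=1$ whose support is an independent set. The action is $(\sigma x)_i=x_{i-g}$. Hence $x$ is $P$-fixed exactly when $x$ is constant on each residue class modulo $g$. The support of a fixed point is therefore a union of full orbits, and $x$ is determined by the values $y_c$ on the classes $c\in\Z/g\Z$, with $m\sum_c y_c=1$.

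First I check which unions of orbits are independent, using the orbit graph $Q$ of the statement. Let $S=\bigcup_{c\in C}(c+g\Z)$. Then $S$ contains an edge $\{i,i+j\}$ with $j\in J$ exactly when both $i$ and $i+j$ reduce into $C$. This happens when some $c,c'\in C$ satisfy $c'-c\equiv\pm j\pmod g$, including $c=c'$ when $j\equiv0\pmod g$. That is precisely the condition that $C$ is not independent in the looped graph on $\Z/g\Z$ with the reduced jumps, since a loop forbids the vertex. So the fixed supports are exactly the faces of $\Ind(Q)$. Because $g\mid n$, reduction modulo $g$ of a difference modulo $n$ is well defined, so edge orbits map correctly; this is the point where $g$ dividing $n$ is needed.

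Next I define the map $\Phi:|\Ind(G_n(J))|^P\to|\Ind(Q)|$ by $\Phi(x)_c=m\,x_c$, using normalized orbit barycenters. Its inverse sends $y$ to the function $x_i=y_{i\bmod g}/m$. Both maps are continuous, since they are restrictions of linear maps between the ambient simplices' coordinate spaces. They are mutually inverse, and they carry supports to supports by the previous paragraph. Since all spaces are compact Hausdorff, this gives the homeomorphism.

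For the final claim, suppose $g$ divides some $j\in J$. Then every orbit carries a loop, so $\Ind(Q)$ has no nonempty face and the fixed space is empty. I would remark on the degenerate case $g=1$, where the single orbit is the whole vertex set and $n>\max J$ guarantees an edge. I would also note that repeated reduced jumps only produce repeated edges, which are harmless. The main obstacle is the support/independence bookkeeping in the second step, in particular the handling of loops and of jumps that coincide modulo $g$. The rest is routine.
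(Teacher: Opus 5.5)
Your proposal is correct and follows essentially the same route as the paper: you identify fixed points with barycentric coordinates constant on the residue classes modulo $g$, use the mutually inverse linear maps $x\mapsto(m\,x_c)_c$ (the orbit sums $\mu_c$ in the paper) and $y\mapsto y_{i\bmod g}/m$, and match fixed supports with faces of the looped orbit graph. One small precision: the hypothesis $n>\max J$ is used for every $g$, not only $g=1$, since it is what guarantees that $i$ and $i+j$ are distinct in $\Z/n\Z$, so that a loop at a residue class really comes from an edge inside that orbit.
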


\begin{proof}
Represent a point by nonnegative vertex coordinates $(\lambda_v)$ with sum
one and independent support.  The point is $P_m^{(n)}$-fixed precisely when its
coordinates are constant on each orbit.  Set
$\mu_a=\sum_{v\equiv a\pmod g}\lambda_v$.  The inverse map assigns
$\lambda_v=\mu_a/m$ on that orbit.  An orbit contains an internal edge
precisely when $g$ divides a jump; two distinct orbits have an edge between
them precisely when their residues differ by a reduced jump.  Consequently
these two linear maps restrict to mutually inverse homeomorphisms of the
stated realizations.  A zero reduced jump produces a loop at every residue,
so there is no nonempty independent support in that case.
\end{proof}

\begin{theorem}[Complete cyclic fixed-space profile]
\label{thm:fixed-profile}
For every $n\ge6$, every $m\mid n$, and $g=n/m$,
\[
 Y_n^{P_m^{(n)}}\cong
 \begin{cases}
  \varnothing,&g=1,2,3,\\
  \text{four points},&g=4,\\
  S^1,&g=5,\\
  Y_g,&g\ge6,
 \end{cases}
\]
where the $g=5$ identification is a homeomorphism and the homotopy type of
$Y_g$ is given by Corollary~\ref{lb-eight-periodic}.  The full translation action
on $Y_n$ is free precisely when $n=6$, $n=9$, or $n$ is prime.
\end{theorem}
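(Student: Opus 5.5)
We apply Proposition~\ref{prop:orbit-fixed} with $J=\{2,3\}$. For $m\mid n$ and $g=n/m$, the fixed space $Y_n^{P_m^{(n)}}$ is homeomorphic to the independence complex of the looped graph $\Gamma_g$ on $\Z/g\Z$. In $\Gamma_g$, two residues are joined when they differ by $\pm2$ or $\pm3$ modulo $g$, and every residue carries a loop when $g$ divides $2$ or $3$. The whole profile therefore depends only on $g$, and the proof reduces to identifying $\Ind(\Gamma_g)$ case by case.

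\emph{Cases $g=1,2,3$.} Here $g$ divides $2$ or $3$, so every orbit contains an internal edge and is looped. The last clause of Proposition~\ref{prop:orbit-fixed} then makes the fixed space empty.

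\emph{Case $g=4$.} The jump $3$ reduces to $\pm1$ and the jump $2$ reduces to the antipodal difference $2$. Hence $\Gamma_4$ is the simple complete graph $K_4$, and its independence complex is four isolated points.

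\emph{Case $g=5$.} The reductions $\pm2$ and $\pm3$ coincide. Thus $\Gamma_5$ is the pentagram $\Cay(\Z/5,\{\pm2\})$, which is a five-cycle. Its complement is again a five-cycle, and it contains no independent triangle because a triple in $\Z/5$ always contains a difference $\pm2$. So $\Ind(\Gamma_5)$ is a pentagon, and the fixed space is homeomorphic to $S^1$.

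\emph{Case $g\ge6$.} The reduced jumps $\pm2,\pm3$ are nonzero, and no loops occur. Therefore $\Gamma_g=G_g(\{2,3\})$ and $\Ind(\Gamma_g)=Y_g$, whose homotopy type is given by Corollary~\ref{lb-eight-periodic}.

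For the freeness assertion, the action of $C_n$ is free exactly when every nontrivial subgroup has empty fixed space. Each nontrivial subgroup is some $P_m^{(n)}$ with $m>1$, so $g=n/m$ ranges over the proper divisors of $n$. By the profile, freeness holds precisely when every proper divisor of $n$ lies in $\{1,2,3\}$. For $n\ge6$ we sort $n$ by the shape of its proper divisors.
\begin{itemize}
\item If $n$ is prime, its only proper divisor is $1$, so the action is free.
\item If $n=6$ or $n=9$, the proper divisors are $\{1,2,3\}$ and $\{1,3\}$, so the action is free.
\item If $n$ is composite and different from $6$ and $9$, it has a proper divisor $g\ge4$. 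The divisor $n/p$ for the least prime $p$ is at least $4$ unless $n\in\{4,6,9\}$, and $n=4$ is excluded by $n\ge6$.
\end{itemize}

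The main obstacle is bookkeeping rather than an idea. For small $g$ one must make sure the reductions of $2$ and $3$ modulo $g$ are handled correctly, with loops versus simple edges and with merged jumps. This matters in particular at $g=4$, where the jump $2$ and the jump $3$ together give all of $K_4$, and at $g=5$, where the two jumps coincide.
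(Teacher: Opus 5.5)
Your proposal is correct and follows essentially the same route as the paper: apply Proposition~\ref{prop:orbit-fixed} with $J=\{2,3\}$, identify the looped quotient for $g\le3$, $K_4$ for $g=4$, a five-cycle (whose independence complex is again a pentagon) for $g=5$, and $G_g(\{2,3\})$ for $g\ge6$, then settle freeness by the least-prime-divisor argument, which shows that a composite $n\ge6$ has all proper divisors at most three only for $n=6,9$. Your extra checks, namely that there is no independent triple when $g=5$ and that the fixed spaces are nonempty for $g\ge4$, make explicit steps the paper leaves to the reader.
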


\begin{proof}
Proposition~\ref{prop:orbit-fixed} applies to $J=\{2,3\}$.  Its quotient
has a loop exactly for $g=1,2,3$.  For $g=4$ it is $K_4$, and for $g=5$
it is a five-cycle whose independence complex is another five-cycle.  For
$g\ge6$ it is $G_g(\{2,3\})$.  The action is free if and only if every nontrivial
subgroup has empty fixed space, equivalently every proper divisor of $n$
is at most three.  For composite $n\ge6$, let $p$ be its least prime
divisor.  This condition gives $n/p\le3$ and $p\le n/p$, whence
$p\le3$ and $n\le9$.  The composite values satisfying the condition are
exactly $6$ and $9$.  Prime $n$ has only the proper divisor one.
\end{proof}

For a real representation $V$ of a finite group $H$, write
$\Sigma^V X=X*S(V)$, where $S(V)$ is its unit sphere.  The zero-dimensional
representation means no suspension.  When $X^H$ is nonempty, on $H$-fixed spaces this construction
is the ordinary $\dim V^H$-fold suspension of $X^H$; if $V^H=0$, it is
$X^H$ itself.  In particular, it preserves the total rank of reduced
homology of a nonempty fixed space.

\begin{theorem}[No uniform equivariant suspension period]
\label{thm:no-equivariant-period}
Fix any positive multiple $p$ of eight, write $p=2^a b$ with $b$ odd, and
put $m=2^a$.  For every positive integer $q$, set $n=8mq$ and identify
the order-$m$ translation subgroups of $C_n$ and $C_{n+p}$ with the same
abstract group $H=C_m$.  There is no $H$-equivariant homotopy equivalence
\[
 Y_{n+p}\simeq_H\Sigma^V Y_n
\]
for any real $H$-representation $V$.  Nevertheless,
$Y_{n+p}\simeq\Sigma^{p/4}Y_n$ nonequivariantly.
\end{theorem}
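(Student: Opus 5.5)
The plan is to compare geometric fixed spaces of the common subgroup $H=C_m$ on the two sides, using Theorem~\ref{thm:fixed-profile}. An $H$-equivariant homotopy equivalence restricts to an ordinary homotopy equivalence of $H$-fixed spaces. It would therefore preserve the total rank of reduced integral homology of those fixed spaces. I will show that this rank is $5$ on one side and $1$ on the other.

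First compute the fixed space in $\Sigma^VY_n$. In $C_n$ with $n=8mq$, the unique subgroup of order $m$ is $P_m^{(n)}$, whose orbit modulus is $g=n/m=8q\geq8$. Theorem~\ref{thm:fixed-profile} gives $Y_n^H\cong Y_{8q}$. Corollary~\ref{lb-eight-periodic} with residue $r=0$ identifies this with $\bigvee^5S^{2q-1}$, which is nonempty. Since $S(V)^H=S(V^H)$ and joins commute with fixed points, $(\Sigma^VY_n)^H=Y_n^H*S(V^H)$. As noted before the theorem, this is either $Y_n^H$ itself or an ordinary $\dim V^H$-fold suspension of it. In every case its reduced homology is free of total rank $5$.

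Next compute the fixed space in $Y_{n+p}$. Here $n+p=8mq+2^ab=m(8q+b)$, so $H$ is $P_m^{(n+p)}$ with $g'=8q+b$. Since $8\mid p$ we have $a\geq3$, and $b=p/m$ is odd. Thus $g'\geq9$ and $g'\equiv b\pmod 8$ lies in $\{1,3,5,7\}$. Theorem~\ref{thm:fixed-profile} gives $Y_{n+p}^H\cong Y_{g'}$. Corollary~\ref{lb-eight-periodic} shows this is a single sphere, $S^{2q'-1}$ or $S^{2q'+1}$ with $q'=\lfloor g'/8\rfloor$, so its reduced homology has total rank $1$. Since $1\neq5$, no $H$-equivariant equivalence exists, for any $V$.

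For the nonequivariant statement, note that $n\geq8$ and $n+p\equiv n\pmod 8$. Iterating $X_{k+8}\simeq\Sigma^2X_k$ from Corollary~\ref{lb-eight-periodic} a total of $p/8$ times gives $Y_{n+p}\simeq\Sigma^{p/4}Y_n$.

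The only delicate point is bookkeeping rather than a genuine obstacle. One must check that the identification of the order-$m$ subgroups is forced, which holds by uniqueness in cyclic groups. One must also check that the fixed space on the $Y_n$ side is nonempty, so that the join formula does not degenerate. The choice $m=2^a$ is exactly what makes $g'$ odd while $g$ stays divisible by $8$.
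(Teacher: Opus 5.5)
Your proposal is correct and follows essentially the same route as the paper: use Theorem~\ref{thm:fixed-profile} to identify the $H$-fixed spaces as $Y_{8q}$, with total reduced Betti number five, and $Y_{8q+b}$, with total reduced Betti number one since $b$ is odd; then observe that a representation suspension cannot change this number, and iterate the eight-periodic equivalence for the nonequivariant claim. Your extra bookkeeping (the join formula $(\Sigma^V Y_n)^H=Y_n^H*S(V^H)$ and the nonemptiness of $Y_n^H$) fills in details the paper leaves implicit; strictly, only the subgroup and not its isomorphism with $H$ is forced, but this does not matter because the fixed space of all of $H$ is independent of that choice.
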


\begin{proof}
The two fixed spaces are $Y_{8q}$ and $Y_{8q+b}$ by
Theorem~\ref{thm:fixed-profile}.  The first has total reduced Betti number
five.  The second has total reduced Betti number one, because $b$ is odd.
Taking fixed spaces of an equivariant homotopy equivalence would preserve
these numbers, and a representation suspension cannot change them.  This
contradiction holds for every $V$, even if it depends on $q$.
Iterating the ordinary eight-periodic equivalence gives the last assertion.
\end{proof}

Any uniform ordinary suspension period must be divisible by eight:
Corollary~\ref{lb-eight-periodic} gives total reduced Betti number five precisely
at indices divisible by eight, and suspension preserves that number.
Applying a proposed period to arbitrarily large such indices forces its
divisibility by eight.  Hence the preceding theorem covers every positive
uniform ordinary period.

There is also a small-subgroup obstruction to the original step itself.
For every $n=8q$, identify the two half-turn subgroups with $C_2$.
Their fixed spaces are $Y_{4q}$ and $Y_{4q+4}$.  For $q\ge2$ their total
reduced Betti numbers alternate between five and three; for $q=1$ they
are four points and $Y_8$, with total reduced Betti numbers three and five.
Thus the eight-step recurrence fails to be a representation suspension even
after restriction to the half-turn.  Larger ordinary periods cannot remove
the obstruction for all common cyclic subgroups, as the theorem shows.

\section{Three finite benchmarks}\label{sec:benchmarks}

The following reductions connect the periodic family to the finite integral-action calculations. The cyclic augmentation lattice on thirteen vertices will also provide a small model for the distinction between an ordinary homotopy type and its symmetry.

\begin{theorem}[Three fixed chorded cycles]\label{thm:three-benchmarks}
There are homotopy equivalences
\[
 \Ind(G(13,2))\simeq\bigvee^{12}S^2,
 \qquad
 \Ind(G(14,4))\simeq S^3,
 \qquad
 \Ind(G(14,5))\simeq S^2.
\]
The associated face vectors are, respectively,
\[
 (13,52,65,13),\qquad
 (14,63,112,77,14),\qquad
 (14,63,112,91,42,14,2).
\]
All reduced integral homology groups are torsion-free.
\end{theorem}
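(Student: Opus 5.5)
The plan is to treat the three complexes separately, using the ordered-toggle machinery of Section~\ref{lb-section} for the ordinary homotopy types and direct independent-set counts for the face vectors. Torsion-freeness will then be automatic: each complex will be shown to be homotopy equivalent to a wedge of spheres of a single dimension, whose reduced integral homology is free. The middle case requires no new work. Corollary~\ref{cor:p687-family} already identifies $\Ind(G(14,4))$ with $X_{14}$ by multiplication by $3$ modulo $14$, and Corollary~\ref{lb-eight-periodic} at $14=8\cdot1+6$ gives $S^3$.

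For the face vectors I will count independent $k$-sets in each circulant graph. For $G(13,2)=C_{13}^2$ these are the $k$-sets with all circular gaps at least three, counted by the circular formula $\frac{n}{n-2k}\binom{n-2k}{k}$ of \cite[equation~(1.1)]{MansourSun} with $r=2$. This gives $13,52,65,13$, and there are none of size five since $5\cdot3>13$. For $G(14,4)$ and $G(14,5)$ I will count $k$-sets by the same transfer procedure underlying Theorem~\ref{thm:rational-critical-series}: a Boolean window of width $w$ recording the last $w$ chosen bits, evaluated with the cyclic closure. The face vector of $G(14,4)$ is also the face vector of $X_{14}$, by the relabeling.

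Each count will be checked against the reduced Euler characteristic:
\begin{itemize}
\item $-1+13-52+65-13=12$, matching $\bigvee^{12}S^2$;
\item $-1+14-63+112-77+14=-1$, matching $S^3$;
\item $-1+14-63+112-91+42-14+2=1$, matching $S^2$.
\end{itemize}

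For the homotopy types of $\Ind(G(13,2))$ and $\Ind(G(14,5))$ I will run the natural-order toggle matching of Lemma~\ref{lb-acyclic}. The indicator is initialized by \eqref{lb-initial-indicator} on the cyclic graph with $z=0$, and the recurrence \eqref{lb-toggle-recurrence} is applied at every pivot. The goal is to exhibit the complete critical list: besides $\{0\}$, the $13$-vertex case should give exactly twelve critical triangles, and the $G(14,5)$ case exactly one critical $2$-face. Lemma~\ref{lb-acyclic} then gives the wedge decompositions. The Euler characteristic check reduces the task to proving that every critical face has the claimed dimension, since the count is then forced.

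For $G(14,5)$ there is an alternative that avoids the toggle computation. The bipartite relabeling $2i,2j+1$ described in the introduction identifies the graph with $G_7^4$, and one could invoke \cite{NS}. I would still record the direct Morse list, so that the theorem supplies an explicit reduction rather than only a citation.

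I expect the main obstacle to be certifying the single-dimensional critical profile for $\Ind(G(13,2))$. The natural order may leave critical cells in adjacent dimensions, and these would have to be cancelled along gradient paths. If that happens, my first fallback is to reorder the pivots, for example processing vertices in steps of three, which is compatible with the gap-three structure of $C_{13}^2$. Failing that, I would use a link/deletion argument at vertex $0$. The link of $0$ is $\Ind$ of a path on $8$ vertices with jumps $1,2$, which is contractible or a sphere by the standard path recursion. The deletion is $\Ind$ of a $12$-vertex strip, which has a known wedge type. The cofibre sequence would then yield the wedge of twelve $2$-spheres, provided the inclusion of the link into the deletion is null-homotopic for dimensional reasons.
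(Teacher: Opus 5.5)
Your proposal is correct. For $\Ind(G(14,4))$ and $\Ind(G(14,5))$ it is the paper's argument: Corollary~\ref{cor:p687-family} for the first, and for the second the natural-order toggle, which leaves $\{0\}$ and $\{3,6,9\}$. The paper's parity-first certificate for $G(14,4)$ is only a cross-check.

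The one real difference is at thirteen vertices. The paper first removes the translation orbit of elementary collapses $\{i,i+6,i+9\}\subset\{i,i+3,i+6,i+9\}$ (Lemma~\ref{lem:p683collapse}). It then runs the natural-order toggle on the two-dimensional remainder, with face vector $(13,52,52)$. You instead run the toggle on the full complex.

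The obstacle you anticipate does not occur. On the full complex the pairs created, listed as vertex--edge, edge--triangle and triangle--tetrahedron counts, are:
\begin{itemize}
\item pivot $0$: $8,15,4$;
\item pivots $1$ and $2$: $1,5,3$ each;
\item pivot $3$: none;
\item pivots $4$ and $5$: $1,4,1$ each;
\item pivot $6$: $0,2,1$;
\item pivots $7$--$10$: $1,1,2,1$ edge--triangle pairs respectively.
\end{itemize}
The totals $12,40,13$ leave exactly $\{0\}$ and the twelve triangles $\{1,5,9\}$, $\{1,5,10\}$, $\{1,6,10\}$, $\{2,6,10\}$, $\{2,6,11\}$, $\{2,7,11\}$, $\{3,7,11\}$, $\{3,8,11\}$, $\{3,7,12\}$, $\{3,8,12\}$, $\{4,8,12\}$, $\{4,9,12\}$. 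So Lemma~\ref{lb-acyclic} applies directly and your fallbacks are unnecessary. The deletion fallback would in any case need care: $G(13,2)-0$ keeps the edge $\{12,1\}$, so it is not a linear strip.

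What each route buys:
\begin{itemize}
\item Yours gives one acyclic matching on the original face poset, with no coface-uniqueness lemma and no concatenation step in Proposition~\ref{prop:perfect}.
\item The paper's gives a translation-equivariant first reduction and a smaller audit.
\end{itemize}

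Your face-vector computations (the Mansour--Sun formula in place of the cyclic gap list, transfer matrices in place of checking all $2^{14}$ subsets) are interchangeable finite counts. The Euler-characteristic checks are correct.
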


The proof is given in the next two subsections. Their translation actions are determined in Theorem~\ref{thm:cyclic-action}.

\subsection{The thirteen-vertex square of a cycle}\label{sec:p683}

Set $K=\Ind(G(13,2))$.  If the vertices of an independent set are listed
cyclically, every gap between consecutive selected residues is at least three.
Conversely, this gap condition is sufficient.  Up to rotation, the nonempty
faces are exactly those in Table~\ref{tab:p683orbits}; all entries are read
modulo $13$.

\begin{table}[ht]
\centering
\caption{Cyclic face orbits for $\Ind(G(13,2))$.}
\label{tab:p683orbits}
\begin{tabular}{clcc}
\toprule
dimension & representatives & orbits & faces\\
\midrule
$0$ & $0$ & $1$ & $13$\\
$1$ & $03,04,05,06$ & $4$ & $52$\\
$2$ & $036,037,047,038,048$ & $5$ & $65$\\
$3$ & $0369$ & $1$ & $13$\\
\bottomrule
\end{tabular}
\end{table}

The corresponding cyclic gap types are $3+10,4+9,5+8,6+7$ in dimension
one; $3+3+7,3+4+6,4+3+6,3+5+5,4+4+5$ in dimension two; and
$3+3+3+4$ in dimension three.

No independent face has five vertices, because five gaps of size at least
three cannot sum to $13$.  Each displayed orbit has length $13$: a nonempty
proper subset of a prime cycle has no nontrivial rotational stabilizer.  This
proves the first face vector in Theorem~\ref{thm:three-benchmarks}.

For $i\in\Z/13\Z$, define
\[
 Q_i=\{i,i+3,i+6,i+9\},\qquad
 B_i=Q_i\setminus\{i+3\}=\{i,i+6,i+9\}.
\]

\begin{lemma}\label{lem:p683collapse}
Each $B_i$ is contained in the unique tetrahedron $Q_i$.  Hence the thirteen
pairs $(B_i,Q_i)$ form a cyclic orbit of elementary collapses.
\end{lemma}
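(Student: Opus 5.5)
The lemma makes two claims: $B_i$ is a face of $K$ lying in $Q_i$, and $Q_i$ is the only face of $K$ properly containing $B_i$. Together these make $B_i$ a free face of $Q_i$. I will check both directly with the cyclic gap description of faces established just above: a set is independent exactly when all cyclic gaps between consecutive selected residues are at least three.

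First, I verify that $Q_i$ is a face. By translation invariance it suffices to take $i=0$. The set $Q_0=\{0,3,6,9\}$ has cyclic gaps $3,3,3,4$, which is the unique three-dimensional orbit $0369$ in Table~\ref{tab:p683orbits}. Hence $Q_i$ and its subset $B_i$ are faces.

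Next, I show that $Q_0$ is the only face properly containing $B_0=\{0,6,9\}$. Since $B_0$ has gaps $6,3,4$, any added vertex must lie strictly inside one of these arcs while leaving both new gaps at least three. A gap of $3$ or $4$ cannot be split into two parts each at least three. A gap of $6$ splits only as $3+3$, so the added vertex must be $3$. Hence the only four-vertex face containing $B_0$ is $Q_0$. No face has five vertices, as already noted, so $Q_0$ is the unique coface of $B_0$, and in particular $B_0$ is a free face of $Q_0$.

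Finally, I check that the thirteen collapses can be performed together. The pairs are the translates of $(B_0,Q_0)$, and the thirteen tetrahedra $Q_i$ are distinct because the orbit has length $13$. I also need the $B_i$ to be distinct and not to coincide with one another's partners. This holds because each $B_i$ determines $Q_i$ as its unique coface. Therefore the pairs are disjoint, each $Q_i$ is maximal, and each $B_i$ is free. Removing the pairs $(B_i,Q_i)$ one at a time keeps the remaining $B_j$ free, since removing other tetrahedra cannot create new cofaces. This gives a cyclic orbit of elementary collapses.

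I do not expect a serious obstacle. The only point requiring care is the exhaustive gap check showing that the only admissible vertex is $3$, and this is immediate from the gap sizes $6,3,4$.
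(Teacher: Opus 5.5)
Your proof is correct and is essentially the paper's argument. After reducing to $i=0$, the paper excludes every residue in the closed neighborhoods of $0,6,9$ and finds that only $3$ remains, which is the same finite check as your splitting of the gaps $6,3,4$; your verification that the thirteen pairs are disjoint and can be removed successively makes explicit what the paper leaves implicit.
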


\begin{proof}
Translation reduces the assertion to $i=0$.  A fourth vertex compatible with
$B_0=\{0,6,9\}$ must avoid
\[
 \{11,12,0,1,2\}\cup\{4,5,6,7,8\}
 \cup\{7,8,9,10,11\}.
\]
The only remaining residue is $3$.  Thus $B_0$ has the unique coface $Q_0$,
and translation proves the claim for every $i$.
\end{proof}

After these collapses, the remaining complex satisfies $K\simeq L$; it is
two-dimensional and has face
vector $(13,52,52)$; its triangle orbits are represented by
$036,047,038,048$.  Apply the ordered-toggle matching with order
$0,1,\ldots,12$.  Table~\ref{tab:p683matching} gives the exact audit.  The
two pair columns count vertex--edge and edge--triangle pairs created at the
pivot; the survivor columns give the unmatched profile afterward.

\begin{table}[ht]
\centering
\caption{Ordered-toggle audit after the cyclic collapses.}
\label{tab:p683matching}
\begin{tabular}{crrrrr}
\toprule
pivot & $0\leftrightarrow1$ & $1\leftrightarrow2$ & vertices & edges & triangles\\
\midrule
0&8&12&5&32&40\\ 1&1&6&4&25&34\\ 2&1&5&3&19&29\\
3&0&1&3&18&28\\ 4&1&2&2&15&26\\ 5&1&4&1&10&22\\
6&0&3&1&7&19\\ 7&0&2&1&5&17\\ 8&0&0&1&5&17\\
9&0&1&1&4&16\\ 10&0&4&1&0&12\\ 11&0&0&1&0&12\\
12&0&0&1&0&12\\
\bottomrule
\end{tabular}
\end{table}

The unique critical vertex is $\{0\}$, and there are twelve critical
triangles.  The acyclicity and CW consequence in Lemma~\ref{lb-acyclic} give
\[
 \Ind(G(13,2))\simeq\bigvee^{12}S^2.
\]
The first collapse orbit is equivariant, but the subsequent ordered matching
is not; no equivariant wedge decomposition is asserted.

\subsection{Two fourteen-vertex certificates}\label{sec:p687p688}

For the remaining graphs, the faces were obtained by checking every one of
the $2^{14}$ vertex subsets against the defining edge set.  Oriented boundary
maps use increasing vertex order and the usual sign $(-1)^j$ on deletion of
the $j$th vertex.  The complete integer audit verified $\partial^2=0$.

\subsubsection{The graph \texorpdfstring{$G(14,4)$}{C14(1,4)}}

Corollary~\ref{cor:p687-family} already gives the homotopy type as the
$n=14$ member of the eight-periodic family.  The following
instance-level certificate is a logically independent finite
cross-check and records the parity-first matching used in the benchmark.

There are $281$ independent faces including the empty face, with
\[
 f=(14,63,112,77,14),\qquad \chi=0,\qquad \widetilde{\chi}=-1.
\]
Use the parity-first order
\begin{equation}\label{eq:p687order}
 0,2,4,6,8,10,12,1,3,5,7,9,11,13.
\end{equation}
The matching has $139$ pairs.  Table~\ref{tab:p687counts} lists the number
whose lower cell has dimension $0,1,2,3$ at each nonzero pivot.

\begin{table}[ht]
\centering
\caption{Per-pivot pair counts for $G(14,4)$.}
\label{tab:p687counts}
\begin{tabular}{crrrr@{\qquad}crrrr}
\toprule
$v$&$0$&$1$&$2$&$3$&$v$&$0$&$1$&$2$&$3$\\
\midrule
0&9&24&22&5 & 2&3&13&15&4\\
4&1&8&12&4 & 6&0&3&5&1\\
8&0&1&2&0 & 1&0&0&1&0\\
3&0&0&1&0 & 9&0&1&2&0\\
11&0&0&1&0 & 13&0&0&1&0\\
\bottomrule
\end{tabular}
\end{table}

The pivots $10,12,5,7$ create no pairs.  The only critical faces are the
vertex $\{0\}$ and the tetrahedron $\{5,7,10,12\}$.  By
Lemma~\ref{lb-acyclic},
\[
 \Ind(G(14,4))\simeq S^3.
\]
In particular $\Ht_3\cong\Z$, all other reduced integral homology groups
vanish, and there is no torsion.

\subsubsection{The graph \texorpdfstring{$G(14,5)$}{C14(1,5)}}

There are $339$ independent faces including the empty face, with
\[
 f=(14,63,112,91,42,14,2),\qquad \chi=2,\qquad \widetilde{\chi}=1.
\]
For the natural order $0,1,\ldots,13$, the matching has $168$ pairs.
Table~\ref{tab:p688counts} records the number of pairs by lower-cell
dimension.  The omitted pivots $10,11,12,13$ create no pairs.

\begin{table}[htbp]
\centering
\caption{Per-pivot pair counts for $G(14,5)$.}
\label{tab:p688counts}
\begin{tabular}{crrrrrr}
\toprule
$v$&$0$&$1$&$2$&$3$&$4$&$5$\\
\midrule
0&9&24&26&15&6&1\\
1&3&15&22&15&6&1\\
2&0&3&3&0&0&0\\
3&1&2&2&0&0&0\\
4&0&2&2&0&0&0\\
5&0&2&2&0&0&0\\
6&0&0&1&0&0&0\\
7&0&1&1&0&0&0\\
8&0&1&1&0&0&0\\
9&0&0&1&0&0&0\\
\bottomrule
\end{tabular}
\end{table}

The only critical faces are $\{0\}$ and the triangle $\{3,6,9\}$.
Lemma~\ref{lb-acyclic} gives
\[
 \Ind(G(14,5))\simeq S^2.
\]
Thus $\Ht_2\cong\Z$, every other reduced integral homology group vanishes,
and there is no torsion.  

\begin{remark}
The parity-first order in \eqref{eq:p687order} and the natural order in the
$s=5$ case behave very differently.  Corollary~\ref{lb-eight-periodic} supplies a
parameter-family rule only after the first graph is relabeled as
$G_{14}(\{2,3\})$ and embedded in the specific family $G_n(\{2,3\})$.  Nothing here
extrapolates the $s=5$ certificate or classifies arbitrary jump pairs.
\end{remark}

\begin{proof}[Proof of Theorem~\ref{thm:three-benchmarks}]
The cyclic gap enumeration gives the thirteen-vertex face vector, and the collapse orbit followed by the displayed ordered matching leaves one vertex and twelve triangles. The two fourteen-vertex enumerations and matchings leave one vertex and one cell in dimension three or two, respectively. Lemma~\ref{lb-acyclic} gives the three wedge types. Their integral reduced homology is free, with the displayed ranks and degrees.
\end{proof}

\section{Perfectness, cyclic characters, and quotient geometry}
\label{sec:symmetry}

The family and benchmark reductions do more than identify homotopy types.
They attain the Morse lower bounds exactly.  Moreover, cyclic symmetry
constrains their Betti numbers, determines their top-homology actions, and
gives a small quotient model for the prime benchmark.

\begin{proposition}[Perfectness and optimality]\label{prop:perfect}
For every complex in Corollary~\ref{lb-eight-periodic} and Theorem~\ref{thm:three-benchmarks}, the
displayed reduction is a perfect discrete Morse reduction.  Consequently no
acyclic matching on its face poset can have fewer critical cells in any
dimension.
\end{proposition}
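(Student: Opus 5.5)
The plan is to apply the weak Morse inequalities to the critical-cell profiles already obtained. For any acyclic matching on the face poset of a finite complex $K$, Forman's theorem gives a CW model of $K$ whose $i$-cells are the critical $i$-faces. Cellular homology of that model then bounds the number $c_i$ of critical $i$-faces from below: $c_i\geq \beta_i(K;\F)$ over every field $\F$. The count uses unreduced homology, so $c_0\geq1$ whenever $K\neq\varnothing$.

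Perfectness therefore amounts to showing that the displayed reductions attain $c_i=\beta_i$ in every dimension. I would read off the critical profiles recorded earlier. In Corollary~\ref{lb-eight-periodic}, the profile is the vertex $\{0\}$ together with the words in \eqref{lb-critical-words}: $m$ faces of a single positive dimension $d$, with $(m,d)$ as tabulated for $6\leq n\leq 24$ and propagated for larger $n$ by Theorem~\ref{lb-criterion}. For $\Ind(G(13,2))$, the cyclic collapse of Lemma~\ref{lem:p683collapse} followed by the audited ordered matching leaves $\{0\}$ and twelve triangles. For $\Ind(G(14,4))$ and $\Ind(G(14,5))$, the profiles are $\{0\}$ plus one tetrahedron and $\{0\}$ plus one triangle, respectively.

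By Lemma~\ref{lb-acyclic}, each complex is homotopy equivalent to $\bigvee^m S^d$ with $d>0$. Its unreduced Betti numbers, over any field, are therefore $\beta_0=1$, $\beta_d=m$, and zero otherwise. These match the critical counts exactly, which is perfectness. The same lower bound then shows that no acyclic matching on the same face poset can have fewer critical cells in any dimension.

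Two points need care.

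\emph{The thirteen-vertex case.} There the matching lives on the collapsed complex $L$, not on the face poset of $K$ itself. I would combine the elementary collapses $(B_i,Q_i)$ with the ordered-toggle pairs on $L$ into a single matching on the face poset of $K$. Acyclicity of the union follows because collapsed pairs can be placed first in a linear extension: each $B_i$ has the unique coface $Q_i$, so no gradient path returns into a collapsed pair from $L$. Alternatively, I would simply interpret "displayed reduction" as the composite, whose critical cells are still $\{0\}$ and twelve triangles.

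\emph{Where the bound comes from.} The main point is not computational; it is to note that the bound holds over every field and uses only the homology of $K$. Hence optimality needs no comparison between different matchings.
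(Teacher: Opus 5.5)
Your proposal is correct and follows essentially the same route as the paper. Both arguments read off each critical profile (one vertex plus $m$ cells in a single positive dimension $d$) and match it against the Betti numbers of $\bigvee^m S^d$ over every field via the weak Morse inequalities. For $\Ind(G(13,2))$, both concatenate the thirteen collapse pairs $(B_i,Q_i)$ with the ordered matching on $L$ to obtain an acyclic matching on the original face poset, with critical profile $(1,0,12,0)$.
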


\begin{proof}
For $G_n(\{2,3\})$, the proof of Corollary~\ref{lb-eight-periodic} identifies the natural-order critical
profile as one vertex together with exactly the cells that generate the
displayed reduced homology.  Thus equality holds in every weak Morse
inequality.

For $G(13,2)$, adjoining the thirteen elementary-collapse pairs to the
ordered matching on the remaining complex gives an acyclic matching on the
original face poset; this is the standard concatenation of a collapse sequence
with a Morse matching on the terminal subcomplex.  Its critical profile is
$(1,0,12,0)$.  The two fourteen-vertex profiles are $(1,0,0,1,0)$ and
$(1,0,1,0,0,0,0)$.

By Theorem~\ref{thm:three-benchmarks}, these benchmark profiles agree dimension by dimension with
the corresponding Betti numbers over every field.  The weak Morse
inequalities require the number of critical $i$-cells in any acyclic matching
to be at least the $i$th Betti number.  Equality here proves both perfectness
and the asserted global optimality.
\end{proof}

Let $\Gamma$ be a finite group of order $q>1$, and write
$\operatorname{Aug}_{\Q}(\Gamma)$ for the augmentation subrepresentation of
$\Q[\Gamma]$.

\begin{proposition}[Face-free action constraint]\label{prop:face-free}
Suppose that $\Gamma$ acts simplicially on a connected finite complex $K$, no
nonidentity element fixes a nonempty simplex setwise, and
$\Ht_i(K;\Q)=0$ for $i\ne d$.  Put
$\beta=\dim_{\Q}\Ht_d(K;\Q)$.  If $d$ is odd, then
\[
 \beta\equiv1\pmod q,
 \qquad
 \Ht_d(K;\Q)
 \cong \Q\oplus
 \Q[\Gamma]^{\oplus(\beta-1)/q}.
\]
If $d$ is even, then $\beta\ge q-1$, $\beta\equiv-1\pmod q$, and
\[
 \Ht_d(K;\Q)
 \cong \operatorname{Aug}_{\Q}(\Gamma)\oplus
 \Q[\Gamma]^{\oplus((\beta+1)/q-1)}.
\]
\end{proposition}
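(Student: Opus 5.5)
The plan is to compute the rational character of $\Ht_d(K;\Q)$ via the Lefschetz trace and then read off the decomposition from character theory. Suppose $\gamma\ne1$. By hypothesis $\gamma$ fixes no nonempty simplex setwise, so in the augmented simplicial chain complex over $\Q$ every nonempty chain basis element is moved to a different basis element up to sign. Hence its trace on $\widetilde C_i$ vanishes for $i\geq0$. The only surviving contribution is the empty face in degree $-1$, which is fixed and contributes $-1$. The alternating chain-trace identity (as used in Proposition~\ref{prop:orbittrace}) then gives a reduced Lefschetz number of $-1$. Since reduced homology is concentrated in degree $d$, we get
\[
 \tr(\gamma_*\mid\Ht_d(K;\Q))=(-1)^{d+1}\qquad(\gamma\ne1),
\]
while the identity has trace $\beta$.

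Next I would identify this character. Let $\chi$ denote the character of $\Ht_d(K;\Q)$. If $d$ is odd, $\chi$ takes the value $1$ off the identity. Then $\chi-\mathbf 1$ vanishes off the identity and equals $\beta-1$ there, so $\chi-\mathbf 1=\frac{\beta-1}{q}\chi_{\mathrm{reg}}$. Taking the inner product with the trivial character shows the multiplicity $(\beta-1)/q$ must be an integer. Equivalently, the multiplicity of the trivial representation, $\frac1q(\beta+(q-1))$, is an integer, so $\beta\equiv1\pmod q$. A nonnegative integer multiple of the regular character plus the trivial character is realized by $\Q\oplus\Q[\Gamma]^{(\beta-1)/q}$. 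Rational representations of a finite group are determined by their characters (semisimplicity over characteristic zero), which gives the isomorphism.

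If $d$ is even, $\chi$ is $-1$ off the identity. Then $\chi+\mathbf 1$ vanishes off the identity with value $\beta+1$ at the identity, so $\chi=\frac{\beta+1}{q}\chi_{\mathrm{reg}}-\mathbf 1$. Integrality of the trivial multiplicity $(\beta+1)/q-1$ gives $\beta\equiv-1\pmod q$. Nonnegativity of that multiplicity gives $\beta+1\geq q$. Writing $\chi_{\mathrm{reg}}-\mathbf 1$ as the augmentation character, we obtain $\chi=\chi_{\operatorname{Aug}}+((\beta+1)/q-1)\chi_{\mathrm{reg}}$, and character determination again yields the stated module.

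The main point needing care is the chain-level trace claim. The hypothesis concerns setwise fixing of nonempty simplices, so a permuted basis simplex contributes zero to the diagonal regardless of orientation sign, and only setwise-fixed simplices matter. The only subtlety is remembering the augmented empty face, which is why the $-1$ appears. Connectedness is used only in that $\Ht_0$ vanishes for $d>0$; in the $d=0$ case the concentration hypothesis already covers it, so connectedness is not really needed beyond consistency with the statement. No integral information is required, since the claim is purely rational.
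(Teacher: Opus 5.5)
Your proposal is correct and follows the paper's argument. Both show that nonidentity elements have vanishing chain traces, use the Hopf trace identity to get $\tr(\gamma_*)=(-1)^{d+1}$, and then identify the character as $\pm\mathbf 1$ plus an integer multiple of the regular character via Maschke.

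The only difference is cosmetic. You work in the augmented complex, where the empty face contributes $-1$, while the paper uses the ordinary chain complex and connectedness to obtain $0=1+(-1)^d\tr(\gamma_*)$. This is why your version, as you note, does not need the connectedness hypothesis.
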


\begin{proof}
For $1\ne g\in\Gamma$, no oriented simplex is carried to itself as a basis
element, even up to sign, so $g$ has trace zero on every simplicial chain
group.  Since $K$ is connected, $\widetilde H_0(K;\Q)=0$.  Applying the Hopf
trace identity to the ordinary simplicial chain complex gives
\[
 0=1+(-1)^d
 \operatorname{tr}\bigl(g_*\mid\Ht_d(K;\Q)\bigr).
\]
Thus the homology character has value $\beta$ at the identity and
$\varepsilon=(-1)^{d+1}$ elsewhere.

For odd $d$ this character is
$\mathbf{1}+((\beta-1)/q)\operatorname{reg}_{\Gamma}$; for even $d$ it is
$-\mathbf{1}+((\beta+1)/q)\operatorname{reg}_{\Gamma}$.  Taking the inner
product with the trivial character shows that the displayed coefficients are
integers and, in the even case, that $(\beta+1)/q-1\ge0$.  Maschke
semisimplicity and equality of characters identify the rational
representations.  Finally,
$\operatorname{reg}_{\Gamma}-\mathbf{1}$ is the augmentation
representation.
\end{proof}

As elsewhere, $T$ denotes translation by one on the labeled vertices.

For a cyclic group $C_m$, write $\operatorname{Aug}_{\Q}(C_m)$ for the
augmentation subrepresentation of $\Q[C_m]$.  If $m$ divides $n$, a
$\Q[C_m]$-module is regarded as a $\Q[C_n]$-module through the quotient
$C_n\twoheadrightarrow C_m$.

\begin{theorem}[Cyclic character of the family]
\label{thm:family-cyclic-character}
Let $d_n$ and $m_n$ be the sphere dimension and multiplicity in Corollary~\ref{lb-eight-periodic}.  As a
rational representation of cyclic translation,
\[
 \Ht_{d_n}(X_n;\Q)\cong
 \begin{cases}
  \Q\oplus\Q(\zeta_8),&n\equiv0\pmod8,\\
  \operatorname{Aug}_{\Q}(C_4),&n\equiv4\pmod8,\\
  \Q,&n\not\equiv0,4\pmod8.
 \end{cases}
\]
Here a generator acts on $\Q(\zeta_8)$ by multiplication by $\zeta_8$;
the first two representations are inflated through $C_n\twoheadrightarrow
C_8$ and $C_n\twoheadrightarrow C_4$, respectively.
\end{theorem}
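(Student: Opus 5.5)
The plan is to determine the rational character of $T$ on $\Ht_{d_n}(X_n;\Q)$ from its traces, using Proposition~\ref{prop:orbittrace} together with the fixed-space profile of Theorem~\ref{thm:fixed-profile}. By Corollary~\ref{lb-eight-periodic}, reduced homology is concentrated in the single degree $d_n$, so the trace of $T^k$ equals $(-1)^{d_n}\widetilde\chi(Y_n^{\langle T^k\rangle})$. Here $\langle T^k\rangle=P_m^{(n)}$ with $g=\gcd(n,k)$, and the fixed space is the independence complex of the reduced orbit graph (Proposition~\ref{prop:orbit-fixed}). A rational representation of $C_n$ is determined by its character, so it suffices to compute $\chi(T^k)$ as a function of $g$ and match it with the characters of the proposed modules.

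The steps run as follows. From Theorem~\ref{thm:fixed-profile}, $\widetilde\chi$ of the fixed space is $-1$ for $g=1,2,3$ (the empty set), $3$ for $g=4$, $-1$ for $g=5$ (the circle), and $(-1)^{d_g}m_g$ for $g\ge6$. The trace of $T^k$ is then
\[
 \chi(g)=(-1)^{d_n}\widetilde\chi\bigl(Y_n^{P^{(n)}_{n/g}}\bigr),
\]
and I would tabulate it using the residue classes of Corollary~\ref{lb-eight-periodic}. The candidate characters are simple: the trivial module $\Q$ has character $1$. The module $\Q(\zeta_8)$, inflated, has trace of $T^k$ equal to the sum of the primitive eighth roots raised to the power $k$, which is $4$ if $8\mid k$, $-4$ if $k\equiv4\pmod8$, and $0$ otherwise. $\operatorname{Aug}_\Q(C_4)$ has trace $3$ if $4\mid k$ and $-1$ otherwise. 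These values depend only on $\gcd(k,8)$ or $\gcd(k,4)$, and hence on $g$ through $\gcd(g,8)$, since $8\mid n$ in the relevant case.

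The core is a case check comparing parities and signs. For example, take $n\equiv0\pmod8$ with $d_n=2q-1$ odd. The identity gives $\chi=-\widetilde\chi(Y_n)\cdot(-1)=5$, because $\widetilde\chi(\bigvee^5S^{d})=-5$ for odd $d$, which matches $1+4$. For $g\ge6$ one needs $(-1)^{d_n+d_g}m_g$ to equal $1+4,\ 1-4,\ 1$ according as $8\mid g$, $g\equiv4\pmod8$, or neither. This follows from Corollary~\ref{lb-eight-periodic}: $m_g=5,3,1$ in these residue classes, and $d_g$ is odd in residues $0,1,2,3,5,6,7$ and even in residue $4$. One must also check that for $g\equiv4$ the sign flip gives $-3=1-4$. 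The small values $g\le5$ are handled the same way, e.g. $g=4$ gives $-3$ and $g\in\{1,2,3,5\}$ gives $1$. The cases $n\equiv4$ (with $d_n$ even, so the sign is $+$) and $n\not\equiv0,4$ (where every divisor $g$ has $m_g=1$, $d_g$ odd, and $g=4$ cannot divide $n$ when $n$ is odd, etc.) work analogously. A subtlety is $n\equiv2,6\pmod8$, where $g$ can be $\equiv2$ but never $0$ or $4$ modulo $8$. I would handle this by noting that $g\mid n$ forces $\gcd(g,8)\mid\gcd(n,8)$.

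The main obstacle is bookkeeping: confirming that the sign $(-1)^{d_n}$ combined with $(-1)^{d_g}$ and $m_g$ reproduces exactly the required character values for every divisor $g$, including the small exceptional fixed spaces and the parity of $d_g$ in each residue class. I would organize this as a table indexed by $n\bmod 8$ and $g\bmod 8$. Once the characters agree, Maschke's theorem identifies the rational modules, and the inflation statements follow because the character factors through $C_8$ or $C_4$.
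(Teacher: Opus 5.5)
Your proposal is correct and follows essentially the paper's argument. The paper likewise computes the trace of $T^a$ from invariant faces indexed by $\gcd(n,a)$; its signed orbit count is exactly minus your $\widetilde\chi$ of the fixed space, so it obtains the same trace $(-1)^{d_n}\widetilde\chi$. It then uses the $K_4$, $C_5$ and eight-periodic quotient cases and matches characters, differing only in that it settles the rank-one residue classes by the single trace of the generator rather than by checking every divisor. One small slip: your expression $-\widetilde\chi(Y_n)\cdot(-1)$ should read $(-1)^{d_n}\widetilde\chi(Y_n)=(-1)(-5)=5$.
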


\begin{proof}
Fix $a\in\Z/n\Z$ and put $g=\gcd(n,a)$, with $\gcd(n,0)=n$.  Translation
by $a$ has $g$ vertex orbits, each of length $\ell=n/g$.  Let $B_g(t)$ count
sets of these orbits whose union is an independent face, by number of chosen
orbits.  If $g=1,2,3$, one of the jumps $2,3$ lies within every orbit, so
$B_g(t)=1$.  For $g\geq4$, a union of orbits is independent precisely when
the corresponding residues form an independent set in the loopless quotient
with jumps $2$ and $3$.  Thus
\[
 B_4(-1)=-3,\qquad B_5(-1)=1,
\]
because the quotient graphs are $K_4$ and $C_5$, while for $g\geq6$
Corollary~\ref{lb-eight-periodic} gives
\begin{equation}\label{eq:orbit-polynomial}
 B_g(-1)=(-1)^{d_g+1}m_g,
\end{equation}
where $m_g$ and $d_g$ are its sphere multiplicity and dimension.

It remains to relate this orbit count to the character.  An invariant face
formed from $t$ orbits has $t\ell$ vertices.  Translation restricts to one
$\ell$-cycle on each chosen orbit, so its orientation sign is
$(-1)^{t(\ell-1)}$.  Including the alternating chain sign, this face
contributes
\[
 (-1)^{t\ell-1}(-1)^{t(\ell-1)}=(-1)^{t+1}
\]
to the ordinary simplicial Lefschetz number.  Consequently that number is
$1-B_g(-1)$.  Since $X_n$ is connected and has reduced homology only
in degree $d_n$, the Hopf trace identity yields
\begin{equation}\label{eq:family-character}
 \operatorname{tr}\!\left(T^a_*\mid
 \Ht_{d_n}(X_n;\Q)\right)
   =(-1)^{d_n+1}B_g(-1).
\end{equation}

Substituting the four residue cases from
Corollary~\ref{lb-eight-periodic} into~\eqref{eq:family-character} gives the
following characters.  For $8\mid n$, the value is $5$ when $8\mid a$,
$-3$ when $a\equiv4\pmod8$, and $1$ otherwise.  This is the character of
$\Q\oplus\Q(\zeta_8)$ inflated from $C_8$.  For $n\equiv4\pmod8$, the
value is $3$ when $4\mid a$ and $-1$ otherwise, which is the character of
$\operatorname{Aug}_{\Q}(C_4)$ inflated from $C_4$.  In every other residue
class the homology has rank one and the generator has trace $1$, so the
representation is trivial.  Rational representations of a finite cyclic group
are determined by their characters, completing the proof.
\end{proof}

Write $\operatorname{Aug}_{\Z}(C_{13})$ for the integral augmentation ideal
in $\Z[C_{13}]$.

\begin{theorem}[Cyclic actions for the benchmark graphs]\label{thm:cyclic-action}
Under cyclic translation,
\begin{align*}
 \Ht_2(\Ind(G(13,2));\Z)
   &\cong \operatorname{Aug}_{\Z}(C_{13}),\\
 T_*\mid \Ht_3(\Ind(G(14,4));\Z)&=+1,\\
 T_*\mid \Ht_2(\Ind(G(14,5));\Z)&=-1.
\end{align*}
In particular, the two fourteen-vertex spheres carry different orientation
characters under the same cyclic generator.
\end{theorem}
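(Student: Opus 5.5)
For the two fourteen-vertex spheres I would use the rank-one row of Proposition~\ref{prop:lowrank}, together with the trace computation in its proof. One-step translation fixes no nonempty independent face, since the only invariant nonempty vertex set is the whole vertex set and it contains an edge. The reduced Lefschetz number is therefore $-1$, and on homology concentrated in degree $q$ the trace is $(-1)^{q+1}$. Theorem~\ref{thm:three-benchmarks} gives $\Ht_3(\Ind(G(14,4));\Z)\cong\Z$, so $q=3$ and $T_*=+1$. It gives $\Ht_2(\Ind(G(14,5));\Z)\cong\Z$, so $q=2$ and $T_*=-1$. Both identifications are exact integral statements, because an integral automorphism of $\Z$ is $\pm1$ and is fixed by its trace.

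For $K=\Ind(G(13,2))$ I would first fix the rational module. Since $13$ is prime, every nontrivial rotation generates the whole group. Such a rotation has no fixed nonempty simplex: every face orbit in Table~\ref{tab:p683orbits} has length $13$, and the full vertex set is not independent. Proposition~\ref{prop:face-free} with $d=2$, $q=13$ and $\beta=12$ then gives $\Ht_2(K;\Q)\cong\operatorname{Aug}_{\Q}(C_{13})\cong\Q(\zeta_{13})$. The lattice $L=\Ht_2(K;\Z)$ is free of rank $12$ and carries $T$ with $\Phi_{13}(T)=0$. So $L$ is a torsion-free rank-one module over $\Z[\zeta_{13}]$, that is, it is isomorphic to an ideal. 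Since $\Z[\zeta_{13}]$ has class number one, $L$ is free of rank one, hence isomorphic to $\Z[t]/(\Phi_{13})\cong\operatorname{Aug}_{\Z}(C_{13})$; the last isomorphism is $p\mapsto(t-1)p$.

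The main obstacle is the class-number input. It is classical (the cyclotomic fields with class number one include $p=13$), but it is an external arithmetic fact. If I wanted to avoid it, I would instead exhibit an explicit generator: a $2$-cycle $z$ whose translates $z,Tz,\ldots,T^{11}z$ form a $\Z$-basis of $L$. After the $13$ equivariant collapses of Lemma~\ref{lem:p683collapse}, the complex $L'$ has $52$ edges in $4$ orbits and $52$ triangles in $4$ orbits. I would write $\partial_2$ as a $4\times4$ matrix over $\Z[t]/(t^{13}-1)$ in orbit coordinates; the collapses are equivariant, so this is legitimate. I would then compute $\ker\partial_2$ over this ring, which has $\Z$-rank $12$, and check that it is cyclic and annihilated exactly by $\Phi_{13}$. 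Cyclicity amounts to a determinant or minor being a unit modulo every prime above each rational prime. Either route identifies the integral module, and I would present the class-number argument with the explicit cycle as an optional cross-check.
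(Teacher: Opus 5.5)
Your proposal is correct. For the two fourteen-vertex spheres it is the paper's own argument: translation is transitive, the reduced Lefschetz number is $-1$, and the homology has rank one.

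For $\Ind(G(13,2))$ you share the paper's rational step through Proposition~\ref{prop:face-free}, but you reach the integral lattice by a genuinely different route. You note that $\Phi_{13}(T)=0$ on the free lattice $L$, so $L$ is a torsion-free rank-one module over the Dedekind ring $\Z[\zeta_{13}]$ and hence isomorphic to a nonzero ideal. The classical fact that $\Q(\zeta_{13})$ has class number one (true for every prime $p\le 19$) then makes $L$ free.

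The paper instead exhibits an explicit twelve-term cycle $z$ with $\sum_{j=0}^{12}T^jz=0$ at chain level. It then shows that $z,Tz,\ldots,T^{11}z$, together with the thirteen tetrahedron boundaries, span a primitive rank-$25$ sublattice of the $2$-cycles, using a $25\times25$ minor of determinant $-1$. Equality of ranks then makes the orbit classes an integral basis.

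Your route needs no finite computation. It is the natural extension of the Eisenstein-ring argument in Proposition~\ref{prop:lowrank}, with the Euclidean property replaced by class number one. The cost is an imported arithmetic theorem and no explicit generator. It also does not transfer to primes such as $23$, where a rank-one lattice could a priori lie in a nontrivial ideal class.

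The paper's route is elementary apart from one integer determinant and yields a chain-level orbit generator, in keeping with its aim of locating specific cycles inside homology. Your orbit-coordinate cross-check after the equivariant collapse is sound but unnecessary.
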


\begin{proof}
For the thirteen-vertex complex, every nonidentity power of $T$ is
transitive because $13$ is prime.  Since the full vertex set is not
independent, Proposition~\ref{prop:face-free} applies with $q=13$ and even
$d=2$.  It forces $\beta_2\ge12$ and $\beta_2\equiv-1\pmod {13}$; the value
$12$ from Theorem~\ref{thm:three-benchmarks} is therefore the smallest possible, and the
rational module is the augmentation representation.

It remains to identify the integral lattice.  Orient simplices by increasing
vertex order and set
\begin{align*}
 z={}&[036]-[037]+[047]-[049]+[069]-[147]+[149]-[169]\\
    &+[1,6,11]-[1,7,11]-[3,6,11]+[3,7,11].
\end{align*}
Direct boundary calculation gives $\partial z=0$ and
$\sum_{j=0}^{12}T^jz=0$ at chain level.  Order the $65$ triangles
lexicographically and form the $65\times25$ matrix whose first thirteen
columns are the tetrahedron boundaries and whose remaining columns are
$z,T z,\ldots,T^{11}z$.  On the zero-based rows
\[
\begin{split}
 0,1,2,3,5,6,7,10,15,16,17,18,20,21,22,25,30,31,33,\\
 35,36,37,40,45,50
\end{split}
\]
the determinant is $-1$.  Hence these columns generate a primitive rank-$25$
sublattice of the two-cycle lattice.  Theorem~\ref{thm:three-benchmarks} gives
$\operatorname{rank}\Ht_2=12$ and $\Ht_3=0$, so the cycle lattice itself
has rank $13+12=25$.  A primitive sublattice of $\Z^{65}$ cannot lie properly
in a rank-equal lattice, so the two lattices are equal.  Consequently the
classes $[z],[T z],\ldots,[T^{11}z]$ form an integral homology basis,
with
\[
 T^{12}[z]= -\sum_{j=0}^{11}T^j[z].
\]
Thus the module is
$\Z[C_{13}]/(1+T+\cdots+T^{12})$, which multiplication by $T-1$
identifies with $\operatorname{Aug}_{\Z}(C_{13})$.

For either fourteen-vertex complex, the generator $T$ is transitive, so
it fixes no nonempty simplex setwise.  Since each complex is connected, the
Hopf trace calculation in the proof
of Proposition~\ref{prop:face-free} gives trace $+1$ in degree three and $-1$
in degree two.  Each relevant integral homology group has rank one, so its
automorphism is determined by that sign.
\end{proof}

\begin{corollary}[The cyclic quotient]\label{cor:p683-quotient}
The translation action on $\Ind(G(13,2))$ is free on its realization.  The
quotient has cell vector $(1,4,5,1)$, Euler characteristic one, fundamental
group $C_{13}$, and vanishing positive-degree rational homology.
\end{corollary}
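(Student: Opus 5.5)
Freeness follows from the orbit analysis already used for Theorem~\ref{thm:cyclic-action}. Since $13$ is prime, every nonidentity power of $T$ acts transitively on the vertices. A point of the realization fixed by $T^j$ has invariant support, so its support is the full vertex set. That set is not independent, so no nonidentity element fixes any point, and the action is free on $|\Ind(G(13,2))|$. The quotient map is then a $13$-sheeted covering, and the simplicial structure descends to a $\Delta$-complex (CW) structure on the quotient. Its cells are the face orbits, all of length $13$ by Table~\ref{tab:p683orbits}. Reading off the orbit counts gives the cell vector $(1,4,5,1)$ and Euler characteristic $1-4+5-1=1$. As a check, this matches $\chi(K)/13=13/13$, where $\chi(K)=1+12=13$ because $K\simeq\bigvee^{12}S^2$.

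For the fundamental group, Theorem~\ref{thm:three-benchmarks} shows $K$ is simply connected. Hence $K\to K/C_{13}$ is a universal covering with deck group $C_{13}$, so $\pi_1\cong C_{13}$.

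For rational homology, the transfer identifies $H_*(K/C_{13};\Q)$ with the coinvariants $H_*(K;\Q)_{C_{13}}$. In positive degrees the only nonzero group is $H_2(K;\Q)$. By Theorem~\ref{thm:cyclic-action} this is $\operatorname{Aug}_\Q(C_{13})$, which has no trivial summand, so its coinvariants vanish. Thus all positive-degree rational homology of the quotient is zero. This agrees with the Euler characteristic $1$.

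The main point needing care is that a free simplicial action on a complex that is not barycentrically subdivided still yields a CW quotient whose cells are orbits. This holds because each open simplex maps homeomorphically when no element fixes a simplex setwise. That is exactly the face-free condition already verified, so I expect no real obstacle.
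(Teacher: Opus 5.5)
Your proof is correct and follows the paper's argument step for step: no simplex is fixed setwise, so the action is free; the face orbits of Table~\ref{tab:p683orbits} give the cell vector and Euler characteristic; simple connectivity of the cover gives $\pi_1\cong C_{13}$; and the augmentation representation has zero rational coinvariants. Your transfer argument is the homology-level form of the paper's exactness of $\Q[C_{13}]$-coinvariants on the quotient chain complex. One minor point: only the CW structure is needed and justified, because the increasing-order $\Delta$-structure does not descend, since translation does not preserve increasing vertex order (e.g.\ $\{3,6,9,12\}\mapsto\{0,4,7,10\}$ sends $12$ to $0$).
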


\begin{proof}
A fixed point would lie in the relative interior of a simplex fixed setwise,
which does not exist.  The face-orbit counts are those of
Table~\ref{tab:p683orbits}.  Theorem~\ref{thm:three-benchmarks} makes the covering complex
simply connected, so the quotient has fundamental group $C_{13}$.  Over
$\Q$, the quotient chain complex is obtained by coinvariants.  These are
exact for the semisimple algebra $\Q[C_{13}]$, and the augmentation
representation has zero coinvariants.  The asserted rational homology
follows.
\end{proof}

\section{Finite CW reductions}\label{sec:cw}
Orient a simplex by increasing vertex order. In positive degrees use
\begin{equation}\label{eq:boundary}
 \partial[v_0,\ldots,v_q]=\sum_{i=0}^q(-1)^i[v_0,\ldots,\widehat v_i,\ldots,v_q].
\end{equation}
In the ordinary chain complex, the boundary of a vertex is zero.
For a facet $\alpha$ of a simplex $\beta$, write $[\beta:\alpha]=\pm1$ for the coefficient of $\alpha$ in $\partial\beta$.
Order nonempty independent sets by cardinality and then lexicographically, and mark them available.
For $v=0,1,\ldots,n-1$, inspect this ordered list: whenever $\sigma$ is available, $v\notin\sigma$, and $\sigma\cup\{v\}$ is available, pair the two and mark both unavailable.
Denote this ordinary matching by $M$.

Orient unmatched Hasse incidences from the larger face to the smaller, and matched incidences in the reverse direction.
Lemma~\ref{lb-acyclic} proves acyclicity for this ordered matching. For each specified graph, topological sorting of the whole directed Hasse graph also verifies the resulting finite matching independently.
The vertex $\{0\}$ is critical.
Equivalently, one may include the empty face, which pairs with $\{0\}$, and remove this pair afterwards.
All pair counts below use the ordinary convention.

\begin{proposition}\label{prop:finite}
The finite matching has the following data; total face counts include the empty face.
The critical faces are given in Appendix~\ref{app:cells}.
\begin{center}
\begin{tabular}{@{}lrrl@{}}
\toprule
Complex & Faces & Ordinary pairs & Ordinary critical cells\\
\midrule
$\K(14,2)$ &211&98& one vertex and thirteen $2$-cells\\
$\K(14,6)$ &337&167& one vertex and one $1$-cell\\
$\K(15,4)$ &434&215& one vertex and two $2$-cells\\
$\K(15,5)$ &394&194& one vertex, three $2$-cells, one $3$-cell\\
$\K(16,4)$ &673&334& one vertex, one $2$-cell, two $3$-cells\\
$\K(17,4)$ &953&475& one vertex and one $3$-cell\\
$\K(18,4)$ &1495&743& one vertex and seven $3$-cells\\
$\K(18,5)$ &1647&817& one vertex and eleven $3$-cells\\
$\K(18,6)$ &1297&645& one vertex, three $3$-cells, two $4$-cells\\
$\K(19,4)$ &2205&1101& one vertex and one $3$-cell\\
$\K(19,5)$ &2205&1101& one vertex and one $3$-cell\\
$\K(21,4)$ &5016&2506& one vertex and two $4$-cells\\
$\K(22,5)$ &8275&4134& one vertex, three $4$-cells, two $5$-cells\\
$\K(22,6)$ &7877&3933& one vertex, three $3$-, four $4$-, two $5$-cells\\
$\K(23,5)$ &11593&5795& one vertex and one $3$-cell\\
$\K(24,4)$ &16782&8389& one vertex and two $4$-cells\\
$\K(25,4)$ &25011&12503& one vertex, one $4$-cell, two $5$-cells\\
$\K(27,5)$ &60079&30035& one vertex and seven $5$-cells\\
\bottomrule
\end{tabular}
\end{center}
For $\K(25,4)$, in the appendix's critical-cell order, the degree-five Morse differential is
\begin{equation}\label{eq:row}
 \partial^M_5=\begin{pmatrix}-1&1\end{pmatrix}:\Z^2\longrightarrow\Z.
\end{equation}
\end{proposition}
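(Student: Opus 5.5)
This proposition is a finite certificate, so the plan is to regenerate and audit the data mechanically for each of the eighteen graphs $G(n,s)$. First I will enumerate all subsets of $\Z/n\Z$ (at most $2^{27}$ for $\K(27,5)$). Better, I will enumerate independent sets by depth-first search with the cyclic gap conditions, that no two chosen vertices differ by $\pm1$ or $\pm s$. This gives the total face count including the empty face. As a sanity check, the alternating sum must agree with the reduced Euler characteristic. That value should equal the one predicted by the claimed critical profile, namely $-1$ plus the signed count of positive-dimensional critical cells.

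Next I will run the ordinary matching $M$ exactly as specified. Faces are sorted by cardinality and then lexicographically, and the pivots $v=0,\dots,n-1$ are processed in turn, pairing available $\sigma$ with available $\sigma\cup\{v\}$. This is the toggle recurrence \eqref{lb-toggle-recurrence} with the empty face excluded. Within a single pivot the order of inspection is irrelevant, since pairs at one stage are disjoint. I will record the number of pairs and the unmatched faces, and compare them with the table and with Appendix~\ref{app:cells}. A consistency check is built in: faces minus one (for the empty face) minus twice the pairs must equal the number of critical cells. For example, for $\K(14,2)$ this gives $211-1-196=14=1+13$. Acyclicity is given by Lemma~\ref{lb-acyclic}. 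I will also confirm it independently by topologically sorting the modified Hasse diagram, as stated before the proposition.

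For the Morse differential \eqref{eq:row} of $\K(25,4)$, I will use the Sk\"oldberg/J\"ollenbeck--Welker formula. The coefficient of a critical $4$-cell $\alpha$ in $\partial^M_5\beta$ is a signed sum over alternating gradient paths $\beta\to\alpha_1\nearrow\beta_1\to\cdots\to\alpha$. Each down-step contributes $[\beta_i:\alpha_{i+1}]$ from \eqref{eq:boundary}, and each up-step along a matched pair contributes $-1/[\beta_i:\alpha_i]=-[\beta_i:\alpha_i]$. I will compute this by propagating a chain from $\partial\beta$ through the matching in reverse topological order, repeatedly cancelling matched lower faces, until only critical $4$-faces remain. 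Applied to the two critical $5$-cells in the appendix order, this should yield the entries $-1$ and $1$ on the unique critical $4$-cell.

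The main obstacle is the scale and sign bookkeeping rather than any conceptual step. There are $60079$ faces for $\K(27,5)$, and the gradient-path signs for $\K(25,4)$ must use the exact orientation conventions. I will handle the signs by implementing the chain-level reduction with integer coefficients and checking $\partial^M\circ\partial^M=0$ and the ranks. As a cross-check, the Morse complex of $\K(25,4)$ with the row $(-1\ 1)$ must have homology $\Z$ in degrees four and five, each of rank one. The Morse complex has one $4$-cell and two $5$-cells, with $\partial^M_5=(-1\ 1)$ surjective, so the homology should be $\Z$ in degree five and zero in degree four. I will verify this against the Euler characteristic.
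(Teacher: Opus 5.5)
Your proposal is correct and is essentially the paper's own argument: exhaustive enumeration of the independent sets, running the specified vertex-outer toggle matching to read off the face, pair and critical counts, acyclicity from Lemma~\ref{lb-acyclic} cross-checked by a topological sort, and the signed alternating-path rule (down-steps weighted $[\beta:\alpha]$, matched up-steps weighted $-[\beta:\alpha]^{-1}$) for $\partial^M_5$ of $\K(25,4)$. There are two slips, both in your auxiliary checks only: the reduced Euler characteristic equals the signed count of the positive-dimensional critical cells itself, with no extra $-1$ (for $\K(14,2)$ both equal $13$), and for $\K(25,4)$ the Morse complex gives $\Ht_5\cong\Z$ and $\Ht_4=0$, as your second sentence says, not $\Z$ in both degrees.
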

\begin{proof}
These are finite exhaustive matching and signed-boundary computations, specified as follows. The algorithms provide a mathematical reconstruction; Section~\ref{sec:reconstruction} distinguishes this specification from the reported execution records and states their availability.
Set $F_{-1}=\{\varnothing\}$ and
\[
 F_v=F_{v-1}\cup\{\sigma\cup\{v\}:\sigma\in F_{v-1},\ \sigma\cap N(v)=\varnothing\}
 \quad(0\le v<n),
\]
where $N(v)$ is the neighborhood of $v$ in $G(n,s)$.
This enumerates independent sets uniquely by their largest vertex.
Sort $F_{n-1}\setminus\{\varnothing\}$, construct $M$, and count its critical complement.
Topological sorting visits every face.
The $f$-vectors for $\K(18,4)$ and $\K(18,5)$, recording numbers of nonempty simplices by dimension, are respectively
\[
 (18,117,348,504,360,129,18),\qquad
 (18,117,348,504,378,189,72,18,2).
\]
For the degree-five differential of $\K(25,4)$ in \eqref{eq:row}, sum signed alternating paths from each critical five-cell to the critical four-cell.
A downward incidence $\beta\to\alpha$ has weight $[\beta:\alpha]$, and a matched upward incidence has weight $-[\beta:\alpha]^{-1}$.
The sums are $-1$ and $1$.
All incidences used are $\pm1$.
An independent recursive enumeration verifies the full face partition and checks every directed Hasse incidence against a topological ordering.
The seven additional pure-dimensional cases use the same full-domain check; the five mixed-dimensional cases are treated in Section~\ref{sec:mixed}.
Execution tests are summarized in Section~\ref{sec:reconstruction}.
The finite algorithm, critical lists, and signed path rule determine the data without a stored certificate.
\end{proof}

Forman's theorem gives the corresponding ordinary Morse CW models.
\subsection{Mixed-dimensional Morse models}\label{sec:mixed}
Cell counts alone do not determine the homology of a mixed-dimensional Morse model.
In the critical-face orders of Appendix~\ref{app:mixed}, its nonzero differentials are
\begin{align}
 \partial^M_3(\K(15,5))&=\begin{pmatrix}1\\0\\1\end{pmatrix},
 &\partial^M_3(\K(16,4))&=\begin{pmatrix}1&-1\end{pmatrix},\label{eq:mixedsmall}\\
 \partial^M_4(\K(18,6))&=\begin{pmatrix}0&0\\0&1\\-1&0\end{pmatrix},
 &\partial^M_5(\K(22,5))&=\begin{pmatrix}-1&0\\-1&0\\0&-1\end{pmatrix},\label{eq:mixedmiddle}\\
 \partial^M_4(\K(22,6))&=\begin{pmatrix}-1&-1&1&1\\-1&0&1&0\\0&-1&0&1\end{pmatrix},
 &\partial^M_5(\K(22,6))&=\begin{pmatrix}1&0\\0&1\\1&0\\0&1\end{pmatrix}.\label{eq:mixedlarge}
\end{align}
All remaining differentials vanish.
The signed alternating-path rule in the proof of Proposition~\ref{prop:finite} gives every displayed entry.
An independent check uses the backward projection recurrence, rather than the forward path sums, and compares every integer entry.
It also verifies the full face partition, acyclic Hasse graph, and square-zero identities.
An isolated reconstruction reproduces all data.
Mutations of differentials, unit pivots, cancellation lists, critical faces, topological order and graph parameters are rejected in each of the five cases.

A unit entry $d_q[i,j]=u=\pm1$ permits an integral chain cancellation.
After integer basis changes isolate this entry, the remaining block is
\[
 d'_q[a,b]=d_q[a,b]-d_q[a,j]u^{-1}d_q[i,b]\qquad(a\ne i,\ b\ne j).
\]
The isolated source and target form a contractible two-term summand.
Their row in $d_{q+1}$ and column in $d_{q-1}$ vanish by $d^2=0$ and can be removed.
Applying this operation uses respectively one, one, two, two and four unit pivots in the five displayed cases.
The residual complexes have one degree-zero generator and, respectively, two generators in degree two, one in degree three, one in degree three, one in degree four and one in degree three; every residual differential is zero.
This proves the asserted free integral homology.
It does not assert that these integer basis changes cancel topological cells.

\begin{lemma}\label{lem:homologywedge}
Let $X$ be a simply connected CW complex with reduced integral homology concentrated in degree $q\ge2$ and equal to $\Z^r$ there.
Then $X\simeq\bigvee^r S^q$.
\end{lemma}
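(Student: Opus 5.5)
The plan is the standard Hurewicz--Whitehead argument, with each step adapted to integral homology concentrated in degree $q$.

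First, since $X$ is simply connected and $\Ht_i(X;\Z)=0$ for $0<i<q$, the Hurewicz theorem gives $\pi_i(X)=0$ for $i<q$. It also gives an isomorphism $h:\pi_q(X)\to H_q(X;\Z)\cong\Z^r$; this holds for $q\ge2$ because $X$ is $(q-1)$-connected. Choose maps $f_1,\ldots,f_r:S^q\to X$ whose Hurewicz images form a $\Z$-basis of $H_q(X;\Z)$. Since $q\ge1$, these maps may be taken based, and together they define a map
\[
 f:\textstyle\bigvee^r S^q\longrightarrow X.
\]

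Second, check that $f$ induces an isomorphism on integral homology in every degree. In degree $q$, the homology of the wedge is $\Z^r$, generated by the fundamental classes of the summands. These are sent to $h([f_j])$, which form a basis by construction. In all other positive degrees both sides vanish: the wedge by the standard computation, and $X$ by the concentration hypothesis. In degree zero both are $\Z$. Since $X$ is simply connected, $X$ is a connected CW complex; the wedge is simply connected because $q\ge2$.

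Finally, apply Whitehead's theorem in its homological form. A map between simply connected CW complexes that induces isomorphisms on all integral homology groups is a weak equivalence, and hence a homotopy equivalence by the CW form of Whitehead's theorem. This gives $X\simeq\bigvee^r S^q$. The only point needing care is that the homological Whitehead theorem requires simple connectivity of both spaces, and that is exactly why the hypothesis $q\ge2$ is imposed. No step is a real obstacle.

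The lemma will then be used to upgrade the free residual Morse complexes of Section~\ref{sec:mixed} to wedge decompositions. For that application one must separately know simple connectivity, for instance from a Morse model with no critical $1$-cells. This is a matter for the later application, not for the lemma.
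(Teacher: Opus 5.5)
Your proposal is correct and follows the paper's proof essentially step for step: Hurewicz gives $(q-1)$-connectivity and $\pi_q(X)\cong H_q(X;\Z)$, a wedge of sphere maps representing a homology basis induces an integral homology isomorphism, and the homological Whitehead theorem for simply connected CW complexes finishes the argument. The only compression is that the vanishing of $\pi_i(X)$ for $i<q$ comes from applying Hurewicz successively in each degree (as the paper says), not from a single application.
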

\begin{proof}
Successive applications of Hurewicz show that $\pi_i(X)=0$ for $i<q$ and that the Hurewicz map $\pi_q(X)\to H_q(X;\Z)$ is an isomorphism.
Choose sphere maps representing an integral homology basis and combine them into a map $\bigvee^r S^q\to X$.
This induces an isomorphism on all integral homology groups.
The homological Whitehead theorem for simply connected CW complexes makes it a homotopy equivalence; these are standard consequences of Hurewicz and Whitehead \cite[Theorems~4.32 and~4.37, and Corollary~4.33]{Hatcher}.
\end{proof}

Each of these five Morse CW models has one vertex and no one-cells, so is simply connected.
Lemma~\ref{lem:homologywedge} therefore supplies their ordinary wedge types.
The same argument applies to $\K(25,4)$ using \eqref{eq:row}.
For the remaining cases the CW model has one vertex and cells in only one positive dimension, and hence is already a wedge of spheres.
The differential and the separate connectivity argument are both needed for the mixed cases.

\section{Integral homology bases and induced actions}\label{sec:certification}
\begin{lemma}\label{lem:basis}
Let $C_*$ be an integral chain complex with $H_q(C_*)$ free of rank $r$.
Suppose $P:C_q\to\Z^r$ kills $\partial C_{q+1}$ and cycles $z_j$ satisfy $Pz_j=e_j$.
Then $[z_1],\ldots,[z_r]$ form an integral homology basis, with inverse coordinates induced by $P$ on cycles.
For a chain automorphism $U$, the homology matrix has columns $P(Uz_j)$.
\end{lemma}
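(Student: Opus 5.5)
The plan is to compare the two lattices $Z_q=\ker\partial_q$ and $B_q=\partial C_{q+1}$ directly. Since $P$ kills $B_q$, its restriction $P|_{Z_q}$ factors through a homomorphism $\bar P:H_q(C_*)\to\Z^r$. The condition $Pz_j=e_j$ says that $\bar P$ is surjective, because its image contains the standard basis. I would then define $\iota:\Z^r\to H_q(C_*)$ by $e_j\mapsto[z_j]$. By construction $\bar P\iota=\operatorname{id}_{\Z^r}$, so $\iota$ is a split injection and
\[
 H_q(C_*)=\iota(\Z^r)\oplus\ker\bar P .
\]

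The only substantive step is showing that $\ker\bar P=0$. By hypothesis $H_q(C_*)$ is free of rank $r$. The summand $\ker\bar P$ is therefore a direct summand of a free group, hence free, and its rank is $r-r=0$. So $\ker\bar P=0$, and $\bar P$ and $\iota$ are mutually inverse isomorphisms. This gives both assertions at once: the classes $[z_1],\ldots,[z_r]$ form an integral basis, and the coordinate map on cycles is the map induced by $P$, so that $[x]=\sum_j(Px)_j[z_j]$ for every cycle $x$. Freeness of $H_q(C_*)$ is essential here. Without it, the kernel could be a torsion group that a rank count does not detect.

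For the action statement, I assume the chain automorphism $U$ commutes with $\partial$, which is implicit in the lemma. Then $U$ preserves cycles and boundaries and induces $U_*$ on homology. Each $Uz_j$ is a cycle, so in the basis $[z_1],\ldots,[z_r]$ its coordinates are $\bar P[Uz_j]=P(Uz_j)$. Hence the $j$th column of the matrix of $U_*$ is $P(Uz_j)$.

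I do not expect any step to be a serious obstacle. The only point needing care is making the freeness and rank argument explicit, since the hypotheses as stated do not assume that $P$ is injective on homology.
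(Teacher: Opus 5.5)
Your proposal is correct and follows essentially the same route as the paper: induce $\bar P$ on homology, use $e_j\mapsto[z_j]$ as a section, and conclude that $\ker\bar P$ is free of rank zero, hence zero, before reading off the columns $P(Uz_j)$. The only difference is cosmetic: you call the kernel a direct summand of the free lattice, while the paper calls it a subgroup; either way it is free of rank $r-r=0$.
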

\begin{proof}
The induced map $\overline P:H_q(C_*)\to\Z^r$ has section $e_j\mapsto[z_j]$.
Its kernel is a subgroup of a free group and has rank zero, so is zero.
Taking coordinates proves the matrix assertion.
\end{proof}

We give one construction for $(n,s,q,r)=(14,2,2,13),(18,4,3,7),(18,5,3,11),(27,5,5,7)$.
Let $c_1,\ldots,c_r$ be the critical $q$-faces in the appendix's order.
For a $q$-simplex $\sigma$, define
\begin{equation}\label{eq:projection}
 P(\sigma)=
 \begin{cases}
 e_j,&\sigma=c_j,\\
 0,&\sigma\text{ is the upper face of a matched pair},\\
 -[\beta:\sigma]^{-1}\displaystyle\sum_{\substack{\eta\prec\beta\\\eta\ne\sigma}}[\beta:\eta]P(\eta),
 &(\sigma,\beta)\in M.
 \end{cases}
\end{equation}
The last dependency follows a matched upward and unmatched downward step in the acyclic Hasse graph, so the recurrence is well-founded.
In each degree, the coefficient of a critical face $c$ in $P(\sigma)$ is the sum of the signed paths from $\sigma$ to $c$ alternating a matched incidence upward with an unmatched incidence downward, including the length-zero path when $\sigma=c$.
Positive-length projection paths start at lower matched faces and end at critical faces; upper matched faces have zero projection.
These degreewise maps form the projection in the algebraic Morse contraction \cite{Skoldberg,JW}, so $P_{q-1}\partial=\partial^M_qP_q$.
In every homology-basis application below there is no critical $(q+1)$-face, so $P$ kills all $(q+1)$-boundaries; in the mixed-dimensional cases of Section~\ref{sec:mixed} the same chain-map property identifies $P\partial$ on a critical $(q+1)$-face with the Morse differential.

To lift a critical face, form a directed graph on $q$-simplices.
If $\alpha\prec\sigma$ is matched to a different $q$-simplex $\sigma'$, insert an edge $\sigma\to\sigma'$ of weight
\begin{equation}\label{eq:liftweight}
 -[\sigma:\alpha][\sigma':\alpha]^{-1}.
\end{equation}
This graph is acyclic.
Starting with coefficient one at $c_j$, propagate coefficients in topological order, adding each source coefficient times its edge weight at the target.
The accumulated coefficients define $z_j$.
This is the signed alternating-path construction \cite{Skoldberg,JW}.
Every simplex reached by a lift path other than $c_j$ itself is the upper face of a matched pair, so $Pz_j=e_j$ holds by construction; the resulting chains and projections are nevertheless checked directly.

\begin{proposition}\label{prop:chainchecks}
For each of the four specified complexes, $\partial z_j=0$, $Pz_j=e_j$, and $P$ kills the boundary of every $(q+1)$-simplex.
The full domains checked are
\begin{center}
\begin{tabular}{@{}lrr@{}}
\toprule
Complex & $q$-simplices & $(q+1)$-simplices\\
\midrule
$\K(14,2)$ &98&35\\
$\K(18,4)$ &504&360\\
$\K(18,5)$ &504&378\\
$\K(27,5)$ &16137&13338\\
\bottomrule
\end{tabular}
\end{center}
\end{proposition}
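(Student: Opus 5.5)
The statement is a finite verification, so the plan is to reduce each of its three assertions to a computation on the full simplex domains listed in the table, using only the constructions just given. I will carry out the same steps for each of $\K(14,2)$, $\K(18,4)$, $\K(18,5)$ and $\K(27,5)$.

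First I enumerate the independent sets by the recursion $F_v$ from the proof of Proposition~\ref{prop:finite}, with faces in the fixed cardinality-then-lexicographic order. I build the ordinary matching $M$ and confirm that the $q$- and $(q+1)$-simplex counts match the table. For the eighteen-vertex cases they can be checked against the displayed $f$-vectors (504, 360, 378); for $\K(14,2)$ and $\K(27,5)$ they come from the same enumeration. By Lemma~\ref{lb-acyclic}, with the topological sort as an independent check, the upward-matched Hasse graph is acyclic. This makes both recursions well founded: the projection \eqref{eq:projection}, run in reverse topological order on $q$-simplices, and the lift propagation with weights \eqref{eq:liftweight}, run in forward order from each critical $c_j$.

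Second, I verify the three identities.
\begin{itemize}
\item $Pz_j=e_j$ holds structurally: every simplex reached by a lift path other than $c_j$ is an upper matched face, and such faces have zero projection. I still recompute it directly.
\item $\partial z_j=0$: the lift exactly cancels each incidence with a lower matched $(q-1)$-face. The only face that could survive is the critical $(q-1)$-face. There is none here, because the critical profiles in Proposition~\ref{prop:finite} are single-dimensional apart from the vertex $\{0\}$. For $q\ge2$ this vertex does not meet degree $q-1$, and for $q=2$ the degree-one part has no critical face either. So the boundary vanishes identically, which I confirm by evaluating $\partial z_j$ over all $(q-1)$-simplices.
\item $P\partial\beta=0$ for every $(q+1)$-simplex $\beta$: the algebraic Morse chain-map property $P_q\partial_{q+1}=\partial^M_{q+1}P_{q+1}$ of \cite{Skoldberg,JW} applies. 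Since the target has no critical $(q+1)$-cells, $\partial^M_{q+1}=0$ and this is automatic. I check it by evaluating $P\partial\beta$ on each of the listed $(q+1)$-simplices, with $13338$ of them for $\K(27,5)$.
\end{itemize}

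The main obstacle is not conceptual but the size and sign bookkeeping in $\K(27,5)$: $16137$ simplices in degree five and the deepest alternating paths. The orientation conventions of \eqref{eq:boundary} must be used consistently in both recursions, so that $[\beta:\sigma]^{-1}=[\beta:\sigma]$. I would first implement and test the pipeline on $\K(14,2)$, where the result can be cross-checked against the rank $13$ and the $L_{14}$ module. Then I run the larger cases, and treat the structural arguments above as the proof, with the exhaustive checks as confirmation.
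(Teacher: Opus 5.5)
Your proposal is correct and is essentially the paper's proof: both run the projection and lift recurrences on the completely enumerated faces and matching, then evaluate $\partial z_j$, $Pz_j$ and every $P\partial\beta$ directly on the listed domains, and both explain the outcome by two facts (lift paths end only at upper matched faces, and the Morse chain-map identity holds with no critical $(q+1)$-cells). There is one small hole in your structural argument for $\partial z_j=0$: the lift cancels coefficients only on \emph{lower} matched $(q-1)$-faces, so you must also rule out $(q-1)$-faces that are upper faces of pairs with $(q-2)$-faces, which follows either from $\partial(\partial z_j)=0$ plus acyclicity (a support face not reachable from the rest of the support by a down-then-matched-up step leaves an uncancelled $\pm1$ term on its matched facet) or from the inclusion identity $\partial g=g\,\partial^M$, and your exhaustive evaluation over all $(q-1)$-simplices covers this case anyway.
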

\begin{proof}
Apply \eqref{eq:projection}--\eqref{eq:liftweight} to the completely enumerated face set and matching.
Apply \eqref{eq:boundary} to every resulting chain, and apply $P$ to each chain and each $(q+1)$-simplex boundary.
The former images are the standard basis vectors and all boundaries map to zero, as the chain-map description of $P$ predicts.
Cycle support sizes are given in the appendix.
An independent check enumerates faces from the graph edge differences and tests the full integer coefficients of $P$ and the lifts, without importing the producer's recurrence or path construction.
Thus the identities hold in the simplicial chain complex itself.
\end{proof}

Proposition~\ref{prop:finite} gives the free ranks needed by Lemma~\ref{lem:basis}.
To apply translation to an oriented simplex, translate every vertex modulo $n$ and sort the image increasingly, multiplying by the sorting permutation's sign.
Thus
\begin{equation}\label{eq:transport}
 T_{ij}=(P(\tau z_j))_i.
\end{equation}
The matrices in Appendix~\ref{app:matrices} use this column convention.
The matching need not be translation invariant: the transfer uses the actual simplicial action, and the CW equivalence is not asserted to respect it.

\section{The cyclic polynomial modules}\label{sec:actions}
\subsection{The rank-two and rank-one cases}
Proposition~\ref{prop:finite} and Section~\ref{sec:mixed} supply the hypotheses of Proposition~\ref{prop:lowrank} for every rank-one and rank-two entry in Theorem~\ref{thm:main}.
The rank-one action is trivial in odd homological degree; the degree-four $\K(22,5)$ instead carries the sign action.
Every rank-two case has even degree, so its module is $\Z[t]/(t^2+t+1)$ and the action has exact order three.
In particular the degree-two $(15,4),(15,5)$ and degree-four $(21,4),(24,4)$ examples share the same Eisenstein translation lattice.

\subsection{Four unimodular cyclic bases}
If a free rank-$r$ homology lattice $H_q$ with operator $T$ has a vector $v$ for which
\[
 B=(v,Tv,\ldots,T^{r-1}v),\qquad\det B=\pm1,
\]
and a monic degree-$r$ polynomial $f$ satisfies $f(T)v=0$, then
\begin{equation}\label{eq:cyclic}
 \Z[t]/(f)\longrightarrow H_q,\qquad \overline g\longmapsto g(T)v
\end{equation}
is an isomorphism.
The orbit vectors form an integer basis; commutation with $T$ shows $f(T)=0$ on that basis.
Both sides of \eqref{eq:cyclic} are free of rank $r$.

\begin{proposition}\label{prop:cyclic}
For the matrices of \eqref{eq:transport}, the following are exact identities.
\begin{center}
\begin{tabular}{@{}lcccl@{}}
\toprule
Complex & $r$ & $v$ & $\det B$ & $f(t)$, with $f(T)v=0$\\
\midrule
$\K(14,2)$ &13&$e_1$&$-1$&$1+t+\cdots+t^{13}$\\
$\K(18,4)$ &7&$e_2$&$-1$&$(t-1)\Phi_9(t)$\\
$\K(18,5)$ &11&$e_1$&$1$&$(t^9-1)\Phi_6(t)$\\
$\K(27,5)$ &7&$e_2$&$-1$&$(t-1)\Phi_9(t)$\\
\bottomrule
\end{tabular}
\end{center}
\end{proposition}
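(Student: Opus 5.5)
The plan is to verify each row separately, using the criterion stated just before the proposition. For a row with data $(r,v,f)$ two finite facts are needed. First, the matrix $B=(v,Tv,\ldots,T^{r-1}v)$ built from the translation matrix $T$ of \eqref{eq:transport} must have determinant $\pm1$. Second, $f(T)v=0$, where $f$ is monic of degree $r$. Given both, the map \eqref{eq:cyclic} is an isomorphism $\Z[t]/(f)\cong H_q$. So the proposition reduces to exact integer linear algebra on the $r\times r$ matrices of Appendix~\ref{app:matrices}. Those matrices come from Lemma~\ref{lem:basis} applied to the cycles and projections certified in Proposition~\ref{prop:chainchecks}.

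For each complex I will compute the Krylov vectors $v,Tv,\ldots,T^{r}v$ exactly over $\Z$. The determinant of the first $r$ of them should be $-1$, $-1$, $1$ and $-1$ respectively. I will then check that $\sum_i f_iT^iv=0$, where the $f_i$ are the coefficients of the listed polynomial. For example, in the $\K(18,4)$ and $\K(27,5)$ rows,
\[
 (t-1)\Phi_9(t)=t^7-t^6+t^4-t^3+t-1,
\]
so the relation to check is $T^7v-T^6v+T^4v-T^3v+Tv-v=0$. In the $\K(18,5)$ row the polynomial $(t^9-1)\Phi_6(t)$ has degree $11$, matching $r=11$. In the $\K(14,2)$ row, $f=1+t+\cdots+t^{13}$ has degree $13=r$.

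As an independent consistency check, I will compare $f$ with the characteristic polynomial of $T$. Because the orbit of $v$ is a basis, $f$ must equal the characteristic polynomial. Its roots should reproduce the rational characters predicted by Proposition~\ref{prop:orbittrace}. The traces of $T^k$ are $(-1)^q\widetilde\chi(\Ind(Q_k))$, computed from the quotient graphs; for instance, $\operatorname{tr}T=(-1)^{q+1}$. This check does not prove the integral statement. It does guard against transcription errors in the choice of $v$ or of $f$.

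The main obstacle is not conceptual. It is confirming that the appendix matrices are the genuine induced actions, with the sign convention for re-sorting translated simplices applied correctly, so that the Krylov determinant really is a unit. The $27$-vertex case is the largest, with $16137$ five-simplices. There I expect the delicate step to be confirming that $P(\tau z_j)$ was evaluated on the full lifted cycles. I would first recheck the relation $T^9=1$ on that matrix, which is forced by Corollary~\ref{cor:effective27}, before trusting the determinant.
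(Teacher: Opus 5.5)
Your plan is the paper's proof. You form $T$ by signed transport \eqref{eq:transport} on the certified lifts, generate the Krylov columns by repeated integer multiplication, and check $\det B$ and $f(T)v=0$ by exact elimination, with Proposition~\ref{prop:chainchecks} making these identities for the geometric action.

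Two small corrections. First, Appendix~\ref{app:matrices} prints only the three odd-degree matrices, so for $\K(14,2)$ you must build $B$ yourself from the projection of the translated lifts. The paper does this using the cross-cycle of Proposition~\ref{prop:composite}, which is the first lift $z_1$. Second, your sanity check $T^9=1$ on $\Ht_5(\K(27,5);\Z)$ should not be taken from Corollary~\ref{cor:effective27}, since that corollary takes $T^9=1$ from Theorem~\ref{thm:main} and hence from this very proposition. Derive it instead from the Lefschetz value $\operatorname{tr}T^9=7$: the quotient graph is $G(9,5)$ and the fixed space is $\bigvee^7S^1$.
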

\begin{proof}
Construct the matrices by signed geometric transport on the verified cycles.
The appendix gives the three odd-degree matrices in the fixed critical-face order, and hence specifies their $B$ by their columns.
For $\K(14,2)$, Proposition~\ref{prop:composite} supplies an explicit cycle, and the same projection and lift recurrences determine $B$.
Successive integer multiplication gives the orbit columns and the indicated annihilating relation.
Exact elimination gives the displayed determinants.
An independent calculation checks the orbit columns, determinants by rational elimination, and the polynomial identity.
Together with Proposition~\ref{prop:chainchecks}, these are identities for the geometric homology action.
\end{proof}

Equation~\eqref{eq:cyclic} proves the four module identifications.
The $\Phi_9$ factor supplies a primitive ninth-root constituent, so both occurrences of $L_3$ have action of order exactly nine.
For $L_2$, the rational factors $t^9-1$ and $\Phi_6$ give order $\operatorname{lcm}(9,6)=18$.
In particular the seven-dimensional lattices in degrees three and five are isomorphic as $\Z[t]$-modules: both are inflated from the same $C_9$ action.

\section{A common factor-ideal description of the gluing}\label{sec:gluing}
The following elementary lemma first treats the two-factor integer obstructions; Lemma~\ref{lem:multifactor} then handles the three-factor comparison.
It is a two-factor Chinese remainder calculation within the arithmetic framework of \cite{MS}; the graph-specific input is the cyclic-lattice identification.
The general resultant theory controls which primes can obstruct factor splitting, whereas the explicit quotient below retains the additive group structure.

\begin{lemma}\label{lem:gluing}
Let $a,b\in\Z[t]$ be monic and coprime over $\Q[t]$, and put $L=\Z[t]/(ab)$.
Then $A=\ker(a:L\to L)=bL$, $D=\ker(b:L\to L)=aL$, $A\cap D=0$, and
\begin{equation}\label{eq:quotient}
 L/(A\oplus D)\cong\Z[t]/(a,b).
\end{equation}
There is an exact sequence of $\Z[t]$-modules
\begin{equation}\label{eq:extension}
 0\longrightarrow\Z[t]/(b)\xrightarrow{\ \overline g\mapsto ag\ }L
 \longrightarrow\Z[t]/(a)\longrightarrow0.
\end{equation}
It splits if and only if $(a,b)=\Z[t]$.
\end{lemma}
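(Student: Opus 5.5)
The argument is elementary algebra in $L=\Z[t]/(ab)$. Because $a$ and $b$ are monic, division with remainder shows that $\Z[t]/(f)$ is free over $\Z$ of rank $\deg f$ for $f=a,b,ab$, and that $g\mapsto ag$ induces an injection $\Z[t]/(b)\to L$. This settles the left map of \eqref{eq:extension}. The sequence is then exact because $L/aL=\Z[t]/(a,ab)=\Z[t]/(a)$.

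For the kernels, suppose $a g\in(ab)$ in $\Z[t]$. Since $\Z[t]$ is a domain, cancelling $a$ gives $g\in(b)$, so $\ker(a:L\to L)=bL$; symmetrically $\ker b=aL$. For $A\cap D=0$, an element of $aL\cap bL$ is killed by both $a$ and $b$. Coprimality over $\Q[t]$ gives $ua+vb=c$ for some nonzero integer $c$ and $u,v\in\Z[t]$ (clear denominators from a B\'ezout relation), so the element is $c$-torsion. $L$ is torsion-free, so it is zero. Finally, $L/(aL+bL)=\Z[t]/(ab,a,b)=\Z[t]/(a,b)$, which is \eqref{eq:quotient}.

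For the splitting criterion, suppose first $(a,b)=\Z[t]$. Then $ua+vb=1$, and the Chinese remainder theorem gives $L\cong\Z[t]/(a)\times\Z[t]/(b)$ compatibly with \eqref{eq:extension}, so the sequence splits. Conversely, suppose a $\Z[t]$-linear section $\Z[t]/(a)\to L$ exists. It sends $1$ to some $x\in L$ with $x\equiv1$ modulo $aL$ and $ax=0$. The condition $ax=0$ means $x\in\ker a=bL$, so $x=bh$. Then $1-bh\in aL$ lifts to $1\in(a,b)+(ab)=(a,b)$ in $\Z[t]$.

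The main obstacle is this converse direction, since we must ensure that a section of $\Z[t]$-modules, and not merely of abelian groups, is what the statement intends. Reading splitting in the category of $\Z[t]$-modules, the argument above suffices. The only other care needed is to use monicity to guarantee freeness and torsion-freeness, so that the integer B\'ezout constant can be cancelled.
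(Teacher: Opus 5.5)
Your proposal is correct and follows essentially the same route as the paper: cancellation in the domain $\Z[t]$ gives the two kernels and the injection, torsion-freeness of $L$ gives $A\cap D=0$, and for the converse both arguments note that a $\Z[t]$-linear section lands in $\ker a=bL$, which forces $1\in(a,b)$. The only difference is cosmetic: you clear denominators in a B\'ezout relation to get an integer constant $c$, where the paper passes to $\Q$, and your reading of splitting over $\Z[t]$ rather than $\Z$ matches the paper's intended meaning.
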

\begin{proof}
If $ag$ is divisible by $ab$ in $\Z[t]$, cancellation gives $g=bh$.
This proves $A=bL$; the other kernel and the injection in \eqref{eq:extension} follow the same way.
The image of the injection is $aL$, the kernel of reduction modulo $a$.
Over $\Q$, multiplication by $a$ is invertible on the $b$-factor and multiplication by $b$ on the $a$-factor.
Thus their kernels have zero intersection over $\Q$.
Since $L$ is free over $\Z$, $A\cap D=0$ integrally.
The quotient by their sum is precisely the quotient by the ideal $(a,b)$, proving \eqref{eq:quotient}.

A module section of reduction modulo $a$ has image annihilated by $a$, hence contained in $A$.
If a section exists, $A+D=L$, so $(a,b)=\Z[t]$.
Conversely, this ideal equality gives the integral Chinese remainder decomposition and a section.
\end{proof}

Apply the lemma first with $a=t-1$ and $b=\Phi_9(t)$.
Reduction modulo $t-1$ gives
\[
 \Z[t]/(t-1,\Phi_9(t))=\Z/(\Phi_9(1))=\Z/3\Z.
\]
In particular $A_3=\Z\Phi_9$ and $D_3=(t-1)L_3\cong\Z[\zeta_9]$.
The exact sequence
\[
 0\longrightarrow\Z[\zeta_9]\longrightarrow L_3
 \xrightarrow{\ \overline g\mapsto g(1)\ }\Z\longrightarrow0
\]
does not split as a $\Z[t]$-module sequence: every invariant element has augmentation divisible by three.
This holds for both the $C_{18}$ and $C_{27}$ realizations.

Next take $a=t^9-1$ and $b=\Phi_6(t)$.
Modulo $\Phi_6$, one has $t^2=t-1$, hence $t^3=-1$ and $t^9-1=-2$.
Consequently
\[
 \Z[t]/(t^9-1,\Phi_6(t))
 \cong\F_2[t]/(t^2+t+1),
\]
whose additive group is $(\Z/2\Z)^2$.
Thus $A_2=\Phi_6 L_2\cong\Z[C_9]$ and $D_2=(t^9-1)L_2\cong\Z[\zeta_6]$ have direct sum of index four.
The exact sequence
\[
 0\longrightarrow\Z[\zeta_6]\longrightarrow L_2
 \longrightarrow\Z[C_9]\longrightarrow0
\]
is nonsplit as a $\Z[t]$-module sequence.
Both underlying abelian-group sequences split because their quotients are free; the obstruction concerns a section compatible with translation.
It is the exponent-two quotient, rather than a cyclic quotient of order four, that records this gluing.

\subsection{Three-factor gluing and a cycle-square comparison}
\begin{lemma}\label{lem:multifactor}
Let $f=\prod_{i=1}^m f_i$ be a product of monic polynomials that are pairwise coprime over $\Q[t]$, and put $L=\Z[t]/(f)$ and $g_i=f/f_i$.
Then $U_i=\ker(f_i:L\to L)=g_iL\cong\Z[t]/(f_i)$, the sum of the $U_i$ is direct, and
\[
 L/\bigoplus_i U_i\cong\Z[t]/(g_1,\ldots,g_m).
\]
The factor-kernel sum need not exhaust $L$, despite the rational factor decomposition.
\end{lemma}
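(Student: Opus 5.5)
The plan is to follow the two-factor argument of Lemma~\ref{lem:gluing}, replacing the single complementary factor by the products $g_i=f/f_i$. First I identify each kernel. If $f_ih\in(f)$ in $\Z[t]$, then $f_ih=fk=f_ig_ik$. Cancellation of the monic $f_i$ gives $h=g_ik$, so $U_i=g_iL$. The same cancellation shows that $\Z[t]\to L$, $h\mapsto g_ih$, has kernel exactly $(f_i)$: if $g_ih\in(f)$, then cancelling the monic $g_i$ gives $f_i\mid h$. This yields $U_i\cong\Z[t]/(f_i)$, with $t$ acting compatibly.

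Next I prove directness over $\Q$ and transfer it to $\Z$. Over $\Q[t]$, pairwise coprimality gives the Chinese remainder decomposition $L_\Q=\bigoplus_i\Q[t]/(f_i)$. Here $U_i\otimes\Q$ is the $f_i$-primary component, namely the kernel of $f_i(T)$. These components are independent, because $f_j(T)$ acts invertibly on $\ker f_i(T)$ for $j\neq i$. Since $L$ is free over $\Z$, hence torsion-free, a relation $\sum u_i=0$ with $u_i\in U_i$ holds in $L_\Q$ and forces every $u_i=0$. Thus the sum is direct.

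Then I identify the quotient: $\sum_iU_i=\sum_ig_iL$ is the image in $L$ of the ideal $(g_1,\ldots,g_m)\subseteq\Z[t]$. This ideal contains $f=f_1g_1$. Therefore
\[
L/\textstyle\bigoplus_iU_i=\Z[t]/\bigl((f)+(g_1,\ldots,g_m)\bigr)=\Z[t]/(g_1,\ldots,g_m).
\]

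For the final remark, an explicit example suffices. Take $m=2$, $f_1=t-1$ and $f_2=\Phi_9$. Then the quotient is $\Z[t]/(\Phi_9,t-1)\cong\Z/3$, which is nonzero. Alternatively, cite the index computation~\eqref{eq:index14} for $L_{14}$.

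I expect no serious obstacle. The only point needing care is that every cancellation is by a monic polynomial, so divisibility in $\Q[t]$ descends to $\Z[t]$ by Gauss's lemma, or directly by monic division. This is what makes $U_i=g_iL$ hold integrally and not merely rationally.
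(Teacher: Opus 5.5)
Your proposal is correct and follows the paper's proof essentially step for step: cancellation in $\Z[t]$ gives $U_i=g_iL\cong\Z[t]/(f_i)$, directness comes from the rational Chinese remainder decomposition together with torsion-freeness of $L$, and the quotient is $\Z[t]/\bigl((f)+(g_1,\ldots,g_m)\bigr)=\Z[t]/(g_1,\ldots,g_m)$. One small correction to your closing remark: the cancellation $f_ih=f_ig_ik\Rightarrow h=g_ik$ already takes place in the domain $\Z[t]$, so no Gauss-lemma descent is needed there, and monicity of $f$ is really used only to make $L$ free over $\Z$ in the directness step.
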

\begin{proof}
The identity $f_ip=fh=f_ig_ih$ implies $p=g_ih$, proving the kernel formula and the isomorphism by multiplication by $g_i$.
Over $\Q$ the kernels occupy distinct factors of the Chinese remainder decomposition, so their sum is direct over $\Q$, hence over $\Z$.
The quotient by their sum is the quotient by the displayed ideal.
This is an elementary factor-ideal consequence of classical integral Chinese remainder theory \cite{MS}.
\end{proof}

\begin{lemma}\label{lem:crtdual}
With the monic factors in Lemma~\ref{lem:multifactor}, let
\[
 P:\Z[t]/(f)\longrightarrow\bigoplus_i\Z[t]/(f_i),\qquad
 a\longmapsto(a\bmod f_i)_i
\]
be the reduction map.
The finite abelian groups $\operatorname{coker}P$ and
$L/\bigoplus_i\ker(f_i:L\to L)$ have the same Smith invariant factors.
\end{lemma}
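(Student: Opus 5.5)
The plan is to build a perfect integral pairing between $L=\Z[t]/(f)$ and $E=\bigoplus_i\Z[t]/(f_i)$ under which $P$ and the inclusion $\iota:\bigoplus_iU_i\hookrightarrow L$ become transposes of each other up to isomorphism. Then the two cokernels are the cokernels of an integer matrix and of its transpose. A square nonsingular integer matrix and its transpose have the same Smith normal form, so the invariant factors agree.

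First I record the ranks. Both $L$ and $E$ are free of rank $\deg f$, since every factor is monic. The map $P$ is injective, because it is an isomorphism over $\Q$ by the Chinese remainder theorem. Hence $\operatorname{coker}P$ is finite. By Lemma~\ref{lem:multifactor}, $U_i=g_iL\cong\Z[t]/(f_i)$ via $b\mapsto g_ib$, so $\iota$ may be written as a map $J:E\to L$, $(b_i)\mapsto\sum_ig_ib_i$.

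Next I choose the pairing. On $\Z[t]/(h)$ with $h$ monic of degree $e$, the trace form $\langle a,b\rangle_h=$ the coefficient of $t^{e-1}$ in $ab\bmod h$ is perfect over $\Z$. Indeed, with respect to the basis $1,\dots,t^{e-1}$ and its reverse, its Gram matrix is unitriangular. I use $\langle\cdot,\cdot\rangle_f$ on $L$ and $\bigoplus_i\langle\cdot,\cdot\rangle_{f_i}$ on $E$.

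The key identity to verify is
\[
 \langle a,\textstyle\sum_i g_ib_i\rangle_f=\sum_i\langle a\bmod f_i,\ b_i\rangle_{f_i}.
\]
For this it suffices to show $\langle g_i c\rangle_f=\langle c\rangle_{f_i}$ as linear functionals on $\Z[t]/(f_i)$, where $\langle c\rangle_h$ denotes the $t^{e-1}$-coefficient of $c\bmod h$. I will check it by writing $c\bmod f_i$ with degree less than $\deg f_i$. Then $g_ic$ has degree less than $\deg f$, so it is already reduced modulo $f$. Its top coefficient is the $t^{\deg f_i-1}$-coefficient of $c$, because $g_i$ is monic of degree $\deg f-\deg f_i$. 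This step is the only genuine computation. I expect it to be the main obstacle, mostly in keeping the reductions well defined, since $ab$ reduced modulo $f$ and then modulo $f_i$ agrees with $ab$ reduced modulo $f_i$.

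The identity says that $J$ is the adjoint of $P$ with respect to perfect pairings. In dual bases, the matrix of $J$ is therefore the transpose of the matrix of $P$, up to the unimodular Gram matrices. Because $P$ is injective with finite cokernel, its matrix is square and nonsingular. Multiplying by unimodular matrices does not change Smith invariant factors, and neither does transposition. Hence $\operatorname{coker}P\cong\operatorname{coker}J=L/\bigoplus_iU_i$ as abstract finite abelian groups, and in particular they have the same Smith invariant factors.
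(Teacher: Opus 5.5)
Your proposal is correct and follows the paper's proof essentially step for step. Both use the perfect pairing given by the coefficient of $t^{\deg h-1}$, with its unitriangular Gram matrix after reversal. Both derive the adjointness identity from the monic degree count $\deg(g_ic)<\deg f$. Both conclude that the factor-kernel embedding is integrally equivalent to the transpose of $P$, and transposition preserves Smith invariant factors.
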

\begin{proof}
For a monic $h\in\Z[t]$ of degree $d$, let $\lambda_h$ extract the coefficient
of $t^{d-1}$ from the representative of degree below $d$.
The integer pairing $\beta_h(a,b)=\lambda_h(ab\bmod h)$ is perfect:
its monomial Gram matrix has zero entries for $i+j<d-1$ and ones
for $i+j=d-1$, so reversing the columns gives a triangular matrix with
diagonal ones.
Put $B=\bigoplus_i\Z[t]/(f_i)$ and
$E:B\to L$, $(b_i)\mapsto\sum_i(f/f_i)b_i$.
By Lemma~\ref{lem:multifactor}, its image is the factor-kernel sum.
Division by the monic $f_i$ gives
\[
 \lambda_f((f/f_i)ab_i\bmod f)
 =\lambda_{f_i}((a\bmod f_i)b_i\bmod f_i).
\]
Thus, for the unimodular Gram matrices $J_f,J_B$,
$J_fE=P^{\mathsf T}J_B$.
The embedding and the transpose of the reduction are integrally equivalent,
and transposition preserves Smith form.
This is an elementary duality within classical integral CRT theory
\cite{MS}; the conclusion is about abelian groups, without identifying
their geometric actions.
\end{proof}

\begin{proposition}\label{prop:composite}
For $\K(14,2)=\Ind(C_{14}^2)$, let
\begin{equation}\label{eq:cross14}
 z=([1]-[0])*([5]-[4])*([9]-[8]),\qquad w=[z]\in\Ht_2(\K(14,2);\Z),
\end{equation}
where $*$ denotes the oriented join of chains, in the displayed factor order.
Then $w,tw,\ldots,t^{12}w$ is an integral basis, the annihilator is $1+t+\cdots+t^{13}$, and
\[
 \rho_*p(t)w=t^5p(t^{-1})w.
\]
The three factor-kernel lattices have the quotient in \eqref{eq:index14}.
Under $w'=t^3w$, the reflection becomes $\rho_*p(t)w'=t^{-1}p(t^{-1})w'$.
Consequently this integral $D_{14}$ homology representation agrees with that of the circular-arc nerve $N(28,14)$ after the latter factors through $D_{14}$.
This comparison asserts no equivariant homotopy equivalence between the complexes.
\end{proposition}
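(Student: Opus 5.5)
We first check that $z$ is a cycle of $\K(14,2)=\Ind(C_{14}^2)$, and then that its translates give a basis.

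\emph{$z$ is a cycle.} Each factor $[2i+1]-[2i]$ (up to the shift) is a reduced $0$-cycle on an edge of $G(14,2)$. The three pairs $\{0,1\},\{4,5\},\{8,9\}$ form an induced matching: any vertex of one pair and any vertex of another differ by at least $3$ circularly, including the wrap $9\to14$. So every transversal triple is independent. The oriented join of reduced $0$-cycles is then a $2$-cycle, which is the standard cross-cycle construction.

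\emph{Basis.} Theorem~\ref{thm:main} and Proposition~\ref{prop:finite} give $\Ht_2$ free of rank $13$ and the projection $P$ of \eqref{eq:projection}. Lemma~\ref{lem:basis} then reduces the basis claim to a single computation: the $13\times13$ matrix $B$ with columns $P(\tau^jz)$, $0\le j\le12$, has determinant $\pm1$. This is the finite check already cited in Proposition~\ref{prop:cyclic}, with $v=[z]$ and $\det B=-1$.

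\emph{Annihilator.} Proposition~\ref{prop:cyclic} gives $f(T)v=0$ with $f=1+t+\cdots+t^{13}$. I would double-check this by a structural argument. By Theorem~\ref{thm:cyclic-action}-type trace reasoning, or Proposition~\ref{prop:lowrank}-style characters, the rational module has characteristic polynomial $\Phi_2\Phi_7\Phi_{14}$, which is $f$. Cayley--Hamilton on the integral basis then forces $f(T)=0$. Equation~\eqref{eq:cyclic} yields $L\cong L_{14}$.

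\emph{Reflection.} With $\rho(x)=-x$, each factor transforms as
\[
[1]-[0]\mapsto[13]-[0],\quad [5]-[4]\mapsto[9]-[10],\quad [9]-[8]\mapsto[5]-[6].
\]
I would reorder the three join factors and identify each with a translate. The join of $[0]-[13]$, $[10]-[9]$, $[6]-[5]$ is $\tau^{-1}$ applied to the pattern on $\{1,0\},\{11,10\},\{7,6\}$. Re-centering shows $\rho z$ equals $\pm\tau^{5}z$ up to a permutation of factors and a sign $(-1)^{3}$ from the three orientation reversals, together with the sign of the factor permutation. The delicate point is that the pair pattern $\{0,4,8\}$ of lower endpoints is carried to $\{5,9,13\}$, a translate by $5$ of $\{0,4,8\}$ reversed. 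I expect the exact bookkeeping of which translate occurs, together with the join-order sign, to be the main obstacle. If $\rho z$ is not literally a translate of $z$, I would fall back to computing $P(\rho z)$ and reading off coordinates. Once $\rho_*w=t^5w$ is established, the relation $\rho t=t^{-1}\rho$ gives $\rho_*p(t)w=t^5p(t^{-1})w$. Setting $w'=t^3w$ gives
\[
\rho_*p(t)w'=p(t^{-1})t^{-3}t^5w=t^{-1}p(t^{-1})w'.
\]

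\emph{Factor-kernel quotient.} Apply Lemma~\ref{lem:multifactor} with $f_1=\Phi_2$, $f_2=\Phi_7$, $f_3=\Phi_{14}$. The quotient is $\Z[t]/(\Phi_7\Phi_{14},\Phi_2\Phi_{14},\Phi_2\Phi_7)$, whose order I would compute via Lemma~\ref{lem:crtdual} and resultants. The relevant resultants are
\[
\operatorname{Res}(\Phi_2,\Phi_7)=\Phi_7(-1)=1,\qquad \operatorname{Res}(\Phi_2,\Phi_{14})=\Phi_{14}(-1)=7,\qquad \operatorname{Res}(\Phi_7,\Phi_{14})=2^6.
\]
These give the group $\Z/7\oplus(\Z/2)^6$ recorded in \eqref{eq:index14}.

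\emph{Comparison with $N(28,14)$.} Finally, I would compare with the cited Propositions~6.1 and~7.2 of \cite{Nerve}. That representation is $L_{14}$ with reflection $t^{-1}p(t^{-1})$ after factoring through $D_{14}$, so the two integral $D_{14}$ representations coincide. No equivariant homotopy equivalence between the complexes is being claimed.
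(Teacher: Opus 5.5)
The gap is in the factor-kernel step. Your three resultants are right. Via Lemma~\ref{lem:crtdual}, and the standard fact that the index of $\Z[t]/(f)$ in $\bigoplus_i\Z[t]/(f_i)$ is $\prod_{i<j}|\operatorname{Res}(f_i,f_j)|$, they give the \emph{order} $1\cdot7\cdot2^6=448$ of $L_{14}/(U_2\oplus U_7\oplus U_{14})$. An order does not determine the group, however. The $7$-part is forced to be $\Z/7$, but a $2$-group of order $2^6$ could equally be $\Z/8\oplus(\Z/2)^3$ or $(\Z/4)^3$. So ``these give the group $\Z/7\oplus(\Z/2)^6$'' does not follow; you need exponent or rank information.

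The paper gets this from the explicit kernel-embedding matrix $E$. It shows $|\det E|=448$ and that $14E^{-1}$ is integral, so the exponent divides $14$. It also computes $\operatorname{rank}_{\F_2}E=7$ and $\operatorname{rank}_{\F_7}E=12$, giving multiplicities $6$ and $1$.

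Within your polynomial route, localization closes the gap:
\begin{itemize}
\item Over $\Z_{(2)}[t]$, $\Phi_2$ is comaximal with $\Phi_7$ (resultant $1$) and with $\Phi_{14}$ (resultant $7$). So the ideal $(\Phi_7\Phi_{14},\Phi_2\Phi_{14},\Phi_2\Phi_7)$ becomes $(\Phi_7,\Phi_{14})$.
\item One has $\Phi_7(t)-\Phi_{14}(t)=2t(1+t^2+t^4)$, and $t(1+t^2+t^4)$ is a unit modulo $\Phi_7$ because $1+\zeta_7^2+\zeta_7^4=(1-\zeta_7^6)/(1-\zeta_7^2)$. Hence $2\in(\Phi_7,\Phi_{14})$, and the $2$-part is $\F_2[t]/(\Phi_7)\cong(\Z/2)^6$.
\item The same localization at $7$ gives $\Z[t]/(\Phi_2,\Phi_{14})=\Z/\Phi_{14}(-1)=\Z/7$.
\end{itemize}
This recovers the order without resultants.

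Three smaller points.
\begin{itemize}
\item Your reflection bookkeeping does close, with a plus sign. We have $\rho z=([13]-[0])*([9]-[10])*([5]-[6])$. The three factor reversals give $(-1)^3$, and reversing the order of three degree-zero join factors gives another $-1$ by graded anticommutativity. So $\rho z=\tau^5z$ holds literally in the chain group, and no fallback to $P(\rho z)$ is needed. This is a cleaner justification than the paper's recorded signed-map check.
\item For the basis, Proposition~\ref{prop:cyclic} is stated with $v=e_1$, and its proof defers the $\K(14,2)$ case back to the present proposition. Citing it is therefore circular unless you add the paper's observation that $z$ is the first critical-face lift, so that $P(z)=e_1$, and actually carry out the Krylov determinant here.
\item Your character check is correct, but the face-free reasoning of Theorem~\ref{thm:cyclic-action} does not apply, since $\tau^7$ fixes the independent edges $\{i,i+7\}$. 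Use Proposition~\ref{prop:orbittrace} instead. There the $\tau^7$ orbit graph is $C_7^2$, whose independence complex is a seven-cycle. Hence every nonidentity trace is $-1$, and the characteristic polynomial is $\Phi_2\Phi_7\Phi_{14}$ as you claim.
\end{itemize}
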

\begin{proof}
The six vertices in \eqref{eq:cross14} induce exactly the three edges $\{0,1\},\{4,5\},\{8,9\}$.
Their independence complex is a join of three zero-spheres, whose fundamental cycle is the displayed eight-term chain.
This is a classical cross-cycle construction \cite[Section~2]{SpecialCycles}.
The chain $z$ equals the first critical-face lift in Appendix~\ref{app:cells}.
The complete matching has one vertex and thirteen two-cells, so gives the ordinary wedge $\bigvee^{13}S^2$; that type also follows from the prior cycle-power recurrence \cite{CyclePowers}.
The face counts are $(14,63,98,35)$.
The common projection and lift recurrences verify all thirteen cycles, the entire projection on the $98$ triangles, and its vanishing on all $35$ tetrahedron boundaries.
Thus Lemma~\ref{lem:basis} applies.
Signed translation gives a Krylov basis with determinant $-1$ and the relation $\sum_{j=0}^{13}t^jw=0$, as in Proposition~\ref{prop:cyclic}.
Signed vertex negation gives $\rho_*w=t^5w$; semilinearity extends this over the orbit basis.
Both geometric signed maps are checked on every simplex and on every homology column.

For the gluing, take $(f_1,f_2,f_3)=(\Phi_2,\Phi_7,\Phi_{14})$ in Lemma~\ref{lem:multifactor}.
Their product is $1+t+\cdots+t^{13}$.
In its monomial basis form the $13$ by $13$ matrix $E$ whose columns are the coefficients of $t^jg_i$, in order $i=1,2,3$ and $0\le j<\deg f_i$.
These are the embeddings of the three kernel bases.
Exact integer elimination gives $|\det E|=448$, $14E^{-1}$ integral, and
\[
 \operatorname{rank}_{\F_2}E=7,\qquad\operatorname{rank}_{\F_7}E=12.
\]
The determinant and exponent bound restrict the quotient to elementary $2$- and $7$-primary parts; the two ranks give their multiplicities six and one.
This proves \eqref{eq:index14}.
This quotient is obtained from the kernel-embedding matrix $E$, independently of a cokernel calculation for the Chinese remainder reduction map $L_{14}\to\bigoplus_{d=2,7,14}\Z[t]/(\Phi_d)$.

The change $w'=t^3w$ gives $\rho_*w'=t^{5-6}w'=t^{-1}w'$.
Propositions~6.1 and~7.2 of the public nerve preprint \cite{Nerve}, with $(n,k,\ell)=(28,14,1)$, give an integral orbit basis $\alpha_0,\ldots,\alpha_{12}$, relation $\sum_{i=0}^{13}\alpha_i=0$, and actions $\tau\alpha_i=\alpha_{i+1}$, $\rho\alpha_i=\alpha_{-i-1}$.
The map $\alpha_i\mapsto t^iw'$ is therefore an integral isomorphism intertwining both generators.
The nerve homology rotation has order fourteen, so its $D_{28}$ action factors through $D_{14}$ with those generators.
This is an already known abstract dihedral representation, here realized by a cycle-square cross-cycle.
\end{proof}

In the original orbit generator, the reflection traces are $\operatorname{tr}(\rho_*t^k)=(-1)^{k+1}$ for $0\le k<14$.
All fourteen values agree with the reduced signed simplicial Lefschetz sums in degree two.
For a monomial shift $w_a=t^aw$, the twist exponent is $5-2a$ modulo $14$, so no such shift changes the reflection to positive pure polynomial inversion.
This is a statement about these orbit generators, not a classification under arbitrary units of $L_{14}$.
The same odd-twist phenomenon is already present in the nerve representation just compared.

\begin{proof}[Proof of Theorem~\ref{thm:main}]
Proposition~\ref{prop:finite} and the CW arguments following it give the homotopy types and free integral homology groups.
Proposition~\ref{prop:lowrank} identifies all the rank-one and rank-two modules, as applied in Section~\ref{sec:actions}.
Propositions~\ref{prop:chainchecks} and \ref{prop:cyclic}, with Lemma~\ref{lem:basis} and \eqref{eq:cyclic}, identify the four higher-rank geometric actions as $L_{14},L_3,L_2,L_3$ and give their orders.
Lemma~\ref{lem:gluing} and its two applications prove \eqref{eq:index3} and \eqref{eq:index2}; Lemma~\ref{lem:multifactor} and Proposition~\ref{prop:composite} prove \eqref{eq:index14}.
Over $\Q$ the factors are coprime, giving the stated rational decompositions.
For each corresponding split integer module, the indicated factor-annihilator kernels sum to the entire lattice.
Their index is preserved by a module isomorphism, so none of the three glued lattices is isomorphic to the displayed split module.
\end{proof}

Rational characters determine the rational decompositions but cannot distinguish these lattices from the corresponding split lattices.
For example, on either realization of $L_3$,
\[
 \tr(t^k)=
 \begin{cases}
 7,&9\mid k,\\
 -2,&3\mid k,\ 9\nmid k,\\
 1,&3\nmid k.
 \end{cases}
\]
The same traces hold on $\Z\oplus\Z[\zeta_9]$.
The quotients \eqref{eq:index3}--\eqref{eq:index2} retain information omitted by the character.

\section{Geometric reflections on the glued lattices}\label{sec:dihedral}
The same signed-chain construction retains more than the cyclic action.
Let $\rho$ be the graph automorphism $x\mapsto-x$ modulo $n$.
The group notation $D_m$ follows the convention fixed in the introduction.
Integral dihedral actions on circular-arc nerve complexes were computed in \cite[Section~7 of the public preprint]{Nerve}.
Here we determine the geometric reflection on the three odd-degree higher-rank cases $(18,4),(18,5),(27,5)$ already identified, using the same cycles and coordinates.

\begin{theorem}\label{thm:dihedral}
For $(18,4),(18,5),(27,5)$ respectively, let $w=[\sum_jv_jz_j]$ with $v=e_2,e_1,e_2$ as in Proposition~\ref{prop:cyclic}, and put $w'=t^7w$.
Under the cyclic-module isomorphism sending $1$ to $w'$, the geometric reflection is
\[
\begin{array}{ccl}
\Ht_3(\K(18,4);\Z)\cong L_3:&\quad&\rho_*p(t)=p(t^{-1}),\\
\Ht_3(\K(18,5);\Z)\cong L_2:&&\rho_*p(t)=p(t^{-1}),\\
\Ht_5(\K(27,5);\Z)\cong L_3:&&\rho_*p(t)=-p(t^{-1}).
\end{array}
\]
These formulas specify integral $D_{18},D_{18},D_{27}$ modules respectively.
The two rank-seven homology actions factor through $D_9$.
As $D_9$ modules with the indicated generators, they are nonisomorphic even after tensoring with $\Q$.
Every reflection has trace $1$ in the first two cases and $-1$ in the third.
\end{theorem}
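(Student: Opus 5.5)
The plan follows the signed-transport method already used for translation in Proposition~\ref{prop:cyclic}. First, form the reflection matrix $R_{ij}=(P(\rho z_j))_i$ by Lemma~\ref{lem:basis}. Here $\rho$ negates every vertex modulo $n$, sorts the image increasingly, and multiplies by the sign of the sorting permutation. The cycles $z_j$ and the projection $P$ are the ones verified in Proposition~\ref{prop:chainchecks}, so $R$ is the matrix of the geometric reflection on integral homology.

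Because $\rho\tau\rho=\tau^{-1}$ holds on the vertices, it also holds at chain level, so $\rho_*$ is $t\mapsto t^{-1}$ semilinear: $\rho_*p(T)x=p(T^{-1})\rho_*x$. It is therefore determined by the single class $\rho_*w'$. In the Krylov basis $B$ of Proposition~\ref{prop:cyclic}, I will solve $\rho_*w=\epsilon\,t^{c}w$ by exact integer elimination. I expect $c=14$ in each case, so that with $w'=t^7w$ we get $\rho_*w'=\epsilon t^{-7}t^{14}w=\epsilon w'$, i.e.\ $\rho_*p(t)=\epsilon p(t^{-1})$. The expected signs are $\epsilon=+1,+1,-1$. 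Multiplication by the unit $t^{-7}$ and the shift compatibility are routine. The main obstacle is the finite computation of $P(\rho z_j)$ on the large $\K(27,5)$ domain, together with confirming that the result is exactly a signed monomial multiple of $w$ rather than a more general unit.

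As a cross-check independent of the matrix, compute $\operatorname{tr}\rho_*$ by the reduced Lefschetz formula of Proposition~\ref{prop:orbittrace}. The quotient graph of $\rho$ determines the fixed faces, and single-degree homology gives the trace $(-1)^q\widetilde\chi(\Ind(Q_\rho))$, which must equal $1,1,-1$. Algebraically, on $\Z[t]/(f)$ with basis $1,t,\ldots,t^{r-1}$, the map $p\mapsto\pm p(t^{-1})$ has trace equal to $\pm$ the number of fixed exponent classes $k\equiv -k$ modulo the orbit. For $L_3$ this can be read from the rational decomposition $\Q\oplus\Q(\zeta_9)$: inversion contributes $1$ on the trivial factor and $0$ on $\Q(\zeta_9)$, since complex conjugation on a degree-six field has trace zero. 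This gives total trace $\pm1$, consistent with the predicted signs and confirming the matrix result. The same argument applied to $L_2$ gives trace $1$.

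The remaining assertions follow from the formulas. First, $T^9$ acts trivially on $L_3$, because $(t-1)\Phi_9\mid t^9-1$, so the two rank-seven modules factor through $D_9$. Second, they are rationally nonisomorphic as $D_9$-modules. On the trivial $T$-factor, $\ker(T-1)=\Z\Phi_9$, and $\rho_*$ acts there by $+1$ in the first case and $-1$ in the third. Any $D_9$-isomorphism preserves $\ker(T-1)\otimes\Q$ and the action of $\rho$ on it, so the two modules cannot be isomorphic.
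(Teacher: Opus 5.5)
Your proposal is correct and follows the paper's route. You transport $\rho$ through the projection to obtain $\rho_*w$ as a signed power of $t$ times $w$. The paper's identities are $\rho_*w=t^5w$, $t^{14}w$ and $-t^5w$, which agree with your predicted $c\equiv14$ because $t^9=1$ on $L_3$. You then shift by $t^7$ and read off traces from $L_3\otimes\Q=\Q\oplus\Q(\zeta_9)$ and $L_2\otimes\Q=\Q[C_9]\oplus\Q(\zeta_6)$. Your nonisomorphism argument, via the sign of $\rho_*$ on $\ker(T-1)\otimes\Q$, is an equally valid variant of the paper's comparison of $\rho$-traces.

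The one thing to add is that the statement concerns every reflection $\rho_*T^k$, not only $\rho_*$. The same rational decomposition handles these. On $\Q[C_9]$ the index map $j\mapsto -k-j$ has exactly one fixed residue modulo $9$. On each nontrivial cyclotomic factor the map $x\mapsto\pm\zeta^{-k}\bar x$ still exchanges conjugate eigenspaces and so has trace zero. Hence the trace remains $1$, $1$ and $-1$ for all $18$, $18$ and $27$ reflections respectively.

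Your basis-exponent count on $\Z[t]/(f)$ should likewise be replaced by this rational computation, since $t^{-k}$ need not be one of the basis monomials $1,\ldots,t^{r-1}$.
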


\begin{proof}
Both defining polynomials have constant term $-1$, so $t$ is a unit in $L_3$ and $L_2$.
They satisfy $t^r f(t^{-1})=-f(t)$, for $r=7,11$ respectively.
Thus substitution $p(t)\mapsto p(t^{-1})$ is a well-defined involution on each quotient.

Apply $\rho$ to every vertex of an oriented simplex and multiply by the sign required to put the image in increasing order, as in \eqref{eq:transport}.
Lemma~\ref{lem:basis} gives the matrix with columns $P(\rho z_j)$.
For the generators fixed above, the resulting exact identities are
\begin{equation}\label{eq:reflectiontwists}
\rho_*w=t^5w\quad(18,4),\qquad
\rho_*w=t^{14}w\quad(18,5),\qquad
\rho_*w=-t^5w\quad(27,5).
\end{equation}
These are obtained using the projection and cycle-lift recurrences \eqref{eq:projection}--\eqref{eq:liftweight} on the critical-face orders in Appendix~\ref{app:cells}.
An independent check computes the signed simplicial reflection, verifies that it commutes with every boundary map, and compares all columns $P(\rho z_j)$.
In the respective $[z_j]$ coordinates, the shifted generators are
\[
\begin{array}{ll}
(18,4):&(-1,0,-1,0,0,0,0)^{\mathsf T},\\
(18,5):&(0,0,0,0,-1,0,-1,1,0,0,0)^{\mathsf T},\\
(27,5):&(1,0,1,0,0,0,0)^{\mathsf T}.
\end{array}
\]
Multiplication by $t^7$ is an integral automorphism, so these generators still have unimodular orbit bases.
The relation $\rho_*t=t^{-1}\rho_*$ extends \eqref{eq:reflectiontwists} over the entire cyclic basis.
For $w'=t^7w$, it gives $\rho_*w'=t^{5-14}w'=w'$ in the first case, $\rho_*w'=t^{14-14}w'=w'$ in the second, and $\rho_*w'=-t^{5-14}w'=-w'$ in the third, using $t^9=1$ on $L_3$.
This proves the displayed semilinear formulas.
The geometric relations also give $\rho_*^2=1$ and $\rho_*t\rho_*=t^{-1}$; no invariance of the Morse matching is required.

To compute reflection traces, work over $\Q$.
On $L_3\otimes\Q=\Q\oplus\Q(\zeta_9)$, inversion has trace one on the first factor and zero on the cyclotomic factor, whose primitive-root eigenspaces are paired by inversion.
On $L_2\otimes\Q=\Q[C_9]\oplus\Q(\zeta_6)$, inversion fixes one vector of the usual $C_9$ basis and pairs the other eight; the cyclotomic contribution again has trace zero.
The same calculation applies to inversion composed with any rotation: on the $C_9$ basis the index map has the form $j\mapsto a-j$, with exactly one fixed index, and on the nontrivial cyclotomic eigenspaces it exchanges pairs.
The negative sign reverses every reflection trace for $\K(27,5)$.
As an independent check, for every reflection $g$ the reduced signed simplicial Lefschetz sum satisfies $\widetilde L(g)=(-1)^q\tr(g_*|\Ht_q)$, for all $18,18,27$ reflections respectively.
The sign is negative in each of the three odd homological degrees.
Since $t$ has order nine on the rank-seven lattices, their homology representations factor through the quotient sending $D_{18}$ or $D_{27}$ to $D_9$ with the specified generators.
Their traces on $\rho$ are $1$ and $-1$, proving rational, hence integral, nonisomorphism.
\end{proof}

The two rank-seven cases therefore agree as cyclic lattices, with the same index-three gluing, but differ as geometric dihedral representations.
The factorization through $D_9$ concerns homology only.
This theorem asserts neither a quotient action on the complexes nor an equivariant homotopy equivalence.

\section{Odd-vertex rank-four family comparison}\label{sec:rankfour}
The two occurrences of the rank-seven lattice in Theorem~\ref{thm:dihedral} are distinguished by rational reflection traces.
A classical even-degree family exhibits a complementary distinction: the rational dihedral representations agree, but their integral lattices do not.
The family below uses the earlier circular-graph classification \cite{Adamaszek,Nerve}, recalled in Theorem~4.3 of \cite{RipsCircle}, and the cross-polytopal maps and integral bases of Adamaszek and Adams \cite[Lemma~8.2 and Proposition~8.4]{RipsCircle}.
It provides a comparison for the preceding finite action calculations.

\begin{proposition}\label{prop:rankfour}
For $\ell\ge0$, put $n=5(2\ell+1)$.
Then
\[
 \K(n,3)\simeq\bigvee^4 S^{2\ell},\qquad
 \Ht_{2\ell}(\K(n,3);\Z)\cong L_5:=\Z[t]/(\Phi_5(t)),
 \quad \Phi_5(t)=1+t+t^2+t^3+t^4,
\]
with $t$ acting by the original one-step vertex translation.
All other reduced homology groups vanish.
There is a cyclic generator $w'$ for which the original reflection acts by
\begin{equation}\label{eq:rankfourreflection}
 \rho_*\bigl(f(t)w'\bigr)=(-1)^{\ell+1}f(t^{-1})w'.
\end{equation}
The homology action factors through $D_5$.
The two resulting integral $D_5$-module types, according to the parity of $\ell$, are nonisomorphic, although their rationalizations are isomorphic.
Their $C_5$-coinvariants are $\Z/5\Z$, with opposite reflection actions.
\end{proposition}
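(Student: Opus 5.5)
The plan is to import the ordinary type and an integral cyclic basis from the cited circular-graph results, and then do three independent computations: the translation module, the geometric reflection sign, and the coinvariants. First, relabel vertices so that $G(n,3)$ with $n=5(2\ell+1)$ becomes a circular complete graph, i.e.\ a cyclic graph in the sense of \cite[Theorem~4.3]{RipsCircle}. Multiplication by a unit modulo $n$ carrying $\{\pm1,\pm3\}$ to a consecutive jump pattern would do this, or otherwise the identification the paper indicates. That classification gives $\K(n,3)\simeq\bigvee^4S^{2\ell}$, so the homology is free of rank four in degree $2\ell$ and vanishes elsewhere. The relabeling conjugates $T$ to a power $T^u$ of the standard rotation and $\rho$ to the standard reflection, since $x\mapsto-x$ commutes with scaling. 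I must keep track of this $u$ throughout.

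For the translation module, Lemma~\ref{prop:orbittrace} (the orbit trace) with $d=\gcd(n,k)$ gives all rotation traces. For $5\nmid k$, or more precisely whenever the quotient graph has loops at $d\in\{1,3\}$, the trace is $(-1)^{2\ell}\widetilde\chi=-1$. At $d=5$ the quotient $Q_k$ is $\Cay(\Z/5,\{\pm1,\pm3\})=K_5$, and its independence complex is five points, so the trace is $4$. I will check the other divisors $d$ similarly. This should give the rational character of $\Q(\zeta_5)$ inflated along $C_n\to C_5$, and in particular $T^5=1$ on homology. Integrally, I will take the cross-polytopal integral basis of \cite[Lemma~8.2, Proposition~8.4]{RipsCircle}, which is permuted up to sign by rotation. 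From it I will extract a vector $v$ with unimodular Krylov basis $v,Tv,T^2v,T^3v$, and then apply \eqref{eq:cyclic} with $f=\Phi_5$. Alternatively, $\Z[\zeta_5]$ has class number one, so any torsion-free rank-one $\Z[\zeta_5]$-module is free, exactly as in Proposition~\ref{prop:lowrank}. This second route is cleaner and needs only the rational character plus $\Phi_5(T)=0$ integrally, which follows because $\Phi_5(T)$ vanishes rationally.

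For the reflection, once $H\cong\Z[\zeta_5]\cdot w$, the relation $\rho t\rho=t^{-1}$ forces $\rho_*w=\epsilon t^aw$ for a unit. I will argue the unit has this monomial form by computing $\rho$ on the explicit cross-polytope cycle. Replacing $w$ by $w'=t^bw$ with $2b\equiv a \pmod 5$ normalizes the formula to $\rho_*f(t)w'=\epsilon f(t^{-1})w'$. To determine $\epsilon$, I will use the Lefschetz number of $\rho$. Its fixed faces are unions of $\rho$-orbits, namely $\{0\}$, pairs $\{x,-x\}$, and possibly nothing else since $n$ is odd. By Proposition~\ref{prop:orbittrace}, $\tr(\rho_*)=\widetilde\chi(\Ind(Q_\rho))$. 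The quotient $Q_\rho$ is a path-like graph on $(n+1)/2$ orbits with loops where $2x\in\{\pm1,\pm3\}$, and its independence complex has $\widetilde\chi=\pm1$, which I will compute via the known path recurrence. On the other side, the trace of $f\mapsto\epsilon f(t^{-1})$ on $\Z[\zeta_5]$ is $\epsilon\cdot(\text{trace of inversion})=\epsilon\cdot(-1)$, because inversion fixes $1$ and swaps $\zeta,\zeta^4$ and $\zeta^2,\zeta^3$ in a power basis, giving trace $0$ on $\{1,\zeta,\zeta^2,\zeta^3\}$ after reduction. I will compute this trace carefully. Matching the two traces should yield $\epsilon=(-1)^{\ell+1}$.

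Finally, the $C_5$-coinvariants are $\Z[\zeta_5]/(\zeta_5-1)\cong\Z/5$, and $\rho$ acts on them by $\epsilon$, giving opposite signs for the two parities of $\ell$. Hence the integral types differ, since coinvariants with reflection action are an invariant of the module. Rationally, $f\mapsto -f(t^{-1})$ is conjugate to $f\mapsto f(t^{-1})$ via multiplication by $\zeta-\zeta^{-1}$, which is not a unit integrally, so the rationalizations are isomorphic. The main obstacle is the sign $\epsilon$: obtaining $\widetilde\chi(\Ind(Q_\rho))$ uniformly in $\ell$, and getting the trace of inversion on $\Z[\zeta_5]$ right. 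If the trace route is ambiguous, I will fall back on computing $\rho$ directly on the cross-polytopal cycle, whose sign picks up $(-1)^{\ell+1}$ from reversing the orientation of $\ell+1$ or so $S^0$-join factors.
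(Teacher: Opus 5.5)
Your relabeling, the translation module and the final coinvariant/rational comparison are fine. The class-number-one argument for $\Z[t]/(\Phi_5)$ is a legitimate shortcut around the paper's explicit Krylov basis. You also only need the traces of $T$ and $T^5$: since $\tr(T^5)=4$ equals the rank, the finite-order $T^5$ is the identity.

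The gap is the sign $\epsilon$, which is the real content of \eqref{eq:rankfourreflection}. Your primary method, matching $\tr(\rho_*)$ against $\widetilde\chi(\Ind(Q_\rho))$, cannot determine it.
\begin{itemize}
\item Polynomial inversion on $\Z[t]/(\Phi_5)$ has trace $0$, not $-1$. In the basis $1,t,t^2,t^3$ it fixes $1$, sends $t$ to $t^4=-(1+t+t^2+t^3)$ and swaps $t^2,t^3$, so the diagonal is $1,-1,0,0$. Hence $\tr(\epsilon\iota)=0$ for both signs.
\item The geometric side vanishes too. By the same orbit analysis as in the proof of Proposition~\ref{prop:oddgeneric}, the usable quotient graph of $\rho$ is $J_L$ with $L=(n-3)/2=5\ell+1$. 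Lemma~\ref{lem:linearjump3} then gives $F_{5\ell+1}=(-1)^\ell F_1=0$.
\end{itemize}
So the comparison reads $0=0$. This is forced, not accidental. Your own last paragraph shows that multiplication by $t-t^{-1}$ makes the two sign choices rationally isomorphic. Therefore no trace of any $\rho T^k$, and indeed no rational invariant, can detect $\epsilon$. Your anticipated values $\epsilon\cdot(-1)$ and $\widetilde\chi=\pm1$ are both wrong, and the proposal is internally inconsistent on exactly this point.

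What is needed is an integral computation. Your fallback, ``reversing the orientation of $\ell+1$ or so $S^0$-join factors,'' is where the whole proof of the sign would have to happen, and it is not carried out. The paper does it with maximal-face cocycles rather than chain-level join bookkeeping.
\begin{itemize}
\item After relabeling by $u=(n+1)/2$, one has $\K(n,3)\cong\operatorname{Cl}(C_n^{5\ell})$, and $\tau$ becomes $\sigma^u$ with $u\equiv3\pmod5$.
\item The classical classes $\alpha_{a,b}$ are identified with $e_{a}-e_{b}$ (residues mod $5$) in the augmentation ideal $I_5$, by pairing with $F_a^\vee$, where $F_a=[a,a+5,\ldots,a+10\ell]$.
\item Negation carries the single-residue face $F_a$ to $F_{-a}$ with its $2\ell+1$ vertices reversed up to a cyclic shift. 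This has sign $(-1)^{\ell(2\ell+1)}=(-1)^\ell$, so $\rho_*(e_i-e_j)=(-1)^\ell(e_{-i}-e_{-j})$.
\end{itemize}
The final sign is not a pure orientation count. The permutation $e_i\mapsto e_{-i}$ itself acts by $-1$ on $I_5/(t-1)I_5\cong\Z/5$, since $e_0-e_4\equiv4(e_0-e_1)$. Concretely, for $w\leftrightarrow e_0-e_1$ one has $t^3w\leftrightarrow e_4-e_0$. Hence $\rho_*w=(-1)^{\ell+1}t^3w$, and $w'=t^4w$ is the normalized generator. The case $\ell=0$ (five points, $v=e_0-e_1$, $\rho v=-t^{-1}v$) is handled directly.

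Incidentally, the monomial form $\rho_*w=\epsilon t^aw$ needs no cycle computation. The relation $\rho_*^2=1$ gives $h(t)h(t^{-1})=1$ for the multiplier $h$, so every conjugate of $h$ has absolute value one. Kronecker's theorem then forces $h=\pm\zeta_5^a$.
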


\begin{proof}[Proof of Proposition~\ref{prop:rankfour}]
We first transport the graph and the generators, then use the classical integral basis to compute reflection.
For $\ell=0$, the graph is complete on five vertices and its independence complex consists of five points.
Its reduced degree-zero homology is the augmentation ideal of the vertex permutation module.
Writing $v=e_0-e_1$, the four successive translates $v,tv,t^2v,t^3v$ are the edge differences of a spanning path, hence an integral basis, with $\Phi_5(t)v=0$.
Negation gives $\rho v=-t^{-1}v$; for $w'=t^2v$, one has $\rho w'=-w'$.
This proves the action formula at $n=5$; the coinvariant argument below applies unchanged.
For the rest of the geometric construction suppose $\ell\ge1$.
Let $u=(n+1)/2$.
Multiplication by $u$ is a permutation of $\Z/n\Z$, with inverse multiplication by $2$.
It sends the forbidden differences $\pm1,\pm3$ to the four differences of cyclic distance $(n-1)/2$ or $(n-3)/2$.
The remaining differences have cyclic distance at most $(n-5)/2=5\ell$.
Hence
\[
 \K(n,3)\cong\operatorname{Cl}(C_n^{5\ell}).
\]
The rotation $\tau$ becomes $\sigma^u$, where $\sigma(x)=x+1$ in the clique coordinates, and $\rho$ remains vertex negation.
In particular $u\equiv3\pmod5$.
The classical classification \cite{Adamaszek,Nerve}, as recalled in Theorem~4.3 of \cite{RipsCircle}, gives the displayed ordinary homotopy type at winding fraction $\ell/(2\ell+1)$.

We recall the integral basis in a form that also specifies its chain representatives.
In clique coordinates, let
\[
 F_a=[a,a+5,\ldots,a+10\ell].
\]
Vertices in these displayed lists are reduced modulo $n$; the displayed order specifies their orientation.
Each $F_a$ is a maximal simplex, so the cochain $\beta_a=F_a^\vee$ is a cocycle.
For $a<b<a+5$, set $A_j=a+5j$, $B_j=b+5(j+\ell)$ modulo $n$, and define
\begin{equation}\label{eq:rankfourcycle}
 z_{a,b}=
 \sum_{\epsilon\in\{0,1\}^{2\ell+1}}
 (-1)^{\epsilon_0+\cdots+\epsilon_{2\ell}}
 [V_{0,\epsilon_0},\ldots,V_{2\ell,\epsilon_{2\ell}}],
 \qquad V_{j,0}=A_j,\quad V_{j,1}=B_j.
\end{equation}
The antipodal pairs $(A_j,B_j)$ describe the cross-polytope from Lemma~8.2 of \cite{RipsCircle}: the $A_j$ are its even-indexed vertices, and $B_j$ is the antipode of $A_j$.
The all-$B$ list is a cyclic shift of the displayed $F_b$; a cyclic shift of $2\ell+1$ vertices preserves orientation, so its coefficient is $-1$.
To express these displayed orientations in the global increasing-order chain basis, multiply each term by its sorting sign.
Every term is a simplex, and cancellation of each omitted pair gives $\partial z_{a,b}=0$.
Its two terms supported in a single residue class modulo $5$ are $F_a-F_b$.
Write $\alpha_{a,b}=[z_{a,b}]$.
Proposition~8.4 of \cite{RipsCircle} gives the basis
$\alpha_{1,5},\ldots,\alpha_{4,5}$, dual to $\beta_1,\ldots,\beta_4$.
Equivalently, their pairing is the identity, and the known free rank four makes Lemma~\ref{lem:basis} applicable.
This is an integral basis.

Identify these classes with the augmentation ideal
\[
 I_5=\left\{\sum_{i=0}^4 c_i e_i:\ c_i\in\Z,\ \sum_i c_i=0\right\}
 \subset\Z^5
\]
by $\alpha_{a,b}\mapsto e_{a\bmod5}-e_{b\bmod5}$.
The relations in the proof of Proposition~8.4 of \cite{RipsCircle} justify this identification for all permitted $a,b$.
Rotation sends $e_i-e_j$ to $e_{i+1}-e_{j+1}$ in clique coordinates.
Negation reverses the displayed ordering of $F_a$, up to a cyclic shift.
Reversal of $2\ell+1$ vertices has sign $(-1)^{\ell(2\ell+1)}=(-1)^\ell$, whereas the cyclic shift has sign $+1$.
Mixed-residue terms remain mixed under both symmetries, so the maximal-face cochains give
\[
 \rho_*(e_i-e_j)=(-1)^\ell(e_{-i}-e_{-j}).
\]
Thus this formula computes the geometric reflection, including its orientation sign.

Pull $\alpha_{0,1}$ back to the original graph and call the resulting class $w$.
Since $\tau$ shifts the residue index by $u\equiv3\pmod5$, the four classes
$w,tw,t^2w,t^3w$ correspond to the edge differences on
$\{0,1\},\{3,4\},\{1,2\},\{4,0\}$.
These edges form a spanning tree on five vertices; their differences form an integral basis of $I_5$.
Also $\Phi_5(t)w=0$, and $\rho_*w=(-1)^{\ell+1}t^3w$.
For $w'=t^4w$, the relation $\rho_*t=t^{-1}\rho_*$ and $t^5=1$ give
$\rho_*w'=(-1)^{\ell+1}w'$.
This proves the cyclic-module identification and \eqref{eq:rankfourreflection}.
The factorization through $D_5$ is at the homology level.

Let $L_5^+$ and $L_5^-$ denote the two choices of sign in \eqref{eq:rankfourreflection}.
Their coinvariant group is
\[
 L_5/(t-1)L_5=\Z[t]/(\Phi_5,t-1)\cong\Z/5\Z.
\]
Reflection induces $+1$ and $-1$, respectively, on this quotient.
No automorphism of $\Z/5\Z$ intertwines these two actions, proving integral nonisomorphism.
Multiplication by $a=t-t^{-1}$ commutes with $t$ and anticommutes with polynomial inversion.
It therefore intertwines $L_5^+\otimes\Q$ and $L_5^-\otimes\Q$.
Its determinant is
\[
 \prod_{j=1}^4(\zeta_5^j-\zeta_5^{-j})
 =\prod_{j=1}^4(1-\zeta_5^j)=\Phi_5(1)=5.
\]
It is consequently invertible over $\Q$, but not over $\Z$.
\end{proof}

The two finite inputs added to the comparison are as follows.
The face and pair counts use the same conventions as Proposition~\ref{prop:finite}.
\begin{center}
\begin{tabular}{@{}lrrcl@{}}
\toprule
Complex & Faces & Ordinary pairs & Degree & Reflection in the $w'$ basis\\
\midrule
$\K(15,3)$ &486&240&$2$& polynomial inversion\\
$\K(25,3)$ &31206&15600&$4$& negative polynomial inversion\\
\bottomrule
\end{tabular}
\end{center}
In each case the ordinary matching has one critical vertex and four critical cells in the indicated positive degree.
Complete face enumeration and a reversed-Hasse acyclicity check independently reproduce these Morse models.
The signed rotation and reflection commute with every augmented simplicial boundary.
Pulling \eqref{eq:rankfourcycle} and the maximal-face cochains back by multiplication by $2$ gives the verified integral pairing and action coordinates.
In each case the normalized orbit basis $w',tw',t^2w',t^3w'$ is unimodular relative to the pulled-back ordered classical basis $\alpha_{1,5},\ldots,\alpha_{4,5}$.
Both cyclic matrices are the companion matrix of $\Phi_5$, and the two reflection matrices differ by a sign.
Their rational characters agree; the coinvariant reflection separates the integral lattices.

The abstract types themselves are already present in the nerve family.
For $N(5(\ell+1),5\ell)$, Propositions~6.1 and~7.2 of the public preprint \cite{Nerve} give the same cyclic lattice with
$\rho\alpha_i=(-1)^{\ell+1}\alpha_{-i-1}$.
Choosing the cyclic generator $t^2\alpha_0$ makes reflection equal to $(-1)^{\ell+1}$ times inversion.
This agreement concerns the common $D_5$ homology quotient; it asserts no equivariant homotopy equivalence between the nerve and the independence complex.
The family comparison is a consequence of the classical models, with a direct signed-chain realization on the two specified graphs.

\section{Even-vertex family via a natural nerve reduction}\label{sec:evenfamily}
The odd-vertex comparison has a bipartite counterpart for every even vertex count at least ten.
The ordinary types are already the $d=4$ case of \cite[Theorem~1.2]{NS}.
We derive the actual one-step actions from classical integral nerve representations.
The reduction transports the part-preserving symmetries; an integral centralizer and the signed trace recover the remaining generator.

\begin{proposition}\label{prop:evenfamily}
Let $m\ge5$, write $m=5q+a$ with $0\le a\le4$, and put $X_m=\K(2m,3)$.
Let $T$ and $R$ act on integral homology by the vertex maps $x\mapsto x+1$ and $x\mapsto-x$, respectively.
If $a=0$, then $X_m\simeq\bigvee^4 S^{2q-1}$ and
\[
 \Ht_{2q-1}(X_m;\Z)\cong\Z[t]/(\Phi_{10}(t)),
 \qquad \Phi_{10}(t)=t^4-t^3+t^2-t+1,
\]
where $T$ acts as $t$ and an integral cyclic generator $w$ can be chosen so that
\[
 R\bigl(f(t)w\bigr)=(-1)^q f(t^{-1})w.
\]
This homology action factors through $D_{10}$.
The two parity choices give nonisomorphic integral $D_{10}$-modules with isomorphic rational representations.
If $a>0$, then $X_m\simeq S^{2q}$ and
\[
 \Ht_{2q}(X_m;\Z)\cong\Z,\qquad T=-1,\quad R=(-1)^q.
\]
In both cases all other reduced integral homology groups vanish.
\end{proposition}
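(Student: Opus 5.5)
The plan follows the route announced at the start of the section: identify $X_m$ with a nerve-type complex on the bipartite structure, transport the part-preserving symmetries, and recover $T$ from its square. The graph $G(2m,3)$ has jumps $\pm1,\pm3$, which are all odd. It is therefore bipartite, with parts the even and odd residues. Following Jonsson's bipartite construction as recorded in Barmak's Theorem~3.7, and the star-cluster cover of his Lemmas~3.2--3.3 \cite{Barmak}, I would cover $X_m$ by the two full simplices on the even and odd parts together with the star clusters. This presents $X_m$, up to a suspension, as a complex built from the nerve of the neighborhoods of one part. A vertex $2i$ has neighborhood $\{2i\pm1,2i\pm3\}$, which is four consecutive odd vertices, that is, an arc of length four on the $m$-cycle of odd vertices. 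The relevant nerve is therefore the uniform circular-arc nerve with $d=4$. The ordinary types are then the $d=4$ case of \cite[Theorem~1.2]{NS}: for $a=0$ the nerve is a wedge of four spheres, and otherwise it is a single sphere. The connecting map of the cover, a Mayer--Vietoris suspension isomorphism, identifies $\Ht_*(X_m;\Z)$ integrally with the reduced homology of the nerve shifted by one.

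Next I would transport the actions. The square $T^2$, the map $x\mapsto x+2$, preserves both parts. The reflection $R$ also preserves both parts, and so does the composite $TR$, the map $x\mapsto 1-x$. These maps preserve the cover, so the integral connecting map intertwines them with the corresponding arc-nerve rotations and reflections. For those, \cite[Theorem~3.6 and Propositions~6.1, 7.2--7.3]{Nerve} give explicit integral orbit bases. When $a=0$ these yield $T^2$ acting on $\Z[s]/(\Phi_5(s))$ with $s=T^2$, together with explicit reflections, up to the sign of the connecting map.

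The hard step is $T$ itself. The map $T$ swaps the two parts, so it does not preserve the cover, and the connecting map gives no direct information about it. I would recover $T$ from three constraints.
\begin{itemize}
\item First, $T$ commutes with $s=T^2$. Since $\Z[s]/(\Phi_5)\cong\Z[\zeta_5]$ is its own centralizer in $\operatorname{End}_\Z$ of the lattice, $T$ is multiplication by a unit $\eta\in\Z[\zeta_5]$ with $\eta^2=\zeta_5$.
\item The square roots of $\zeta_5$ are $\pm\zeta_5^3$, so $T$ is $\pm\zeta_5^3$, a primitive tenth root of unity up to sign.
\item The sign is fixed by the Hopf trace, as in Proposition~\ref{prop:face-free}. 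The action of $T$ fixes no nonempty face, because every invariant vertex set contains an edge, so $\operatorname{tr}T=(-1)^{d+1}$. For $d=2q-1$ this gives trace $+1$. Since $\operatorname{tr}(\zeta_5^3)=-1$, we get $T=-\zeta_5^3$, which has minimal polynomial $\Phi_{10}$.
\end{itemize}
The reflection on a suitable generator $w$ is then read off from the transported formula for $R$ in the nerve basis. I would shift the generator by a power of $t$ to centre it, and use the relation $TR=R T^{-1}$. The sign $(-1)^q$ comes from the orientation sign of the nerve reflection together with the sign of the connecting map. I would double-check it against the signed Lefschetz trace of $R$, using orbit quotient graphs via Proposition~\ref{prop:orbittrace}.

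For $a>0$ the homology is $\Z$ in even degree $2q$, and the trace argument gives $T=-1$ directly, as in Proposition~\ref{prop:lowrank}. The sign of $R$ again comes from the Lefschetz number of $R$, computed from its fixed independent sets.

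Finally, for nonisomorphism I would argue exactly as in Proposition~\ref{prop:rankfour}. The coinvariants $\Z[t]/(\Phi_{10},t-1)$ vanish, since $\Phi_{10}(1)=1$, so I would instead use $\Z[t]/(\Phi_{10},t+1)=\Z/5$, on which $R$ acts by $\pm1$ according to parity, and this separates the two types. Multiplication by $t-t^{-1}$ gives the rational isomorphism between them.
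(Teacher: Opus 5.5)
Your route is the paper's: the cover $U=\Delta(B)$, $V=\operatorname{SC}_{X_m}(A)$, and naturality of the Mayer--Vietoris map $\delta$ for the part-preserving maps $T^2$ and $R$. Then $T$ is a square root $\pm z^3$ of $z=T^2$ in the centralizer $\Z[\zeta_5]$, with the sign fixed by the Lefschetz number, and order-five coinvariants separate the two types. Your $\Z[t]/(\Phi_{10},t+1)$ is the paper's $H/(T^2-1)H$, since $t-1$ is a unit modulo $\Phi_{10}$.

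However, you misidentify the complex to which you apply \cite{Nerve}, and that step fails as written. The intersection $U\cap V$ is not the nerve of the four-point neighborhood arcs. Its faces are the subsets of $B$ disjoint from some $N(a_i)=\{b_{i-2},\dots,b_{i+1}\}$. So its facets are the complementary blocks $\{b_{i+2},\dots,b_{i+m-3}\}$ of $m-4$ consecutive odd vertices. With $b_j\mapsto j$ it is literally $N(m,m-5)$, at ratio $(q-1)/q$ when $a=0$. It is for this complex that Theorem~3.6 and Propositions~6.1 and~7.2 of \cite{Nerve} give $\alpha_0,\dots,\alpha_3$ with $\sum_i\alpha_i=0$, $s\alpha_i=\alpha_{i+1}$ and $e\alpha_i=(-1)^q\alpha_{-i-1}$, where $s:j\mapsto j+1$ and $e:j\mapsto-j$. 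The nerve of four-point arcs is $N(m,3)$, a circle for $m\ge7$, which also contradicts the types you quote from \cite{NS}.

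Separately, $x\mapsto1-x$ sends $0$ to $1$, so $TR$ swaps the parts. The cover-preserving maps are $x\mapsto2k\pm x$. If $TR$ preserved the cover, then so would $T=(TR)R$, contradicting your own hard step.

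On the reflection, $\delta$ contributes no sign, since it is natural for maps preserving both $U$ and $V$. The sign comes from $e$ together with $R=s^{-1}e$ on the $B$-labels ($b_j\mapsto b_{-j-1}$). This gives $R\alpha_0=(-1)^qz^{-2}\alpha_0$ and $Rw=(-1)^qw$ for $w=\delta^{-1}(z^4\alpha_0)$.

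Your Lefschetz double-check cannot confirm this sign when $a=0$. For either sign, each reflection $RT^k$ acts on $\Q(\zeta_{10})$ as $x\mapsto\pm\zeta_{10}^{-k}\bar x$, which has trace $0$. This is exactly why the two types are rationally isomorphic.

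For $a>0$, by contrast, your Lefschetz determination of $R$ is a valid alternative to the paper's transport of $e=(-1)^q$ from \cite[Proposition~7.3]{Nerve}. The $R$-orbits $\{0\},\{m\},\{i,-i\}$ of $\Z/2m\Z$ carry no loops, because $2i$ is even, and their orbit graph is exactly $J_{m+1}$. Proposition~\ref{prop:orbittrace} and Lemma~\ref{lem:linearjump3} then give reduced Lefschetz number $-F_{m+1}=(-1)^q$, so $\tr R=(-1)^q$ in degree $2q$. The same computation gives $F_{5q+1}=0$ when $a=0$, as it must. Your route avoids the nerve reflection formula in the sphere case, while the paper's naturality argument handles both cases uniformly.
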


\begin{proof}[Proof of Proposition~\ref{prop:evenfamily}]
We first identify a natural integral suspension isomorphism for $T^2$ and $R$.
The known nerve basis then identifies the integer centralizer of $T^2$; the reduced Lefschetz number chooses the correct square root for $T$.

\smallskip\noindent\emph{The ordered cover and its natural actions.}
Set $A=\{a_i=2i\}$ and $B=\{b_j=2j+1\}$, with indices modulo $m$.
Both parts are independent, and the neighbors of $a_i$ are
$b_{i-2},b_{i-1},b_i,b_{i+1}$.
Let $U=\Delta(B)$ be the full simplex on $B$ and let
\[
 V=\operatorname{SC}_{X_m}(A)
   =\bigcup_{i\in\Z/m\Z}\operatorname{st}_{X_m}(a_i),
\]
where $\operatorname{st}$ denotes the closed simplicial star.
Every face containing a vertex of $A$ belongs to $V$, so $X_m=U\cup V$.
Both subcomplexes are contractible by the star-cluster lemma \cite[Lemma~3.2 of the public preprint]{Barmak}.
Indeed, any nonempty intersection of the stars is the closed star of the corresponding face of $A$, hence a cone, and the cover nerve is a simplex.
The maximal faces of $U\cap V$ are the cyclic blocks
\[
 \{b_{i+2},b_{i+3},\ldots,b_{i+m-3}\}.
\]
Each has $m-4$ vertices; with $b_j$ labeled $j$, this is exactly $N(m,m-5)$, whose maximal faces are the cyclic intervals $[j,j+m-5]$.
For $m=5$ it consists of five isolated vertices.
The classical bipartite suspension construction is due to Jonsson; Barmak recalls it in Theorem~3.7 and gives the two-contractible-subcomplex argument in Lemma~3.3 of the public preprint \cite{Barmak}.
It yields the ordinary equivalence $X_m\simeq\Sigma N(m,m-5)$.
For the fixed ordered cover $(U,V)$, reduced integral Mayer--Vietoris gives an isomorphism
\[
 \delta:\Ht_d(X_m;\Z)\longrightarrow\Ht_{d-1}(N(m,m-5);\Z).
\]
An automorphism preserving both parts preserves each cover member, so naturality makes $\delta$ equivariant for such maps.
On the $B$ labels, $T^2$ is $s:j\mapsto j+1$ and $R$ is $s^{-1}e:j\mapsto-j-1$, where $e:j\mapsto-j$.
The one-step translation exchanges the parts, so we recover it separately.

\smallskip\noindent\emph{The rank-four lattice and the missing generator.}
Suppose $m=5q$.
Then $(m-5)/m=(q-1)/q$.
Theorem~3.6 and Propositions~6.1 and~7.2 of the public nerve preprint \cite{Nerve} give a wedge of four $(2q-2)$-spheres and integral classes $\alpha_i$, indexed modulo $5$, with
\[
 \alpha_0,\ldots,\alpha_3\text{ an integral basis},\qquad
 \sum_{i=0}^4\alpha_i=0,\qquad
 s\alpha_i=\alpha_{i+1},\quad e\alpha_i=(-1)^q\alpha_{-i-1}.
\]
Via $\delta$, identify the homology of $X_m$ with
$O=\Z[z]/(1+z+z^2+z^3+z^4)$, taking $\alpha_0$ to $1$ and $T^2$ to multiplication by $z$.
This is a free rank-one $O$-module over the integers.
Since $T$ commutes with $T^2$, it is multiplication by some $\lambda\in O$ satisfying $\lambda^2=z$.
The ring $O$ is an integral domain, $z^5=1$, and $(z^3)^2=z$; hence
$(\lambda-z^3)(\lambda+z^3)=0$ forces $\lambda=\pm z^3$.
The actual one-step vertex translation fixes no nonempty independent face setwise: its only nonempty invariant vertex subset is the full vertex set.
Thus it has trace zero on every nonnegative simplicial chain group and trace one on the augmentation group, so its reduced Lefschetz number is $-1$.
The homology degree $2q-1$ is odd, giving $\operatorname{tr}(T)=1$.
Multiplication by $z^3$ has cyclotomic trace $-1$, and therefore
\[
 T=-z^3,\qquad z=T^2.
\]
Consequently $\Z[T]=O$ and the minimal polynomial is $\Phi_{10}$.
For completeness, $\Phi_{10}(-z^3)=0$ follows by permuting the five powers of $z$ in their norm relation; the resulting surjection $\Z[t]/(\Phi_{10})\to O$ is an isomorphism between free abelian groups of rank four.

\smallskip\noindent\emph{Reflection normalization and the integral distinction.}
Naturality and the nerve formulas give
\[
 R\alpha_0=(-1)^q z^{-2}\alpha_0,\qquad
 R(f(z)\alpha_0)=(-1)^q f(z^{-1})z^{-2}\alpha_0.
\]
Choose $w=\delta^{-1}(z^4\alpha_0)$.
Since $z^5=1$, one has $Rw=(-1)^q w$.
Inversion takes $T=-z^3$ to $T^{-1}$, proving the stated formula in $t$ and the integral orbit basis $w,Tw,T^2w,T^3w$.
The order of $T$ is $10$, dividing $2m=10q$, which gives the $D_{10}$ quotient on homology.
Moreover,
\[
 H/(T^2-1)H\cong O/(z-1)O\cong\Z/5\Z.
\]
Reflection acts there by $(-1)^q$, distinguishing the two integral types.
Over $\Q$, multiplication by $z-z^{-1}$ is invertible, commutes with $T$ and anticommutes with inversion, so it intertwines the two signs.

\smallskip\noindent\emph{The remaining residues.}
For $1\le a\le4$, the ratio $(m-5)/m$ lies strictly between $(q-1)/q$ and $q/(q+1)$.
Theorem~3.6 and Proposition~7.3 of the public nerve preprint \cite{Nerve} give $N(m,m-5)\simeq S^{2q-1}$, with $s=1$ and $e=(-1)^q$ on its integral homology.
The suspension and naturality give $X_m\simeq S^{2q}$ and $R=(-1)^q$.
On this rank-one group $T=\pm1$; the same reduced Lefschetz number now gives $\operatorname{tr}(T)=-1$, so $T=-1$.
\end{proof}

Together with Proposition~\ref{prop:rankfour}, this gives odd- and even-vertex rank-four comparisons: the actual rotations have cyclotomic types $\Phi_5$ and $\Phi_{10}$, respectively.
Both reflection distinctions come from order-five coinvariants and disappear over $\Q$.
The even-vertex formulas are consequences of the cited suspension and integral nerve theory; they add a uniform action comparison without adding finite computational inputs or an equivariant suspension equivalence.

\section{Odd sphere branches and the complete jump-three comparison}\label{sec:oddgeneric}
The rank-four odd family occurs at the singular ratios in the classical circular-clique classification.
The complementary ratios give single odd-dimensional spheres.
We determine their actual reflection by the orbit trace of Proposition~\ref{prop:orbittrace}, rather than transporting an action through an ordinary homotopy equivalence.

\begin{lemma}\label{lem:linearjump3}
Let $J_L$ be the graph on $\{0,\ldots,L-1\}$ whose edges have difference $1$ or $3$, with $J_0$ the empty graph, and put
\[
 F_L=\sum_{\sigma\in\Ind(J_L)}(-1)^{|\sigma|}.
\]
Then $F_0=1$, $F_1=0$, $F_2=F_3=F_4=-1$, and $F_L=-F_{L-5}$ for $L\ge5$.
In particular $F_{5\ell+r}=(-1)^{\ell+1}$ for $\ell\ge0$ and $2\le r\le5$.
\end{lemma}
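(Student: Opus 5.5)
The plan is the standard first-vertex recursion for the independence polynomial at $-1$, run on the path-like graph $J_L$. Write $F_L=I(J_L;-1)$, where $I(G;x)=\sum_{\sigma\in\Ind(G)}x^{|\sigma|}$. For any vertex $v$, splitting the independent sets according to whether they contain $v$ gives $I(G;x)=I(G-v;x)+x\,I(G-N[v];x)$. We apply this with $v$ the vertex $0$ of $J_L$. Removing $0$ leaves an induced copy of $J_{L-1}$ on $\{1,\dots,L-1\}$, after shifting the labels down by one.

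Removing the closed neighborhood $N[0]=\{0,1,3\}$ leaves the vertex set $\{2\}\cup\{4,\dots,L-1\}$. The difficulty is that this set is not an interval. Vertex $2$ is adjacent to $5$, via difference $3$, but not to $4$, since the difference $2$ is not an edge. So we must track a small family of auxiliary graphs. Define $G_L$ to be the induced subgraph of $J_L$ on $\{2\}\cup\{4,\dots,L-1\}$. Expanding $G_L$ at the vertex $2$ gives
\[
 I(G_L)=I(J_{L-4})+x\,I(J_{L-6}),
\]
because deleting $2$ leaves the interval $\{4,\dots,L-1\}$, and deleting $N[2]\cap G_L=\{2,5\}$ leaves $\{4\}\cup\{6,\dots,L-1\}$. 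In that remainder, vertex $4$ is adjacent to $7$, but $6$ and $7$ are adjacent to each other. To avoid an unbounded chain of auxiliary graphs, the better move is to expand at vertex $4$ instead. Its neighbors in the set are $\{7\}$; it is not adjacent to $6$, since the difference $2$ is not an edge, and not to $5$, which has been removed. This yields $I(J_{L-6})$ for the deletion and $I(\{6\}\cup\{8,\dots,L-1\})$ for the neighborhood removal, and the second graph is a shifted copy of $G_{L-4}$. Once the definitions are made correctly, the recursion closes on the pair of families $(J_L,G_L)$.

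The cleaner route is to verify the closed linear system at $x=-1$ directly. Set $g_L=I(G_L;-1)$. The system is
\[
 F_L=F_{L-1}-g_L,
\]
together with the expansion of $G_L$ at vertex $2$, $g_L=F_{L-4}-g'_{L}$, where $g'_L$ counts the graph $\{4\}\cup\{6,\dots,L-1\}$. That graph is a shift of $G_{L-2}$, so $g'_L=g_{L-2}$ and $g_L=F_{L-4}-g_{L-2}$. Eliminating $g$ uses $g_L=F_{L-1}-F_L$. Substituting gives
\[
 F_{L-1}-F_L=F_{L-4}-(F_{L-3}-F_{L-2}),
\]
that is, $F_L=F_{L-1}-F_{L-2}+F_{L-3}-F_{L-4}$, or equivalently $(1+E^{-1})F_L$ satisfying a $\Phi_{10}$-type relation. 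Multiplying this relation by $(1+E^{-1})$ then gives $F_L=-F_{L-5}$. This is the factorization $t^5+1=(t+1)(t^4-t^3+t^2-t+1)$.

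The small values come from direct enumeration. $F_0=1$ counts only the empty set. $F_1=1-1=0$. $F_2=1-2=-1$. $F_3=1-3+1=-1$, the one edge-free pair being $\{0,2\}$. $F_4=1-4+3=-1$, the non-edges being $02$, $13$ and $23$; but $1$ and $3$ are at difference $2$, which is allowed, so the correct count of non-edges is $02,12,13,23$ minus the edges $12,23,03$. The count will be redone carefully. Independent pairs of triples also have to be checked.

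The main obstacle is the bookkeeping at the boundary. We must confirm that the shifted sets really are isomorphic to $G_{L-2}$; this holds because $4\sim7$, $4\not\sim5,6$, and the rest is an interval, matching $2\sim5$, $2\not\sim3,4$. We must also handle the small $L$ for which the recursions read outside $L\geq0$, either by setting conventions such as $F_{-1}$ or by checking $F_5,\dots,F_9$ by hand. The final closed form then follows by induction on $\ell$ from $F_2=F_3=F_4=-1$ and $F_5=-F_0=-1$.
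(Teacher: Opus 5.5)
\textbf{Verdict.} Your argument is correct in substance, but it takes a different route from the paper.

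\textbf{Comparison with the paper.} The paper introduces no auxiliary sequence. It cancels every independent set disjoint from $N(0)=\{1,3\}$ by toggling $0$. The survivors either contain $1$, or avoid $1$ and contain $3$, which gives $F_L=-I_E(-1)-I_D(-1)$ with $E=J_L[\{3\}\cup\{5,\ldots,L-1\}]$ and $D=J_L[\{5\}\cup\{7,\ldots,L-1\}]$. A single deletion--contraction of $E$ at vertex $3$ then gives $I_E(-1)=F_{L-5}-I_D(-1)$. The unknown $I_D(-1)$ cancels, and $F_L=-F_{L-5}$ appears in one step.

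You instead expand at $0$ and introduce the family $G_L$, whose neighborhood-removal term is a shifted $G_{L-2}$. This closes the system $F_L=F_{L-1}-g_L$, $g_L=F_{L-4}-g_{L-2}$. Eliminating $g$ gives $F_L-F_{L-1}+F_{L-2}-F_{L-3}+F_{L-4}=0$, and the factorization $t^5+1=(t+1)\Phi_{10}(t)$ then yields the antiperiodicity.

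The paper's route is shorter, and its small-$L$ checks are trivial. Yours is more mechanical: it is a closed transfer system of the kind that extends to other jump sets. It also gives the finer $\Phi_{10}$ relation, which already produces $F_4$ from $F_0,\ldots,F_3$.

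\textbf{Repairs needed in the write-up.}
\begin{itemize}
\item Discard the first display $I(G_L)=I(J_{L-4})+x\,I(J_{L-6})$ and the detour about expanding at $4$. The neighborhood-removal term is the shift of $G_{L-2}$, not $J_{L-6}$, as your ``cleaner route'' correctly states.
\item Your count for $F_4$ is wrong as written: $1-4+3=0$, and $\{2,3\}$ is an edge. The graph $J_4$ is the $4$-cycle with edges $01,12,23,03$. Its only independent pairs are $\{0,2\}$ and $\{1,3\}$, and it has no independent triples, so $F_4=1-4+2=-1$.
\item The boundary issue resolves cleanly with one convention. Take $G_L$ to be the induced subgraph of $J_L$ on $(\{2\}\cup\{4,5,\ldots\})\cap\{0,\ldots,L-1\}$, so $g_2=1$ and $g_3=g_4=g_5=0$.
\begin{itemize}
\item Then $F_L=F_{L-1}-g_L$ holds for every $L\ge1$.
\item Also $g_L=F_{L-4}-g_{L-2}$ holds for every $L\ge4$; in particular $g_4=F_0-g_2$ and $g_5=F_1-g_3$.
\item Hence the four-term relation holds for $L\ge4$. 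Adding it at $L$ and at $L-1$ gives $F_L=-F_{L-5}$ for all $L\ge5$.
\end{itemize}
With this convention you need no value for $F_{-1}$ and no hand check of $F_5,\ldots,F_9$.
\end{itemize}
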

\begin{proof}
Write $I_H(x)=\sum_{\sigma\in\Ind(H)}x^{|\sigma|}$ for a graph $H$.
The initial values follow from $I_{J_0}=1$, $I_{J_1}=1+x$, and
$J_2=K_{1,1}$, $J_3=K_{1,2}$, $J_4=K_{2,2}$, using
$I_{K_{a,b}}(x)=(1+x)^a+(1+x)^b-1$.

For $L\ge5$, independent sets avoiding both $1$ and $3$ cancel in the signed sum by toggling $0$.
Every remaining set avoids $0$ and either contains $1$, or avoids $1$ and contains $3$.
Removing the specified vertex in the first class leaves the induced graph
$E=J_L[\{3\}\cup\{5,\ldots,L-1\}]$; in the second class it leaves
$D=J_L[\{5\}\cup\{7,\ldots,L-1\}]$.
All listed vertices outside $\{0,\ldots,L-1\}$ are omitted.
Thus $F_L=-I_E(-1)-I_D(-1)$.
In $E$, the only possible neighbor of $3$ is $6$.
Deletion of $3$ leaves $J_{L-5}$, up to translation, and deletion of its closed neighborhood leaves $D$.
The elementary deletion identity
\[
 I_E(x)=I_{E-3}(x)+xI_{E-N_E[3]}(x)
\]
therefore gives $I_E(-1)=F_{L-5}-I_D(-1)$ and proves the recurrence, including $L=5,6$ when $6$ is absent.
Since $F_5=-F_0=-1$, iteration gives the final formula.
\end{proof}

\begin{proposition}\label{prop:oddgeneric}
Let $n\ge7$ be odd and not divisible by $5$.
There is a unique integer $\ell\ge0$ with $10\ell+5<n<10\ell+15$.
Then
\[
 \K(n,3)\simeq S^{2\ell+1},\qquad
 \Ht_{2\ell+1}(\K(n,3);\Z)\cong\Z,
 \qquad \tau_*=1,\quad\rho_*=(-1)^{\ell+1}.
\]
All other reduced integral homology groups vanish.
\end{proposition}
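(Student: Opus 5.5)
We prove Proposition~\ref{prop:oddgeneric} in three steps: identify the ordinary homotopy type from the classical circular-clique classification, then obtain each of the two homology actions from a trace computation, as the section introduction announces.

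\emph{Homotopy type.} Put $u=(n+1)/2$, the inverse of $2$ modulo the odd number $n$. Exactly as in the proof of Proposition~\ref{prop:rankfour}, multiplication by $u$ carries the forbidden differences $\pm1,\pm3$ to the four differences of cyclic distance $(n-1)/2$ and $(n-3)/2$. This gives $\K(n,3)\cong\operatorname{Cl}(C_n^{r})$ with $r=(n-5)/2$.

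The winding fraction is $r/n=(n-5)/(2n)$. Since $n$ is not divisible by $5$, this fraction is not of the singular form $\ell/(2\ell+1)$. The inequality $10\ell+5<n<10\ell+15$ is equivalent to
\[
 \frac{\ell}{2\ell+1}<\frac{r}{n}<\frac{\ell+1}{2\ell+3}.
\]
Adamaszek's classification \cite[Corollary~6.7]{Adamaszek} then gives $\operatorname{Cl}(C_n^r)\simeq S^{2\ell+1}$. Uniqueness of $\ell$ holds because the open intervals $(10\ell+5,10\ell+15)$ are disjoint, and their union contains every odd $n\ge7$ prime to $5$. So the reduced integral homology is $\Z$ in degree $2\ell+1$ and zero elsewhere. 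Each of $\tau_*$ and $\rho_*$ is therefore $\pm1$ and is determined by its trace.

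\emph{Translation.} One-step translation fixes no nonempty face setwise. Proposition~\ref{prop:orbittrace} therefore gives reduced Lefschetz number $-1$, hence trace $(-1)^{q+1}=1$ in the odd degree $q=2\ell+1$. Thus $\tau_*=1$.

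\emph{Reflection.} This is the main step. By Proposition~\ref{prop:orbittrace}, the trace of $\rho_*$ equals $(-1)^q\widetilde\chi(\Ind(Q_\rho))$, which is $-\widetilde\chi(\Ind(Q_\rho))$ in our odd degree. The orbits of $\rho(x)=-x$ on $\Z/n\Z$ are $\{0\}$ and the pairs $\{i,-i\}$ for $1\le i\le (n-1)/2$.

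The next task is to determine which pairs are looped in $Q_\rho$. The pair $\{i,-i\}$ contains an edge exactly when $2i\equiv\pm1$ or $\pm3$ modulo $n$. For $1\le i\le(n-1)/2$ this happens at $i=(n-1)/2$ and $i=(n-3)/2$.

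Relabel the remaining pairs by $i$ and add the vertex $0$. The surviving vertices are $\{0,1,\ldots,(n-5)/2\}$, with adjacencies given by $|i\pm j|\in\{1,3\}$. Differences of $1$ and $3$ give the path-type graph $J_L$ with $L=(n-3)/2$. Sums $i+j\in\{1,3\}$ add the pairs $\{0,1\}$ and $\{0,3\}$, which are already present, and $\{1,2\}$, which is also a difference-one edge. Sums near $n$ cannot occur in this range. Hence $Q_\rho$, after deleting looped orbits, is $J_L$.

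The reduced Euler characteristic is $\widetilde\chi(\Ind(J_L))=-F_L$ in the notation of Lemma~\ref{lem:linearjump3}. Write $L=(n-3)/2=5\ell+r'$. The constraint $10\ell+5<n<10\ell+15$ gives $r'\in\{2,3,4,5\}$, excluding the value corresponding to $5\mid n$. Lemma~\ref{lem:linearjump3} then gives $F_L=(-1)^{\ell+1}$. The trace of $\rho_*$ is $-\widetilde\chi=F_L=(-1)^{\ell+1}$, so $\rho_*=(-1)^{\ell+1}$.

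The step I expect to be most delicate is this orbit-graph bookkeeping. One must check the exact set of looped orbits and the exact range of $L$, and verify that the sum-type adjacencies add nothing beyond $J_L$. If needed, a sanity check at $n=7$ ($\ell=0$, expected $\rho_*=-1$) pins down the sign conventions.
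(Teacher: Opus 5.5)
Your proposal is correct and follows the paper's proof: the unit relabeling to $\operatorname{Cl}(C_n^{(n-5)/2})$, Adamaszek's classification for the ordinary sphere type, and the reflection-orbit quotient reducing to $J_L$ with $L=(n-3)/2$ and Lemma~\ref{lem:linearjump3}, including the same loop and sum-adjacency bookkeeping. The only difference is minor. You obtain $\tau_*=1$ from the orbit Lefschetz trace, whereas the paper observes that $\tau_*=\pm1$ has odd order and hence equals $+1$; either argument suffices.
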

\begin{proof}
The unit relabeling $u=(n+1)/2$ used in Proposition~\ref{prop:rankfour} gives
\[
 \K(n,3)\cong\operatorname{Cl}(C_n^{(n-5)/2}),
 \qquad\tau\mapsto\sigma^u,\quad\rho\mapsto(x\mapsto-x).
\]
The inequalities in the statement are exactly
\[
 \frac{\ell}{2\ell+1}<\frac{n-5}{2n}<\frac{\ell+1}{2\ell+3}.
\]
The classical clique classification \cite[Corollary~6.7]{Adamaszek}, also stated as Theorem~8.2 of the public preprint \cite{Nerve} and Theorem~4.3 of \cite{RipsCircle}, proves the ordinary sphere type.
The integral rotation on its rank-one homology is $\pm1$; its odd order forces $\tau_*=1$.

Put $n=2h+1$.
The reflection orbits are $\{0\}$ and $\{i,-i\}$ for $1\le i\le h$.
The last two orbits, $i=h,h-1$, contain edges and give looped quotient vertices: their internal differences are $n-1$ and $n-3$.
There are no other looped vertices.
After removing these forbidden vertices, label the remaining orbits by $0,\ldots,h-2$.
For distinct labels $i<j$, quotient adjacency means that $j-i$ or $i+j$ is congruent to one of $\pm1,\pm3$ modulo $n$.
In these ranges the only possibilities are $j-i\in\{1,3\}$ or $i+j\in\{1,3\}$.
The latter produces only $(0,1)$, $(0,3)$ and $(1,2)$, already edges of the former kind whenever their labels occur.
Thus the usable quotient graph is $J_L$, with $L=h-1=(n-3)/2$.

Write $n=10\ell+r$ with $r\in\{7,9,11,13\}$.
Then $L=5\ell+(r-3)/2$, and Lemma~\ref{lem:linearjump3} gives $F_L=(-1)^{\ell+1}$.
Proposition~\ref{prop:orbittrace} identifies the reduced Lefschetz number of the actual reflection with
\[
 \widetilde\chi(\Ind(J_L))=-F_L=(-1)^\ell.
\]
The sole homology degree is odd, so $\tr(\rho_*)=(-1)^{\ell+1}$, proving the claimed integral action.
\end{proof}

These statements give a complete action comparison for every $n\ge5$.
For $n=6,8$, the graph is $K_{3,3}$ or $K_{4,4}$; the independence complex is the disjoint union of the two part simplices.
It is homotopy equivalent to $S^0$, with rotation exchanging components and reflection preserving them.
Let $\iota$ denote polynomial inversion $f(t)\mapsto f(t^{-1})$ in the normalized rank-four cyclic bases.
The nonzero reduced integral homology of $\K(n,3)$ and its geometric actions are summarized below, with integer parameters $\ell\ge0$ and $q\ge1$; all omitted degrees vanish.
\begin{center}
\begin{tabular}{@{}lccl@{}}
\toprule
Vertex count & Degree & Rotation module & Reflection\\
\midrule
$5(2\ell+1)$, $\ell\ge0$ &$2\ell$ &$\Z[t]/(\Phi_5)$ &$(-1)^{\ell+1}\iota$\\
odd $10\ell+5<n<10\ell+15$ &$2\ell+1$ &$\Z$, $t=1$ &$(-1)^{\ell+1}$\\
$10q$, $q\ge1$ &$2q-1$ &$\Z[t]/(\Phi_{10})$ &$(-1)^q\iota$\\
$10q+2a$, $q\ge1$, $1\le a\le4$ &$2q$ &$\Z$, $t=-1$ &$(-1)^q$\\
$6,8$ &$0$ &$\Z$, $t=-1$ &$1$\\
\bottomrule
\end{tabular}
\end{center}
The table assembles the cited ordinary classifications, classical integral bases, natural Mayer--Vietoris and elementary signed traces.
The action formulas follow from these classical models and the signed trace calculation.

\section{Two further cycle-square lattices}\label{sec:cyclesquare}
The Morse-cycle reconstruction of Sections~\ref{sec:cw} and~\ref{sec:certification} also applies when the ordinary matching uses a specified vertex priority instead of increasing vertex order.
These examples extend the factor-kernel comparison to three odd-order factors and to an order-eight constituent.

\begin{proposition}\label{prop:cyclesquare}
For one-step translation on the indicated reduced homology groups, there are integral cyclic generators $w$ giving
\[
 \begin{array}{c|c|c|c}
 K & q & H=\Ht_q(K;\Z) & \rho_*w=h(t)w\\ \hline
 \K(15,2)&2&\Z[t]/((t^5-1)\Phi_3(t))&h(t)=t^6-t^3-t\\
 \K(16,2)&3&\Z[t]/((t-1)\Phi_8(t))&h(t)=t^2.
 \end{array}
\]
Here $\Phi_3=t^2+t+1$ and $\Phi_8=t^4+1$.
Reduced homology is zero in other degrees, and the ordinary homotopy types are $\bigvee^7 S^2$ and $\bigvee^5 S^3$, respectively.
Their translation actions have orders $15$ and $8$; the latter is inflated from $C_8$ to $C_{16}$.
With $U_d=\ker(\Phi_d(t):H\to H)$ and $\Phi_1=t-1$, the factor-kernel quotients are
\[
 \Ht_2(\K(15,2);\Z)/(U_1\oplus U_3\oplus U_5)\cong\Z/15\Z,
 \qquad
 \Ht_3(\K(16,2);\Z)/(U_1\oplus U_8)\cong\Z/2\Z.
\]
On either cyclic lattice, reflection is the semilinear map $p(t)w\mapsto h(t)p(t^{-1})w$.
For $\K(16,2)$, replacing $w$ by $tw$ gives pure polynomial inversion.
\end{proposition}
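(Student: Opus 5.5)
The plan follows the finite certification pipeline already used for $\K(18,4)$, $\K(18,5)$ and $\K(27,5)$. The only change is that the ordered-toggle matching of Section~\ref{sec:cw} uses a specified vertex priority instead of increasing order; Lemma~\ref{lb-acyclic} applies to any vertex order. For $\K(15,2)$ and $\K(16,2)$ I would enumerate all independent faces by the recursion in the proof of Proposition~\ref{prop:finite}, fix a priority order, and verify that the critical profile is one vertex together with seven $2$-cells, respectively five $3$-cells. If the natural order leaves a mixed profile, I would try parity-first orders, as in \eqref{eq:p687order}, until the profile is pure. Lemma~\ref{lb-acyclic} then gives $\bigvee^7S^2$ and $\bigvee^5S^3$, free homology, and vanishing in other degrees.

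Next I would build the projection $P$ and the lifts $z_j$ by \eqref{eq:projection}--\eqref{eq:liftweight}. I would then check $\partial z_j=0$, $Pz_j=e_j$, and $P\partial=0$ on every $(q+1)$-simplex, as in Proposition~\ref{prop:chainchecks}. Lemma~\ref{lem:basis} makes $[z_j]$ an integral basis, and \eqref{eq:transport} gives the matrices of $T$ and $\rho$. A sanity check comes first: the rational characters should be predicted by Proposition~\ref{prop:orbittrace}, because the quotient graphs $Q_k$ are small Cayley graphs whose independence complexes have computable Euler characteristics. For $n=15$ this should give the characters of $(t^5-1)\Phi_3$, and for $n=16$ those of $(t-1)\Phi_8$ inflated from $C_8$. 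I would then pick a candidate vector $v$ (try $e_1$, $e_2$, \dots) and verify two exact identities: $\det(v,Tv,\ldots,T^{r-1}v)=\pm1$ and $f(T)v=0$. By \eqref{eq:cyclic} this identifies $H$ with $\Z[t]/(f)$. The orders $15$ and $8$ follow from the cyclotomic constituents: $\operatorname{lcm}(1,3,5)=15$, and the $\Phi_1$ and $\Phi_8$ constituents give order exactly $8$. For the reflection, compute $\rho_*w$ in the orbit basis and read off $h(t)$; semilinearity extends this because $\rho_*t=t^{-1}\rho_*$. For $\K(16,2)$, replacing $w$ by $tw$ changes $h=t^2$ to $t^{2-2}=1$.

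The factor-kernel quotients are pure algebra. For $\K(16,2)$, Lemma~\ref{lem:gluing} with $a=t-1$ and $b=\Phi_8$ gives $\Z[t]/(t-1,\Phi_8)=\Z/\Phi_8(1)=\Z/2$. For $\K(15,2)$, Lemma~\ref{lem:multifactor} with $f_i=\Phi_1,\Phi_3,\Phi_5$ reduces the quotient to $\Z[t]/(g_1,g_2,g_3)$, where $g_1=\Phi_3\Phi_5$, $g_2=\Phi_1\Phi_5$ and $g_3=\Phi_1\Phi_3$. I would compute it by a CRT argument. Localizing at $3$, the factor $\Phi_5$ is a unit modulo $(3,\Phi_1\Phi_3)$, so the $3$-part is the two-factor quotient $\Z[t]/(\Phi_1,\Phi_3)=\Z/3$. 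Symmetrically the $5$-part is $\Z[t]/(\Phi_1,\Phi_5)=\Z/5$. The resultant of $\Phi_3$ and $\Phi_5$ is $1$, so no other primes occur. Together these give $\Z/15$. A cross-check is the determinant of the $7\times7$ kernel-embedding matrix, which should be $15$, together with its ranks modulo $3$ and $5$, exactly as in Proposition~\ref{prop:composite}.

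I expect the main obstacle to be the choice of matching order and cyclic generator. A poor priority can leave mixed-dimensional critical cells, which would require the cancellation argument of Section~\ref{sec:mixed} and Lemma~\ref{lem:homologywedge}. Also, a cyclic vector with unit Krylov determinant need not be a basis vector. I would first search over basis vectors and small sums $e_i\pm e_j$. If none works, I would use the factor-kernel index to show that $H$ is cyclic: $H$ must contain a vector whose images in the quotients by the factor kernels generate, and I would search for such a vector directly.
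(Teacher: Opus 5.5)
Your plan is the paper's proof step for step. It uses a vertex-priority ordered-toggle matching leaving one critical vertex and seven $2$-cells (resp.\ five $3$-cells), then Morse lifts and projection with Lemma~\ref{lem:basis}, a unimodular Krylov basis with its annihilator and \eqref{eq:cyclic}, the reflection read off in the orbit basis (shifting the generator by a power of $t$ to realize the stated $h$), and Lemmas~\ref{lem:gluing} and~\ref{lem:multifactor} with the same kernel-embedding determinant cross-check. Your localization/resultant computation of $\Z/15$ is correct, but the paper gets it in one line from the Bezout identity $(1+t^3)\Phi_3-t\Phi_5=1$, which gives $t-1=(1+t^3)g_3-tg_2$ and hence $(g_1,g_2,g_3)=(t-1,15)$.
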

\begin{proof}
Use the face order and vertex-stage matching of Section~\ref{sec:cw}, replacing its vertex order by
\[
 \begin{split}
 \K(15,2):\ &0,3,6,9,12,1,4,7,10,13,2,5,8,11,14,\\
 \K(16,2):\ &0,7,13,4,15,12,6,9,14,10,3,5,8,11,2,1.
 \end{split}
\]
The respective face counts, including the empty face, are $309$ and $453$.
The complete ordinary matchings have $150$ and $223$ pairs, a critical vertex $0$, and the following critical faces in lexicographic order:
\[
 \begin{split}
 \K(15,2):\ &(1,4,10),(1,4,11),(1,5,10),(1,8,13),\\
             &(2,5,11),(2,7,10),(4,7,14),\\
 \K(16,2):\ &(1,5,8,12),(1,5,10,14),(2,5,11,14),\\
             &(2,6,11,14),(3,8,11,15).
 \end{split}
\]
Topological orders verify all $885$ and $1392$ directed nonempty Hasse incidences, respectively.
Forman's theorem therefore gives the stated ordinary wedges and free homology ranks.
The ordinary types also follow from the earlier cycle-power splitting theorem \cite{CyclePowers}.

Let $z_1,\ldots,z_r$ be the critical-face lifts determined by \eqref{eq:projection}--\eqref{eq:liftweight} with these matchings.
Their support sizes are $(8,8,8,10,8,8,8)$ and $(16,16,57,16,16)$.
The projection is defined on all $140$ simplices in degree $q$ in either case and kills all $75$ or $16$ incoming simplex boundaries.
Every lift has zero boundary and projects to its specified standard basis vector.
Thus Lemma~\ref{lem:basis} identifies the lifts as an integral homology basis.
In their displayed order, choose the coordinate vectors
\[
 v_{15}=(-1,-1,0,1,0,0,1)^T,\qquad
 v_{16}=(-1,-1,-1,-1,1)^T,
\]
and put $w=[\sum_j v_jz_j]$ in the corresponding homology group.
The signed geometric translation and negation give the following integer identities:
\[
 \begin{array}{c|c|c|c}
 n&r&\det(v,Tv,\ldots,T^{r-1}v)& f(t)\text{ with }f(T)=0\\ \hline
 15&7&-1&(t^5-1)\Phi_3(t)\\
 16&5&-1&(t-1)\Phi_8(t).
 \end{array}
\]
In the orbit bases, the first reflection columns are the coefficients of $t^6-t^3-t$ and $t^2$, respectively.
These finite identities are determined by the graph, matching, critical lists, recurrences and vectors just specified.
Independent checks enumerate the entire face set from edge differences, verify every directed Hasse incidence, every projection and lift identity, every signed simplicial chain-map identity and every homology action column, and check the orbit determinants and polynomial relations by rational elimination.
Equation~\eqref{eq:cyclic} then proves the module identifications.
The rational cyclotomic factors give the exact action orders.
The relation $\rho\tau\rho=\tau^{-1}$ gives the reflection formula on every orbit vector.
Both defining polynomials are reciprocal up to sign and have unit constant term, so substitution by $t^{-1}$ is well-defined on their integer quotient rings.
The geometric identity $\rho_*^2=1$ gives $h(t)h(t^{-1})=1$ in each quotient; thus the displayed multiplier is a unit.
For the second case, $\rho_*(tw)=t^{-1}t^2w=tw$.

For the first gluing quotient use Lemma~\ref{lem:multifactor} with $(f_1,f_2,f_3)=(\Phi_1,\Phi_3,\Phi_5)$.
Put $A=\Phi_3$ and $B=\Phi_5$.
The identity $(1+t^3)A-tB=1$ implies that
\[
 (AB,(t-1)B,(t-1)A)=(t-1,15):
\]
the left ideal contains $t-1$, and reduction modulo $t-1$ takes $AB$ to $A(1)B(1)=15$.
Conversely each left generator lies in $(t-1,15)$.
Lemma~\ref{lem:multifactor} therefore gives $\Z/15\Z$ directly.
As an independent integer check, the columns $t^j f/f_i$, $0\le j<\deg f_i$, give a kernel-embedding matrix $E$ with $|\det E|=15$, $15E^{-1}$ integral, and ranks six over $\F_3$ and $\F_5$.
For the second quotient, Lemma~\ref{lem:gluing} gives directly
\[
 \Z[t]/(t-1,t^4+1)=\Z/2\Z.
\]
These are factor-kernel quotients inside the integral homology lattices, rather than just rational factor decompositions.
\end{proof}

The ordinary cycle-square types, induced-matching cross-cycles and supersymmetric cohomology descriptions are prior work \cite{CyclePowers,SpecialCycles,HuijseSchoutens,HuijseLadders,HuijseThesis}.
The proposition records the integer translation and reflection on these two graphs, including an integral orbit generator and the gluing quotients.

\section{Two-generator lattices and a recurring summand}\label{sec:higherorbit}
A cyclic orbit need not span the full homology lattice.
The next examples require two generators and connect the cycle-square calculations to the lattice $L_3$ already occurring in Theorem~\ref{thm:main}.
The following elementary criterion extends the orbit-basis argument.

\begin{lemma}\label{lem:multiorbit}
Let $H$ be a free abelian group of rank $r$ with an automorphism $T$.
For monic polynomials $f_a$ of degrees $d_a$ with $\sum_a d_a=r$, choose $u_a\in H$ such that $f_a(T)u_a=0$.
If the concatenated columns
\[
 B=(u_1,Tu_1,\ldots,T^{d_1-1}u_1,\ u_2,\ldots,T^{d_2-1}u_2,\ldots)
\]
have determinant $\pm1$ in an integral basis of $H$, then
\[
 H\cong\bigoplus_a\Z[t]/(f_a(t)),\qquad p_a(t)\longmapsto p_a(T)u_a.
\]
Suppose additionally that $T^n=1$ and $RTR=T^{-1}$, $R^2=1$.
If $Ru_a=h_a(T)u_a$ for every $a$, then each displayed summand is $R$-stable and its reflection is $p(t)\mapsto h_a(t)p(t^{-1})$.
For a finite-order $T$, write its rational characteristic polynomial as $\prod_{d\mid n}\Phi_d^{m_d}$.
The minimum number of cyclic generators of $H\otimes\Q$ is $\max_d m_d$.
\end{lemma}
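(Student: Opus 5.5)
The plan has three parts. First, the module isomorphism follows the single-orbit argument around \eqref{eq:cyclic}. Second, the reflection statement uses the semilinearity of $R$. Third, the generator count is a statement about modules over the semisimple algebra $\Q[t]/(t^n-1)$.

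\emph{Module isomorphism.} Define $\Phi:\bigoplus_a\Z[t]/(f_a)\to H$ by $(p_a)\mapsto\sum_ap_a(T)u_a$. It is well defined because $f_a(T)u_a=0$, and it is $\Z[t]$-linear by construction. Each summand $\Z[t]/(f_a)$ is free of rank $d_a$ with monomial basis $1,t,\ldots,t^{d_a-1}$, since $f_a$ is monic. In these bases the matrix of $\Phi$ is exactly $B$. Since $\det B=\pm1$, $\Phi$ is an isomorphism of abelian groups, hence of $\Z[t]$-modules.

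\emph{Reflection.} From $RTR=T^{-1}$ we get $Rp(T)u_a=p(T^{-1})Ru_a=p(T^{-1})h_a(T)u_a$. This element lies in the $a$th summand, because $T^{-1}=T^{n-1}$ is a polynomial in $T$. Transporting through $\Phi$ gives the stated formula $p(t)\mapsto h_a(t)p(t^{-1})$. Well-definedness on $\Z[t]/(f_a)$ is automatic, since the formula is induced from the actual operator $R$ on the summand $\Phi(\Z[t]/(f_a))$.

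\emph{Generator count.} Because $T^n=1$ and $t^n-1$ is squarefree over $\Q$, the algebra $A=\Q[t]/(t^n-1)$ is a product of the fields $\Q(\zeta_d)=\Q[t]/(\Phi_d)$ for $d\mid n$. Hence $H\otimes\Q$ decomposes as $\bigoplus_d\Q(\zeta_d)^{m_d}$. The multiplicities are read off the characteristic polynomial: each copy of $\Q(\zeta_d)$ contributes exactly one factor $\Phi_d$.

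For the lower bound, let $g$ be the number of generators. Tensoring with the field $\Q(\zeta_d)$ kills the other factors, so $g$ generators of $H\otimes\Q$ give $g$ generators of the vector space $\Q(\zeta_d)^{m_d}$ over $\Q(\zeta_d)$. Thus $g\ge m_d$ for every $d$.

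For the upper bound, set $M=\max_dm_d$. The cyclic module $A$ contains one copy of each $\Q(\zeta_d)$, so $A^M$ contains $M$ copies of each, and $H\otimes\Q$ is a quotient (indeed a summand) of $A^M$. Concretely, take as the $i$th generator the element whose $d$-component is the $i$th basis vector of $\Q(\zeta_d)^{m_d}$, or zero when $i>m_d$.

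The only point needing care is the lower bound. It is there that the proof uses the decomposition of $A$ into fields, that is, the squarefree factorization of $t^n-1$ over $\Q$.
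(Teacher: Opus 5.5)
Your proposal is correct and follows the paper's proof essentially step for step. The matrix $B$ is the matrix of the evident map $\bigoplus_a\Z[t]/(f_a)\to H$; the relation $RT=T^{-1}R$ propagates each seed reflection; and squarefreeness of $t^n-1$ over $\Q$ splits $H\otimes\Q$ into $\Q(\zeta_d)$-vector spaces of dimensions $m_d$, giving the lower bound by projecting to each field and the upper bound by a Chinese-remainder combination of one basis vector per factor. One small correction: the upper bound uses this same field decomposition, so the lower bound is not the only step that relies on it.
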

\begin{proof}
The proposed module map is well-defined by the annihilator identities and sends the standard polynomial bases to $B$.
Its determinant proves that it is an isomorphism over $\Z$.
The dihedral relation extends each seed reflection to every polynomial in that seed.
For the last assertion, $t^n-1$ is square-free over $\Q$, so the rational module is a direct sum of $\Q[t]/(\Phi_d)$ vector spaces of dimensions $m_d$.
One cyclic generator contributes at most one dimension over each of these fields.
Conversely the rational Chinese remainder theorem combines one basis vector from each constituent into one generator, and repeating this $\max_d m_d$ times generates the module.
\end{proof}

Write $\mathcal P_n^\epsilon=\Z[t]/(t^n-1)$ with reflection $p(t)\mapsto t^\epsilon p(t^{-1})$.
Its underlying cyclic module is regular; the superscript specifies the reflection, with the geometric generators $\tau,\rho$ kept fixed.
For $L_3$ in the next proposition, reflection is pure inversion as in Theorem~\ref{thm:dihedral}.

\begin{proposition}\label{prop:twoorbit}
Reduced homology of each of $\K(17,2),\K(18,2),\K(19,2)$ is concentrated in degree three.
There are integral $D_n$-module identifications
\[
 \begin{array}{c|c|c}
 K&\text{ordinary homotopy type}&\Ht_3(K;\Z)\\ \hline
 \K(17,2)&\bigvee^{18}S^3&\mathcal P_{17}^0\oplus\Z\\
 \K(18,2)&\bigvee^{25}S^3&\mathcal P_{18}^1\oplus L_3\\
 \K(19,2)&\bigvee^{20}S^3&\mathcal P_{19}^0\oplus\Z .
 \end{array}
\]
The $\Z$ summands have trivial translation and reflection.
Each integral module needs exactly two cyclic generators.
In particular,
\[
 \Ht_3(\K(18,2);\Z)\cong
 \mathcal P_{18}^1\oplus\Ht_3(\K(18,4);\Z)
\]
with the same one-step translation and vertex-negation conventions on both graphs.
This is an isomorphism of homology representations, with no claimed equivariant decomposition of the complexes.
\end{proposition}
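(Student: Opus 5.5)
The plan follows the finite pipeline of Sections~\ref{sec:cw}--\ref{sec:certification} for $\K(n,2)$ with $n=17,18,19$. First fix a vertex priority order, possibly the natural one or a residue-class order as in Proposition~\ref{prop:cyclesquare}. Build the ordered-toggle matching and record the critical profile. The target is one vertex together with $18$, $25$, $20$ critical $3$-cells, respectively. Lemma~\ref{lb-acyclic} then gives the stated wedges $\bigvee^{r}S^3$ and free homology concentrated in degree three. As a cross-check, the ranks agree with the cycle-power splitting theorem of \cite{CyclePowers}, which applies since $n\geq14$. Proposition~\ref{prop:orbittrace} supplies the rational characters. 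For prime $n=17,19$, every nontrivial rotation is transitive, so Proposition~\ref{prop:face-free} with odd $d=3$ gives $\Q\oplus\Q[C_n]$, with $\beta\equiv1\pmod n$ consistent with $18$ and $20$. For $n=18$, the quotient complexes $\Ind(Q_k)$ with $d=\gcd(18,k)\in\{1,2,3,6,9,18\}$ determine the character. It should equal that of $\Q[C_{18}]\oplus\Q\oplus\Q(\zeta_9)$, so $m_1=2$ and $m_9=2$, and the other $m_d$ equal $1$. By Lemma~\ref{lem:multiorbit}, the maximum multiplicity is therefore $2$ in all three cases, which already proves that at least two generators are needed.

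For the integral structure, use the projection $P$ of \eqref{eq:projection} and the critical lifts $z_j$ of \eqref{eq:liftweight}. Check $\partial z_j=0$, $Pz_j=e_j$, and that $P$ vanishes on all $4$-simplex boundaries, so that Lemma~\ref{lem:basis} applies. Then compute the translation and reflection matrices columnwise via \eqref{eq:transport}. Next choose two seeds. Take $u_1$ with $(t^n-1)u_1=0$ and a full regular orbit, and take $u_2$ annihilated by $t-1$ for $n=17,19$, or by $(t-1)\Phi_9$ for $n=18$. Verify $\det B=\pm1$ for the concatenated Krylov columns, and then Lemma~\ref{lem:multiorbit} gives the module decomposition.

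The main obstacle is finding the seeds. For prime $n$ there is a canonical candidate for $u_2$ that I would try first: a generator of the $T$-fixed line, for instance the norm of a well-chosen cycle divided by its content. The remaining difficulty is to make the complement a free regular orbit. Integrally, $\Z\oplus\Z[C_n]$ differs from the glued lattice $\Z[C_n]\oplus_{\text{glued}}\Z$. The decisive check is therefore whether $\mathcal G_P$ vanishes in the appropriate sense, which Theorem~\ref{thm:fixedgluing} controls. For $n=17$ and $n=19$ the action is free, so $F=\varnothing$ and $b_0=1$. This gives $g=(h-1)/n=1$, the value for $\Z[C_n]\oplus\Z$, so the fixed-point data is at least consistent with the split answer.

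For $n=18$, I would search for $u_1$ among cross-cycles of induced matchings, as in Proposition~\ref{prop:composite}, whose orbits have rank $18$. I would take $u_2$ from the kernel of $(t-1)\Phi_9(T)$, found by integer elimination, and test unimodularity. I would also use the fixed spaces of $T^9$ and $T^6$ from Proposition~\ref{prop:orbit-fixed} to predict the gluing indices before searching.

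Finally, compute $\rho_*u_a=h_a(T)u_a$. Then normalize by monomial shifts, using the rule that $t^a$ changes the exponent twist by $-2a$, as in Theorem~\ref{thm:cyclepoweraction}. The regular summand should come out as $\mathcal P_n^0$ for odd $n$. For $n=18$ the twist parity is invariant under such shifts, and it should land on $\mathcal P_{18}^1$. The $\Z$ and $L_3$ seeds should have $\rho_*=+1$ and pure inversion. The comparison with $\Ht_3(\K(18,4);\Z)$ then follows from Proposition~\ref{prop:cyclic} and Theorem~\ref{thm:dihedral}.
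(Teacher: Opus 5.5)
Your plan is the paper's proof: an ordered matching with one critical vertex and $18,25,20$ critical $3$-faces (natural priority for $n=17,18$, a listed non-natural priority for $n=19$), and critical lifts made into an integral basis by Lemma~\ref{lem:basis}. It continues with two seeds whose concatenated orbit columns are unimodular with the stated annihilators, monomial shifts normalizing the reflection multipliers, and the rational multiplicities $\Phi_1^2\Phi_{17}$, $\Phi_1^2\Phi_2\Phi_3\Phi_6\Phi_9^2\Phi_{18}$, $\Phi_1^2\Phi_{19}$ for minimality. The paper settles what you call the main obstacle by exhibiting the seeds ($u=[z_1]$ in each case, $v=[z_1+z_3+z_8]$ for $n=18$, and explicit integer coordinate vectors for $n=17,19$). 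Two corrections to your heuristics: for prime $n$ the $T$-fixed lattice has rank two (it contains $\sum_j T^ju_1$), so there is no canonical fixed line from which to pick $u_2$; and $\rho_*$ preserving each orbit block is a condition you must check, as the paper does, since the cyclic decomposition does not guarantee it.
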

\begin{proof}
Use the ordinary nonempty-face matching of Section~\ref{sec:cw} and the projection/lift conventions of Section~\ref{sec:certification}.
For $n=17,18$ the vertex priority is $0,1,\ldots,n-1$; for $n=19$ it is
\[
 2,3,9,0,6,5,8,1,14,7,13,4,10,17,18,11,12,16,15.
\]
The complete face counts, including the empty face, are $664,973,1426$.
The matchings have respectively $322,473,702$ pairs, one critical vertex, and $18,25,20$ critical three-faces.
The critical vertices are $0,0,2$ respectively.
Directed Hasse acyclicity is checked on every one of the $2176,3384,5244$ nonempty incidences.
Forman's theorem gives the displayed ordinary wedges; these ordinary types are also covered by the prior cycle-power classification \cite{CyclePowers}.

Order the critical three-faces lexicographically, and let $z_1,\ldots,z_r$ be their lifts from \eqref{eq:projection}--\eqref{eq:liftweight}.
The critical faces are the unmatched faces of the fully specified matching, so this also fixes the indexing without an external data file.
The projection is defined on all $238,378,570$ three-faces and kills all $51,126,266$ incoming four-face boundaries.
Every lift has zero boundary and projects to its standard basis vector; Lemma~\ref{lem:basis} makes their classes an integral basis.

In each case put $u=[z_1]$.
For $n=18$, put $v=[z_1+z_3+z_8]$.
For $n=17,19$, put $v=[\sum_j b_jz_j]$ with the following coordinate vectors:
\[
\begin{split}
 b_{17}={}&(3,-3,3,-3,3,-3,3,-3,3,-3,\\
           &\hspace{12mm}-2,3,1,-3,1,3,-2,1)^{\mathsf T},\\
 b_{19}={}&(0,0,-1,-1,1,-1,1,1,0,0,\\
           &\hspace{12mm}1,-1,0,-1,3,-2,-1,0,0,1)^{\mathsf T}.
\end{split}
\]
Let $B$ concatenate the $n$ successive translates of $u$ with the first $d$ translates of $v$.
The signed geometric actions satisfy
\[
\begin{array}{c|c|c|c|c|c}
 n&d&\det B&f_v(t)&\rho_*u&\rho_*v\\ \hline
 17&1&1&t-1&t^4u&v\\
 18&7&-1&(t-1)\Phi_9(t)&t^5u&t^4v\\
 19&1&1&t-1&t^{16}u&v .
\end{array}
\]
Here $(t^n-1)u=0$ and $f_v(t)v=0$.
These are full integer identities in the lifted-cycle coordinates: the checker derives every column of both actions from the signed vertex maps, rather than merely testing abstract group relations.
The concatenated orbit columns and their determinants give Lemma~\ref{lem:multiorbit}.
The reflection columns lie in their respective blocks.
Replacing $u$ by $t^2u,t^2u,t^8u$ for $n=17,18,19$ respectively changes its reflection multiplier to $1,t,1$.
Replacing $v$ by $t^2v$ for $n=18$ changes its multiplier to $1$.
For a seed with $Ru=T^k u$, the relation $RT=T^{-1}R$ gives $R(T^a u)=T^{k-2a}(T^a u)$.
These shifts are integral automorphisms, giving the stated dihedral formulas.
The second summand for $n=18$ is therefore the same $L_3$ with inversion already identified on $\K(18,4)$.

Finally the three rational factorizations have multiplicities
\[
 \Phi_1^2\Phi_{17},\qquad
 \Phi_1^2\Phi_2\Phi_3\Phi_6\Phi_9^2\Phi_{18},\qquad
 \Phi_1^2\Phi_{19}.
\]
The last part of Lemma~\ref{lem:multiorbit} excludes a single cyclic generator.
The displayed integer orbit bases supply two, proving minimality.
Complete signed Lefschetz checks independently recover every rotation and reflection trace.
\end{proof}

The proposition locates the earlier glued rank-seven action as a direct summand of a larger geometric action.
Splitting off a regular summand does not split its cyclotomic factors over $\Z$, and the $L_3$ summand retains its index-three factor-kernel quotient.
Thus regular summands, cyclic glued lattices and their geometric reflections belong to the same reconstruction framework.
The ordinary cycle-square types, cross-cycle constructions, ground-state counts and momentum interpretations have substantial precedents \cite{CyclePowers,SpecialCycles,HuijseSchoutens,HuijseLadders,HuijseThesis,HuijseCritical,ChepigaLadder}.

\section{Integer saturation of a classical cross-cycle family}\label{sec:tiling}
The orbit-basis criterion requires an integer basis.
A family of classical cross-cycles can span the full rational homology while missing a primitive integer class.
The next fixed example makes that distinction explicit.
The cross-cycle construction itself is established \cite[Section~1.2 and Proposition~3.1]{JonssonCross}; the additional assertion concerns the integral embedding of the specified family.

For an ordered list of five distinct vertices $a=(a_1,\ldots,a_5)$, write
\[
 c(a)=\bigwedge_{j=1}^5\bigl([a_j]-[a_j+1]\bigr),
\]
where the wedge denotes the oriented join of chains, using the written factor order and increasing simplex orientations after expansion; vertices are taken modulo $22$.
If the circular gaps between successive $a_j$ are $4$ or $5$, the five edges $\{a_j,a_j+1\}$ form an induced matching in $G(22,2)$ and $a$ is a maximal independent set.
Thus these are cross-cycles in the precise sense of the cited construction.
Put $N(t)=1+t+\cdots+t^{21}$, and let $T=\tau_*$, $R=\rho_*$ on $H=\Ht_4(\K(22,2);\Z)$.

\begin{proposition}\label{prop:tilingindex}
The ordinary type is $\K(22,2)\simeq\bigvee^{43}S^4$.
Let
\[
 u=[c(0,4,8,12,17)],\qquad v=[c(0,4,8,13,17)],\qquad
 L=\Z[t]u+\Z[t]v\subset H.
\]
Their forty-four translates comprise the adjacent-pair cross-cycle family with five circular gaps in $\{4,5\}$.
They span $H\otimes\Q$, but
\[
 H/L\cong\Z/2\Z,
\]
with trivial induced $D_{22}$ action.
Their unique primitive integer relation is
\begin{equation}\label{eq:tilingrelation}
 N(T)u+2N(T)v=0.
\end{equation}
The geometric reflections satisfy $Ru=-T^9u$ and $Rv=-T^{13}v$.
For $u'=T^4u$ and $v'=T^6v$, both reflections have multiplier $-t$.
Consequently, as an integral dihedral module,
\begin{equation}\label{eq:tilingsubgroup}
 L\cong\Z[t]/(N(t))\ \oplus\ \Z[t]/(t^{22}-1).
\end{equation}
The two factors have generators $u'+2v'$ and $v'$, respectively; reflection on both is $p(t)\mapsto-tp(t^{-1})$.
Let $z$ be the critical-cell lift specified below. Returning to the original seeds $u,v$, put $w=u+v$ and $e=z-3w$.
These give an isomorphism of integral cyclic modules
\begin{equation}\label{eq:tilingambient}
 H\cong A\oplus P,\qquad A=\Z[t]/(N(t)),\quad P=\Z[t]/(t^{22}-1),
\end{equation}
with generators $e,w$, respectively.
In these coordinates the specified subgroup has the precise position
\begin{equation}\label{eq:tilingparity}
 L=\ker(\epsilon_2:A\longrightarrow\Z/2\Z)\oplus P,
 \qquad \epsilon_2\Bigl(\sum_i a_it^i\Bigr)=\sum_i a_i\pmod2.
\end{equation}
The minimum number of cyclic generators of $H$ is exactly two.
Equation~\eqref{eq:tilingambient} specifies cyclic coordinates; Proposition~\ref{prop:tilingdihedral} gives compatible ambient dihedral coordinates.
\end{proposition}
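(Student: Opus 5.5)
The plan follows the finite reconstruction pattern of Sections~\ref{sec:cw}--\ref{sec:certification}, with extra bookkeeping for the specified subgroup. \emph{First}, we establish the ordinary type. We enumerate the faces of $\K(22,2)$ by the recursion in the proof of Proposition~\ref{prop:finite}, run the ordered-toggle matching, and check that the critical profile is one vertex and $43$ four-cells. Lemma~\ref{lb-acyclic} then gives $\bigvee^{43}S^4$, so $H$ is free of rank $43$ with no other reduced homology. Because $\Ind(G(22,2))$ has no five-dimensional critical cells, the projection $P$ of \eqref{eq:projection} kills all $5$-boundaries. Lemma~\ref{lem:basis} then turns the lifts $z_1,\ldots,z_{43}$ into integral coordinates on $H$.

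\emph{Second}, we compute everything in these coordinates. We expand $c(a)$ for each gap word $a$ (each has $32$ terms), verify that $\partial c(a)=0$ directly from the induced-matching property, and read off $P(T^ju)$, $P(T^jv)$ and the reflected chains $P(\rho c(a))$. Orbit counting shows that the gap words with five gaps in $\{4,5\}$ summing to $22$ have exactly three $4$'s and two $5$'s, which leaves only the two cyclic patterns $44455$ and $44545$ up to rotation. This is why $u$ and $v$ account for all $44$ translates.

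\emph{Third}, we handle the lattice arithmetic. The $43\times44$ matrix of translates will have rank $43$, and its kernel is spanned by the vector corresponding to \eqref{eq:tilingrelation}. The Smith form of the matrix yields $H/L\cong\Z/2$. Chain-level identities give $Ru=-T^9u$ and $Rv=-T^{13}v$, and the shift rule $R(T^au)=T^{k-2a}(T^au)$ used in Proposition~\ref{prop:twoorbit} then produces the common multiplier $-t$ for $u'$ and $v'$. For \eqref{eq:tilingsubgroup} we apply Lemma~\ref{lem:multiorbit} to $L$ itself with seeds $u'+2v'$ and $v'$. The relation, rewritten for the shifted seeds, shows that $N(t)$ annihilates $u'+2v'$, and the concatenated orbit basis is unimodular in the basis of $L$.

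\emph{Fourth}, we locate $L$ inside $H$. We exhibit the specific lift $z$ and check that the $43$ columns $e,\ldots,t^{20}e,w,\ldots,t^{21}w$ have determinant $\pm1$ in the $z_j$ coordinates, with $N(T)e=0$. Lemma~\ref{lem:multiorbit} then gives \eqref{eq:tilingambient}. We next express $u=w-v$ and $v$ in these coordinates. Since $2N(T)v=-N(T)u$, the class $N(T)w$ lies in $P$, and one checks that $u$ and $v$ correspond to elements of $A\oplus P$ whose $A$-components generate $\ker\epsilon_2$. Because $L$ has index two, it suffices to show that $L\subseteq\ker\epsilon_2\oplus P$ and that $L$ contains $P$; comparing indices then gives \eqref{eq:tilingparity}. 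The trivial $D_{22}$ action on $H/L\cong\Z/2$ is automatic, since $\mathrm{Aut}(\Z/2)$ is trivial. Minimality of two generators follows from the last part of Lemma~\ref{lem:multiorbit}, because $\Phi_1$ (and every $\Phi_d$ with $d\mid22$, $d\ne1$) occurs with multiplicity two.

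The main obstacle is the fourth step. Finding a lift $z$ for which $e=z-3w$ is a clean cyclic generator of a complement with $N(T)e=0$ is a search over critical-cell lifts. I would first test the lifts $z_j$ one at a time, computing the $A$-component of $z_j-3w$ modulo the relation and checking the Krylov determinant, before resorting to integer combinations.
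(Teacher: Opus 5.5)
Your overall architecture matches the paper's proof. You use integral lift coordinates from a Morse matching and the $43\times 44$ orbit matrix; the paper reads the index off the maximal minors $\pm2,\pm4$ rather than a full Smith form, which is equivalent. You then use chain-level reflections with the shift rule, a Krylov determinant for $e,w$, and an index argument for the position of $L$.

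The gap is the step you flag yourself, and it starts in your first step. The paper does \emph{not} use the vertex-outer ordered-toggle matching for this graph. It lists all faces, including $\varnothing$, by cardinality and then lexicographically. Each available face $\sigma$ is paired with the first available $\sigma\cup\{x\}$ found by scanning $x=21,20,\ldots,0$, and the pair $(\varnothing,\{21\})$ is then discarded. The critical vertex is therefore $21$. Acyclicity is checked directly on all $19162$ Hasse incidences, because Lemma~\ref{lb-acyclic} does not cover this face-by-face rule.

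The $z$ of the statement is the lift $z_{24}$ of the critical face $\{1,6,10,14,18\}$ for that matching. The coefficient $3$ in $e=z-3w$ is not a free choice. It is read off from the primitive relation $N(T)z-3N(T)w=0$ among the $44$ orbit columns of $z,w$; deleting the column $z$ then leaves a basis of determinant $-1$.

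Your plan has two unsupported steps:
\begin{itemize}
\item It presupposes that the natural vertex-outer matching is perfect, with one vertex and $43$ four-cells. You give no argument for this, and the paper deliberately replaces that matching here.
\item It hopes that some single lift $z_j$ has exactly the coefficient $3$ and an $A$-component that is a unit of $\Z[t]/(N(t))$. Neither is forced. The condition $z\notin L$ only makes the $A$-component have odd augmentation; the constant $3$ has odd augmentation but is not a unit.
\end{itemize}
Your fallback of taking an integer combination of lifts does not produce a critical-cell lift. As written, the fourth step is a search with no guaranteed outcome. To prove the statement as given, fix the paper's face-outer matching and verify the determinants for its $z_{24}$.

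Two smaller corrections:
\begin{itemize}
\item In the minimality argument, $\Phi_1$ occurs only once in $(t^{22}-1)N(t)=\Phi_1\Phi_2^2\Phi_{11}^2\Phi_{22}^2$. The lower bound comes from $\Phi_2,\Phi_{11},\Phi_{22}$, or more simply from the fact that one orbit spans rank at most $22<43$.
\item You do not need the $A$-components of $u$ and $v$. Your remark that $N(T)w$ lies in $P$ is vacuous, since $w$ generates $P$. Because $w=u+v\in L$, one has $L=(L\cap A)\oplus P$ with $[A:L\cap A]=2$. Moreover $\ker\epsilon_2$ is the only index-two $\Z[t]$-submodule of $A$: $t$ must act trivially on $\Z/2$, and $N(1)=22$ is even. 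This is how the paper obtains \eqref{eq:tilingparity}.
\end{itemize}
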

\begin{proof}
For this example use the following face-outer matching, rather than the vertex-outer rule of Section~\ref{sec:cw}.
List all faces, including the empty face, by cardinality and lexicographic order.
For each available face $\sigma$, inspect vertices $21,20,\ldots,0$ and pair it with the first available face $\sigma\cup\{x\}$, if one exists with $x\notin\sigma$; mark both unavailable.
Remove the first pair $(\varnothing,\{21\})$ to pass to ordinary chains.
There are $4489$ faces including the empty face, $2222$ ordinary pairs, one critical vertex $21$ and forty-three critical four-faces.
Acyclicity is checked on all $19162$ nonempty Hasse incidences.
Forman's theorem gives the ordinary wedge, already covered by the cycle-power classification \cite[Theorem~1.1, specialized to $r=2$]{CyclePowers}.

Use the projection and lift rules of Section~\ref{sec:certification}, with these matched incidences.
Order the critical four-faces lexicographically and write their lifts as $z_1,\ldots,z_{43}$.
The projection is defined on all $1452$ four-faces and kills every one of the $462$ five-face boundaries.
Every lift has zero boundary and standard projection, giving an integral basis by Lemma~\ref{lem:basis}.
In this order $z_{24}$ lifts the critical face $\{1,6,10,14,18\}$.
Set $z=[z_{24}]$.

Let $M$ have columns
\[
 u,Tu,\ldots,T^{21}u,\quad v,Tv,\ldots,T^{21}v
\]
in the integral lift basis.
The chains defining the two seeds have thirty-two terms each; their boundaries vanish.
Applying the signed vertex translations and projection gives all columns of $M$.
These integer calculations give \eqref{eq:tilingrelation} and the maximal minors
\begin{equation}\label{eq:tilingminors}
 \det M_{\widehat j}=(-1)^j
 \begin{cases}2,&0\le j\le21,\\4,&22\le j\le43,\end{cases}
\end{equation}
where the columns are numbered from zero and the other columns retain their written order.
The nonzero minors prove rational rank forty-three.
The displayed primitive relation therefore generates the entire integer relation group, and the greatest common divisor of these minors is the subgroup index, namely two.
There are exactly two circular gap patterns up to translation of a gap word: the two gaps of length five are adjacent or separated.
They give the specified seed sets and their translates; the complete family has forty-four sets.
This identifies the family being measured without asserting completeness of all cross-cycles of this complex.

Moreover the matrix with ordered columns
\begin{equation}\label{eq:tilingsaturation}
 [z_{24}],\ T^2u,T^3u,\ldots,T^{21}u,\quad v,Tv,\ldots,T^{21}v
\end{equation}
has determinant $1$ in the same lift basis.
It supplies the explicit missing primitive class and proves that $u,v,[z_{24}]$ generate $H$ as a $\Z[t]$-module.
We now describe the dihedral action on the index-two subgroup $L$.
Signed vertex negation on both seed chains gives $Ru=-T^9u$ and $Rv=-T^{13}v$.
Thus $L$ is $D_{22}$-stable; the quotient of order two has trivial action.
The shifts give multiplier $-t$ by $R(T^a u)=-T^{9-2a}(T^a u)$ and the analogous formula for $v$.
For the shifted seeds $u',v'$, relation \eqref{eq:tilingrelation} is unchanged, and the unimodular generator change $(u',v')\mapsto(u'+2v',v')$ puts the relation into $N(T)(u'+2v')=0$.
Its primitive relation group gives exactly \eqref{eq:tilingsubgroup}, including the reflection on both factors.

To identify the ambient cyclic module, use the two seeds $z,w=u+v$.
Their forty-four orbit columns have the unique primitive relation
\[
 N(T)z-3N(T)w=0.
\]
Deleting the initial column $z$ leaves the ordered basis
\[
 Tz,T^2z,\ldots,T^{21}z,\quad w,Tw,\ldots,T^{21}w,
\]
whose determinant is $-1$ in the fixed lift basis.
With $e=z-3w$, the relation becomes $N(T)e=0$.
The normalized columns
\begin{equation}\label{eq:tilingambientbasis}
 e,Te,\ldots,T^{20}e,\quad w,Tw,\ldots,T^{21}w
\end{equation}
have determinant $1$.
Both determinants and all orbit columns are checked from the lifted chains and signed vertex translation.
Since $T^{22}=1$, these columns and their annihilators prove \eqref{eq:tilingambient} by the concatenated orbit-basis criterion of Section~\ref{sec:higherorbit}.
One orbit has rank at most twenty-two, whereas $H$ has rank forty-three; the two displayed seeds therefore attain the minimum.

The full summand $P=\Z[t]w$ lies in $L$.
Thus $L=(L\cap A)\oplus P$ and $A/(L\cap A)\cong\Z/2\Z$.
The class of $e$ in this quotient is nonzero: $e\equiv z\pmod L$, and the determinant in \eqref{eq:tilingsaturation} shows that $z$ supplies the missing class.
The quotient action is trivial, so its cyclic module map is determined by $e\mapsto1$ and $t\mapsto1$.
It is well-defined because $N(1)=22$ is even, and is precisely $\epsilon_2$.
This proves \eqref{eq:tilingparity}.

The characteristic polynomial of $T$ on $H$ is
\[
 (t^{22}-1)N(t)=\Phi_1(t)\Phi_2(t)^2\Phi_{11}(t)^2\Phi_{22}(t)^2.
\]
The repeated rational factors also give the same lower bound by Lemma~\ref{lem:multiorbit}.
Every rotation trace is $-1$ except the identity trace $43$; the traces of $RT^k$ are $1$ for even $k$ and $-3$ for odd $k$.
All these traces are also checked by the signed simplicial Lefschetz sums.
\end{proof}

\begin{lemma}\label{lem:oddhalfsaturation}
Let $n=2m$ with $m$ a positive odd integer, and write
\[
 N_n(t)=1+t+\cdots+t^{n-1},\quad A_n=\Z[t]/(N_n(t)),
 \qquad B_n(t)=t+t^3+\cdots+t^{n-1}.
\]
Give $A_n$ the rotation $T(p)=tp$ and reflection $R(p)=-tp(t^{-1})$.
Inside $A_n\otimes\Q$, put $g=B_n/2$ and $A_n^+=A_n+\Z g$.
Then $A_n^+$ is a $D_n$-stable index-two extension, and multiplication by
$h=t+t^{-1}$ is a $D_n$-module isomorphism $A_n^+\to A_n$ sending
the original submodule $A_n$ onto $\ker\epsilon_2$, where
$\epsilon_2(p)=p(1)\pmod2$.
\end{lemma}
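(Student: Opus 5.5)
The plan is to verify the three assertions directly inside $A_n\otimes\Q$: the index-two and stability claims for $A_n^+$, the equivariance and injectivity of $h$, and the image computation. The image computation will rest on a determinant (resultant) calculation for multiplication by $h$. Throughout I use that $N_n\mid t^n-1$, so $t^n=1$ and $t$ is a unit in $A_n$.

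\emph{Step 1: $A_n^+$ is an index-two, $D_n$-stable extension.}
Since $t^n=1$, the shift $tB_n=t^2+t^4+\cdots+t^n$ is the sum of all even powers. Hence $tB_n=N_n-B_n=-B_n$ in $A_n$, and therefore $Tg=-g$. Inversion maps the odd exponents $\{1,3,\ldots,n-1\}$ to themselves modulo $n$, so $B_n(t^{-1})=B_n(t)$. This gives $Rg=-tg=g$. Thus $A_n^+$ is $T$- and $R$-stable, and $2g=B_n\in A_n$.
To see that $g\notin A_n$, reduce $B_n$ in the basis $1,t,\ldots,t^{n-2}$ using $t^{n-1}=-(1+\cdots+t^{n-2})$. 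This gives $B_n=-(1+t^2+\cdots+t^{n-2})$. Its coefficients are $\pm1$, so half of it is not integral. Hence $[A_n^+:A_n]=2$.

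\emph{Step 2: $h$ is an injective $D_n$-map with $hA_n^+\subseteq A_n$.}
Multiplication by $h$ commutes with $T$. Since $h(t^{-1})=h(t)$,
\[
R(hp)=-t\,h(t^{-1})p(t^{-1})=h\,R(p).
\]
Rational invertibility follows from $h=t^{-1}(t^2+1)$. The eigenvalues of $t$ are the nontrivial $n$th roots of unity, and none of them satisfies $\zeta^2=-1$, because $4\nmid n=2m$ with $m$ odd.
For the image of $g$, note that $t^{-1}B_n$ is again the sum of all even powers. Hence $t^{-1}B_n=-B_n$, so $hg=-B_n\in A_n$. Together with $hA_n\subseteq A_n$ this gives $hA_n^+\subseteq A_n$.

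\emph{Step 3: index count.}
I expect this to be the main point. First, $hA_n\subseteq\ker\epsilon_2$, since $\epsilon_2(hp)=h(1)p(1)=2p(1)\equiv0$. Next, $|\det h|$ on the free lattice $A_n$ equals the absolute norm
\[
\prod_{\zeta^n=1,\ \zeta\ne1}\bigl|1+\zeta^2\bigr|,
\]
where the unit $t^{-1}$ contributes only a sign. As $\zeta$ runs over all $n$th roots of unity, $\zeta^2$ runs twice over the $m$th roots $\mu$. The identity $\prod_\mu(x-\mu)=x^m-1$ at $x=-1$ gives $\prod_\mu(1+\mu)=2$ because $m$ is odd. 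Hence the product over all $\zeta$ is $4$, and removing the factor $1+1=2$ at $\zeta=1$ leaves $|\det h|=2$.
Thus $[A_n:hA_n]=2=[A_n:\ker\epsilon_2]$, which forces $hA_n=\ker\epsilon_2$. Finally, $h$ is injective and $[A_n^+:A_n]=2$, so
\[
[A_n:hA_n^+]=[A_n:hA_n]\,/\,[hA_n^+:hA_n]=2/2=1.
\]
That is, $hA_n^+=A_n$, completing the isomorphism. The only delicate points are the sign and parity bookkeeping in the norm product and in the even/odd power sums, which is exactly where $m$ odd is used.
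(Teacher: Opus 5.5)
Your proof is correct. Steps 1 and 2 agree with the paper, except that the paper checks nonintegrality of $g$ by a degree argument modulo two rather than by your explicit reduction $B_n=-(1+t^2+\cdots+t^{n-2})$; that difference is cosmetic.

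The real difference is in Step 3. The paper proves surjectivity $hA_n^+=A_n$ first, and directly:
\begin{itemize}
\item Since $tB_n=-B_n$, one has $A_nB_n=\Z B_n$; together with $hg=-B_n$ this shows the image is the ideal $(h,B_n)$.
\item The polynomial identity $B_n=tE_n$, with $E_n=1+t^2+\cdots+t^{n-2}$, and the congruence $E_n\equiv1\pmod{1+t^2}$ (an alternating sum of $m$ terms) give $B_n\equiv t$ modulo $1+t^2=th$.
\item Hence the ideal contains the unit $t$ and is all of $A_n$.
\item Only afterwards does $hA_n=\ker\epsilon_2$ follow, by index counting.
\end{itemize}
You reverse the order. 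You compute $[A_n:hA_n]=|\det h|=2$ as a norm product over the nontrivial $n$th roots of unity, obtain $hA_n=\ker\epsilon_2$ first, and then deduce $hA_n^+=A_n$ from $[A_n^+:A_n]=2$ and injectivity.

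What each route buys:
\begin{itemize}
\item The paper's argument stays inside $\Z[t]$ and avoids complex roots and determinants. It also yields an explicit preimage of the unit $t$, namely $-g-tQ$ where $B_n=t+(1+t^2)Q$.
\item Your argument is more mechanical and portable, since the same norm computation gives the index of any multiplier on such a cyclotomic-type lattice.
\item Your argument also isolates where $m$ odd is used: through $\prod_{\mu^m=1}(1+\mu)=2$, which would vanish for even $m$. The paper uses it instead through the alternating sum equalling $1$.
\end{itemize}
Both proofs rely on $4\nmid n$ for rational injectivity of $h$.
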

\begin{proof}
The even-power sum $E_n$ satisfies $E_n+B_n=N_n=0$ in $A_n$ and
$tB_n=E_n=-B_n$.
Inversion permutes the odd residues modulo $n$, so $Tg=-g$ and $Rg=g$.
The class $g$ is nonintegral: modulo two, the degree-$n-1$ polynomial
$B_n$ is neither zero nor the monic polynomial $N_n$ of that degree,
and hence is not a multiple of $N_n$.
Thus the extension has index two and is stable under both generators.
The multiplier $h$ is invariant under inversion and therefore intertwines $R$.
Since $hg=-B_n$ and $A_nB_n=\Z B_n$, its image is the ideal $(h,B_n)$.
Modulo $f=1+t^2$, the even-power sum is an alternating sum of $m$ terms,
equal to $1$; consequently $B_n=t+fQ$ for some $Q\in\Z[t]$.
As $t$ is a unit and $h=t^{-1}f$, this ideal is all of $A_n$.
Neither $i$ nor $-i$ is an $n$th root of unity because $4\nmid n$;
hence $h$ is nonzero on every root of $N_n$ and is injective over $\Q$.
It is therefore an isomorphism on the indicated integer lattices.
Its image $hA_n$ has index two and lies in $\ker\epsilon_2$, since $h(1)=2$.
The latter kernel also has index two, proving equality.
\end{proof}

\begin{proposition}\label{prop:tilingdihedral}
For the modules $H,L$ of Proposition~\ref{prop:tilingindex}, there is an
isomorphism of integral $D_{22}$ modules with specified submodule
\begin{equation}\label{eq:tilingdihedralpair}
 (H,L)\cong(A\oplus P,\ker\epsilon_2\oplus P),
 \qquad R(p(t))=-tp(t^{-1})\text{ on both factors}.
\end{equation}
\end{proposition}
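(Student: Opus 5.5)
The plan is to start from the cyclic coordinates of Proposition~\ref{prop:tilingindex} and correct them so that the reflection becomes the required one. There we have $H=\Z[t]e\oplus\Z[t]w$ with $A=\Z[t]e\cong\Z[t]/(N)$ and $P=\Z[t]w\cong\Z[t]/(t^{22}-1)$. We also know $L=\ker\epsilon_2\oplus P$, where $L$ is generated by the seeds $u',v'$, and $R$ acts on $L$ by $-tp(t^{-1})$ on both factors of \eqref{eq:tilingsubgroup}.

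The reflection need not preserve the cyclic splitting $A\oplus P$ of $H$, so I would not compute $R$ on $e$ directly. Instead I will build $H$ from $L$. From \eqref{eq:tilingsubgroup} we have $L\cong A'\oplus P'$ as dihedral modules, with $A'=\Z[t](u'+2v')\cong A_{22}$ and $P'=\Z[t]v'\cong\mathcal P_{22}$, both carrying reflection $-tp(t^{-1})$. Since $H/L\cong\Z/2$ with trivial action, $H=L+\Z z$ for any lift $z$. The lattice $2H$ lies in $L$, so $H$ is determined inside $L\otimes\Q$ by one class $g=z/2$ modulo $L$.

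The key step is to compare $H$ with the extension of Lemma~\ref{lem:oddhalfsaturation}, applied with $n=22$ and $m=11$. Recall that $\epsilon_2$ vanishes on $h\,$-images. I will show that the half-class may be chosen in the $A'$ factor alone, of the form $g'+p$ with $g'=B_{22}(u'+2v')/2$ and $p\in\frac12 P'$. The first task is to pin down the class of $z$ modulo $L$ rationally. For this I would use the relation $N(T)e=0$ together with the explicit identities $e=z-3w$, $w=u+v$ and $u'=T^4u$, $v'=T^6v$, and express $2z$ in the basis $u'+2v',v'$.

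The fact that $(1-T)$ and $(1+R)$ act as zero on $H/L$ constrains $g$. It forces $g$ to be fixed by $T$ up to sign and by $R$, modulo $L$. In $\frac12A'/A'$ the only such nonzero classes are $\frac12 N$, which is zero in $A'$, and $\frac12 B$. In $\frac12P'/P'$ they are $\frac12 N_{P}$ and $\frac12 B$-type sums.

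I expect the main obstacle to be ruling out, or absorbing, a $P'$-component of $g$. If $g=\frac12 B(u'+2v')+\frac12 q(t)v'$ with $q$ a norm or odd-power sum, I would apply a dihedral automorphism of $L$ of the form $(a,b)\mapsto(a,b+c\,a)$ or $(a,b)\mapsto(a+c\,b,b)$ with $c$ $R$-invariant, to move the half-class into $A'$. The first thing I would try is to compute $2z$ explicitly from \eqref{eq:tilingsaturation} and the relation $N(T)z=3N(T)w$ to find $q$.

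Once $H=A'^+\oplus P'$, Lemma~\ref{lem:oddhalfsaturation} gives a dihedral isomorphism $A'^+\to A_{22}$, namely multiplication by $h=t+t^{-1}$, sending $A'$ onto $\ker\epsilon_2$. This yields \eqref{eq:tilingdihedralpair}.
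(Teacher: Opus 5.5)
Your reduction is the paper's: with $a=u'+2v'$, $b=v'$ and $B=B_{22}$, write $H=L+\Z z$ with $2H\subset L$. Then show $z\equiv\tfrac12B(T)a\pmod L$, so that $H=A^+a\oplus Pb$, and finish with Lemma~\ref{lem:oddhalfsaturation} at $m=11$. The gap is that this middle step, which carries all of the content, is never established, and the inputs you name cannot establish it as proposed.

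Because $H/L$ has order two, it is a $D_{22}$-fixed vector of $L/2L$. One computes $(L/2L)^{D_{22}}=\F_2\overline{B(T)a}\oplus\F_2\overline{N(T)b}$, so there are three candidate overlattices. Your list is wrong in the regular factor: $\tfrac12B(T)b$ is not a candidate, since $TB-B=E-B\equiv N\not\equiv0$ modulo $2P$, where $E=tB$ is the even-power sum. Neither of your inputs lets you compute $2z$. The relation $N(T)z=3N(T)w$ reads $N(T)z=-3N(T)b$ and is satisfied by suitable representatives in all three candidates. The unimodularity in \eqref{eq:tilingsaturation} does not separate the mixed candidate from $\tfrac12B(T)a$.

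More seriously, your plan assumes the first-factor component is $\tfrac12B(T)a$ and only worries about an additional regular component. The pure-norm lattice $L+\Z\tfrac12N(T)b$ is not covered, and no automorphism of $L$ can move it. Equivariant endomorphisms commute with $N(T)$, and $N(T)L=\Z N(T)b$ because $N(T)$ kills the first factor. Hence $\overline{N(T)b}$ is fixed by all of $\operatorname{Aut}_{D_{22}}(L)$. Your shape $b\mapsto b+ca$ also never changes the regular component of a class.

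The paper closes this step by computation. The lift projection gives the rational identity \eqref{eq:tilinghalfsum}. Since $T^{-4}B(T)=B(T)$ on the $22$-periodic modules and $B(T)v'$ is integral, one gets $\tfrac12B(T)u\equiv\tfrac12B(T)a$, hence $z\equiv\tfrac12B(T)a\pmod L$.

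Your symmetry route can be finished without that identity, but it needs two facts you did not supply.
\begin{itemize}
\item The pure-norm lattice is excluded. It contains $\tfrac12N(T)b\in H^T\setminus N(T)H$, whereas \eqref{eq:tilingambient} gives $H^T=N(T)H=\Z N(T)w$. Equivalently, the non-$z$ columns of \eqref{eq:tilingsaturation} span a lattice containing $N(T)b$ but not $\tfrac12N(T)b$. The quotient of $H$ by that span would then have $2$-torsion, so that matrix could not be unimodular.
\item The mixed class $\tfrac12(B(T)a+N(T)b)$ is carried to $\tfrac12B(T)a+B(T)b$ by the automorphism $a\mapsto a+(1-t^{11})b$, $b\mapsto b$. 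This is well defined because $N(t)(1-t^{11})=0$ in $\Z[t]/(t^{22}-1)$. It commutes with $R$ because $1-t^{11}$ is inversion-invariant there. Finally, $B(1-t^{11})+N=B-E+N=2B$.
\end{itemize}
With either repair, your concluding application of Lemma~\ref{lem:oddhalfsaturation} is valid.
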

\begin{proof}
Use $B=B_{22}$ and the shifted seeds $a=u'+2v'$, $b=v'$ from
Proposition~\ref{prop:tilingindex}, so $L=Aa\oplus Pb$ with the indicated reflections.
The lift projection in the ordered basis
$(Tu,\ldots,T^{21}u,v,\ldots,T^{21}v)$ gives
\begin{equation}\label{eq:tilinghalfsum}
 z=\tfrac12 B(T)u+U(T)u+V(T)v,
\end{equation}
where
\[
 U=t^2+t^3+t^4+t^6+t^{10}+t^{12}+t^{18}+t^{20},
 \qquad V=N+t^3+t^6.
\]
This is an equality in rational homology, obtained using the same signed
chain projection and integer lift basis as in the preceding proposition.
In particular $z-B(T)u/2\in L$.
The even shift satisfies $T^4B(T)=B(T)$ on the $22$-periodic modules, and
$B(T)(2v')/2$ is integral; thus $B(T)a/2\equiv z\pmod L$.
Since $[H:L]=2$ and $z\notin L$, this identifies the ambient lattice as
$H=(A^+a)\oplus Pb$, where $A^+=A+\Z(B/2)$ inside $A\otimes\Q$.
Lemma~\ref{lem:oddhalfsaturation} with $m=11$ now gives the pair map
$pa+qb\mapsto(hp,q)$ for $p\in A^+$ and $q\in P$, proving the assertion.
These dihedral coordinates differ from the particular cyclic coordinates
$e,w$ in \eqref{eq:tilingambientbasis}; no reflection-stability of those
particular embedded factors is required.
\end{proof}

The lemma supplies a sufficient algebraic condition for the saturation to
preserve the abstract dihedral type.
Only the specified $22$-vertex graph is asserted here to realize that condition;
it does not classify cross-cycle saturation for all cycle powers.

Jonsson explicitly asks about the subgroup generated by tiling cross-cycles after Proposition~5.1 \cite{JonssonCross}.
The proposition above answers a finite saturation question for the particular adjacent-pair family, with a specified extra class.
It does not identify that family with every balanced-grid cross-cycle family considered there.
Huijse's zig-zag analysis already uses four- and five-site tiles and periodic ground-state counts \cite[Section~7.2.2]{HuijseThesis}; complex dimension or momentum data alone cannot detect this index-two embedding.
The ambient abstract cyclic type also has a classical model.
Propositions~6.1 and~7.2 of the public nerve preprint \cite{Nerve} give the rank-forty-three homology of $N(132,88)$ as the norm quotient for a cyclic homology action of order forty-four.
Restrict this homology representation to the subgroup generated by the square of its rotation.
The generators split into two twenty-two-element orbits, say with seeds $a,b$, whose unique relation is $N(T)(a+b)=0$.
The unimodular change to seeds $a+b,b$ gives the cyclic type in \eqref{eq:tilingambient}.
This is a consequence of the cited representation formulas, not an identification of the underlying complexes or their geometric symmetry groups.
The additional graph-specific information is the explicit integral coordinates and the parity-kernel position of the specified classical family inside its homology.

\section{A primitive cross-cycle subgroup and its free quotient}\label{sec:crossfamily}
The index-two example measures a failure of integer generation after rational spanning.
The same adjacent-pair construction can also miss rational directions.
For any specified family $\mathcal C$ of oriented cycles in free homology $H$, its class map $\eta:\Z^{\mathcal C}\to H$ gives
\[
 0\longrightarrow\ker\eta\longrightarrow\Z^{\mathcal C}
 \xrightarrow{\eta}H\longrightarrow H/\operatorname{im}\eta\longrightarrow0.
\]
The free rank of the last term measures missing rational directions; its torsion measures the failure of primitivity within the rational span.
These are standard properties of Smith normal form.
They distinguish the following example from Proposition~\ref{prop:tilingindex} without introducing a new cycle construction.

For $n=18$, use the oriented four-factor join
$c(a)=\bigwedge_{j=1}^4([a_j]-[a_j+1])$, with the same orientation convention as in Section~\ref{sec:tiling}.
Let $\mathcal C_{18}$ consist of the four-element start sets with all circular gaps in $\{4,5\}$.
Each determines an induced matching and a maximal independent set, hence a classical cross-cycle \cite{JonssonCross}.
Write $H=\Ht_3(\K(18,2);\Z)$, with actual vertex translation $T$ and negation $R$.

\begin{proposition}\label{prop:freecrossdefect}
Put $u=[c(0,4,8,13)]$, $v=[c(0,4,9,13)]$ and $L=\Z[t]u+\Z[t]v\subset H$.
The family $\mathcal C_{18}$ has twenty-seven members, comprising, up to orientation signs, the eighteen translates of $u$ and the nine translates of $v$.
Its subgroup $L$ is primitive of rank twenty-three in $H\cong\Z^{25}$.
There is an exact sequence of integral $D_{18}$ modules
\begin{equation}\label{eq:freecrossquotient}
 0\longrightarrow L\longrightarrow H\longrightarrow\Z[C_2]\longrightarrow0,
\end{equation}
where both $T$ and $R$ exchange the two standard basis vectors of the quotient.
Define
\[
 E=1+t^2+\cdots+t^{16},\qquad S=1+t^3+\cdots+t^{15},\qquad
 f=(t-1)\Phi_9(t).
\]
In the free abelian group on the ordered orbit symbols
$u,Tu,\ldots,T^{17}u,v,Tv,\ldots,T^8v$, the entire integer relation group has basis
\begin{equation}\label{eq:freecrossrelations}
 Eu,\quad tEu,\quad Su-fv,\quad t(Su-fv).
\end{equation}
Here symbols are reduced using $T^{18}u=u$ and $T^9v=v$.
The sequence does not split even over $\Z[C_{18}]$, as established in Proposition~\ref{cor:freecrossnonsplit}.
\end{proposition}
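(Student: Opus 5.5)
This proposition is a finite lattice computation, in the same framework as Proposition~\ref{prop:tilingindex}; the homology basis is the one already fixed in Proposition~\ref{prop:twoorbit}. That proposition uses the natural-order matching on $\K(18,2)$ with $25$ critical three-faces. The lifts $z_1,\ldots,z_{25}$ form an integral basis by Lemma~\ref{lem:basis}, and $P$ gives the coordinates.

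\textbf{Enumerating the family.} First I will list $\mathcal C_{18}$. The four gaps must lie in $\{4,5\}$ and sum to $18$, so exactly two gaps equal $5$. The two fives are either adjacent or opposite in the cyclic gap word. The adjacent pattern has trivial rotational stabilizer, so it gives $18$ start sets, the translates of $(0,4,8,13)$. The opposite pattern $(4,5,4,5)$ is invariant under translation by $9$, so it gives $9$ start sets, the translates of $(0,4,9,13)$. Listing each start set cyclically from a different element only permutes the join factors, which gives the claimed signs. Each seed chain has $16$ terms and zero boundary. Applying signed translation and then $P$ produces the $25\times27$ integer matrix $M$ of orbit columns.

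\textbf{Relations and rank.} Next I will verify by exact integer computation that $Eu$, $tEu$, $Su-fv$ and $t(Su-fv)$ lie in $\ker M$. The rank of $M$ should be $27-4=23$. To show these four vectors are the \emph{entire} relation group, rather than a finite-index subgroup of it, I will check that they span a primitive sublattice of $\Z^{27}$. Concretely, I will exhibit four columns of the $27\times4$ relation matrix whose $4\times4$ minor is $\pm1$; the leading monomials of $E$ and $S$ make such a choice available.

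\textbf{Primitivity and the quotient.} Primitivity of $L$ amounts to the $23\times23$ minors of $M$ having gcd one. I will establish it directly by adjoining two explicit classes, for example two lifted critical cycles $[z_i]$ and $[z_j]$, to $23$ orbit columns and checking that the resulting $25\times25$ determinant is $\pm1$. Once the two added classes are chosen so that $T$ and $R$ exchange them modulo $L$, the quotient $H/L$ is free of rank two with that exchange action, which is $\Z[C_2]$. Invariance of $L$ under $D_{18}$ follows from seed reflection formulas $Ru=\pm T^{a}u$ and $Rv=\pm T^{b}v$, computed by signed negation as before.

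\textbf{Main obstacle.} I expect the main difficulty to be pinning down the exchange action on the quotient. The plan is first to compute the rational characters of $T$ and $R$ on $H$ and on $L$, and to read the quotient character off their difference. Translation should act with trace $0$ on the quotient, which forces $T$ to swap the two basis vectors, and similarly for $R$. I will then choose a quotient class $x$ and verify integrally that $x$ and $Tx$ give a unimodular completion of $L$. The final non-splitting claim is deferred to Proposition~\ref{cor:freecrossnonsplit}, as the statement indicates.
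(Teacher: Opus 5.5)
Your plan uses the same lift basis and the same $25\times27$ matrix $M$ as the paper, but you certify two things differently.

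\textbf{The relation group.} The paper computes a Smith form $UMV=I_{23}\oplus0$ and reads a kernel basis off the last four columns of $V$. It then checks that the four vectors in \eqref{eq:freecrossrelations} have a unimodular coordinate matrix in that basis. You need only $\operatorname{rank}M=23$, that $M$ kills the four vectors, and a unit $4\times4$ minor. This suffices because a primitive rank-four sublattice of $\Z^{27}$ lying in the rank-four lattice $\ker M$ must equal it. The minor can be checked by hand: on the symbols $u,Tu,v,T^2v$ the four relations give a lower-triangular matrix with diagonal $1,1,1,-1$.

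\textbf{The quotient.} The paper writes down an explicit two-row map $Q$ and checks $QM=0$, $QT=JQ$, $QR=JQ$, $Qz_1=e_0$ and $QTz_1=e_1$, where $J$ is the swap matrix. This one set of identities gives surjectivity, $R$-stability of $L=\ker Q$ (given primitivity from the Smith form), and both quotient actions with their signs. The same $Q$ is reused in Proposition~\ref{cor:freecrossnonsplit}.

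Your unimodular completion $H=L\oplus\Z x\oplus\Z Tx$ (for instance $x=[z_1]$) gives primitivity and cyclicity of $H/L$ at once. Together with $\operatorname{tr}(T\mid H/L)=1-1=0$ and $T^{18}=1$, it forces $T^2x\equiv x\pmod L$, since $T^2=-1$ would give order four. Note that this integral completion, not the character, is what excludes $\Z\oplus\Z_-$: that module has the same rational character as $\Z[C_2]$. So ``trace $0$ forces the swap'' is true only after the completion has been verified.

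\textbf{The reflection.} The step ``similarly for $R$'' needs one more input. Once $T$ acts as $J$ on $H/L$, the reflection commutes with $T$ there, because $T^{-1}=T$ on the quotient. It is therefore an involution in the centralizer $\{\pm I,\pm J\}$ of $J$ in $GL_2(\Z)$. The condition $\operatorname{tr}R=0$ does not distinguish $J$ from $-J$. You also need $\operatorname{tr}(RT\mid H/L)$, or a direct check that $Rx\equiv Tx\pmod L$.

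Signed negation gives $Ru=-T^9u$ and $Rv=T^4v$, which also supplies your $R$-stability of $L$. From these, $\operatorname{tr}(RT\mid L)=1$, while the Lefschetz count gives $\operatorname{tr}(RT\mid H)=3$. The quotient trace is therefore $2$, so $RT=I$ and $R=J$, as the statement requires.
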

\begin{proof}
Use the vertex-outer matching with priority $0,1,\ldots,17$ from Proposition~\ref{prop:twoorbit}.
Order its twenty-five critical three-faces lexicographically and let $z_1,\ldots,z_{25}$ be their signed critical-cell lifts.
The first critical three-face is $\{1,5,9,13\}$.
The lift and projection recurrences in Section~\ref{sec:certification} give the fixed integral basis of $H$ and the matrices of $T,R$ in it.
There are two gap words up to cyclic shift: the two gaps of length five are adjacent or alternating.
The former gives eighteen start sets, and the latter gives nine; signed translation of the alternating seed has period nine.

Expand each four-factor join into its sixteen oriented simplices and apply the same projection.
Let $M$ be the resulting $25\times27$ integer matrix in the written orbit order.
Its Smith form has twenty-three unit entries and no other nonzero entry.
Equivalently, integer matrices $U,V$ with determinants $\pm1$ satisfy
$UMV=(I_{23}\oplus0)$, with the rectangular zero block understood.
Since $U,V$ are unimodular, the last four columns of $V$ form an integral basis of $\ker M$. The four vectors in \eqref{eq:freecrossrelations} are killed by $M$; writing them in this basis gives a $4\times4$ coordinate matrix with determinant of absolute value one.
Thus they generate the entire integer kernel, not merely its rational span.
These finite identities use every orbit column; they prove rank twenty-three and primitivity.

For clarity, the quotient map can be specified independently of the Smith transformations.
On the ordered lift basis define $Q:H\to\Z^2$ by the following two rows, written in five consecutive blocks of length five:
\[
\begin{aligned}
 Q_1&=(1,1,0,-1,0;\ 0,1,0,0,0;\ -1,-1,0,1,1;\ 0,-1,-1,0,-1;\ 0,1,0,-1,-1),\\
 Q_2&=(0,0,0,0,0;\ 0,0,1,1,0;\ -1,0,0,1,0;\ 0,-1,-1,0,-1;\ 0,1,0,-1,-1).
\end{aligned}
\]
The full projection and signed action columns give
\[
 QM=0,\qquad QT=JQ,\qquad QR=JQ,\qquad
 Qz_1=\binom10,\quad Q(Tz_1)=\binom01,
 \qquad J=\begin{pmatrix}0&1\\1&0\end{pmatrix}.
\]
The last two identities prove surjectivity.
The primitive subgroup $L\subset\ker Q$ has the same rank twenty-three as $\ker Q$, hence they coincide.
The other identities give the quotient actions and prove \eqref{eq:freecrossquotient}.
\end{proof}

Huijse and Schoutens already relate complex cohomology dimensions to rhombus tiling counts, while distinguishing this count from a proof that the corresponding cross-cycles constitute a basis \cite[Section~4, p.~10 of the public preprint]{HuijseSchoutens}.
Their quotient-grid convention includes this graph: $(x,y)\mapsto2x-y\pmod{18}$ identifies its square-grid quotient with $G(18,2)$; the periods $(9,0),(1,2)$ also generate $(6,-6),(1,2)$.
Thus the homology dimension is a classical comparison.
Huijse's thesis discusses four- and five-site zig-zag tiles \cite[Sections~7.2.2 and~7.3]{HuijseThesis} and notes dependence among the quarter-filling cross-cycles at total size $4$, as well as incompleteness at sizes $8,12,16$ \cite[Section~7.4.1, p.~122, footnote~1]{HuijseThesis}.
The assertion here concerns the specified adjacent-pair family, its complete integer relation group, and its primitive embedding with both geometric quotient generators.
No identification with every rhombus-tiling family or completeness of all cross-cycles is needed.
Together with the twenty-two-vertex case, it distinguishes a free defect from a finite parity defect within one construction.

\subsection{A primitive image need not split with the geometric symmetries}
The free quotient in Proposition~\ref{prop:freecrossdefect} splits as an abelian group.
Its splitting with the actual rotation and reflection is a different question.
The following calculation uses the classical description of maps from a permutation module by stabilizer-fixed elements; compare \cite[Section~3.3]{BrownLectures}.

\begin{lemma}\label{lem:permutationlift}
Let $m\geq1$, and let $g\in\Z[t]$ be monic, with $g(1)g(-1)\neq0$,
$g(t^{-1})=t^{-\deg g}g(t)$, and $(t-1)g\mid t^{2m}-1$.
Put
\[
 X=\mathcal P_{2m}^{\epsilon}\oplus\Z[t]/((t-1)g),\qquad \epsilon\text{ odd},
\]
with generators $a,b$, translation by $t$, and reflections $Ra=t^{\epsilon}a$, $Rb=b$.
Let $A=\Z[C_2]$ have basis $e_0,e_1$, with both $T$ and $R$ interchanging them.
Define the equivariant surjection $q:X\to A$ by $q(a)=e_0$, $q(b)=e_0+e_1$.
For $E_m=\sum_{j=0}^{m-1}t^{2j}$, one has
\[
 X^{T^2}=\Z E_ma\oplus\Z tE_ma\oplus\Z g(t)b,
 \qquad R=T\text{ on }X^{T^2}.
\]
Under $\operatorname{End}_{D_{2m}}(A)=\Z[C_2]$, the image of
\[
 \operatorname{Hom}_{D_{2m}}(A,X)\xrightarrow{q\circ-}
 \operatorname{End}_{D_{2m}}(A)
\]
is the ideal $(m,g(1)(1+t))$.
For a positive integer $d$, an equivariant map $s:A\to X$ with $qs=d\operatorname{id}_A$ exists exactly when $m\mid d$.
The same divisibility condition holds if only rotation equivariance is required.
\end{lemma}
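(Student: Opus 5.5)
The plan is to compute everything through the description of equivariant maps out of a permutation module by stabilizer-fixed elements \cite[Section~3.3]{BrownLectures}. This reduces the lemma to finding $X^{T^2}$ explicitly and evaluating $q$ on its generators. I would work summand by summand, using monic division as in the proofs of Corollaries~\ref{cor:cyclepowerscalar} and~\ref{cor:neighborscalar}.

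\emph{Step 1: the $T^2$-fixed lattice.} On $\mathcal P_{2m}^\epsilon$, a class $p$ is $T^2$-fixed exactly when $t^{2m}-1$ divides $(t^2-1)p$ in $\Z[t]$. Since $t^2-1$ is monic, this means $p$ is a multiple of $(t^{2m}-1)/(t^2-1)=E_m$. Reducing modulo $t^2E_m\equiv E_m$ then gives the basis $E_ma,\ tE_ma$.

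On the second summand, a $T^2$-fixed class satisfies $(t-1)g\mid(t^2-1)p$, that is, $g\mid(t+1)p$. This is the step I expect to need the most care. Because $(t-1)g$ divides the square-free polynomial $t^{2m}-1$ and $g(-1)\neq0$, the polynomials $g$ and $t+1$ are coprime in $\Q[t]$. Hence $g\mid p$ in $\Q[t]$, and Gauss's lemma for the monic $g$ upgrades this to $g\mid p$ in $\Z[t]$. Since $(t-1)g=0$ in this summand, $tg\equiv g$, so the fixed lattice is $\Z g(t)b$, on which $T$ acts trivially.

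\emph{Step 2: $R=T$ on $X^{T^2}$.} I would use reciprocity on each generator. Since $E_m(t^{-1})=t^{-2(m-1)}E_m$ and $t^2$ acts trivially on the fixed lattice, $R(E_ma)=t^\epsilon E_ma$. Because $\epsilon$ is odd and $t^2=1$ here, this equals $tE_ma=T(E_ma)$, and likewise for $tE_ma$. For the other summand, $R(gb)=g(t^{-1})b=t^{-\deg g}gb=gb=T(gb)$.

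\emph{Step 3: the lifting ideal.} The stabilizer of $e_0$ in $D_{2m}$ is $\langle T^2,RT\rangle$. An equivariant $s:A\to X$ is therefore determined by $x=s(e_0)\in X^{\langle T^2,RT\rangle}$, and then $s(e_1)=Tx$. By Step~2, on $X^{T^2}$ one has $RTx=T^2x=x$, so every element of $X^{T^2}$ occurs as such an $x$.

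Evaluating $q$ on the generators, using that $t^2$ acts trivially on $A$, gives
\[
 q(E_ma)=me_0,\qquad q(tE_ma)=me_1,\qquad q(gb)=g(1)(e_0+e_1).
\]
Identifying $q\circ s$ with $q(x)\in\Z[C_2]$ therefore yields the image ideal $(m,\ mt,\ g(1)(1+t))=(m,\ g(1)(1+t))$.

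\emph{Step 4: scalar lifts.} Write $d=\alpha m+\beta mt+\gamma g(1)(1+t)$. Comparing the coefficients of $t$ gives $\beta m+\gamma g(1)=0$, and comparing constant terms gives $d=\alpha m+\gamma g(1)$. Together these give $d\equiv-\beta m\equiv0\pmod m$. Conversely, $d=m$ is attained by $x=E_ma$. For rotation equivariance alone, the stabilizer of $e_0$ in $C_{2m}$ is $\langle T^2\rangle$. The same fixed lattice and the same values of $q$ then give the same ideal, and hence the same condition $m\mid d$.
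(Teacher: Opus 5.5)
Your proposal is correct and follows essentially the same route as the paper: you compute $X^{T^2}$ summand by summand via divisibility and monic division, using $g(-1)\neq0$ for coprimality with $t+1$, verify $R=T$ there by reciprocity, identify $D_{2m}$-maps $A\to X$ with elements fixed by the stabilizer $\langle T^2,RT\rangle$ of $e_0$, and read off the ideal $(m,g(1)(1+t))$ and the condition $m\mid d$ by comparing coefficients. The differences are cosmetic: you state the stabilizer condition as $RTx=x$ rather than $Rx=Tx$, and your appeal to square-freeness of $t^{2m}-1$ is unnecessary, since $g(-1)\neq0$ alone gives the coprimality.
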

\begin{proof}
Translation by two on the regular summand has two vertex orbits, giving the first two fixed generators.
In the second summand, represent an element by $p(t)b$ with $\deg p\leq\deg g$.
The fixed condition says $(t-1)g\mid(t^2-1)p$.
Since $g(-1)\neq0$, $g$ and $t+1$ are relatively prime over $\Q[t]$, so $g\mid p$ over $\Q[t]$.
Monic division and the degree bound give $p=kg$ for an integer $k$.
Thus the written generators form the entire integer fixed lattice.
Oddness of $\epsilon$ gives $R(E_ma)=tE_ma$.
Reciprocity of $g$ gives $R(gb)=t^{-\deg g}gb=gb$, because $Tgb=gb$.
This proves $R=T$ on the fixed lattice.

An equivariant map $A\to X$ is determined by $x=s(e_0)$;
the necessary and sufficient conditions are $T^2x=x$ and $Rx=Tx$, with $s(e_1)=Tx$.
The second condition is automatic on the fixed lattice just computed.
Its image under $q$ is generated by
\[
 me_0,\quad me_1,\quad g(1)(e_0+e_1).
\]
The corresponding endomorphisms are $m$, $mt$ and $g(1)(1+t)$, proving the ideal assertion.
If $d e_0$ lies in this image, its first coefficient minus its second is divisible by $m$; hence $m\mid d$.
Conversely $s(e_0)=E_ma$, $s(e_1)=tE_ma$ lifts $m\operatorname{id}_A$ and respects both generators.
Rotation equivariance alone has precisely the condition $T^2x=x$, so it gives the same conclusion.
\end{proof}

\begin{proposition}\label{cor:freecrossnonsplit}
For the quotient $Q:H\to A$ in Proposition~\ref{prop:freecrossdefect},
the image of $\operatorname{Hom}_{D_{18}}(A,H)$ under composition with $Q$ is the ideal
\[
 (9,3(1+t))\subset\Z[C_2].
\]
The quotient $\Z[C_2]/(9,3(1+t))$ has additive group $\Z/3\oplus\Z/9$.
The exact sequence \eqref{eq:freecrossquotient} does not split over $\Z[C_{18}]$, and therefore does not split over $\Z[D_{18}]$.
The least positive $d$ for which $d\operatorname{id}_A$ has an integral equivariant lift is nine.
The exact sequence splits after extending scalars to $\Z[1/3]$.
\end{proposition}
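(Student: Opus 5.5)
We plan to reduce Proposition~\ref{cor:freecrossnonsplit} to Lemma~\ref{lem:permutationlift} with $m=9$ and $g=\Phi_9$, so that $(t-1)g=(t-1)\Phi_9$ and $g(1)=3$. The first step is an integral $D_{18}$ identification of $H$ with the module $X=\mathcal P_{18}^{\epsilon}\oplus L_3$ of that lemma, in which the quotient $Q$ becomes $q$. Proposition~\ref{prop:twoorbit} already gives $H\cong\mathcal P_{18}^1\oplus L_3$ with generators $t^2u_0$ (from $u_0=[z_1]$) and $t^2v_0$ (from $v_0=[z_1+z_3+z_8]$). Their reflections are $R(t^2u_0)=t\cdot t^2u_0$ and $R(t^2v_0)=t^2v_0$, so $\epsilon=1$ is odd. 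The hypotheses on $g$ are immediate: $\Phi_9$ is monic and reciprocal, $\Phi_9(1)=3$, $\Phi_9(-1)=1$, and $(t-1)\Phi_9\mid t^{18}-1$.

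Next we compute $Q$ on these two generators using the explicit rows $Q_1,Q_2$ and the identities $QT=JQ$, $QR=JQ$ from Proposition~\ref{prop:freecrossdefect}. Since $z_1$ is the first lift, $Q(z_1)=e_0$, so $Q(t^2u_0)=J^2e_0=e_0$. For $t^2v_0$ we evaluate the $z_1,z_3,z_8$ coordinates of $Q_1,Q_2$: $Q_1$ gives $1+0+1=2$ and $Q_2$ gives $0+0+1=1$, so $Q(t^2v_0)=2e_0+e_1$.

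This is not yet $e_0+e_1$, so we change generators. Put $b'=t^2v_0-t^2u_0$. We must check that $b'$ still generates an $R$-stable $L_3$ summand. This is the step we expect to be the main obstacle, because subtracting a regular-block element need not preserve either the annihilator $(t-1)\Phi_9$ or $R$-invariance. If $b'$ fails, we instead modify the generator of $L_3$ by an element $p(t)t^2u_0$ with $p(T)$ chosen so that the result is annihilated by $(t-1)\Phi_9$ and fixed by $R$. Alternatively, we avoid a normal form altogether and work directly with the fixed lattice $H^{T^2}=\Z E_9a\oplus\Z tE_9a\oplus\Z\Phi_9 b$ from Lemma~\ref{lem:permutationlift}. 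In that case it suffices to compute $Q$ on $E_9(T)t^2u_0$ and on $\Phi_9(T)t^2v_0$. The first gives $9e_0$ and $9e_1$. The second gives $\Phi_9(J)(2e_0+e_1)=3(2e_0+e_1)$, using that $\Phi_9(J)=3$ because $J^2=1$ and $J^6=1$ with the constant term counted. The resulting image ideal is $(9,9t,6+3t)$, which equals $(9,3(1+t))$ modulo $9$, since $6+3t\equiv 3(1+t)-3\cdot 9/9$ up to elements of $(9,9t)$. This direct route, using the lemma's proof rather than its conclusion, is the one we will follow; equivariance of lifts again reduces to $T^2x=x$, $Rx=Tx$, which holds on the fixed lattice.

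From the ideal the remaining assertions follow. In the basis $1,t$, the ideal $(9,3(1+t))$ has Smith form $\mathrm{diag}(3,9)$, which gives the stated additive group. A lift of $d\,\mathrm{id}_A$ requires $de_0$ to lie in the image, and the difference of its coefficients must be divisible by $9$, so $d=9$ is least. It is attained by $E_9(T)t^2u_0$. Since $1\notin(9,3(1+t))$, there is no $\Z[C_{18}]$-section; the same conclusion holds for rotation equivariance alone, as in the lemma. Finally, after inverting $3$ the image contains $9$, hence $1$, so the sequence splits over $\Z[1/3]$.
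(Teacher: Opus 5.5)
Your strategy is the paper's: take $a=T^2[z_1]$ and $b=T^2[z_1+z_3+z_8]$ from Proposition~\ref{prop:twoorbit}, identify $H$ with $\mathcal P_{18}^1\oplus\Z[t]/((t-1)\Phi_9)$, and apply Lemma~\ref{lem:permutationlift} with $m=9$ and $g=\Phi_9$. However, your computation of $Qb$ is wrong, and as written your argument reaches the stated ideal only through further compensating errors.

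In $Q_1=(1,1,0,-1,0;\ 0,1,0,0,0;\ldots)$ the entry in position $8$ is $0$; the $1$ in the second block sits in position $7$. Hence $Q[z_1+z_3+z_8]=(1+0+0,\ 0+0+1)^{\mathsf T}=e_0+e_1$. Since $QT=JQ$, this gives $Qb=J^2(e_0+e_1)=e_0+e_1$, which is exactly the normalization $q(b)=e_0+e_1$ the lemma requires. No change of generator such as $b'=b-a$ is needed.

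You could have ruled out $2e_0+e_1$ a priori. You have $Rb=b$, and $QR=JQ$ then forces $JQb=Qb$, but $2e_0+e_1$ is not $J$-fixed.

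Your later steps contain two further errors.
\begin{itemize}
\item $\Phi_9(J)=1+J^3+J^6=2+J$, not $3$. It equals $3$ only on $J$-fixed vectors.
\item The claimed equality $(9,9t,6+3t)=(9,3(1+t))$ is false. The left ideal equals $(9,3(1-t))=\{x+yt:\ 3\mid x,\ 3\mid y,\ 9\mid x+y\}$, which does not contain $3+3t$. The reduction ``modulo $9$'' is invalid because $3\notin(9,9t)$.
\end{itemize}
Had you applied the correct operator $2+J$ to your value $2e_0+e_1$, you would have obtained $5e_0+4e_1$. The resulting ideal $(9,5+4t)$ has additive quotient $\Z/9$, contradicting the statement. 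So the first assertion of the proposition is not established by your computation.

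With the correct value of $Qb$, the fixed-lattice generators map to
\begin{itemize}
\item $Q(E_9(T)a)=9e_0$,
\item $Q(TE_9(T)a)=9e_1$,
\item $Q(\Phi_9(T)b)=\Phi_9(J)(e_0+e_1)=3(e_0+e_1)$.
\end{itemize}
These give the ideal $(9,9t,3(1+t))=(9,3(1+t))$ directly, and your remaining deductions then go through as in the paper: Smith form $\mathrm{diag}(3,9)$, least lift $d=9$, no section even for rotation equivariance alone by the lemma's last clause, and splitting over $\Z[1/3]$.
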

\begin{proof}
Retain the lift basis $z_1,\ldots,z_{25}$ used to define $Q$.
The two orbit generators in Proposition~\ref{prop:twoorbit}, after its stated normalizations, are
\[
 a=T^2[z_1],\qquad b=T^2[z_1+z_3+z_8].
\]
They identify $H$ with $\mathcal P_{18}^1\oplus\Z[t]/((t-1)\Phi_9)$,
with $Ra=Ta$, $Rb=b$.
The displayed quotient rows and $QT=JQ$ give $Qa=e_0$ and $Qb=e_0+e_1$.
Lemma~\ref{lem:permutationlift} applies with $m=9$ and $g=\Phi_9=1+t^3+t^6$, for which $g(1)=3$.
In particular, the complete fixed-lattice image has matrix
\[
 \begin{pmatrix}9&0&3\\0&9&3\end{pmatrix}.
\]
Its entries have greatest common divisor three, and its maximal minors have greatest common divisor twenty-seven.
Thus its Smith factors are three and nine, by the standard determinantal-divisor criterion.
The lemma excludes a lift of the identity even with only rotation equivariance and supplies a lift of nine times the identity with both generators.
Dividing this lift by nine over $\Z[1/3]$ gives the final assertion.
\end{proof}

This example separates three questions about a classical cycle family: rational spanning, primitivity within its rational span, and equivariant splitting of the remaining free quotient.
The family here is primitive but its quotient does not split with the actual rotation.
The conditional polynomial calculation supplies a mechanism for the obstruction; it does not assert that other graph parameters realize its hypotheses.

\section{Clique complexes and equivariant quotient maps}\label{sec:flagfamily}
We now compute an integral homology action directly from graph cycles and triangle boundaries.
This family provides a low-dimensional counterpart to the lattices obtained from Morse reductions.
Its augmentation will also illustrate how the lifting obstruction depends on the specified quotient map.
For $s\geq3$, set
\[
 F_s=\Cay(\Z/(3s)\Z,\{\pm1,\pm s\}),\qquad \mathcal X_s=\operatorname{Cl}(F_s).
\]
Here $\operatorname{Cl}$ denotes the clique complex.
Thus $\mathcal X_s=\Ind(\overline{F_s})$, where the bar denotes graph complement; it is generally not $\K(3s,s)$.
We retain the same geometric generators $T:i\mapsto i+1$ and $R:i\mapsto-i$.
Brimkov's degree-four circulant calculation already gives clique number three for these parameters \cite[Section~3.1 of the public conference text]{Brimkov}.
The same face structure gives the ordinary collapse below.
Keeping the oriented cycles then determines both the integral action and the augmentation map.

\begin{proposition}\label{prop:flagfamily}
For every $s\geq3$, $\mathcal X_s$ is homotopy equivalent to a wedge of $2s+1$ circles.
Put $N_s(t)=1+t^s+t^{2s}$ and $M_s=\Z[t]/((t-1)N_s(t))$.
There is an isomorphism of integral $D_{3s}$ modules
\[
 H_1(\mathcal X_s;\Z)\cong M_s,
 \qquad T(p(t)w)=tp(t)w,\qquad R(p(t)w)=-p(t^{-1})w,
\]
with the following explicit cycle generator. Orient edges in their written direction, with residues taken modulo $3s$, and define
\[
 e_i=[i,i+1],\quad f_i=[i,i+s],\quad
 c_i=f_i-\sum_{j=0}^{s-1}e_{i+j},\qquad w=[c_s].
\]
The classes $w,Tw,\ldots,T^{2s}w$ form an integral basis.
Let $\Z_-$ denote the rank-one module with $T=1$ and $R=-1$.
Augmentation $q(p(t)w)=p(1)$ gives an exact sequence
\begin{equation}\label{eq:flagaugmentation}
 0\longrightarrow(t-1)M_s\longrightarrow M_s\xrightarrow{q}\Z_-\longrightarrow0.
\end{equation}
It has no integral rotation-equivariant section.
The least positive integer $d$ admitting an integral $D_{3s}$-equivariant map $a:\Z_-\to M_s$ with $qa=d\operatorname{id}$ is three.
The sequence splits over $\Z[1/3]$.
\end{proposition}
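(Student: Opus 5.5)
The plan mirrors the proof of Theorem~\ref{thm:cyclepoweraction}, using graph cycles and triangle boundaries instead of a Morse reduction.

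\emph{Face structure and ordinary type.} A clique in $F_s$ uses only the differences $\pm1,\pm s$ modulo $3s$. For $s\geq3$ a triangle needs three such differences summing to zero. The only solution is $s+s+s=3s$, so the triangles are exactly $\{i,i+s,i+2s\}$ for $0\le i<s$. These are pairwise edge-disjoint and there are no tetrahedra. The graph has $3s$ vertices and $6s$ edges. Collapsing each triangle along one $s$-edge leaves a connected graph containing the Hamiltonian $1$-cycle, with $6s-s$ edges. Its first Betti number is $5s-3s+1=2s+1$, so $\mathcal X_s$ is a wedge of $2s+1$ circles.

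\emph{Integral presentation.} Put $u=\sum_i e_i$, the Hamiltonian cycle. The chains $c_i$, $0\le i<3s$, together with $u$ form an integral basis of graph cycles: subtracting $f$-coefficients leaves a multiple of $u$. On chains, $Tc_i=c_{i+1}$ and $Tu=u$. Vertex negation gives $Rf_i=-f_{-i-s}$ and $Re_i=-e_{-i-1}$, so $Ru=-u$. Expanding $R(c_i)$ should give $Rc_i=-c_{-i-s}$; this sign and index bookkeeping must be checked carefully.

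The boundary of the triangle $\{i,i+s,i+2s\}$ is $f_i+f_{i+s}+f_{i+2s}$, with the last edge $f_{i+2s}=[i+2s,i]$ closing up. This equals $c_i+c_{i+s}+c_{i+2s}+u=N_s(T)c_i+u$. The relations therefore let us eliminate $u=-N_s(T)c_0$, and the remaining relation $Tu=u$ becomes $(t-1)N_s(t)c_0=0$. This presents $H_1$ as $M_s$ on the generator $c_0$, of rank $3s-(s-1)=2s+1$ as required; the relation count is consistent. With $w=[c_s]=t^sc_0$ we get $Rw=-[c_{-2s}]=-[c_s]=-w$, since $-2s\equiv s$. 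Semilinearity $RT=T^{-1}R$ then gives $R(p(t)w)=-p(t^{-1})w$. The main obstacle is exactly this reflection sign together with the index identity for $R$.

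\emph{Augmentation and lifting.} The map $q$ is well defined because $(t-1)N_s$ vanishes at $t=1$, and it is equivariant: $T$ acts trivially on the target and $R$ acts as $-1$ there, since $q(-p(t^{-1}))=-p(1)$. Its kernel is $(t-1)M_s$. Monic division, exactly as in the proof of Corollary~\ref{cor:cyclepowerscalar}, shows that the $T$-fixed lattice of $M_s$ is $\Z N_s(t)w$. Its image under $q$ is $N_s(1)=3$, and $R(N_sw)=-N_sw$ because $N_s$ is reciprocal modulo $t^{3s}=1$. Hence every rotation-equivariant map $\Z_-\to M_s$ lands in $\Z N_sw$, so $qa$ is a multiple of $3$. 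This rules out a section, even one that is only rotation-equivariant. Conversely, $a(1)=N_sw$ lifts $3\operatorname{id}$ with both generators, so the minimum is three. Inverting $3$, the map $\tfrac13N_sw$ splits the sequence over $\Z[1/3]$. Theorem~\ref{thm:geometricfamily} gives an alternative check of the value three.
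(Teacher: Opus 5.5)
Your proposal is correct and follows essentially the same route as the paper: the triangle classification and collapse along the $s$-chords, the graph-cycle basis $c_0,\ldots,c_{3s-1},u$ with triangle boundaries $N_s(T)c_i+u$ and elimination of $u$, and the fixed-lattice computation $M_s^T=\Z N_s(t)w$ with $q(N_sw)=3$. The one point to state explicitly, rather than infer from the rank count, is that the chain-level periodicity relation $(t^{3s}-1)c_0=0$ is redundant because $(t-1)N_s(t)$ divides $t^{3s}-1$; this is what makes the presentation by $\Z[t]/((t-1)N_s(t))$ exact, and hence makes $w,Tw,\ldots,T^{2s}w$ an integral basis.
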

\begin{proof}
A closed triangle has three signed steps from $\{1,-1,s,-s\}$ summing to zero modulo $3s$.
For $s\geq3$, the only possibility is three equal steps of size $s$.
Consequently the triangles are exactly
\[
 \{i,i+s,i+2s\},\qquad 0\leq i<s.
\]
Their chord edges are disjoint, and every such edge lies in only one triangle; no four-clique exists.
Collapse each triangle along $[i,i+s]$.
The unit-edge cycle remains, so the resulting graph is connected, with $3s$ vertices and $5s$ edges.
It is therefore homotopy equivalent to a wedge of $2s+1$ circles, by the usual maximal-tree argument \cite[Section~1.A]{Hatcher}.
This collapse is not asserted to be invariant under translation or reflection.

Before filling the triangles, the entire integral cycle group of the graph has basis
\[
 c_0,c_1,\ldots,c_{3s-1},u,\qquad u=\sum_{i=0}^{3s-1}e_i.
\]
Indeed, subtracting the written chord coefficient times $c_i$ eliminates that chord from any cycle.
The remaining cycle is supported on the unit-edge circle, so its coefficients are all equal and it is an integer multiple of $u$.
Uniqueness follows first from the chord coefficients, then from the coefficient of any unit edge.
Translation sends $c_i$ to $c_{i+1}$ and fixes $u$.
Negation reverses the written edges, giving
\[
 Rc_i=-c_{-i-s},\qquad Ru=-u.
\]
The oriented boundary of $[i,i+s,i+2s]$ is
\begin{equation}\label{eq:flagtriangleboundary}
 f_i+f_{i+s}+f_{i+2s}=c_i+c_{i+s}+c_{i+2s}+u.
\end{equation}
The right side is unchanged when $i$ is replaced by $i+s$.
There are no higher faces, so these boundaries give the complete relations in $H_1$.
Writing $c=[c_0]$, the presentation is
\[
 (t^{3s}-1)c=0,\qquad(t-1)u=0,\qquad N_s(t)c+u=0.
\]
Eliminating $u$ gives $(t-1)N_s(t)c=0$.
The first relation is redundant because $(t-1)N_s$ divides $t^{3s}-1$.
Thus there are no further integer relations, and the monic presentation gives an integral basis $c,tc,\ldots,t^{2s}c$.
Multiplication by $t^s$ is an integral automorphism, giving the asserted basis based at $w=t^sc$.
Since $Rc=-t^{-s}c$, one obtains $Rw=-t^{-2s}c=-w$, using $t^{3s}=1$.
This proves both geometric actions.

Augmentation is well-defined and surjective, and its kernel is $(t-1)M_s$ by monic division by $t-1$.
For the whole integral fixed lattice, $(t-1)p=0$ in $M_s$ holds exactly when $N_s\mid p$ in $\Z[t]$.
Representatives have degree at most $2s$, so
\[
 M_s^T=\Z N_s(t)w,\qquad R(N_sw)=-N_sw,\qquad q(N_sw)=3.
\]
Hence every rotation-equivariant lift of a scalar has scalar divisible by three.
The map $a(1)=N_sw$ lifts $3\operatorname{id}$ and respects reflection as well.
Dividing it by three gives a section after inverting three.
This is the classical fixed-element lifting calculation \cite[Section~3.3]{BrownLectures}, applied to the explicit geometric presentation.
\end{proof}

\begin{proposition}\label{prop:flagfactorquotients}
For every $s\geq3$, let $\mathcal L_s$ be the sum in $M_s$ of the kernels
of its distinct cyclotomic annihilator factors.
Write $s=3^a m$ with $a\geq0$ and $\gcd(m,3)=1$, and put
$q=3^{a+1}$ and $r=\varphi(q)=2\cdot3^a$, where $\varphi$ is Euler's totient.
As finite abelian groups,
\begin{equation}\label{eq:flagprimary}
 M_s/\mathcal L_s\cong\Z/3\oplus
 \bigoplus_{p^b\parallel m}\ \bigoplus_{j=1}^{b}
 (\Z/p^j)^{\,r(m/p^b)\varphi(p^{b-j})}.
\end{equation}
Here $p$ ranges over primes, $b\geq1$, and $\varphi(1)=1$.
The latter sum is empty for $m=1$.
The exponent is $3m$, and the order is
\[
 3\prod_{p^b\parallel m}
 p^{\,r(m/p^b)(p^b-1)/(p-1)}.
\]
In particular,
$M_5/\mathcal L_5\cong\Z/3\oplus(\Z/5)^2$ and
$M_6/\mathcal L_6\cong\Z/3\oplus(\Z/2)^6$.
These are quotients of torsion-free homology by specified sublattices.
\end{proposition}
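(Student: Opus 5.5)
The plan is to reduce the statement to a cokernel computation for a complete cyclotomic Chinese remainder map, and then evaluate that cokernel with the arithmetic of \cite{MS}.

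\emph{Reduction.} First identify the factors. Since $N_s(t)=(t^{3s}-1)/(t^s-1)$, its cyclotomic factors are the $\Phi_d$ with $d\mid 3s$ and $d\nmid s$, namely $d=qe$ with $e\mid m$. Equivalently $N_s(t)=\Phi_q(t^m)=\prod_{e\mid m}\Phi_{qe}(t)$. Thus $(t-1)N_s$ is a product of the distinct monic factors $\Phi_1$ and $\Phi_{qe}$, $e\mid m$. Lemma~\ref{lem:multifactor} identifies $M_s/\mathcal L_s$ with $\Z[t]/(g_1,\dots)$. Lemma~\ref{lem:crtdual} then shows that this group has the same Smith invariants as the cokernel of the reduction map
\[
 P:M_s\to\Z[t]/(t-1)\oplus\bigoplus_{e\mid m}\Z[t]/(\Phi_{qe}).
\]
So it suffices to compute $\operatorname{coker}P$.

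\emph{Two-stage factorization.} Write $P=(\mathrm{id}\oplus P_2)\circ P_1$. Here $P_1:M_s\to\Z[t]/(t-1)\oplus\Z[t]/(N_s)$, and $P_2$ reduces $\Z[t]/(\Phi_q(t^m))$ modulo each $\Phi_{qe}$. Both maps are injective, so there is an exact sequence
\[
 0\to\operatorname{coker}P_1\to\operatorname{coker}P\to\operatorname{coker}P_2\to0 .
\]
By Lemma~\ref{lem:gluing}, $\operatorname{coker}P_1\cong\Z[t]/(t-1,N_s)=\Z/3$. Next show that $\operatorname{coker}P_2$ is a $\pi$-group, where $\pi$ is the set of primes dividing $m$. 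The primes occurring in it divide the resultants of $\Phi_{qe}$ and $\Phi_{qe'}$, and these are powers of primes $p$ with $e/e'$ a power of $p$, and $p\mid m$. Since $3\nmid m$, the extension splits, and the summand $\Z/3$ in \eqref{eq:flagprimary} appears.

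\emph{Computing $\operatorname{coker}P_2$.} Since $\gcd(q,m)=1$, use the coprime tensor isomorphism of \cite{MS}:
\[
 \Z[t]/(\Phi_q(t^m))\cong\Z[\zeta_q]\otimes\Z[x]/(x^m-1).
\]
It is induced by $t\mapsto\zeta'\otimes x$ for a suitable primitive $q$th root $\zeta'$. Under it, $P_2$ becomes $\Z[\zeta_q]\otimes_\Z P^{(m)}$, where $P^{(m)}:\Z[x]/(x^m-1)\to\bigoplus_{e\mid m}\Z[\zeta_e]$ is the complete cyclotomic reduction. Because $\Z[\zeta_q]$ is free of rank $r=\varphi(q)$, this gives $\operatorname{coker}P_2\cong(\operatorname{coker}P^{(m)})^r$. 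Now factor $m=\prod p^b$ and apply the coprime tensor decomposition once more:
\[
 \Z[C_m]\cong\bigotimes_p\Z[C_{p^b}],
\]
compatibly with the cyclotomic reductions. The $p$-primary part of $\operatorname{coker}P^{(m)}$ is then the cokernel for $x^{p^b}-1$ tensored with $\Z[C_{m/p^b}]$. The latter is free of rank $m/p^b$ after localizing at $p$, since the other factors have unit resultants there. The prime-power Smith formula of \cite{MS} gives $\bigoplus_{j=1}^b(\Z/p^j)^{\varphi(p^{b-j})}$ for $x^{p^b}-1$. Multiplying the multiplicities by $r\,(m/p^b)$ yields \eqref{eq:flagprimary}.

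\emph{Exponent, order, and examples.} The exponent $3m$ and the order formula follow by summing the multiplicities, using $\sum_j j\,\varphi(p^{b-j})=(p^b-1)/(p-1)$. For $s=5$ the formula gives $\Z/3\oplus(\Z/5)^2$, and for $s=6$ it gives $\Z/3\oplus(\Z/2)^6$, matching the stated examples.

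\emph{Main obstacle.} I expect the hardest step to be the cokernel tensor bookkeeping. One must check that the isomorphism carries the reduction maps to tensor products of reduction maps, with the factors $\Phi_{qe}$ matching the products $\Z[\zeta_q]\otimes\Z[\zeta_e]\cong\Z[\zeta_{qe}]$. One must also check that tensoring cokernels with coprime supports introduces no extra torsion, which is a Tor vanishing after $p$-localization. My plan is to work one prime at a time after localization, citing the coprime tensor isomorphisms of \cite{MS} for the identification.
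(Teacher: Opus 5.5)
Your proposal is correct and follows the paper's proof essentially step for step. Both arguments use Lemma~\ref{lem:crtdual} to pass to the reduction cokernel, the two-stage factorization giving $0\to\Z/3\to\operatorname{coker}P\to\operatorname{coker}P_2\to0$, the coprime tensor isomorphism identifying $\operatorname{coker}P_2$ with $r$ copies of the complete cyclotomic reduction cokernel for $x^m-1$, and the prime-power Smith formula of \cite{MS}. The differences are minor: you justify the splitting of the $\Z/3$ extension by cyclotomic resultants rather than by the exponent bound $m$, and you carry out the prime-by-prime tensor assembly explicitly where the paper cites \cite{MS}.
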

\begin{proof}
One has
\[
 N_s(t)=\Phi_q(t^m)=\prod_{d\mid m}\Phi_{qd}(t).
\]
Let $P_F$ be reduction modulo all the distinct factors of $F=(t-1)N_s$,
and let $P_N$ be reduction from $\Z[t]/N_s$ to
$\bigoplus_{d\mid m}\Z[t]/\Phi_{qd}$.
Lemma~\ref{lem:crtdual} reduces the assertion to $\operatorname{coker}P_F$.
Factor $P_F$ through reduction modulo $t-1$ and $N_s$.
The coarse reduction has cokernel
$\Z[t]/(t-1,N_s)=\Z/3$.
Both reductions are injective between free lattices of equal rank,
so the cokernel sequence for their composition is
\begin{equation}\label{eq:flagcrtsequence}
 0\longrightarrow\Z/3\longrightarrow\operatorname{coker}P_F
 \longrightarrow\operatorname{coker}P_N\longrightarrow0.
\end{equation}

Let
\[
 \Psi_m:\Z[y]/(y^m-1)\longrightarrow
 \bigoplus_{d\mid m}\Z[y]/\Phi_d(y)
\]
be the complete classical CRT reduction.
There are integral ring isomorphisms
\begin{align*}
 \Z[x]/\Phi_q(x)\otimes_{\Z}\Z[y]/(y^m-1)
 &\longrightarrow\Z[t]/N_s(t),\\
 \Z[x]/\Phi_q(x)\otimes_{\Z}\Z[y]/\Phi_d(y)
 &\longrightarrow\Z[t]/\Phi_{qd}(t)\qquad(d\mid m),
\end{align*}
using $x\mapsto t^m$ and $y\mapsto t^q$ in every map.
The relations vanish by the orders $q,m$ in the source and $q,d$ in
each target.
A Bezout identity for $m,q$ recovers $t$ from these two invertible images,
so the maps are surjective.
Their free integer ranks agree, proving injectivity.
These are the coprime tensor isomorphisms of
\cite[Lemma~2.2 and Remark~2.3 of the public preprint]{MS};
the same powers in every component make the reduction square commute.
Consequently $P_N$ is integrally equivalent to $I_r\otimes\Psi_m$, and
\[
 \operatorname{coker}P_N\cong(\operatorname{coker}\Psi_m)^r.
\]

The classical prime-power Smith formula and its coprime tensor assembly
\cite[Theorems~2.6 and~2.15 of the public preprint]{MS}
give the $p$-primary multiplicity
$(m/p^b)\varphi(p^{b-j})$ for each $p^j$, $1\leq j\leq b$.
In particular this cokernel has exponent dividing $m$.
Its order is coprime to three, so \eqref{eq:flagcrtsequence} splits
as a sequence of abelian groups into its primary parts.
This proves \eqref{eq:flagprimary}.
Each maximal $p^b$ occurs, giving exponent $3m$.
Finally,
$\sum_{j=1}^b j\varphi(p^{b-j})=(p^b-1)/(p-1)$
gives the order.
\end{proof}

The full factor-kernel defect is therefore controlled by the prime powers
in the part of $s$ coprime to three, with the factor $3^a$ changing their
multiplicities.
In contrast, the augmentation sequence \eqref{eq:flagaugmentation}
always has minimum scalar lift three.
The two obstructions answer different questions.
Formula~\eqref{eq:flagprimary} is an abelian-group consequence of the
classical CRT theorem; it does not identify the quotient's geometric
rotation and reflection with those on the CRT target.

The cases $s=5,6$ illustrate the uniform triangle-boundary presentation.
The symbolic case $s=3$ gives $M_3=L_3$ with negative inversion, the same integral $D_9$ action as $\Ht_5(\K(27,5);\Z)$ after its stated inflation convention.
Thus triangle filling and a higher-dimensional Morse reduction realize the same lattice and its index-three obstruction.
This comparison concerns homology actions; it does not identify the spaces equivariantly.
For every $s$, \eqref{eq:flagaugmentation} has minimum scalar lift three, whereas the rank-two permutation quotient in Proposition~\ref{cor:freecrossnonsplit} has minimum nine.
The common question is the image of the full stabilizer-fixed lattice under a specified geometric quotient map.
The comparison combines the classical graph collapse, cyclotomic arithmetic and fixed-element method.

\subsection{Three clique branches and the choice of quotient}
For $n\geq7$ and $2\leq s<n/2$, put
\[
 \mathcal Y_{n,s}=\operatorname{Cl}(G(n,s)),\qquad
 \Gamma_{n,s}=H_1(G(n,s);\Z).
\]
Thus $\Gamma_{n,s}$ is the cycle lattice of the unfilled graph, even when $\mathcal Y_{n,s}$ has triangles.
Write $A_{n,s}=\Z[t]/(t^n-1)$ with the specified dihedral action
\[
 T(p(t))=tp(t),\qquad R(p(t))=-t^{-s}p(t^{-1}).
\]
Brimkov's clique-number partition already separates the three branches below \cite[Section~3.1 of the public conference text]{Brimkov}.
The following coordinates make their quotient maps comparable without identifying different maps.

\begin{proposition}\label{prop:flaggraphs}
For every such $n,s$, there is an integral $D_n$-module isomorphism
\[
 \Gamma_{n,s}=A_{n,s}\oplus\Z_-
\]
given by the literal integer basis
\[
 c_i=[i,i+s]-\sum_{j=0}^{s-1}[i+j,i+j+1]\quad(0\leq i<n),
 \qquad u=\sum_{i=0}^{n-1}[i,i+1].
\]
The chosen $T,R$ act by
\[
 Tc_i=c_{i+1},\quad Tu=u,\quad Rc_i=-c_{-i-s},\quad Ru=-u.
\]
Every equivariant map $\ell:\Gamma_{n,s}\to\Z_-$ is uniquely specified by
\[
 \ell(c_i)=\alpha,\qquad \ell(u)=\beta\qquad(\alpha,\beta\in\Z).
\]
It is surjective precisely when $\gcd(\alpha,\beta)=1$.
For a surjective map, its least positive scalar admitting an equivariant lift is
\begin{equation}\label{eq:graphliftminimum}
 \gcd(n\alpha,\beta)=\gcd(n,\beta).
\end{equation}
Every positive divisor of $n$ occurs for a primitive equivariant quotient of this same lattice.

The clique complex has the following ordinary type and homology module:
\[
\begin{array}{c|c|c}
\text{condition}&\text{ordinary type of }\mathcal Y_{n,s}&H_1(\mathcal Y_{n,s};\Z)\\\hline
s=2\text{ or }2s=n-1&S^1&\Z_-\\
3s=n&\bigvee^{2s+1}S^1&M_s\\
\text{otherwise}&\bigvee^{n+1}S^1&A_{n,s}\oplus\Z_-
\end{array}
\]
All reduced homology outside degree one vanishes.
\end{proposition}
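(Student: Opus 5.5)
The plan is to follow the pattern of Proposition~\ref{prop:flagfamily} and Theorem~\ref{thm:cyclepoweraction}: first handle the unfilled graph cycle lattice, then fill in triangles branch by branch.

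\emph{The graph lattice.} The unit edges form a Hamiltonian cycle. The chord cycles $c_i$ together with $u$ form an integral basis of $\Gamma_{n,s}$: subtracting chord coefficients eliminates every chord, and what remains is supported on the unit cycle, hence is a multiple of $u$. Since $s<n/2$, the chord orbit is full of length $n$. The action formulas follow because negation reverses written edges. The reflection sends $[i,i+s]$ to $-[-i-s,-i]$, and it sends the base path of $c_i$ to minus the base path of $c_{-i-s}$. This gives $A_{n,s}\oplus\Z_-$, with $c_0\mapsto 1$.

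\emph{Equivariant quotients.} An equivariant map to $\Z_-$ is constant on the orbit $\{c_i\}$, since $T$ acts trivially on the target. The value $-\alpha$ is consistent with $Rc_i=-c_{-i-s}$, so any $(\alpha,\beta)$ is allowed, and surjectivity means $\gcd(\alpha,\beta)=1$. For the lifting minimum, I will use the fixed-element description of maps out of $\Z_-$ \cite[Section~3.3]{BrownLectures}: a lift $a$ with $\ell a=d$ corresponds to a $T$-fixed $x$ with $Rx=-x$ and $\ell(x)=d$. Monic division shows that the $T$-fixed lattice is $\Z N\oplus\Z u$, where $N=\sum_i c_i$. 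Both generators have $R=-1$: for $N$ this holds because $R$ permutes the $c_i$ with a sign, and for $u$ it was shown above. Their images are $n\alpha$ and $\beta$, so the least positive scalar is $\gcd(n\alpha,\beta)$. This equals $\gcd(n,\beta)$ because $\gcd(\alpha,\beta)=1$. To realize a divisor $e$ of $n$, take $\beta=e$ and $\alpha=1$.

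\emph{Clique branches.} Next I classify the triangles, that is, signed step triples from $\{\pm1,\pm s\}$ summing to $0$ modulo $n$. Using $n\ge7$ and $2\le s<n/2$, the possibilities are:
\begin{itemize}
\item[(a)] $1+1-s$, which forces $s=2$; this gives the triangles $\{i,i+1,i+2\}$.
\item[(b)] $s+s+1\equiv0$, which forces $2s=n-1$; this gives $\{i,i+s,i+2s\}$ with $2s\equiv-1$.
\item[(c)] $3s=n$.
\item[(d)] Nothing else.
\end{itemize}
Triples such as $1+1+1$ or $s+s-1$ are excluded by the inequalities, and I will check these cases directly. I also need to rule out $4$-cliques, which follows since every triangle uses at least two equal steps.

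Each branch is then handled by presentation. In case (d) there are no triangles, so $H_1=\Gamma_{n,s}$ and the graph is a wedge of $2n-n+1=n+1$ circles. Case (c) is exactly Proposition~\ref{prop:flagfamily}. In cases (a) and (b) the $n$ triangles form one full orbit, and each boundary equals $\pm c_j$ plus an integer multiple of $u$. I will write the boundary out in the $c,u$ basis to confirm this: for $s=2$ the boundary of $[i,i+1,i+2]$ is $-c_i$, and in (b) it is $c_j+c_{j'}\pm\ldots$. The relations then eliminate the whole chord orbit integrally, leaving $\Z u=\Z_-$.

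For the ordinary type in (a), collapse each triangle along its chord to obtain the unit cycle, so the space is $S^1$. In (b) I need a collapse sequence; I will try collapsing along the $s$-chord of step $1$ type, or, if the edges are shared, argue that the relations kill the entire chord orbit and that the $2$-complex is homotopy equivalent to $S^1$. That last step is the one most likely to need care.

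\textbf{Main obstacle.} I expect the main obstacle to be case (b). Each chord edge there lies in two triangles, so both the unimodularity of the triangle relations modulo $\Z u$ and the collapse need an explicit check, for instance via a Morse or Euler-characteristic argument combined with the integral presentation and Lemma~\ref{lem:homologywedge}-style reasoning in dimension one. The latter amounts to free $\pi_1$ via collapses.
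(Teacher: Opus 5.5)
\textbf{Gap in the branch $2s=n-1$.} Your treatment of the unfilled graph lattice is fine and matches the paper's proof. The same holds for the quotient maps and their lifting ideal $(n\alpha,\beta)$, for the triangle classification, and for the branches $s=2$, $3s=n$ and the triangle-free case. The problem is the branch $2s=n-1$, where your boundary formula and its consequence are wrong.

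There the triangles are $\{i-1,i,i+s\}$, with edges $[i,i+s]$, $[i+s,i+2s]=[i+s,i-1]$ and $[i-1,i]$. The two base paths together with the unit edge $[i-1,i]$ traverse all $2s+1=n$ unit edges exactly once. So the oriented boundary is $c_i+c_{i+s}+u$. This has two chord classes, not the form $\pm c_j$ plus a multiple of $u$ that you assert.

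These relations do not eliminate the chord orbit. Comparing the relations at $i$ and $i+s$ gives $[c_i]=[c_{i+2s}]=[c_{i-1}]$. Hence all $[c_i]$ equal a single class $\gamma$, and then $u=-2\gamma$. Algebraically, the presentation is $\Z[t]/(t^n-1,(t-1)(1+t^s))$, which is $\Z$ because $N_n\equiv t^{2s}=1$ modulo $1+t^s$. So $\gamma$ generates $H_1\cong\Z$, and $H_1/\Z u\cong\Z/2$. Your claim $H_1=\Z u$ is therefore false, and the planned check of unimodularity modulo $\Z u$ would fail.

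\textbf{Missing idea: the free faces in this branch are the unit edges, not the chords.} Each chord $[j,j+s]$ lies in two triangles, those indexed by $i=j$ and $i=j-s$. Each unit edge $[i-1,i]$ lies only in $\{i-1,i,i+s\}$.
\begin{itemize}
\item Collapsing every triangle along its unit edge leaves the chord circle through $0,s,2s,\ldots$. It is connected because $\gcd(s,2s+1)=1$. This gives $\mathcal Y_{n,s}\simeq S^1$ directly, with no Euler-characteristic or Hurewicz argument needed.
\item The chord circle is a $T$- and $R$-invariant subcomplex whose inclusion is a homotopy equivalence. Its fundamental class $\sum_i[i,i+s]$, which equals $\gamma$, generates $H_1$, is fixed by $T$, and is negated by $R$. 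So $H_1\cong\Z_-$, with $u$ equal to $-2$ times the generator.
\item The same free-edge observation excludes $4$-cliques: the chord when $s=2$, the unit edge when $2s=n-1$, lies in only one triangle. Every edge of a $4$-clique lies in two of its triangles. Your remark that every triangle has two equal steps does not by itself give this.
\end{itemize}
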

\begin{proof}
Eliminate each chord coefficient of any integer cycle by subtracting the corresponding multiple of $c_i$.
The remainder is supported on the unit circle and is an integer multiple of $u$.
Chord coefficients and then any unit-edge coefficient prove independence.
This is the same complete cycle-group argument used before triangle filling in Proposition~\ref{prop:flagfamily}, now for every $n,s$ in the stated range.
Literal vertex translation and negation give the four displayed signed identities, proving the direct sum and retaining the geometric generator labels.

Translation equivariance makes all $\ell(c_i)$ equal; the displayed reflection imposes no further restriction.
Thus the image is $(\alpha,\beta)$.
Let $N_n=1+t+\cdots+t^{n-1}$.
The entire translation-fixed lattice has basis $N_nc_0,u$, both of which are negated by $R$.
Its image under $\ell$ is the ideal $(n\alpha,\beta)$.
An equivariant lift of a scalar is exactly a vector in this fixed lattice mapping to that scalar.
A B\'ezout combination realizes the positive generator of the image ideal.
For a primitive map it equals $\gcd(n,\beta)$; taking $\alpha=1,\beta=d$ realizes every $d\mid n$.
This is the classical fixed-element lifting calculation \cite[Section~3.3]{BrownLectures} in the stated geometric coordinates.

A triangle corresponds to three signed steps from $\{\pm1,\pm s\}$ summing to zero modulo $n$.
Two unit steps and one chord can sum to zero only for $s=2$; two chord steps and one unit give $2s=n-1$; three chords give $3s=n$.
The range excludes every other zero sum, including three unit steps.
These conditions are mutually exclusive for $n\geq7$.
In the first case the triangles are $\{i,i+1,i+2\}$ and each length-two chord is a free edge of its unique triangle.
Removing these pairs leaves the unit circle.
In the second case the triangles are $\{i,i+s,i-1\}$, each with its unique unit edge.
Removing those pairs leaves the chord circle, which is connected since $\gcd(s,n)=1$.
Neither case has a four-clique: its free edge cannot lie in a second triangle.
The surviving circle is an invariant subcomplex; its inclusion induces $T=1,R=-1$ on homology, without requiring an equivariant collapse sequence.
The $3s=n$ case is Proposition~\ref{prop:flagfamily}.
Otherwise there are no triangles or higher simplices, and a maximal-tree contraction gives $n+1$ circles; the integer action is the graph action already proved.
\end{proof}

For the graph lattice,
\[
 \tr(T^hR\mid\Gamma_{n,s})=-1-\#\{i\in\Z/n\Z:2i=h-s\}.
\]
For odd $n$ this is $-2$; for even $n$ it is $-3$ when $h-s$ is even and $-1$ otherwise.
Consequently $\mathcal Y_{14,5}$ and $\mathcal Y_{14,6}$ have the same ordinary type and cyclic integer module, but nonisomorphic geometric $D_{14}$ modules: their chosen $R$ traces are $-1$ and $-3$.
This preserves the labels of $T,R$; it does not identify actions up to a dihedral-group automorphism.

The projection $\ell_{0,1}$ splits by the section $1\mapsto u$.
It does not descend through triangle filling when $n=3s$, since
\[
 \partial[i,i+s,i+2s]=c_i+c_{i+s}+c_{i+2s}+u
\]
in the directed-edge orientation, and $\ell_{0,1}$ sends this boundary to one.
Instead, augmentation of $M_s$ pulls back to $\ell_{1,-3}$: it sends every $[c_i]$ to one and $[u]$ to $-3$.
Equation~\eqref{eq:graphliftminimum} shows that this different map already has minimum three on the unfilled graph.
Triangle filling retains that specified-map obstruction.

Finally, the sum of distinct cyclotomic factor kernels in $A_{n,s}\oplus\Z_-$ contains the entire latter summand in its $(t-1)$ kernel; every other factor has nonzero value at one and hence zero kernel there.
Its quotient is therefore the factor-kernel quotient of $A_{n,s}$.
By Lemma~\ref{lem:crtdual} and the classical full CRT formula \cite[Theorems~2.6 and~2.15 of the public text]{MS}, this finite abelian group has exponent $n$.
It is nonzero although $\ell_{0,1}$ splits integrally.
Factor-kernel gluing and scalar liftability thus answer different questions; equations~\eqref{eq:graphliftminimum} and~\eqref{eq:flagaugmentation}, together with Proposition~\ref{cor:freecrossnonsplit}, compare explicit maps rather than inferring a quotient obstruction from an ambient module alone.

\section{Computational reconstruction}\label{sec:reconstruction}
The finite action calculations are specified by the graphs, face orders, signed boundaries, comparison-map recurrences and cycle lifts given above.
Critical faces are ordered first by cardinality and then lexicographically within each degree.
The matching loop order is part of this specification: changing it can change the critical basis.
The conventions are collected in the following table.
\begin{center}
\begin{tabular}{@{}lll@{}}
\toprule
Cases & Loop order and vertex priority & Definition\\
\midrule
Theorem~\ref{thm:main} & Vertex-outer, $0,1,\ldots,n-1$ & Section~\ref{sec:cw}\\
$(15,2),(16,2)$ & Vertex-outer, listed priorities & Section~\ref{sec:cyclesquare}\\
$(17,2),(18,2)$ & Vertex-outer, $0,1,\ldots,n-1$ & Section~\ref{sec:higherorbit}\\
$(19,2)$ & Vertex-outer, listed priority & Section~\ref{sec:higherorbit}\\
$(22,2)$ & Face-outer, $21,20,\ldots,0$ & Section~\ref{sec:tiling}\\
\bottomrule
\end{tabular}
\end{center}

The three benchmarks use the orders and collapse sequence in Section~\ref{sec:benchmarks}; in particular, $G(14,4)$ uses the parity-first order rather than the natural order.

The vertex-outer matchings use nonempty faces.
The face-outer matching first includes the empty face and then removes its initial pair.
The rank-four family uses the classical cycles and maximal-face cochains instead of Morse lifts; the even-vertex family uses the natural connecting map for its ordered cover and the published integral nerve basis.
The odd sphere branches use a signed orbit-quotient calculation.
For the clique complexes, graph cycles and the complete triangle-boundary relations give direct presentations.
The family arguments just described are symbolic; the local Morse family also uses the finite core specified in Section~\ref{lb-section}. The finite cases admit the following explicit reconstruction.

\subsection{Integer identities to be checked}
First enumerate every face and verify the complete matching and the acyclicity of its reversed Hasse graph.
In the mixed-dimensional cases, calculate every signed Morse differential and perform the stated integral unit cancellations.
The ordinary CW models and their connectivity supply the free-rank hypothesis needed in Lemma~\ref{lem:basis}.
Next verify that each lifted chain is a cycle, that its projection is the prescribed coordinate vector, and that the projection kills every higher boundary.
Apply the signed vertex translation and negation to the chains themselves to obtain the homology columns.
Finally check the integer orbit determinants and all annihilator relations; for two generators, use the complete concatenated orbit basis and the reflection columns of both summands.
These identities determine the integral action rather than only its characteristic polynomial.

The cycle-subgroup calculations require additional integer data.
For the twenty-two-vertex family, check all forty-four translates, the unique primitive relation, all maximal minors, and the unimodular saturation basis.
The ambient coordinates must satisfy the norm relation and the two geometric seed reflections.
For the eighteen-vertex family, check all twenty-seven columns, the complete Smith identity, the primitive four-relation basis, and the surjective two-row quotient map with both actions.
The lifting calculation then uses the entire $T^2$-fixed lattice and its image under this same quotient map, together with the displayed equivariant lift of nine times the identity.
For the rank-four comparisons, the cycle--cocycle pairing and the coinvariant reflection provide the corresponding integer checks.

For the clique complexes, verify the complete oriented graph cycle basis, every attaching relation and the unimodular quotient basis.
The fixed-lattice image must be computed for the specified map.
In particular, the graph projection on the unfilled graph of $\mathcal X_s$ sends a triangle boundary to one, whereas the augmentation pullback annihilates it.
These are different maps, and their lifting minima cannot be transported by the filling alone.

\subsection{Execution records and availability}
The finite matching, boundary, projection, orbit-basis and action data were reconstructed and independently checked.
The checkers derive the signed vertex maps from the face data and compare the complete integer columns.
They reject altered face sets, nonzero cycle boundaries, projections that fail on higher boundaries, nonunimodular bases and incorrect geometric actions, including changed reflection signs.
Isolated reconstructions reproduced the action data with identical artifact bytes.
Separate checks of $\mathcal X_5,\mathcal X_6$ and $\mathcal Y_{13,3},\mathcal Y_{14,5},\mathcal Y_{14,6}$ cover the graph-cycle presentations, actual actions, factor kernels and fixed-lattice images; the latter also checks eight primitive scalar maps and the unfilled graph of $\mathcal X_5$.
For $\mathcal Y_s$, $3\leq s\leq8$, separate checks enumerate the complete edge and triangle sets and exclude higher faces, verify a unimodular homology basis, and compare every signed translation and reflection column with the parity-dependent blocks.
Changed generators, reflection phases, annihilators and triangle lists are rejected; two isolated replays reproduce the certificate and checker bytes.
For the adjacent families, eleven cases with $2\leq s\leq7$ for $a=1$ and $3\leq s\leq7$ for $a=2$ check the entire invariant lattice and fifty-seven primitive scalar maps in addition to the signed action and integral basis.
Six altered inputs are rejected, and two isolated replays reproduce the data.
The two actual prime-order subgroups of $\K(14,2)$ are separately checked against the complete three-factor kernel quotient.
The finite checks supplement the parameter-family proofs and do not establish their universal quantifiers.
The accompanying computational supplement includes exact programs and eleven regression tests for the local criterion, the eight-periodic base data, all rotation traces at $6\leq n\leq16$, and the prime integral augmentation lattice.
The family tests also replay $25\leq n\leq32$ and compare reduced Euler characteristics throughout $6\leq n\leq32$; these are bounded regression checks, not a substitute for the local induction.
For the prime integral augmentation calculation, fraction-free elimination and a separate rational-elimination algorithm evaluate the stated minor from the same frozen matrix; they are independent elimination paths and share the face and boundary construction.
The supplement also provides independent exhaustive checks of the three benchmarks in Section~\ref{sec:benchmarks}.
The finite-state and fixed-space results are established by the direct proofs above.
The present local-block audit separately compares all four local survivor sets and all nineteen base cases $6\leq n\leq24$; its unbounded conclusion rests on Theorem~\ref{lb-criterion}.
The original benchmark audit records $8192$, $16384$ and $16384$ tested subsets; for $G(14,5)$ it also checks $1134$ Hasse covers and $324$ boundary columns with $3234$ signed terms.

The accompanying archive \texttt{computational-supplement-v2.zip} supplies the finite certificates, independent checkers and reconstruction scripts, indexed by statement in \texttt{claim-map.csv}.
The deposit at \url{https://doi.org/10.5281/zenodo.23383591} includes the README with the environment, licenses and commands for \texttt{run\_checks.py}.
Its rebuild mode regenerates components with a listed producer; the original twenty-one-, twenty-five- and twenty-seven-vertex certificates are independently checked without rerunning their historical producers.

\section{Further questions}\label{sec:further-questions}
The first remaining question for odd matching complexes is to determine the full integral lattice, beyond the sharp denominator in Theorem~\ref{thm:om-exponent}.
Does the canonical divided-power lattice $N^{(k)}$ equal the saturation $\widehat S^{(k)}$?
More generally, what are the invariant factors of $\Gamma_k$?
The inclusions in \eqref{eq:om-two-inclusions} and the first modular layer in Corollary~\ref{cor:om-first-layer} give constraints on the answer.
One possible approach is an integral straightening argument for arbitrary top cycles, extending the divided-power construction used to attain the exponent.

For circulant complexes, a corresponding problem is to determine the position of natural cycle subgroups uniformly in the number of vertices.
The adjacent-pair cross-cycles on eighteen vertices generate a primitive subgroup but leave a free rank-two quotient with least equivariant scalar lift nine.
On twenty-two vertices they span rational homology and have integral index two.
A uniform description for $\Ind(C_n^2)$ must therefore account for the relations among these cycles, the free and torsion parts of their quotient, and the induced dihedral action.
The clique families show that the quotient map itself is essential, even when the ambient lattice splits.

The explicit calculations also locate the primes at which the displayed splittings fail.
For $L_{14}$, inverting $2$ and $7$ removes the factor-kernel defect; for the lattices of $\K(15,2)$ and $\K(16,2)$, inverting $15$ and $2$, respectively, does so.
The decompositions of $L_3$ and $L_2$ split after inverting $3$ and $2$, respectively, and the eighteen-vertex cross-cycle sequence splits over $\Z[1/3]$.
These conclusions follow from the quotient ideals and explicit lifts.
For the cycle-square family, a uniform prime-primary description should retain both the saturation defects and the lifting obstructions of the free quotient.

The periodic Morse constructions raise a different integral question.
For a fixed bounded jump set, a rational critical-face series counts cells but does not determine their attaching maps.
Which vertex orders give perfect matchings, and when do local duplication identities produce a uniform suspension period?
For the jump set $\{2,3\}$, the rational cyclic modules are known in every residue class, whereas the integral lattices in the rank-three and rank-five cases remain to be identified.
The fixed-space obstruction excludes a uniform representation suspension, leaving open other equivariant descriptions of this family.
Finally, the cyclic quotient of the thirteen-vertex benchmark has a known cell vector and rational homology; its integral homology and simple-homotopy type remain to be determined.

\appendix
\renewcommand{\thesection}{\Alph{section}}
\section{Finite independence-complex classification}\label{app:finiteclassification}
\begin{theorem}\label{thm:main}
For one-step vertex translation, the nonzero reduced integral homology modules and ordinary homotopy types are as follows.
Reduced homology vanishes outside the indicated degree.
\begin{center}
\begin{tabular}{@{}lccl@{}}
\toprule
$(n,s)$ & Homotopy type & Degree & Integral module\\
\midrule
$(14,2)$ & $\bigvee^{13}S^2$ & $2$ & $L_{14}$, with action of order $14$\\
$(14,6)$ & $S^1$ & $1$ & trivial $\Z$\\
$(15,4)$ & $S^2\vee S^2$ & $2$ & $\Z[t]/(t^2+t+1)$, inflated from $C_3$\\
$(15,5)$ & $S^2\vee S^2$ & $2$ & $\Z[t]/(t^2+t+1)$, inflated from $C_3$\\
$(16,4)$ & $S^3$ & $3$ & trivial $\Z$\\
$(17,4)$ & $S^3$ & $3$ & trivial $\Z$\\
$(18,4)$ & $\bigvee^7 S^3$ & $3$ & $L_3$, inflated from $C_9$\\
$(18,5)$ & $\bigvee^{11} S^3$ & $3$ & $L_2$, with action of order $18$\\
$(18,6)$ & $S^3$ & $3$ & trivial $\Z$\\
$(19,4)$ & $S^3$ & $3$ & trivial $\Z$\\
$(19,5)$ & $S^3$ & $3$ & trivial $\Z$\\
$(21,4)$ & $S^4\vee S^4$ & $4$ & $\Z[t]/(t^2+t+1)$, inflated from $C_3$\\
$(22,5)$ & $S^4$ & $4$ & $\Z[t]/(t+1)$, sign action of order $2$\\
$(22,6)$ & $S^3$ & $3$ & trivial $\Z$\\
$(23,5)$ & $S^3$ & $3$ & trivial $\Z$\\
$(24,4)$ & $S^4\vee S^4$ & $4$ & $\Z[t]/(t^2+t+1)$, inflated from $C_3$\\
$(25,4)$ & $S^5$ & $5$ & trivial $\Z$\\
$(27,5)$ & $\bigvee^7 S^5$ & $5$ & $L_3$, inflated from $C_9$\\
\bottomrule
\end{tabular}
\end{center}
The action on $L_3$ has order nine.
For $A_3=\ker(t-1:L_3\to L_3)$ and $D_3=\ker(\Phi_9(t):L_3\to L_3)$,
\begin{equation}\label{eq:index3}
 L_3/(A_3\oplus D_3)\cong\Z/3\Z.
\end{equation}
For $A_2=\ker(t^9-1:L_2\to L_2)$ and $D_2=\ker(\Phi_6(t):L_2\to L_2)$,
\begin{equation}\label{eq:index2}
 L_2/(A_2\oplus D_2)\cong(\Z/2\Z)^2.
\end{equation}
For $U_d=\ker(\Phi_d(t):L_{14}\to L_{14})$, $d=2,7,14$, one has
\begin{equation}\label{eq:index14}
 L_{14}/(U_2\oplus U_7\oplus U_{14})\cong\Z/7\Z\oplus(\Z/2\Z)^6.
\end{equation}
Hence $L_{14}$ is not the direct sum of its three cyclotomic integer factors.
Consequently $L_3\not\cong\Z\oplus\Z[\zeta_9]$ and $L_2\not\cong\Z[C_9]\oplus\Z[\zeta_6]$ as $\Z[t]$-modules, although their rationalizations have these corresponding decompositions.
\end{theorem}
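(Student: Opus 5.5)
The proof of Theorem~\ref{thm:main} assembles results already established; it needs no new construction. Each row of the table has two parts: an ordinary homotopy type with free homology, and an integral translation module. The gluing formulas \eqref{eq:index3}, \eqref{eq:index2} and \eqref{eq:index14} are separate arithmetic statements about the three glued lattices. I would prove them in that order.

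\emph{Homotopy types.} Proposition~\ref{prop:finite} gives, for each of the eighteen pairs $(n,s)$, the ordinary vertex-outer matching and its critical profile. Lemma~\ref{lb-acyclic} (equivalently Forman's theorem) turns this into a CW model with one vertex. For the thirteen pure-dimensional profiles, that model is already a wedge of spheres in the stated degree. For $\K(25,4)$ and the five mixed cases of Section~\ref{sec:mixed}, I would use the displayed Morse differentials and the integral unit cancellations to compute free homology in a single degree. These models have no one-cells, hence are simply connected, and Lemma~\ref{lem:homologywedge} then gives the wedge type. This also shows that reduced homology vanishes outside the indicated degree.

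\emph{Modules.} Every rank-one and rank-two row follows from Proposition~\ref{prop:lowrank}. In rank one, the odd-degree rows are trivial and the degree-four $(22,5)$ row is the sign module. All rank-two rows have even degree, so they give $\Z[t]/(t^2+t+1)$. For the four higher-rank rows $(14,2),(18,4),(18,5),(27,5)$, Proposition~\ref{prop:chainchecks} together with Lemma~\ref{lem:basis} supplies integral homology bases and the translation matrices \eqref{eq:transport}. Proposition~\ref{prop:cyclic} then gives unimodular Krylov bases with the stated annihilators, and \eqref{eq:cyclic} identifies the modules as $L_{14},L_3,L_2,L_3$. The action orders come from the cyclotomic factors: $\Phi_9$ gives nine, $\operatorname{lcm}(9,6)=18$ gives eighteen, and $\Phi_{14}$ gives fourteen.

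\emph{Gluing.} For \eqref{eq:index3} and \eqref{eq:index2}, apply Lemma~\ref{lem:gluing}:
\begin{itemize}
\item with $(a,b)=(t-1,\Phi_9)$ the quotient is $\Z/(\Phi_9(1))=\Z/3$;
\item with $(a,b)=(t^9-1,\Phi_6)$, the relations $t^3=-1$ modulo $\Phi_6$ give $t^9-1=-2$, so the quotient is $\F_2[t]/(t^2+t+1)$.
\end{itemize}
For \eqref{eq:index14}, Lemma~\ref{lem:multifactor} reduces the quotient to the kernel-embedding matrix $E$. Its determinant $448=7\cdot 2^6$, the integrality of $14E^{-1}$, and its ranks over $\F_2$ and $\F_7$ give $\Z/7\oplus(\Z/2)^6$, as recorded in Proposition~\ref{prop:composite}. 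Corollary~\ref{cor:fixedexamples} provides an independent check of this result through fixed-point norm gluing.

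\emph{Nonisomorphism.} In a split module such as $\Z\oplus\Z[\zeta_9]$, the factor-annihilator kernels sum to the whole lattice. This index is preserved by any $\Z[t]$-isomorphism, so the nonzero quotients above rule out each of the split decompositions.

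The main obstacle is the finite input, not the assembly. The theorem rests on exhaustive matchings, Morse differentials and orbit determinants whose correctness is certified computationally (Section~\ref{sec:reconstruction}). Within the proof itself, the delicate step is the mixed-dimensional cases. Cell counts alone do not determine the homology there, so both the integral differentials and the simple-connectivity argument are required before Lemma~\ref{lem:homologywedge} applies.
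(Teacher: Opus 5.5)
Your proposal is correct and follows the paper's own proof essentially step for step: Proposition~\ref{prop:finite} with the Morse CW models, unit cancellations and Lemma~\ref{lem:homologywedge} for the types; Proposition~\ref{prop:lowrank} for ranks one and two; the Krylov bases of Proposition~\ref{prop:cyclic} for the four higher-rank lattices; Lemmas~\ref{lem:gluing} and~\ref{lem:multifactor} with Proposition~\ref{prop:composite} for the quotients; and index invariance for the nonisomorphisms. The only slip is a miscount: there are twelve pure-dimensional profiles, not thirteen, since twelve plus $\K(25,4)$ plus the five mixed cases give the eighteen rows.
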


The finite independence-complex calculations associated with the expanded core table and its action comparisons use twenty-seven parameter inputs: the eighteen pairs in Theorem~\ref{thm:main}, $(15,3),(25,3)$, the cycle squares $(15,2),(16,2),(17,2),(18,2),(19,2),(22,2)$, and $(21,5)$.
The twenty-six pairs displayed in that collection represent twenty-five graphs; the three benchmarks of Section~\ref{sec:benchmarks} are listed separately.
Indeed, multiplication by $5$ identifies $G(19,4)$ with $G(19,5)$, carrying one-step translation to five-step translation; both homology actions are trivial.
The additional input $G(21,5)$ is similarly isomorphic to $G(21,4)$ and adds no graph representative or displayed theorem pair.
The five clique-complex inputs are $\mathcal X_5,\mathcal X_6$ and $\mathcal Y_{13,3},\mathcal Y_{14,5},\mathcal Y_{14,6}$, where $\mathcal Y_{n,s}=\operatorname{Cl}(G(n,s))$.
The parameter-family proofs are independent of these finite checks.

\section{Critical faces and chain supports}\label{app:cells}
For the eighteen cases of Theorem~\ref{thm:main}, each ordinary matching has the critical vertex $\{0\}$.
Their positive-dimensional critical faces are listed in matching order, which also gives the homology-basis order for the four higher-rank cases of that theorem.
The two further cycle-square critical lists and vertex priorities are specified separately in Proposition~\ref{prop:cyclesquare}.
Their translation matrices and orbit-basis checks are collected in Appendix~\ref{app:matrices}.
For the additional low-rank cases, the critical faces are
\[
\begin{array}{ll}
\K(14,6):&\{1,4\},\\
\K(15,4):&\{1,7,10\},\{2,8,11\},\\
\K(17,4):&\{2,8,11,13\},\\
\K(19,4):&\{2,4,11,13\},\\
\K(19,5):&\{2,5,9,12\},\\
\K(23,5):&\{1,4,12,15\},\\
\K(24,4):&\{1,7,10,16,19\},\{2,8,11,17,20\}.
\end{array}
\]

For $\K(14,2)$ they are
\begin{align*}
 &\{1,5,9\},\{1,5,10\},\{1,6,10\},\{1,6,11\},\{2,6,10\},\{2,6,11\},\{2,7,11\},\\
 &\{2,7,12\},\{3,7,12\},\{3,8,12\},\{3,8,13\},\{4,8,13\},\{4,9,13\}.
\end{align*}
Their cycle support sizes are $8,8,8,8,8,8,8,14,12,12,12,12,12$.
The first lift is the explicit cross-cycle in \eqref{eq:cross14}.

For $\K(18,4)$ they are
\[
 \{1,7,10,16\},\{2,4,11,13\},\{2,8,11,17\},\{3,5,12,14\},
 \{4,6,13,15\},\{5,7,14,16\},\{6,8,15,17\}.
\]
For $\K(18,5)$ they are
\begin{align*}
 &\{1,4,12,15\},\{1,9,12,16\},\{2,5,9,12\},\{2,5,13,16\},\\
 &\{2,10,13,17\},\{3,5,12,14\},\{3,6,10,13\},\{3,6,14,17\},\\
 &\{4,6,13,15\},\{5,7,14,16\},\{6,8,15,17\}.
\end{align*}
For $\K(21,4)$ they are
\[
 \{3,5,12,14,17\},\qquad \{4,6,13,15,18\}.
\]
For $\K(25,4)$ the four-cell is $\{1,7,10,16,19\}$.
The five-cells, in the order of \eqref{eq:row}, are
\[
 \{5,7,14,16,21,23\},\qquad \{6,8,15,17,22,24\}.
\]
For $\K(27,5)$ they are
\begin{align*}
 &\{1,9,12,16,19,25\},\{2,5,13,16,20,22\},\{2,10,13,17,20,26\},\\
 &\{3,5,12,14,21,23\},\{4,6,13,15,22,24\},\\
 &\{5,7,14,16,23,25\},\{6,8,15,17,24,26\}.
\end{align*}
The cycle support sizes, in these respective orders, are
\[
\begin{array}{ll}
\K(18,4):&16,28,33,34,32,34,38,\\
\K(18,5):&16,20,32,20,24,32,33,24,33,29,37,\\
\K(27,5):&96,224,252,180,245,225,265.
\end{array}
\]

\subsection{Mixed-dimensional critical faces}\label{app:mixed}
In each line, faces of a given dimension are ordered as the columns or rows of the corresponding differential in Section~\ref{sec:mixed}.
Every complex also has the critical vertex $\{0\}$.
\begin{align*}
\K(15,5),\ q=2:&\quad\{1,4,12\},\{2,5,9\},\{3,6,10\};\\
q=3:&\quad\{4,6,10,12\}.\\
\K(16,4),\ q=2:&\quad\{1,7,10\};\\
q=3:&\quad\{5,7,12,14\},\{6,8,13,15\}.\\
\K(18,6),\ q=3:&\quad\{2,5,12,15\},\{3,6,11,13\},\{4,7,12,14\};\\
q=4:&\quad\{5,7,9,12,14\},\{6,8,10,13,15\}.\\
\K(22,5),\ q=4:&\quad\{2,5,9,12,20\},\{3,5,12,14,18\},\{3,6,10,13,21\};\\
q=5:&\quad\{5,7,14,16,18,20\},\{6,8,15,17,19,21\}.\\
\K(22,6),\ q=3:&\quad\{1,4,12,14\},\{1,5,9,13\},\{2,6,10,14\};\\
q=4:&\quad\{1,5,9,14,18\},\{2,6,10,15,19\},\\
&\quad\{3,7,11,16,20\},\{4,8,12,17,21\};\\
q=5:&\quad\{7,9,11,16,18,20\},\{8,10,12,17,19,21\}.
\end{align*}

\section{Translation matrices}\label{app:matrices}
For the two cycle squares in Proposition~\ref{prop:cyclesquare}, the matrices use its listed critical-lift bases.
Columns give images of basis vectors:
\[
T_{15,2}=\begin{pmatrix}
-1&-1&-1&-1&-1&-1&-1\\
0&0&1&0&0&0&0\\
0&0&0&0&0&0&1\\
0&0&0&0&0&1&0\\
1&0&0&0&0&0&0\\
0&1&0&0&0&0&0\\
0&0&0&1&0&0&0
\end{pmatrix},\qquad
T_{16,2}=\begin{pmatrix}
0&0&0&0&1\\
1&0&0&0&0\\
0&0&1&0&0\\
0&-1&0&0&0\\
0&0&1&1&0
\end{pmatrix}
.
\]
Together with the explicit vectors in Proposition~\ref{prop:cyclesquare}, these small matrices specify the two orbit-basis determinant checks without repeating the chain construction.

For each matrix below, columns are the images of the basis classes $[z_j]$ in the preceding critical-face order.
The orbit matrices are completely specified by $B=(v,Tv,\ldots,T^{r-1}v)$, with $v=e_2,e_1,e_2$ for the three displayed matrices respectively.
Their determinants and annihilators are those of Proposition~\ref{prop:cyclic}.
\begingroup\small\renewcommand{\arraystretch}{0.95}
\[
 T_{18,4}=\begin{pmatrix}
0&0&0&0&1&0&-1\\
1&1&-1&0&-1&0&1\\
1&0&0&0&0&0&0\\
0&1&0&0&0&0&0\\
0&0&0&1&0&0&0\\
0&0&0&0&1&0&0\\
0&0&0&0&0&1&0
\end{pmatrix},\qquad
 T_{18,5}=\begin{pmatrix}
0&0&0&0&0&-1&0&-1&0&-1&0\\
0&0&0&0&0&0&0&0&0&0&1\\
0&0&1&1&-1&1&0&0&0&1&0\\
1&0&0&0&0&0&0&0&0&0&0\\
0&1&0&0&0&0&0&0&0&0&0\\
0&0&0&-1&1&0&-1&0&-1&0&0\\
0&0&1&0&0&0&0&0&0&0&0\\
0&0&0&1&0&0&0&0&0&0&0\\
0&0&0&0&0&1&0&0&0&0&0\\
0&0&0&0&0&0&0&0&1&0&0\\
0&0&0&0&0&0&0&0&0&1&0
\end{pmatrix}.
\]
\[
 T_{27,5}=\begin{pmatrix}
0&0&0&0&1&0&-1\\
-1&1&1&0&1&0&-1\\
1&0&0&0&0&0&0\\
0&-1&0&0&0&0&0\\
0&0&0&1&0&0&0\\
0&0&0&0&1&0&0\\
0&0&0&0&0&1&0
\end{pmatrix}.
\]

\endgroup

\section{Guide to the additional action comparisons}\label{app:guide}
\begin{center}
\begin{tabularx}{\textwidth}{@{}lXl@{}}
\toprule
Parameters & Invariant identified & Result\\
\midrule
Cycle-power families & Integral actions and scalar lifts & Section~\ref{sec:cyclepowerfamily}\\
Prime-order subgroups of $\mathcal X_s$ & Geometric gluing and scalar lifts & Theorem~\ref{thm:geometricfamily}\\
Eighteen core pairs & Low-rank and glued lattices & Theorem~\ref{thm:main}\\
$(5(2\ell+1),3)$ & Odd-vertex family comparison & Proposition~\ref{prop:rankfour}\\
$(2m,3)$, $m\ge5$ & Even-vertex family comparison & Proposition~\ref{prop:evenfamily}\\
odd $n\ge7$, $5\nmid n$ & Jump-three sphere actions & Proposition~\ref{prop:oddgeneric}\\
$(15,2),(16,2)$ & Cyclic lattices and factor kernels & Proposition~\ref{prop:cyclesquare}\\
$(17,2),(18,2),(19,2)$ & Two-orbit dihedral summands & Proposition~\ref{prop:twoorbit}\\
$(18,2)$ & Cycle subgroup and nonsplit free quotient & Propositions~\ref{prop:freecrossdefect}, \ref{cor:freecrossnonsplit}\\
$(22,2)$ & Dihedral lattice and parity subgroup & Propositions~\ref{prop:tilingindex}, \ref{prop:tilingdihedral}\\
$\mathcal X_s$, $s\geq3$ & Triangle-boundary family mechanism & Proposition~\ref{prop:flagfamily}\\
Degree-four clique branches & Geometric maps and all scalar lifts & Proposition~\ref{prop:flaggraphs}\\
\bottomrule
\end{tabularx}
\end{center}

\section*{Acknowledgements}
We acknowledge Gewu Intelligence Lab for providing the collaborative research environment in which this project was developed.

\section*{Disclosure of automated assistance}
OpenAI GPT-5.6 Sol, Anthropic Claude Fable 5 and Grok 4.6 assisted with the original circulant arguments and exposition. OpenAI GPT-6.1 Sol developed the saturation, integral-action, finite-state and fixed-space arguments, constructed exact computational checks and drafted the expanded manuscript. OpenAI GPT-6 Astra checked derivations and references and revised the exposition. Anthropic Claude Fable 5.1 reviewed the circulant arguments, re-checked the derivations and references of the complete manuscript, and revised the exposition.

\begingroup
\raggedright
\bibliographystyle{amsplain-fullnames}
\bibliography{references}
\endgroup
\end{document}